\documentclass[12pt]{amsart}
\usepackage{subfiles}
\usepackage{fullpage} 
\usepackage{graphicx} 
\usepackage{prefix}
\usepackage{amsmath}
\usepackage{amsfonts}
\usepackage{tikz-cd}
\usepackage{tfrupee}
\usepackage{braket} 

\title{Functoriality of the KSGNS Construction for  Intertwiners of Strict Positive $C^*$-Correspondences}

\author{Lucus Brady}

\address{Department of Mathematical Sciences\\Montana State University\\Bozeman, MT 59717}
\email{lucusbrady@montana.edu}

\author{Ryan Grady}

\address{Department of Mathematical Sciences\\Montana State University\\Bozeman, MT 59717}
\email{ryan.grady1@montana.edu}

\date{\today}

\begin{document}
	
	\maketitle
\begin{abstract}
		
		\noindent We prove that the KSGNS construction can be viewed as an endofunctor on a category whose objects are strict positive $C^*$-correspondences from a fixed $C^*$-algebra and morphisms are given by intertwiners which account for automorphisms of the fixed $C^*$-algebra. Using this perspective, we provide a functorial perspective for strict positive equivariant $C^*$-correspondences of $C^*$-dynamical systems and show every strict positive equivariant $C^*$-correspondence of $C^*$-dynamical systems unitarily uniquely dilates under the KSGNS construction to an equivariant $C^*$-correspondence of the dynamical systems.
		
	\end{abstract}
	
	\tableofcontents
	
	\section{Introduction}
	
	The KSGNS construction\footnote{Named after Kasparov, Stinespring, Gelfand, Naimark, and Segal} provides a generalization of both the GNS construction and Stinespring Dilation Theorem to the setting of Hilbert modules, and enables one to form $C^*$-correspondences from suitable completely positive maps. Abstractly, the KSGNS construction provides an assignment of strict positive $C^*$-correspondences to $C^*$-correspondences. Due to this transformation of data, we seek to investigate in what sense is this assignment functorial.
	
	To begin understanding functoriality of the KSGNS construction, we first need a category encoding $C^*$-correspondences. Work regarding categorizing $C^*$-correspondences has been already taken up.  In the work of \cite{echterhoff2005categoricalapproachimprimitivitytheorems}, the authors construct a category for $C^*$-correspondences, later called the \emph{Enchilada category} \cite{eryuzlu2023exactsequencesenchiladacategory}, to obtain a categorical view of the imprimitivity theorems for $C^*$-dynamical systems. Briefly, the Enchilada category has objects as $C^*$-algebras, morphisms given by unitary isomorphism classes of $C^*$-correspondences, and composition given by the balanced/interior tensor product. A modification of the Enchilada category can be made to keep track of the unitary isomorphisms as 2-morphisms \cite{Buss_2012}. In the work of \cite{brix2024morphismscuntzpimsneralgebrascompletely}, the Enchilada category was generalized to the semi-category called \emph{Quesadilla} by taking morphisms as unitary isomorphism classes of strict positive $C^*$-correspondences. The Enchilada category is contained as a subcategory of Quesadilla, and the KSGNS construction can be viewed as a retraction of Quesadilla to Enchilada \cite{brix2024morphismscuntzpimsneralgebrascompletely}. 
	
	In this work, we take an alternative perspective by viewing the correspondences as objects instead of morphisms. One way of achieving this perspective is to consider the under category of either Enchilada or Quesadilla for a fixed $C^*$-algebra; however, we want to study the intertwiners (not necessarily unitary) of the correspondences which consider potential automorphisms of the fixed $C^*$-algebra. This point of view is not readily available from taking the under category of either Enchilada or Quesadilla, or by encoding the intertwining data as 2-morphisms due to a lack of a well-defined composition of automorphisms between different $C^*$-algebras.
	
	Traditionally, the automorphism data is not taken into account when discussing intertwiners (such as for representations of groups); however, such automorphisms play an important role when viewing the algebras through the lens of quantum mechanics. To emphasize this point, let us suppose we have a $C^*$-algebra $A$ representing the algebra of observables for a quantum system. As states are examples of non-degenerate completely positive maps, then every state $\phi\in S(A)$ defines a strict positive $C^*$-correspondence from $A$ to $\mathbb{C}$. Applying the GNS construction yields a triple $(H_\phi, \pi_\phi, x_\phi)$ where $H_\phi$ is a complex Hilbert space and $\pi_\phi:A\to \mathcal{B}(H_\phi)$ is a cyclic representation of $A$ with unit cyclic vector $x_\phi$. Trivially, the identity map on $\mathbb{C}$ intertwines $\phi$ with itself. From the GNS construction, the identity map on $\mathbb{C}$ lifts to the identity map on $H_\phi$ which intertwines $\pi_\phi$ with itself.
	
	Now suppose we have additional knowledge of a $*$-automorphism $\alpha:A\to A$ for which $\phi(\alpha(a))=\phi(a)$ for all $a\in A$. If we take into account both $\alpha$ and $1_\mathbb{C}$, we can construct a non-trivial unitary $U\in \mathcal{B}(H_\phi)$ for which $U\circ\pi_\phi(a)=\pi_\phi(\alpha(a))\circ U$ for all $a\in A$. As the GNS construction provides a localization of the $C^*$-algebra framework of quantum mechanics to the Hilbert space framework, the dependence of the intertwiner for the localized system on automorphisms of $A$ suggest that we account for this information when discussing intertwiners for states. 
	
	As the KSGNS construction is a generalization of the GNS construction, we find it appropriate to take automorphism information into account for intertwiners of strict positive $C^*$-correspondences. To this end, we build a category for a fixed $C^*$-algebra $A$ called $\text{PosCor}(A)$ whose objects are strict positive $C^*$-correspondences from $A$ to any other $C^*$-algebra and whose morphisms are intertwiners which account for automorphisms of $A$.  As we take objects to be correspondences from $A$ to any other $C^*$-algebra, determining the exact data needed for morphisms and defining the composition rule becomes a bit tricky. Therefore, we build $\text{PosCor}(A)$ in two stages. 
	
	In the first stage, we define a category for fixed $C^*$-algebras $A$ and $B$ called $\text{PosCor}(A, B)$  where objects are given by strict positive $C^*$-correspondences from $A$ to $B$ and morphisms are given by intertwiners which take into account automorphisms of $A$. In the second stage, we combine together the categories $\text{PosCor}(A, B)$ as $B$ varies. To describe morphisms between objects in different categories, we utilize the interior tensor product. As the interior tensor product defines a functor on the categories $\text{PosCor}(A, B)$ (Theorem \ref{Theorem: Tensor Functor}), we are able to utilize its functorial properties to form a well-defined composition rule leading to the construction of our desired category $\text{PosCor}(A)$.
	
	Using the category $\text{PosCor}(A)$, we show the KSGNS construction defines as an idempotent endofunctor (Theorem \ref{Theorem: KSGNS Functor 2} and Theorem \ref{Theorem: Idempotency 2}). Due to how $\text{PosCor}(A)$ is constructed, we arrive at this assertion in two intermediate steps. In the first step, we show the KSGNS construction defines an idempotent endofunctor on the category $\text{PosCor}(A, B)$ (Theorem \ref{Theorem: KSGNS Endofucntor} and Proposition \ref{Prop: Idempotency of KSGSN}). In the second step, we show the KSGNS endofunctors on the categories of $\text{PosCor}(A, B)$ commute with the functors coming from the interior tensor product (Theorem \ref{Theorem: KSNGS commutes with ITP}). Combining these results together, we obtain the perspective of the KSGNS construction as a idempotent endofunctor on $\text{PosCor}(A)$.
	
	Due to the data of the morphisms in $\text{PosCor}(A, B)$ and $\text{PosCor}(A)$, we are able to define natural topologies on the hom-sets of  both $\text{PosCor}(A, B)$ and the subcategory  $\text{PosCor}_{*\text{-alg}}(A)$ of $\text{PosCor}(A)$. Using these topologies, we discuss in what sense the maps on the hom-sets obtained from composition, KSGNS endofunctors, and the functors coming from the interior tensor product are continuous. Additionally, we show as an application that strict positive equivariant $C^*$-correspondences of $C^*$-dynamical systems can be viewed as topological unitary valued functors from $BG$ to $\text{PosCor}_{*\text{-alg}}(A)$ (Theorem \ref{Theorem: ESPC as Functors}) as well as every strict positive equivariant $C^*$-correspondence between $C^*$-dynamical systems unitarily uniquely dilates under the KSGNS construction to an equivariant $C^*$-correspondence of the $C^*$-dynamical systems (Theorem \ref{Theorem: Dilation Result}). In the case the group for the $C^*$-dynamical systems is trivial, Theorem \ref{Theorem: Dilation Result} is simply the KSGNS construction. Therefore Theorem \ref{Theorem: Dilation Result} can viewed as a generalization of the KSGNS construction.
	
	Let us outline the layout of this paper. In section 2, we review the basics of completely positive maps and Hilbert modules, as well as the KSGNS construction and interior tensor product. We also provide a comment in the last subsection regarding notation for equivalence classes of simple tensors in both the interior tensor product and KSGNS construction. In section 3, we build the categories $\text{PosCor}(A, B)$, $\text{PosCor}(A)$, and $\text{PosCor}_{*\text{-alg}}(A)$. In section 4, we prove the KSGNS construction defines an idempotent endofunctors on $\text{PosCor}(A, B)$ and $\text{PosCor}(A)$.
	
	In section 5, we define the topologies on the hom-sets of $\text{PosCor}(A, B)$ and $\text{PosCor}_{*\text{-alg}}(A)$. In this section, we also consider the extent for which composition, the KSGNS endofunctors, and the functors from the interior tensor product define continuous maps on the hom-sets. Finally, in section 6, we apply the framework and results to strict positive equivariant $C^*$-correspondences of $C^*$-dynamical systems.
	
	\subsection*{Acknowledgments}
	LB was sponsored in part by the Air Force Research Laboratory under Agreement Number FA8750-24-1-1019. The U.S.Government is authorized to reproduce and distribute reprints for government purposes not withstanding any copyright notation thereon. Any opinions, findings, and conclusions or recommendations expressed in this material are those of the authors and do not necessarily reflect the view of the funder. RG is supported by the Simons Foundation under Travel Support/Collaboration 9966728.
	
	\section{Preliminary}
	
	In this section we provide some background information on completely positive maps and Hilbert modules. For more information on completely positive maps see \cite{paulsen2002completely} or \cite{blackadar2006operator}. For more information on Hilbert modules see \cite{lance1995hilbert}, \cite{raeburn1998morita}, or \cite{blackadar2006operator}.
	
	\subsection{Completely positive maps}
	
	For a $C^*$-algebra $A$, we have for each $n\in\mathbb{N}$ the $*$-algebra of $n$-by-$n$ matrices with entries in $A$, denote by $M_n(A)$. Using any faithful representation of $A$,  the $*$-algebra $M_n(A)$ can be equipped with a norm making it into a $C^*$-algebra. Given a linear map $\phi:A\to B$ between $C^*$-algebras, we obtain an induced linear map $\phi_n:M_n(A)\to M_n(B)$ for each $n\in\mathbb{N}$; in particular, we have
	$$\phi_n:M_n(A)\to M_n(B)\quad\quad \phi_n([a^i_j])=[\phi(a^i_j)].$$
	If $\phi$ is a $*$-algebra homomorphism, then $\phi_n$ is as well.
	
	For any two $C^*$-algebras $A$ and $B$, a \textit{positive map} from $A$ to $B$ is a linear function $f:A\to B$ such that for all $a\in A$, $f(a^*a)\in B^+$. As every element of $A^+$ is of the form $a^*a$ for some $a\in A$, then we equivalently have that $f$ is positive if $f(A^+)\subset B^+$. While continuity of $f$ is not assumed, every positive map is bounded; in fact, if $A$ and $B$ are unital $C^*$-algebras, then $||f||=||f(1)||$ \cite{paulsen2002completely}. Furthermore, as positive maps preserve positive elements, it follows that every positive map preserves the $*$-operation.
	
	For any two $C^*$-algebras $A$ and $B$, a \textit{completely positive map} from $A$ to $B$ is a linear function $f:A\to B$ such that $f_n:M_n(A)\to M_n(B)$ is positive for every $n\in\mathbb{N}$. Examples of completely positive maps include $*$-algebra homomorphisms and positive maps whenever the domain or codomain is a unital commutative $C^*$-algebra \cite{paulsen2002completely}. Note, not every positive map is completely positive; for example, the transpose map on $M_2(\mathbb{C})$ defines a positive map which is not completely positive.
	
	\subsection{Hilbert modules}
	
	For a $C^*$-algebra $A$, an \textit{inner product $A$-module} is a complex vector space $E$ that is a right $A$-module together with a map $\langle\cdot, \cdot\rangle:E\times E\to A$ satisfying the following properties:
	\begin{enumerate}
		\item  $\langle x, x\rangle\geq 0$ for all $x\in E$, and $\langle x, x\rangle=0$ if and only if $x=0$. 
		\item for all $x, y, z\in E$ and $\lambda\in \mathbb{C}$, $\langle x, y+\lambda z\rangle=\langle x, y\rangle+\lambda\langle x, z\rangle$.
		\item for all $x, y\in E$, $\langle x, y\rangle=\langle y, x\rangle^*$
		\item for all $x, y\in E$ and $a\in A$, $\langle x, ya\rangle=\langle x, y\rangle a$.
		
	\end{enumerate}
	Using the norm on $A$, we can equip $E$ with a norm: $||x||=\sqrt{||\langle x, x\rangle||_A}$. If $E$ is a Banach space with respect to this norm, then $E$ is called a \textit{Hilbert $A$-module}.
	
	\begin{example}\

		\noindent A Hilbert $\mathbb{C}$-module is precisely a Hilbert space. 
		
	\end{example}
	
	\begin{example}\

		\noindent Let $A$ be a $C^*$-algebra. Viewing $A$ as a right $A$-module, we define the pairing
		$$\langle\cdot, \cdot\rangle:A\times A\to A\quad\quad \langle a, b\rangle=a^*b$$
		The map $\langle\cdot, \cdot\rangle$ makes $A$ into an inner product $A$-module whose induced norm is exactly the norm on $A$. As $A$ is Banach space with respect to its norm, then $A$ is a Hilbert $A$-module.
		
	\end{example}
	
	\begin{example}\

		\noindent Let $X$ be a compact Hausdorff topological space, and let $\pi:E\to X$ be a finite rank complex vector bundle with Hermitian metric $g$. The space of continuous sections $\Gamma(X, E)$ forms a $C(X)$-module under pointwise multiplication. Using the metric $g$, we define
		$$\tilde{g}:\Gamma(X, E)\times \Gamma(X, E)\to C(X)\quad\quad \tilde{g}(s_1, s_2)(x)=g_x(s_1(x), s_2(x))$$
		Using properties of $g$, it follows that $\Gamma(X, E)$ is an inner product $C(X)$-module with respect to $\tilde{g}$. Using local trivializations of the vector bundle and that each fiber is a Hilbert space,  $\Gamma(X, E)$ is complete with respect to the induced norm from $\tilde{g}$ making it into a Hilbert $C(X)$-module.
		
	\end{example}
	
	\begin{example}\

		\noindent Let $E$ be a Hilbert $A$-module, then $E^n$ is a Hilbert $A$-module where $A$ acts on each summand and the $A$-valued pairing is given by
		$$\left\langle \begin{bmatrix} x_1\\  \vdots\\ x_n \end{bmatrix}, \begin{bmatrix} y_1\\  \vdots\\ y_n \end{bmatrix}\right\rangle=\sum\limits_{k=1}^n \langle x_k, y_k\rangle_{E}.$$
		Alternatively, we can view $E^n$ as a Hilbert $M_n(A)$-module where the right action of $M_n(A)$ is given by
		$$\begin{bmatrix}
			x_1 & \ldots & x_n
		\end{bmatrix}[a^i_j]=\begin{bmatrix}
			\sum\limits_{i=1}^n x_ia^i_1 & \ldots &\sum\limits_{i=1}^n x_ia^i_n
		\end{bmatrix}$$
		and the $M_n(A)$-valued inner product on $E^n$ is given by
		$$\left\langle \begin{bmatrix}
			x_1 & \ldots & x_n
		\end{bmatrix}, \begin{bmatrix}
			y_1 & \ldots & y_n
		\end{bmatrix}\right\rangle=[\langle x_i, y_j\rangle_E]$$
		
	\end{example}
	
	Let $E$ and $F$ be Hilbert $A$-modules. A function $f:E\to F$ is \emph{adjointable} if there exists a function $f^*:F\to E$, called the adjoint of $f$, such that for all $x\in E$ and $y\in F$, $\langle f(x), y\rangle_F=\langle x, f^*(y)\rangle_E$. It follows that every adjointable map is $A$-linear, has a unique adjoint, and is bounded. We denote the set of all adjointable $A$-linear maps from $E$ to $F$ as $\mathcal{L}(E, F)$. In the case $E=F$,  $\mathcal{L}(E):=\mathcal{L}(E, E)$ forms a unital $C^*$-algebra with respect to the usual operator norm.  
	
	Given a Hilbert $A$-module $E$, we can identify a subspace of $\mathcal{L}(E)$ which is similar to compact operators on a Hilbert space. For each $x, y\in E$, let
	$$\theta_{x, y}:E\to E\quad\quad \theta_{x, y}z=x\langle y, z\rangle$$
	The map $\theta_{x, y}$ is $A$-linear and adjointable with $\theta^*_{x, y}=\theta_{y, x}$. Let $\mathcal{K}(E)$ be the closed linear span of $\{\theta_{x, y}: x, y\in E\}$ in $\mathcal{L}(E)$, then $\mathcal{K}(E)$ is a closed essential ideal in $\mathcal{L}(E)$. In the case $E$ is an actual Hilbert space, $\mathcal{K}(E)$ is precisely the algebra of compact operators as an application of the Spectral Theorem for Compact Operators.  Furthermore, in the case $E=A$, then $\mathcal{K}(E)$ can be identified with $A$ and $\mathcal{L}(E)$ with the multiplier algebra of $A$.
	
	Similar to bounded operators on a Hilbert space, there is an alternative topology on $\mathcal{L}(E)$ we will work with called the strict topology. The \textit{strict topology}\footnote{This topology is more commonly called the $*$-strong topology; however, we follow the terminology convention of this topology given in \cite{lance1995hilbert}.} is generated by the semi-norms
	$$\{p_x:\mathcal{L}(E)\xrightarrow{T\mapsto ||Tx||} [0, \infty)\}_{x\in E}\cup \{p_x^*:\mathcal{L}(E)\xrightarrow{T\mapsto ||T^*x||} [0, \infty)\}_{x\in E}$$
	and makes $\mathcal{L}(E)$ into  a locally convex Hausdorff topological vector space. Therefore, a net $(T_\lambda)_{\lambda\in \Lambda}$ in $\mathcal{L}(E)$ converges to $T$ in the strict topology if and only if for all $x\in E$ both $T_\lambda x\to T x$ and $T_\lambda^* x\to T^*x$ with respect to the norm on $E$. From this characterization, we see the strong operator topology is weaker than the strict operator topology; however, both topologies agree on norm bounded subsets of $\mathcal{L}(E)$ \cite{raeburn1998morita}.
	
	\subsection{Interior tensor product of Hilbert modules}
	
	Let $E$ be a Hilbert $A$-module, let $F$ be a Hilbert $B$-module, and let $\pi:A\to \mathcal{L}(F)$ be a $*$-algebra homomorphism. Using $\pi$, we view $F$ as a left $A$-module where given $y\in F$ and $a\in A$, $a\cdot y:=\pi(a)y$. Using this module structure, we construct the tensor product of $E$ and $F$ over $A$: $E\otimes_AF$. Recall, $E\otimes_A F$ is constructed by starting with the algebraic tensor product of the underlying complex vector spaces for $E$ and $F$, denote by $E\otimes_{\text{alg}}F$, and quotienting out by the submodule $N$ generated by the set $\{xa\otimes y-x\otimes \pi(a)y: x\in E, y\in F, a\in A\}$.
	
	On $E\otimes_A F$ we define the pairing
	$$\langle\cdot, \cdot\rangle:(E\otimes_A F)^2\to B\quad\quad \langle x_1\otimes y_1+N, x_2\otimes y_2+N\rangle=\langle y_1, \pi(\langle x_1, x_2\rangle_E)y_2\rangle_F$$
	which yields a $B$-valued inner product. We point out that a non-trivial argument is required to conclude  $\langle\cdot, \cdot\rangle$ is non-degenerate (see chapter 4 of \cite{lance1995hilbert} for details). With respect to the induced norm, we take the completion of $E\otimes_A F$, denote by $E\otimes_\pi F$, to obtain a Hilbert module called the \emph{interior tensor product of $E$ and $F$}. 
	
	Given an adjointable operator $T\in \mathcal{L}(E)$, we define 
	$$T\otimes I:E\otimes_{A}F\to E\otimes_{\pi}F\quad\quad (T\otimes I)(x\otimes y+N)=Tx\otimes y+N$$
	which is a well-defined, bounded, $B$-linear map. Therefore we can uniquely extend $T\otimes I$ to a bounded $B$-linear map on $E\otimes_\pi A$ which we denote by $T\otimes_\pi I$. Observe $T\otimes_\pi I$ is adjointable with adjoint $T^*\otimes_\pi I$. As we vary over $\mathcal{L}(E)$ we obtain a unital $*$-algebra homomorphim from $\mathcal{L}(E)$ to $\mathcal{L}(E\otimes_\pi F)$ which is continuous on the unit ball in $\mathcal{L}(E)$ with respect to the strict topology on both $\mathcal{L}(E)$ and $\mathcal{L}(E\otimes_\pi F)$.
	
	\subsection{KSGNS construction}
	
	Let $E$ be a Hilbert $A$-module, and let $F$ be a Hilbert $B$-module. If one replaces the $*$-algebra homomorphism $\pi:A\to \mathcal{L}(F)$ in the interior tensor product construction with a completely positive map $\phi:A\to \mathcal{L}(F)$, the construction can still be completed with some modifications. Let us briefly review this construction, details can be found in chapter 5 of \cite{lance1995hilbert}. 
	
	As before, we begin with the algebraic tensor product $E\otimes_{\text{alg}}F$. We make $E\otimes_{\text{alg}}F$ into a right $B$-module where $B$ acts on $F$. On this $B$-module, we define the $B$-valued pairing
	$$\langle\cdot, \cdot\rangle_\phi:(E\otimes_{\text{alg}}F)^2\to B\quad\quad\langle x_1\otimes y_1, x_2\otimes y_2\rangle_\phi=\langle y_1, \phi(\langle x_1, x_2\rangle_E)y_2\rangle_F$$
	which satisfies all the conditions for making $E\otimes_{\text{alg}}F$ into an inner product $B$-module except potentially non-degeneracy; that is, there may exist $z\in E\otimes_{\text{alg}}F$ with $\langle z, z\rangle_\phi=0$ though $z\neq 0$. Let $N_\phi$ be the submodule of $E\otimes_{\text{alg}}F$ for which $\langle z, z\rangle_\phi=0$.
	
	Pushing $\langle\cdot, \cdot\rangle_\phi$ onto the quotient $(E\otimes_{\text{alg}}F)/N_\phi$ makes the quotient $B$-module into an inner product $B$-module. Completing the inner product $B$-module with respect to the induced norm yields a Hilbert $B$-module which we denote by $E\otimes_\phi F$. Similar to the interior tensor product, we obtain a unital $*$-algebra homomorphism
	$$\tilde{\pi}_\phi:\mathcal{L}(E)\to \mathcal{L}(E\otimes_\phi F)\quad\quad T\mapsto T\otimes_\phi I$$
	which is continuous on the unit ball in $\mathcal{L}(E)$ with respect to the strict topology on both $\mathcal{L}(E)$ and $\mathcal{L}(E\otimes_\phi F)$. 
	
	
	In the case $E=A$,  the completely positive map $\phi$ being strict yields a uniqueness for both $A\otimes_\phi F$ and $M(A)\to \mathcal{L}(A\otimes_\phi E)$ which is similar to the GNS construction and Stinespring Dilation Theorem. A completely positive map $\phi:A\to \mathcal{L}(F)$ is \textit{strict} if there exists an approximate unit $(a_\lambda)_{\lambda\in\Lambda}$ in $A$ for which $(\phi(a_\lambda))_{\lambda\in \Lambda}$ is a Cauchy net with respect to the strict topology on $\mathcal{L}(F)$. Equivalently, $\phi:A\to \mathcal{L}(F)$ is strict if there exists a completely positive map $\tilde{\phi}:M(A)\to \mathcal{L}(F)$ which is continuous on the unit ball with respect to the strict topology on both $M(A)$ and $\mathcal{L}(F)$ as well as restricts to $\phi$ on $A\subset M(A)$ \cite{lance1995hilbert}. If $A$ is unital, we can always take the constant net at $\phi(1)$ making the condition of being strict always satisfied. 
	
	If $\phi:A\to \mathcal{L}(F)$ is a strict completely positive map, then we use the net $(a_\lambda)_{\lambda\in\Lambda}$ to define for each $\lambda\in \Lambda$
	$$V_\lambda:F\to A\otimes_\phi F\quad\quad V_\lambda y=a_\lambda\otimes y+N_\phi$$
	which is a well-defined, $B$-linear map. As $(\phi(a_\lambda))_{\lambda\in\Lambda}$ is a Cauchy net with respect to the strict topology, then $(V_\lambda y)_{\lambda\in \Lambda}$ is a Cauchy net in $A\otimes_\phi F$ with respect to the norm on $A\otimes_{\phi} F$. As $A\otimes_{\phi}F$ is complete, the net converges to a point $V_\phi y$. Varying over $F$ yields a map $V_\phi \in \mathcal{L}(F, A\otimes_\phi F)$ for which $V_\phi y=\lim\limits_{\lambda\to \infty }V_\lambda y$ for all $y\in F$. Observe 
	$$V_\phi^*:A\otimes_\phi F\to F\quad\quad V_\phi^*(a\otimes y+N_\phi)=\phi(a)y$$
	
	While the net $(V_\lambda)_{\lambda\in\Lambda}$ is dependent on the approximate unit $(a_\lambda)_{\lambda\in\Lambda}$, the limit $V_\phi$ is independent of the approximate unit. Indeed, if $(a_\mu)_{\mu\in M}$ is another approximate unit, then using $\tilde{\phi}:M(A)\to \mathcal{L}(F)$ shows $(\phi(a_\mu))_{\mu\in M}$ converges to $\tilde{\phi}(1)$ in the strict topology (thus is Cauchy). Therefore, we can construct a net $(V_\mu)_{\mu\in M}$ and function $V'\in \mathcal{L}(F, A\otimes_\phi F)$ which is the strong limit of $(V_\mu)_{\mu\in M}$. From the observation at the end of the last paragraph, we have $V'^*=V_\phi^*$. Thus, by uniqueness of adjoints, $V'=V_\phi$.
	
	Since $\tilde{\pi}_\phi:M(A)\to \mathcal{L}(A\otimes_\phi F)$ is a unital $*$-algebra homomorphism which is strictly continuous on the unit ball in $M(A)$, then the restriction of $\tilde{\pi}_\phi$ to $A\subset M(A)$ determines a non-degenerate $*$-algebra homomorphism, denote by $\pi_\phi$. With respect to $V_\phi$ and $\pi_\phi$, we have $V_\phi^*\pi_\phi(a)V_\phi=\phi(a)$ for all $a\in A$ and the submodule $\pi_\phi(A)V_\phi F\subset A\otimes_\phi F$ defined as
	$$\pi_\phi(A)VF=\text{Span}(\{\pi_\phi(a)V_\phi y: a\in A, y\in F\})$$
	is dense in $A\otimes_\phi F$. As in the GNS construction and Stinespring Dilation Theorem, the triple $(A\otimes_\phi F, \pi_\phi, V_\phi)$ is unitarily uniquely characterized by these last two properties. 
	
	\begin{theorem}[KSGNS Construction \cite{lance1995hilbert}]\
		
		\noindent Let $A$ and $B$ be $C^*$-algebras. Let $F$ be a Hilbert $B$-module, and let $\phi:A\to \mathcal{L}(F)$ be a strict completely positive map. Then there exists a triple $(F_\phi, \pi_\phi, V_\phi)$ where $F_\phi$ is a Hilbert $B$-module, $\pi_\phi\in \text{Hom}_{*\text{-alg}}(A, \mathcal{L}(F_\phi))$ is non-degenerate, and $V_\phi\in \mathcal{L}(F, F_\phi)$, and the triple satisfies the following conditions:
		\begin{enumerate}
			\item for all $a\in A$, $\phi(a)=V_\phi^*\pi_\phi(a)V_\phi$.
			\item the submodule $\pi_\phi(A)V_\phi F$ is dense in $F_\phi$.
		\end{enumerate}
		If $(F', \pi', V')$ is another triple satisfying the two conditions, then there exists a $B$-linear unitary $U\in \mathcal{L}(F_\phi, F')$ such that for all $a\in A$, $\pi'(a)=U\pi_\phi(a)U^*$ and $V'=UV_\phi$.
		
	\end{theorem}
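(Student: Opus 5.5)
Existence comes from the construction just reviewed: take $F_\phi=A\otimes_\phi F$, let $\pi_\phi$ be the restriction of $\tilde{\pi}_\phi$ to $A$, and let $V_\phi$ be the strong limit of the $V_\lambda$. Non-degeneracy of $\pi_\phi$ follows because $\pi_\phi(a_\lambda)(a\otimes y+N_\phi)=a_\lambda a\otimes y+N_\phi\to a\otimes y+N_\phi$, and elementary tensors span a dense submodule. For condition (1) I compute on $y,y'\in F$:
$$\langle V_\phi y,\pi_\phi(a)V_\phi y'\rangle=\lim_\lambda\langle y,\phi(a_\lambda^* a a_\lambda)y'\rangle=\langle y,\phi(a)y'\rangle ,$$
using $a_\lambda a a_\lambda\to a$ in norm and continuity of $\phi$. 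Alternatively, I can use the formula $V_\phi^*(a\otimes y+N_\phi)=\phi(a)y$ directly, since $\pi_\phi(a)V_\phi y=\lim_\lambda aa_\lambda\otimes y+N_\phi=a\otimes y+N_\phi$. This last identity also gives condition (2): the span of the $\pi_\phi(a)V_\phi y$ equals the image of $A\otimes_{\text{alg}}F$, which is dense by construction.

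For uniqueness, given $(F',\pi',V')$, I define $U$ on the dense submodule by
$$U\Big(\sum_i \pi_\phi(a_i)V_\phi y_i\Big)=\sum_i \pi'(a_i)V'y_i .$$
The key step is the inner product identity
$$\Big\langle \sum_i\pi'(a_i)V'y_i,\sum_j\pi'(b_j)V'z_j\Big\rangle=\sum_{i,j}\langle y_i,V'^*\pi'(a_i^*b_j)V'z_j\rangle=\sum_{i,j}\langle y_i,\phi(a_i^*b_j)z_j\rangle ,$$
together with the same computation in $F_\phi$. This identity shows that $U$ is well defined: a combination with zero norm on one side has zero norm on the other. It also shows that $U$ is isometric and $B$-linear, so $U$ extends to an isometry $F_\phi\to F'$. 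Its range contains the dense submodule $\pi'(A)V'F$ and is closed, so $U$ is surjective. A surjective isometry that preserves the $B$-valued inner product is adjointable with $U^*=U^{-1}$, hence unitary.

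The intertwining relation $U\pi_\phi(a)=\pi'(a)U$ holds on generators by construction, hence everywhere by density and continuity. Finally, for $V'=UV_\phi$ I use non-degeneracy: $UV_\phi y=\lim_\lambda U\pi_\phi(a_\lambda)V_\phi y=\lim_\lambda\pi'(a_\lambda)V'y$. The main obstacle is showing this limit equals $V'y$. Condition (1) gives $V'^*\pi'(a_\lambda)V'\to\tilde\phi(1)$, but a priori that is not $V'^*V'$. I would handle this by computing
$$\|V'y-\pi'(a_\lambda)V'y\|^2=\langle y,V'^*V'y\rangle-2\,\mathrm{Re}\langle y,\phi(a_\lambda)y\rangle+\langle y,\phi(a_\lambda^2)y\rangle ,$$
which reduces the problem to $V'^*V'=\tilde\phi(1)$, i.e. to $V'^*V'$ being determined by $\phi$. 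In the KSGNS setting this is the strictness normalization: with non-degenerate $\pi'$ and $\phi$ strict, one has $V'^*\tilde{\pi}'(1)V'=\tilde\phi(1)$ via the strictly continuous extensions, and $\tilde\pi'(1)=1$. I expect to invoke the strict extension $\tilde\phi$ carefully at exactly this point, and this is where the argument requires the most care.
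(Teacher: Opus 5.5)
Your proposal is correct and follows the same route as the paper. The paper takes existence from the reviewed construction $F_\phi=A\otimes_\phi F$ with $V_\phi^*(a\dot{\otimes}y)=\phi(a)y$, and defers uniqueness to the standard Stinespring-type argument from Lance, which is the argument you reproduce. The step you flag as the main obstacle is not actually one. The map $\pi'$ is non-degenerate (condition (2) already forces this) and $\|\pi'(a_\lambda)\|\le 1$, so $\pi'(a_\lambda)\xi\to\xi$ for every $\xi\in F'$. This gives $UV_\phi y=\lim_\lambda\pi'(a_\lambda)V'y=V'y$ directly, and $V'^*V'=\tilde{\phi}(1)$ then follows as a consequence rather than being needed as input.
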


	\subsection{Notation}
	
	For much of the paper, we will utilize the density of $E\otimes_AF\subset E\otimes_\pi F$ and $\frac{A\otimes_{\text{alg}}F}{N_\phi}\subset A\otimes_\phi F$ in conjunction with linearity and continuity to reduce many computations to elements of the form $x\otimes y+N\in E\otimes_AF$ and $a\otimes x+N_\phi \in \frac{A\otimes_{\text{alg}}F}{N_\phi}$. As a result, carrying the "$+N$" and "$+N_\phi$" becomes quite cumbersome as well as unwieldly when we start to consider multiple tensor products. Therefore, we will follow the notation convention in \cite{lance1995hilbert} by placing a dot over the tensor symbol to denote the equivalence class of a simple tensor. For example, we write $a\otimes x+N_\phi$ as $a\dot{\otimes}x$ and $x\otimes y+N$ as $x\dot{\otimes}y$. As the notation for the two different cases is the same, we will be diligent and explicit throughout the paper as to where elements like $a\dot{\otimes}x$ and $x\dot{\otimes}y$ live. Additionally, we will denote $A\otimes_\phi F$ simply as $F_\phi$.
	
	\section{Constructing the Categories $\text{PosCor}(A, B)$, $\text{PosCor}(A)$, and $\text{PosCor}_{*\text{-alg}}(A)$}
	
	Our goal in this section is to construct, for each $C^*$-algebra $A$, the category $\text{PosCor}(A)$. This category encodes strict positive $C^*$-correspondences from $A$ to any other $C^*$-algebra as objects and intertwiners that account for automorphisms of $A$ as morphisms. As the codomain of the correspondences can be any $C^*$-algebra, trying to immediately construct the category becomes rather complex and unwieldy due to the data needed to define the morphisms. 
	
	To rein in the complexity, we proceed in constructing $\text{PosCor}(A)$ in two steps. First, we fix the codomain $C^*$-algebra of the correspondences, and we build a category, denoted by $\text{PosCor}(A, B)$, which encodes our desired intertwining data as the morphisms. The next step is to combine the categories $\text{PosCor}(A, B)$ as $B$ varies. Combining the objects from the various categories is straightforward; the difficulty of this process appears when we go to describe morphisms from objects in $\text{PosCor}(A, B)$ to objects in $\text{PosCor}(A, C)$. The mechanism we will use to help in describing morphisms between objects in the different categories is the interior tensor product. By realizing the interior tensor product as a functor, we can easily navigate describing morphisms and a composition rule to arrive at our desired category $\text{PosCor}(A)$. In this section, we also introduce the category $\text{PosCor}_{*\text{-alg}}(A)$ which is a subcategory of $\text{PosCor}(A)$ that we will utilize in Section 5 and Section 6.
	
	\subsection{The category $\text{PosCor}(A, B)$}
	
	\begin{definition}\

		\noindent Let $A$ and $B$ be $C^*$-algebras. 
		\begin{itemize}
			\item Define $\text{PosCor}(A, B)$ as the category where
			\begin{itemize}
				\item objects are strict positive $C^*$-correspondences $(E, \phi)$ from $A$ to $B$; that is, $E$ is a Hilbert $B$-module, and $\phi:A\to \mathcal{L}(E)$ is a strict completely positive map.
				\item a morphism from $(E_1, \phi_1)$ to $(E_2, \phi_2)$ is the data $(\eta, \alpha)$ where $\eta\in \mathcal{L}(E_1, E_2)$ and $\alpha\in \text{Aut}_{*\text{-alg}}(A)$, and the data satisfies the following condition: for all $a\in A$
				$$\phi_2(\alpha(a))\circ\eta=\eta\circ \phi_1(a).$$ 
				\item given morphisms $(\eta, \alpha):(E_1, \phi_1)\to (E_2 ,\phi_2)$ and $(\xi, \beta):(E_2, \phi_2)\to (E_3, \phi_3)$, define the composition $(\xi, \beta)\circ (\eta, \alpha)$ as the morphism $(\xi\circ \eta, \beta\circ \alpha)$.
			\end{itemize}
			\item Define $\text{Cor}(A, B)$ as the full subcategory of $\text{PosCor}(A, B)$ with objects given by $C^*$-correspondences from $A$ to $B$; that is, objects are pairs $(E, \pi)$ where $E$ is a Hilbert $B$-module and $\pi:A\to \mathcal{L}(E)$ is a non-degenerate $*$-algebra homomorphism.
		\end{itemize}
		
	\end{definition}
	
	As function composition is associative, it follows the composition rule for $\text{PosCor}(A, B)$ is associative. Furthermore, it is clear that for an object $(E, \phi)\in\text{PosCor}(A, B)$, the morphism $(1_E, 1_A)$ is the identity morphism. Thus $\text{PosCor}(A, B)$ does indeed form a category.  
	
	\subsection{Functoriality of the interior tensor product}
	
	Now that we have defined the categories $\text{PosCor}(A, B)$, we look to combine them together to construct the category $\text{PosCor}(A)$. Let $(E_B, \phi)$ and $(E_C, \psi)$ be strict positive $C^*$-correspondences from $A$ to $B$ and $A$ to $C$, respectively. If $B=C$, then morphisms in $\text{PosCor}(A, B)$ are sufficient for describing a morphism from $(E_B, \phi)$ to $(E_C, \psi)$. If $B\neq C$, then we need additional information to describe a morphism from $(E_B ,\phi)$ to $(E_C, \psi)$.
	
	A piece of additional information that we can utilize is a way to convert $E_B$ to a Hilbert $C$-module. In particular, if we have the data of a non-degenerate $*$-algebra homomorphism $\pi:B\to \mathcal{L}(F)$ for $F$ a Hilbert $C$-module, we can form the Hilbert $C$-module $E_B\otimes_\pi F$ and strict completely positive map $\tilde{\phi}:=\phi\otimes_\pi I$ given by the composition
	$$A\xrightarrow{\phi}\mathcal{L}(E_B)\xrightarrow{T\mapsto T\otimes_\pi I}\mathcal{L}(E_B\otimes_\pi F).$$
	As $(E_B\otimes_\pi F,\tilde{\phi})$ is an object in  $\text{PosCor}(A, C)$, we can utilize morphisms in $\text{PosCor}(A, C)$ to describe a morphism from $(E_B\otimes_\pi F,\tilde{\phi})$ to $(E_C, \psi)$. Thus the data of a morphism from $(E_B, \phi)$ to $(E_C, \psi)$ can be taken as a pair $((F, \pi), (\eta, \alpha))$ where $(\eta, \alpha)$ is a morphism in $\text{PosCor}(A, C)$ from $(E_B\otimes_\pi F, \tilde{\phi})$ to $(E_C, \psi)$.
	
	Due to the interior tensor product playing a role in the data of morphisms, we first determine in what sense the construction is functorial. Notice, the pair $(F, \pi)$ is an object in the category $\text{Cor}(B, C)$ as well as we have an assignment on objects from $\text{PosCor}(A, B)$ to $\text{PosCor}(A, C)$. Thus, to realize the interior tensor product as a functor, we just need to determine an assignment at the level of morphisms and verify such an assignment is functorial.
	
	To determine the assignment on the morphisms, let $(\eta, \alpha):(E_1, \phi_1)\to(E_2, \phi_2)$ be a morphism in $\text{PosCor}(A, B)$. If we have a $C^*$-correspondence $(F, \pi)\in\text{Cor}(B, C)$, then we can look to define the map
	$$\hat{\eta}:E_1\otimes_BF\to E_2\otimes_\pi F\quad\quad \hat{\eta}(x\dot{\otimes}f)=\eta(x)\dot{\otimes}f$$
	Provided the map is well-defined, it is $C$-linear. Furthermore, if the map is bounded, we can extend it to all of $E_1\otimes_\pi F$. Defining a similar map for $\eta^*$ will show $\hat{\eta}$ is adjointable. From how $\hat{\eta}$, $\tilde{\phi}_1$, and $\tilde{\phi}_2$ are defined, it follows $(\hat{\eta}, \alpha)$ is a morphism from $(E_1\otimes_\pi F, \tilde{\phi}_1)$ to $(E_2\otimes_\pi F, \tilde{\phi}_2)$ in $\text{PosCor}(A, C)$. Thus, to obtain our assignment at the level of morphisms, we need to verify $\hat{\eta}$ is well-defined and bounded. The next lemma will help in resolving these two items. Note, the following lemma is similar to Lemma 4.2 in \cite{lance1995hilbert}, though we include a proof here for completeness.
	
	\begin{lemma}\label{Lemma: Bound 1}\

		\noindent Let $A, B$, and $C$ be $C^*$-algebras, and let $(F, \pi)\in\text{Cor}(B, C)$. Let $(E_i, \phi_i)\in \text{PosCor}(A, B)$ for $i=1, 2$. If $(\eta, \alpha)\in \text{Hom}_{\text{PosCor}(A, B)}((E_1, \phi_1), (E_2, \phi_2))$, then
		\begin{enumerate}
			\item for all $x_1, \ldots, x_n\in E_1$ and $f_1, \ldots, f_n\in F$,
			$$\left\|\sum\limits_{i, j=1}^n \langle f_i, \pi(\langle \eta(x_i), \eta(x_j)\rangle_{E_2})f_j\rangle_F\right\|_C\leq ||\eta||^2\left\|\sum\limits_{i, j=1}^n \langle f_i, \pi(\langle x_i, x_j\rangle_{E_1})f_j\rangle_F\right\|_C$$
			\item for all $y_1, \ldots, y_n\in E_2$ and $f_1, \ldots, f_n\in F$,
			$$\left\|\sum\limits_{i, j=1}^n \langle f_i, \pi(\langle \eta^*(y_i), \eta^*(y_j)\rangle_{E_1})f_j\rangle_F\right\|_C\leq ||\eta||^2\left\|\sum\limits_{i, j=1}^n \langle f_i, \pi(\langle y_i, y_j\rangle_{E_2})f_j\rangle_F\right\|_C$$
		\end{enumerate}
		
	\end{lemma}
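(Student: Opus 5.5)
The plan is to recast both inequalities as statements about positive matrices over $B$, as in the proof of Lemma 4.2 in \cite{lance1995hilbert}. For $x_1,\ldots,x_n\in E_1$, view $x=[x_1\ \cdots\ x_n]$ as an element of $E_1^n$, regarded as a Hilbert $M_n(B)$-module (the last example in Section 2.2). Then the Gram matrix $X:=[\langle x_i,x_j\rangle_{E_1}]=\langle x,x\rangle$ is a positive element of $M_n(B)$. The map $\eta^{(n)}:=\eta\oplus\cdots\oplus\eta$ acts componentwise and is adjointable $E_1^n\to E_2^n$ with adjoint $(\eta^*)^{(n)}$ and norm $\|\eta^{(n)}\|=\|\eta\|$. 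The Gram matrix of $\eta(x_1),\ldots,\eta(x_n)$ is therefore $Y:=\langle \eta^{(n)}x,\eta^{(n)}x\rangle$.

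The key step is the operator inequality $Y\le \|\eta\|^2 X$ in $M_n(B)$. This is the standard fact that an adjointable $T$ between Hilbert modules satisfies $\langle Tz,Tz\rangle\le\|T\|^2\langle z,z\rangle$ (Lance, Prop.~1.2 / Cor.~2.22). I would justify it quickly by noting that $\|T\|^2 1-T^*T\ge 0$ in the $C^*$-algebra $\mathcal{L}(E_1^n)$, so it equals $S^*S$ for some $S$. Hence
$$\|T\|^2\langle z,z\rangle-\langle Tz,Tz\rangle=\langle Sz,Sz\rangle\ge0.$$

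Next I would transport the inequality through $\pi$. The map $\pi_n:M_n(B)\to M_n(\mathcal{L}(F))\cong\mathcal{L}(F^n)$ is a $*$-homomorphism, so it preserves positivity, and $\pi_n(\|\eta\|^2X-Y)\ge0$ in $\mathcal{L}(F^n)$. With $f=(f_1,\ldots,f_n)^T\in F^n$ (the column Hilbert $C$-module with the summed inner product), we have
$$\sum_{i,j}\langle f_i,\pi(\langle \eta x_i,\eta x_j\rangle)f_j\rangle_F=\langle f,\pi_n(Y)f\rangle.$$
The right-hand side of (1) is the analogous expression with $X$ in place of $Y$. Since $\langle f,Pf\rangle\ge0$ for positive $P\in\mathcal{L}(F^n)$ (write $P=R^*R$), we get $0\le\langle f,\pi_n(Y)f\rangle\le\|\eta\|^2\langle f,\pi_n(X)f\rangle$ in $C$. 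For $0\le c\le d$ in a $C^*$-algebra one has $\|c\|\le\|d\|$, which yields (1).

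Part (2) follows by the same argument applied to $\eta^*\in\mathcal{L}(E_2,E_1)$, using $\|\eta^*\|=\|\eta\|$. The automorphism $\alpha$ and the maps $\phi_i$ play no role in either part. The only point needing care is the identification $M_n(\mathcal{L}(F))\cong\mathcal{L}(F^n)$, so that positivity of $\pi_n(\cdot)$ yields positivity of the quadratic forms $\langle f,\cdot\,f\rangle$. If that identification is awkward to cite, I would instead write $\|\eta\|^2X-Y=W^*W$ with $W=[w_{ij}]\in M_n(B)$. Then
$$\langle f,\pi_n(W^*W)f\rangle=\sum_k\Big\langle \sum_i\pi(w_{ki})f_i,\sum_j\pi(w_{kj})f_j\Big\rangle\ge0,$$
which is directly positive and avoids the identification.
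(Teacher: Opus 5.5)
Your proposal is correct and follows the paper's proof essentially step for step. The paper likewise passes to the Hilbert $M_n(B)$-module $E_i^n$ and uses $\tilde{\eta}^*\tilde{\eta}\le\|\eta\|^2 I$, together with $\tilde{\eta}\tilde{\eta}^*\le\|\eta\|^2 I$ for part (2), which is your application to $\eta^*$; it then pushes the Gram-matrix inequality through the positive map $\pi_n:M_n(B)\to\mathcal{L}(F^n)$, pairs with $f\in F^n$, and takes norms. Your fallback $\|\eta\|^2X-Y=W^*W$, expanding $\langle f,\pi_n(W^*W)f\rangle$ as $\sum_k\langle g_k,g_k\rangle$, is a tidy explicit substitute for the identification $\mathcal{L}(F^n)\cong M_n(\mathcal{L}(F))$ that the paper relies on.
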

	
	\begin{proof}\
		
		\noindent Fix $n\in \mathbb{N}$. We view $E^n_i$ as a Hilbert $M_n(B)$-module and $F^n$ as a Hilbert $C$-module. Given $\eta:E_1\to E_2$, we obtain a $M_n(B)$-linear map $\tilde{\eta}:E_1^n\to E_2^n$ where
		$$\tilde{\eta}\left(\begin{bmatrix}
			x_1 & \ldots & x_n
		\end{bmatrix}\right)=\begin{bmatrix}
			\eta(x_1) & \ldots & \eta(x_n)
		\end{bmatrix}$$
		We observe $\tilde{\eta}$ is adjointable with respect to the $M_n(B)$-valued inner product with adjoint given by $\widetilde{\eta^*}$ as well as $||\tilde{\eta}||=||\eta||$. As $\tilde{\eta}^*\tilde{\eta}\leq ||\eta||^2 I_{E_1^n}$, then for all $x_1, \ldots, x_n\in E_1$
		\begin{align}
			0\leq \left\langle\tilde{\eta}\begin{bmatrix}
				x_1 & \ldots & x_n    
			\end{bmatrix}, \tilde{\eta}\begin{bmatrix}
				x_1 & \ldots & x_n    
			\end{bmatrix}\right\rangle_{E_2^n}\leq ||\eta||^2\left\langle\begin{bmatrix}
				x_1 & \ldots & x_n    
			\end{bmatrix}, \begin{bmatrix}
				x_1 & \ldots & x_n    
			\end{bmatrix}\right\rangle_{E_1^n}
		\end{align}
		Similarly, $\tilde{\eta}\tilde{\eta}^*\leq ||\eta||^2I_{E_2^n}$ so that for all $y_1, \ldots, y_n\in E_2$
		\begin{align}
			0\leq \left\langle\tilde{\eta}^*\begin{bmatrix}
				y_1 & \ldots & y_n    
			\end{bmatrix}, \tilde{\eta}^*\begin{bmatrix}
				y_1 & \ldots & y_n    
			\end{bmatrix}\right\rangle_{E_1^n}\leq ||\eta||^2\left\langle\begin{bmatrix}
				y_1 & \ldots & y_n    
			\end{bmatrix}, \begin{bmatrix}
				y_1 & \ldots & y_n    
			\end{bmatrix}\right\rangle_{E_2^n}
		\end{align}
		
		As $\pi:B\to \mathcal{L}(F)$ is a non-degenerate $*$-algebra homomorphism, then $\pi$ is a completely positive map. Therefore we obtain a positive map
		$$\pi_n:M_n(B)\to \mathcal{L}(F^n)\cong M_n(\mathcal{L}(F)) \quad\quad \pi_n([b^i_j])\begin{bmatrix}
			f_1\\
			\vdots\\
			f_n
		\end{bmatrix}=\begin{bmatrix}
			\sum\limits_{j=1}^n \pi(b^1_j)f_j\\
			\vdots\\
			\sum\limits_{j=1}^n \pi(b^n_j)f_j
		\end{bmatrix}$$
		Thus, using line $(1)$, we have for all $x_1, \ldots, x_n\in E_1$
		\begin{align*}
			0&\leq \pi_n\left(\left\langle\tilde{\eta}\begin{bmatrix}
				x_1 & \ldots & x_n    
			\end{bmatrix}, \tilde{\eta}\begin{bmatrix}
				x_1 & \ldots & x_n    
			\end{bmatrix}\right\rangle_{E_2^n}\right)\leq ||\eta||^2\pi_n\left(\left\langle\begin{bmatrix}
				x_1 & \ldots & x_n    
			\end{bmatrix}, \begin{bmatrix}
				x_1 & \ldots & x_n    
			\end{bmatrix}\right\rangle_{E_1^n}\right)
		\end{align*}
		Similarly, using line $(2)$, we have for all $y_1, \ldots, y_n\in E_2$
		\begin{align*}
			0&\leq \pi_n\left(\left\langle\tilde{\eta}^*\begin{bmatrix}
				y_1 & \ldots & y_n    
			\end{bmatrix}, \tilde{\eta}^*\begin{bmatrix}
				y_1 & \ldots & y_n    
			\end{bmatrix}\right\rangle_{E_1^n}\right)\leq ||\eta||^2\pi_n\left(\left\langle\begin{bmatrix}
				y_1 & \ldots & y_n    
			\end{bmatrix}, \begin{bmatrix}
				y_1 & \ldots & y_n    
			\end{bmatrix}\right\rangle_{E_2^n}\right)
		\end{align*}
		Therefore for all $f_1, \ldots, f_n\in F$ and $x_1, \ldots, x_n\in E_1$
		\begin{align*}
			0&\leq\left\langle\begin{bmatrix}
				f_1\\
				\vdots\\
				f_n
			\end{bmatrix}, \pi_n\left(\left\langle\tilde{\eta}\begin{bmatrix}
				x_1 & \ldots & x_n    
			\end{bmatrix}, \tilde{\eta}\begin{bmatrix}
				x_1 & \ldots & x_n    
			\end{bmatrix}\right\rangle_{E_2^n}\right)\begin{bmatrix}
				f_1\\
				\vdots\\
				f_n
			\end{bmatrix}\right\rangle_{F^n} \\
			&\leq ||\eta||^2\left\langle\begin{bmatrix}
				f_1\\
				\vdots\\
				f_n
			\end{bmatrix}, \pi_n\left(\left\langle\begin{bmatrix}
				x_1 & \ldots & x_n    
			\end{bmatrix}, \begin{bmatrix}
				x_1 & \ldots & x_n    
			\end{bmatrix}\right\rangle_{E_2^n}\right)\begin{bmatrix}
				f_1\\
				\vdots\\
				f_n
			\end{bmatrix}\right\rangle_{F^n}
		\end{align*}
		Similarly, we have for all $f_1, \ldots, f_n\in F$ and $y_1, \ldots, y_n\in E_2$
		\begin{align*}
			0&\leq\left\langle\begin{bmatrix}
				f_1\\
				\vdots\\
				f_n
			\end{bmatrix}, \pi_n\left(\left\langle\tilde{\eta}^*\begin{bmatrix}
				y_1 & \ldots & y_n    
			\end{bmatrix}, \tilde{\eta}^*\begin{bmatrix}
				y_1 & \ldots & y_n    
			\end{bmatrix}\right\rangle_{E_1^n}\right)\begin{bmatrix}
				f_1\\
				\vdots\\
				f_n
			\end{bmatrix}\right\rangle_{F^n} \\
			&\leq ||\eta||^2\left\langle\begin{bmatrix}
				f_1\\
				\vdots\\
				f_n
			\end{bmatrix}, \pi_n\left(\left\langle\begin{bmatrix}
				y_1 & \ldots & y_n    
			\end{bmatrix}, \begin{bmatrix}
				y_1 & \ldots & y_n    
			\end{bmatrix}\right\rangle_{E_2^n}\right)\begin{bmatrix}
				f_1\\
				\vdots\\
				f_n
			\end{bmatrix}\right\rangle_{F^n}
		\end{align*}
		
		Since for all $x_1, \ldots, x_n\in E_1$ and $f_1, \ldots, f_n\in F$ 
		\begin{align*}
			\sum\limits_{i, j} \langle f_i, \pi(\langle \eta(e_i), &\eta(e_j)\rangle_{E_2})f_j\rangle_F\\
			&=\left\langle\begin{bmatrix}
				f_1\\
				\vdots\\
				f_n
			\end{bmatrix}, \pi_n\left(\left\langle\tilde{\eta}\begin{bmatrix}
				x_1 & \ldots & x_n    
			\end{bmatrix}, \tilde{\eta}\begin{bmatrix}
				x_1 & \ldots & x_n    
			\end{bmatrix}\right\rangle_{E_2^n}\right)\begin{bmatrix}
				f_1\\
				\vdots\\
				f_n
			\end{bmatrix}\right\rangle_{F^n}\\
			&\leq ||\eta||^2\left\langle\begin{bmatrix}
				f_1\\
				\vdots\\
				f_n
			\end{bmatrix}, \pi_n\left(\left\langle\begin{bmatrix}
				x_1 & \ldots & x_n    
			\end{bmatrix}, \begin{bmatrix}
				x_1 & \ldots & x_n    
			\end{bmatrix}\right\rangle_{E_1^n}\right)\begin{bmatrix}
				f_1\\
				\vdots\\
				f_n
			\end{bmatrix}\right\rangle_{F^n}\\
			&=||\eta||^2\sum\limits_{i, j} \langle f_i, \pi(\langle x_i, x_j\rangle_{E_1})f_j\rangle_F
		\end{align*}
		then 
		$$\left\| \sum\limits_{i, j} \langle f_i, \pi(\langle \eta(x_i), \eta(x_j)\rangle_{E_2})f_j\rangle_F\right\|_C\leq ||\eta||^2\left\|\sum\limits_{i, j} \langle f_i, \pi(\langle x_i, x_j\rangle_{E_1})f_j\rangle_F\right\|_C$$
		proving $(1)$. Similarly, for all $y_1, \ldots, y_n\in E_2$ and $f_1, \ldots, f_n\in F$ 
		\begin{align*}
			\sum\limits_{i, j} \langle f_i, \pi(\langle \eta^*(e_i), &\eta^*(e_j)\rangle_{E_1})f_j\rangle_F\\
			&=\left\langle\begin{bmatrix}
				f_1\\
				\vdots\\
				f_n
			\end{bmatrix}, \pi_n\left(\left\langle\tilde{\eta}^*\begin{bmatrix}
				y_1 & \ldots & y_n    
			\end{bmatrix}, \tilde{\eta}\begin{bmatrix}
				y_1 & \ldots & y_n    
			\end{bmatrix}\right\rangle_{E_1^n}\right)\begin{bmatrix}
				f_1\\
				\vdots\\
				f_n
			\end{bmatrix}\right\rangle_{F^n}\\
			&\leq ||\eta||^2\left\langle\begin{bmatrix}
				f_1\\
				\vdots\\
				f_n
			\end{bmatrix}, \pi_n\left(\left\langle\begin{bmatrix}
				y_1 & \ldots & y_n    
			\end{bmatrix}, \begin{bmatrix}
				y_1 & \ldots & y_n    
			\end{bmatrix}\right\rangle_{E_2^n}\right)\begin{bmatrix}
				f_1\\
				\vdots\\
				f_n
			\end{bmatrix}\right\rangle_{F^n}\\
			&=||\eta||^2\sum\limits_{i, j} \langle f_i, \pi(\langle y_i, y_j\rangle_{E_2})f_j\rangle_F
		\end{align*}
		so that
		$$\left\| \sum\limits_{i, j} \langle f_i, \pi(\langle \eta^*(y_i), \eta^*(y_j)\rangle_{E_1})f_j\rangle_F\right\|_C\leq ||\eta||^2\left\|\sum\limits_{i, j} \langle f_i, \pi(\langle y_i, y_j\rangle_{E_2})f_j\rangle_F\right\|_C$$
		proving $(2)$.
		
	\end{proof}
	
	\begin{proposition}\label{Prop: Lift 1}\

		\noindent Let $A, B,$ and $C$ be $C^*$-algebras, and let $(F, \pi)\in\text{Cor}(B, C)$. Let $R_i=(E_i, \phi_i)\in\text{PosCor}(A, B)$ for $i=1, 2$. If $(\eta, \alpha)\in \text{Hom}_{\text{PosCor}(A, B)}(R_1, R_2)$, then there exists a unique $\hat{\eta}\in \mathcal{L}(E_1\otimes_\pi F, E_2\otimes_\pi F)$ such that
		\begin{enumerate}
			\item For all $x\in E_1$ and $f\in F$, $\hat{\eta}(x\dot{\otimes} f)=\eta(x)\dot{\otimes} f$
			\item For all $y\in E_2$ and $f\in F$, $\hat{\eta}^*(y\dot{\otimes} f)=\eta^*(y)\dot{\otimes} f$
			\item $(\hat{\eta}, \alpha)\in\text{Hom}_{\text{PosCor}(A, C)}((E_1\otimes_\pi F, \tilde{\phi}_1), (E_2\otimes_\pi F, \tilde{\phi}_2))$.
		\end{enumerate}
		
	\end{proposition}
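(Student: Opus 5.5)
We first define $\hat{\eta}$ on the algebraic tensor product and then pass to the quotient and the completion. Let
$$\eta\otimes 1:E_1\otimes_{\text{alg}}F\to E_2\otimes_{\text{alg}}F,\qquad x\otimes f\mapsto \eta(x)\otimes f .$$
This map is well defined and linear by the universal property of the algebraic tensor product.

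\textbf{Step 1: $\hat{\eta}$ descends to the quotient.} Take $z=\sum_{i=1}^n x_i\otimes f_i$. By the definition of the inner product on $E_2\otimes_\pi F$, $\langle (\eta\otimes 1)z,(\eta\otimes 1)z\rangle$ is exactly the sum $\sum_{i,j}\langle f_i,\pi(\langle\eta(x_i),\eta(x_j)\rangle_{E_2})f_j\rangle_F$. Lemma \ref{Lemma: Bound 1}(1) therefore gives
$$\|(\eta\otimes 1)z\|^2\le\|\eta\|^2\|z\|^2,$$
where the norms are the (semi)norms induced by the $C$-valued pairings. If $z$ lies in the kernel $N$ (the elements of norm zero), then $(\eta\otimes 1)z$ has norm zero, so it lies in the kernel for $E_2$. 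Hence $\eta\otimes 1$ descends to a map $E_1\otimes_B F\to E_2\otimes_B F$ that is bounded by $\|\eta\|$.

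Alternatively, one can check directly that $\eta\otimes 1$ sends the generators $xb\otimes f-x\otimes\pi(b)f$ of $N$ to $\eta(x)b\otimes f-\eta(x)\otimes\pi(b)f$, using that $\eta$ is $B$-linear. I expect this well-definedness step to be the main subtle point. The norm estimate handles it cleanly only if $N$ coincides with the null space of the pairing, which is the nondegeneracy fact the paper cites from Lance; I would therefore give the generator argument as the primary proof and the norm argument as the source of the bound.

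\textbf{Step 2: extension and adjoint.} By continuity, the map from Step 1 extends uniquely to a bounded $C$-linear map $\hat{\eta}:E_1\otimes_\pi F\to E_2\otimes_\pi F$, and it satisfies (1). Applying the same argument to $\eta^*$, with Lemma \ref{Lemma: Bound 1}(2) supplying the bound, gives a bounded map $\check\eta:E_2\otimes_\pi F\to E_1\otimes_\pi F$ with $\check\eta(y\dot\otimes f)=\eta^*(y)\dot\otimes f$. On simple tensors,
$$\langle\hat\eta(x\dot\otimes f),y\dot\otimes g\rangle=\langle f,\pi(\langle\eta x,y\rangle)g\rangle=\langle f,\pi(\langle x,\eta^*y\rangle)g\rangle=\langle x\dot\otimes f,\check\eta(y\dot\otimes g)\rangle .$$
Sesquilinearity extends this identity to finite sums, and continuity together with density extends it to the completions. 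Hence $\hat\eta$ is adjointable with $\hat\eta^*=\check\eta$, which is (2).

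\textbf{Step 3: intertwining and uniqueness.} For the intertwining relation, compute on simple tensors:
$$\tilde\phi_2(\alpha(a))\hat\eta(x\dot\otimes f)=\phi_2(\alpha(a))\eta(x)\dot\otimes f=\eta(\phi_1(a)x)\dot\otimes f=\hat\eta\,\tilde\phi_1(a)(x\dot\otimes f).$$
Both sides are bounded linear maps, so by linearity, continuity and density we get $\tilde\phi_2(\alpha(a))\circ\hat\eta=\hat\eta\circ\tilde\phi_1(a)$, which is (3). Uniqueness holds because the span of the simple tensors $x\dot\otimes f$ is dense in $E_1\otimes_\pi F$, and any bounded operator is determined by its values on a dense subspace, so (1) alone pins down $\hat\eta$.
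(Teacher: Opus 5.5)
Your proposal is correct and follows the paper's proof essentially step for step. Like the paper, you check that the map kills the generators $xb\otimes f-x\otimes\pi(b)f$ using $B$-linearity of $\eta$, bound it by $\|\eta\|$ via Lemma \ref{Lemma: Bound 1}, build the analogous map for $\eta^*$ and verify adjointness on simple tensors, and check the intertwining relation on simple tensors. The one addition is that you state the density argument for uniqueness explicitly, which the paper leaves implicit.
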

	
	\begin{proof}\
		
		\noindent As the map 
		$$E_1\times F\to E_2\otimes_\pi F\quad\quad (x, f)\mapsto \eta(x)\dot{\otimes} f$$
		is bilinear and $C$-linear in the second slot, then we obtain a unique $C$-linear map 
		$$\xi:E_1\otimes_{\text{alg}}F\to E_2\otimes_\pi F\quad\quad \xi(x\otimes f)=\eta(x)\dot{\otimes} f$$
		Suppose $x\in E_1, b\in B$, and $c\in C$, then
		\begin{align*}
			\xi(xb\otimes f-e\otimes \pi(b)f)&=\eta(xb)\dot{\otimes} f-\eta(x)\dot{\otimes} \pi(b)f=\eta(x)b\dot{\otimes} f-\eta(x)\dot{\otimes} \pi(b)f=0
		\end{align*}
		Therefore we obtain a unique $C$-linear map
		$$\tilde{\xi}:E_1\otimes_B F\to E_2\otimes_\pi F\quad \quad \tilde{\xi}(x\dot{\otimes} f)=\eta(x)\dot{\otimes} f$$
		We observe
		\begin{align*}
			\left\|\tilde{\xi}\left(\sum\limits_{k=1}^n x_k\dot{\otimes} f_k\right)\right\|^2&=\left\|\sum\limits_{i, j} \langle f_i, \pi(\langle \eta x_i, \eta x_j\rangle_{E_2})f_j\rangle_F\right\|_C\\
			&\leq ||\eta||^2\left\|\sum\limits_{i, j} \langle f_i, \pi(\langle  x_i,  x_j\rangle_{E_1})f_j\rangle_F\right\|_C=||\eta||^2\left\|\sum\limits_{k=1}^n x_k\dot{\otimes} f_k\right\|^2
		\end{align*}
		where the inequality comes from the Lemma \ref{Lemma: Bound 1}. Therefore $\tilde{\xi}$ is a bounded $C$-linear map which enables us to extend it to a bounded $C$-linear map on $E_1\otimes_\pi F$ which we denote by $\hat{\eta}$. By a similar argument, we obtain a $C$-linear map
		$$\widehat{\eta^*}:E_2\otimes_\pi F\to E_1\otimes_\pi F\quad \quad \tilde{\gamma}(y\dot{\otimes} f)=\eta^*(y)\dot{\otimes} f$$

		We now show $\hat{\eta}=\widehat{\eta^*}^*$. Using continuity of both $\hat{\eta}$ and $\widehat{\eta^*}$ as well as density of $E_i\otimes_B F$ in $E_i\otimes_\pi F$, it suffices to prove the claim on the dense subspaces $E_i\otimes_B F$. Using linearity, it suffices to proof the claim on elements of the form $x\dot{\otimes} f_1\in E_1\otimes_BF$ and $y\dot{\otimes} f_2\in E_2\otimes_BF$. Given such elements, we have
		\begin{align*}
			\langle \widehat{\eta^*}(y\dot{\otimes} f_2), x\dot{\otimes} f_1\rangle&=\langle \eta^*(y)\dot{\otimes} f_2, x\dot{\otimes} f_1\rangle=\langle f_2, \pi\langle \eta^*(y), x\rangle_{E_1})f_1\rangle_F\\
			&=\langle f_2, \pi(\langle y,\eta( x)\rangle_{E_2})f_1\rangle_F=\langle y\dot{\otimes} f_2, \eta(x)\dot{\otimes} f_1\rangle=\langle e_2\dot{\otimes} f_2, \hat{\eta}(x\dot{\otimes} f_1)\rangle
		\end{align*}
		Hence $\hat{\eta}=\widehat{\eta^*}^*$ so that $\hat{\eta}^*=\widehat{\eta^*}$. From how $\hat{\eta}$ and $\hat{\eta}^*$ are constructed, we know $\hat{\eta}$ satisfies properties $(1)$ and $(2)$ of the assertion.
		
		Now we verify $(\hat{\eta}, \alpha)$ is a morphism; in particular, we show $\tilde{\phi}_2(\alpha(a))\circ\hat{\eta}=\hat{\eta}\circ \tilde{\phi}_1(a)$ for all $a\in A$. Fix $a\in A$. Using continuity of both $\tilde{\phi}_2(\alpha(a))\circ\hat{\eta}$ and $\hat{\eta}\circ \tilde{\phi}_1(a)$ as well as density of $E_1\otimes_B F$, it suffices to show equality on $E_1\otimes_BF$. Using linearity, it suffices to show equality on elements of the form $x\dot{\otimes} f\in E_1\otimes_\pi F$. Given such an element, we have
		\begin{align*}
			(\tilde{\phi}_2(\alpha(a))\circ\hat{\eta})(x\dot{\otimes} f)&=\tilde{\phi}_2(\alpha(a))(\eta(x)\dot{\otimes} f)=\phi_2(\alpha(a))\eta(x)\dot{\otimes} f\\
			&=\eta(\phi_1(a)x)\dot{\otimes} f=\hat{\eta}(\phi_1(a)x\dot{\otimes} f)=(\hat{\eta}\circ\tilde{\phi}_1(a))(x\dot{\otimes} f)
		\end{align*}
		Hence we have $\tilde{\phi}_2(\alpha(a))\circ \hat{\eta}=\hat{\eta}\circ\tilde{\phi}_1(a)$ for all $a\in A$ as claimed. 
		
	\end{proof}
	
	\begin{theorem}\label{Theorem: Tensor Functor}\

		\noindent Let $A, B,$ and $C$ be $C^*$-algebras. If $(F, \pi)\in \text{Cor}(B, C)$, then the assignment
		\begin{align*}
			(-)\otimes_\pi F:\text{PosCor}(A, B)&\to \text{PosCor}(A, C) \\
			(E, \phi)&\mapsto (E\otimes_\pi F, \phi(-)\otimes_\pi I)\\
			(\eta, \alpha)&\mapsto (\hat{\eta}, \alpha)
		\end{align*}
		defines a covariant functor.
		
	\end{theorem}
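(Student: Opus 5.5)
The plan has three parts: check that objects land in the right category, invoke Proposition \ref{Prop: Lift 1} for morphisms, and verify that identities and composition are preserved using uniqueness plus density of $E\otimes_B F$ in $E\otimes_\pi F$.

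\emph{Objects.} Take $(E,\phi)\in\text{PosCor}(A,B)$. Then $E\otimes_\pi F$ is a Hilbert $C$-module. The map $\tilde\phi=\phi(-)\otimes_\pi I$ is the composite of $\phi$ with the unital $*$-homomorphism $T\mapsto T\otimes_\pi I$.
\begin{itemize}
\item Complete positivity holds because a $*$-homomorphism is completely positive and composites of completely positive maps are completely positive.
\item For strictness, pick an approximate unit $(a_\lambda)$ with $(\phi(a_\lambda))$ strictly Cauchy. Since $\|\phi(a_\lambda)\|\le\|\phi\|$, this net is norm bounded. The map $T\mapsto T\otimes_\pi I$ is strictly continuous on bounded sets, and it is linear, so it preserves Cauchy nets on bounded sets. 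Hence $(\tilde\phi(a_\lambda))$ is strictly Cauchy.
\end{itemize}
This is the step needing most care. If one prefers, one can check directly that $\|(T\otimes_\pi I)z\|$ on simple tensors is controlled by $\|Tx\|$, then pass to finite sums and use density. Alternatively, use the extension characterization: $\tilde\phi$ extends to $M(A)$ as the composite of $\phi$'s extension with $T\mapsto T\otimes_\pi I$.

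\emph{Morphisms.} Proposition \ref{Prop: Lift 1} already shows that $(\hat\eta,\alpha)$ is a morphism in $\text{PosCor}(A,C)$ between the image objects. It also shows that $\hat\eta$ is uniquely determined by $\hat\eta(x\dot\otimes f)=\eta(x)\dot\otimes f$. So the assignment on morphisms is well defined.

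\emph{Identities and composition.} For the identity morphism $(1_E,1_A)$, both $\widehat{1_E}$ and $1_{E\otimes_\pi F}$ send $x\dot\otimes f\mapsto x\dot\otimes f$. They are bounded and agree on the dense subspace $E\otimes_B F$, so they coincide. For composable $(\eta,\alpha)$ and $(\xi,\beta)$, both $\widehat{\xi\circ\eta}$ and $\hat\xi\circ\hat\eta$ send $x\dot\otimes f\mapsto \xi(\eta(x))\dot\otimes f$. By linearity, continuity and density they agree, and the automorphism components compose as $\beta\circ\alpha$ on both sides. Hence the assignment is a covariant functor.
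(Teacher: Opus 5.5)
Your proposal is correct and follows the paper's proof: Proposition~\ref{Prop: Lift 1} gives well-definedness on morphisms, and preservation of identities and composition follows from the defining formula $\hat{\eta}(x\dot{\otimes} f)=\eta(x)\dot{\otimes} f$ together with density of $E\otimes_B F$. The only difference is that you verify that $\phi(-)\otimes_\pi I$ is completely positive and strict, using boundedness of $(\phi(a_\lambda))$ and strict continuity of $T\mapsto T\otimes_\pi I$ on bounded sets; the paper simply calls the object assignment clear.
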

	
	\begin{proof}\
		
		\noindent It is clear the assignment on objects is well-defined. Using Proposition \ref{Prop: Lift 1}, the assignment on morphisms is well-defined. That the assignment is functorial in a covariant manner immediately follows from how $\hat{\eta}$ is defined in Proposition \ref{Prop: Lift 1}.
		
	\end{proof}
	
	\begin{proposition}\label{Prop: inclusion tensor functor}\

		\noindent Let $A$ and $B$ be $C^*$-algebras. Let $\text{inc}:B\to M(B)$ be the inclusion $*$-algebra homomorphism. Then $(-)\otimes_\text{inc} B$ is naturally isomorphic  to $1_{\text{PosCor}(A, B)}$.
		
	\end{proposition}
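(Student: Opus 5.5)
We need to produce, for each object $(E,\phi)\in\text{PosCor}(A,B)$, a unitary $U_E\in\mathcal{L}(E\otimes_{\text{inc}}B, E)$ with $(U_E,1_A)$ a morphism from $(E\otimes_{\text{inc}}B,\phi(-)\otimes_{\text{inc}} I)$ to $(E,\phi)$, and then check naturality. Note that $\text{inc}:B\to M(B)=\mathcal{L}(B)$ is non-degenerate, so $(B,\text{inc})\in\text{Cor}(B,B)$ and Theorem \ref{Theorem: Tensor Functor} applies. The candidate is the multiplication map
$$U_E:E\otimes_B B\to E\quad\quad U_E(x\dot{\otimes} b)=xb.$$

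\textbf{Step 1 (construction of $U_E$).} The map $E\times B\to E$, $(x,b)\mapsto xb$, is bilinear and kills the balancing relations $xb'\otimes b-x\otimes b'b$, so it descends to a $B$-linear map on $E\otimes_B B$. It preserves inner products:
$$\langle xb_1, yb_2\rangle_E=b_1^*\langle x,y\rangle_E b_2=\langle b_1,\text{inc}(\langle x,y\rangle_E)b_2\rangle_B=\langle x\dot{\otimes} b_1, y\dot{\otimes} b_2\rangle.$$
By sesquilinearity it is therefore isometric on $E\otimes_B B$, and it extends to an isometry $U_E:E\otimes_{\text{inc}}B\to E$.

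\textbf{Step 2 (surjectivity; expected main obstacle).} We must show $U_E$ has dense range, since an isometry with closed dense range is onto. This reduces to showing $EB$ is dense in $E$. Take an approximate unit $(e_\lambda)$ of $B$ and compute
$$\|x-xe_\lambda\|^2=\|\langle x,x\rangle-e_\lambda\langle x,x\rangle-\langle x,x\rangle e_\lambda+e_\lambda\langle x,x\rangle e_\lambda\|\to 0.$$
A surjective isometric $B$-linear map between Hilbert modules is a unitary, with adjoint $U_E^*=U_E^{-1}$; this needs a short polarization check that isometries preserve inner products, which we already have from Step 1. Concretely, $U_E^*(xb)=x\dot{\otimes} b$.

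\textbf{Step 3 (morphism and naturality).} For $a\in A$, on simple tensors we have
$$U_E(\phi(a)\otimes_{\text{inc}}I)(x\dot{\otimes} b)=(\phi(a)x)b=\phi(a)(xb)=\phi(a)U_E(x\dot{\otimes} b),$$
using $B$-linearity of $\phi(a)$. By density and continuity, $(U_E,1_A)$ is a morphism. Its inverse $(U_E^*,1_A)$ is also a morphism, because the intertwining relation with $1_A$ transfers to the inverse. So $(U_E,1_A)$ is an isomorphism in $\text{PosCor}(A,B)$.

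For naturality, given $(\eta,\alpha):(E_1,\phi_1)\to(E_2,\phi_2)$, we need
$$(U_{E_2},1_A)\circ(\hat\eta,\alpha)=(\eta,\alpha)\circ(U_{E_1},1_A).$$
The automorphism components agree, both being $\alpha$. On the operator side, by Proposition \ref{Prop: Lift 1},
$$U_{E_2}\hat\eta(x\dot{\otimes} b)=\eta(x)b=\eta(xb)=\eta U_{E_1}(x\dot{\otimes} b),$$
using $B$-linearity of $\eta$. Equality then extends to all of $E_1\otimes_{\text{inc}}B$ by density and continuity. The only genuinely non-formal input is the density of $EB$ in Step 2; everything else is routine checking on simple tensors.
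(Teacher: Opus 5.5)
Your proposal is correct and follows essentially the same route as the paper: the multiplication map $x\dot{\otimes} b\mapsto xb$ is shown to be an isometry and extended. It is then shown to be surjective using density of $EB$ in $E$, and the intertwining with $1_A$ and the naturality identity $\eta\circ U_{E_1}=U_{E_2}\circ\hat{\eta}$ are checked on simple tensors. You give slightly more detail than the paper, which only asserts that $EB$ is dense and that the inverse $(U_E^*,1_A)$ is again a morphism; your approximate-unit computation and your remark on the inverse fill these in.
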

	
	\begin{proof}\
		
		\noindent We first construct the isomorphism between the objects from each functor. Given an object $(E, \phi)\in \text{PosCor}(A, B)$, let $(E\otimes_\text{inc} B, \tilde{\phi})$ be the resulting object from applying the functor $(-)\otimes_{\text{inc}}B$. Define the map
		$$U:E\otimes_B B\to E\quad\quad U(x\dot{\otimes} b)=xb$$
		It is clear that $U$ is a well-defined $B$-linear map. Furthermore, since
		\begin{align*}
			\left\|U(\sum\limits_{k=1}^n x_k\dot{\otimes} b_k)\right\|^2&=\left\|\sum\limits_{k=1}^nx_kb_k\right\|^2=\left\|\sum\limits_{i, j}\langle x_ib_i, x_jb_j\rangle\right\|_B=\left\|\sum\limits_{i, j} b_i^*\langle x_i, x_j\rangle b_j\right\|_B=\left\|\sum\limits_{k=1}^n x_k\dot{\otimes} b_k\right\|,
		\end{align*}
		then $U$ is an isometry. Therefore, we can extend $U$ to an isometry $\tilde{U}:E\otimes_{\text{inc}}B\to E$. 
		
		We claim $\tilde{U}$ is surjective. As the image of $U$ is $EB=\text{Span}(\{eb: e\in E, b\in B\})$ which is dense in $E$ and $U$ is an isometry, then we know the image of $\tilde{U}$ is $\overline{EB}=E$. Hence $\tilde{U}$ is indeed surjective. Since $\tilde{U}$ is a surjective $B$-linear isometry, then $\tilde{U}$ is a unitary. 
		
		We claim $(\tilde{U}, 1_A)$ defines a morphism from $(E\otimes_{\text{inc}}B, \tilde{\phi})$ to $(E, \phi)$. As $\tilde{U}:E\otimes_\text{inc}B\to E$ is an adjointable map, then we just need to verify $\tilde{U}$ intertwines $\tilde{\phi}$ and $\phi$ with respect to $1_A$. Fix $a\in A$. Using density of $E\otimes_{B}B$ in $E\otimes_{\text{inc}}B$ and continuity of both $\tilde{U}\circ \tilde{\phi}(a)$ and $\phi(a)\circ\tilde{U}$, it suffices to show $\tilde{U}\circ \tilde{\phi}(a)=\phi(a)\circ\tilde{U}$ on $E\otimes_{B} B$. Using linearity, it suffices to show $\tilde{U}\circ \tilde{\phi}(a)=\phi(a)\circ\tilde{U}$ on elements of the form $x\dot{\otimes} b\in E\otimes_BB$. Given such an element, we have
		\begin{align*}
			(\tilde{U}\circ \tilde{\phi}(a))(x\dot{\otimes} b)&=\tilde{U}(\phi(a)x\dot{\otimes} b)=\phi(a)(x)b=\phi(a)(xb)=(\phi(a)\circ \tilde{U})(x\dot{\otimes} b) 
		\end{align*}
		Therefore we conclude $\tilde{U}\circ \tilde{\phi}(a)=\phi(a)\circ\tilde{U}$ for all $a\in A$ showing $(\tilde{U}, 1_A)$ defines a morphism as claimed. Observe, as $\tilde{U}$ is unitary, the morphism $(\tilde{U}, 1_A)$ is an isomorphism.
		
		Now we verify the isomorphisms satisfy the naturality condition. Let $(E_i, \phi_i)\in \text{PosCor}(A, B)$ for $i=1, 2$, and let $(\eta, \alpha):(E_1, \phi_1)\to (E_2, \phi_2)$ be a morphism. Denote the induced objects and morphisms from applying $(-)\otimes_{\text{inc}}B$ as $(E_i\otimes_{\iota}, \tilde{\phi}_i)$ and $(\hat{\eta}, \alpha)$, respectively. Let $(U_i, 1_A):(E_i\otimes_{\iota}B, \tilde{\phi}_i)\to (E_i, \phi_i)$ be the corresponding isomorphisms. To prove naturality, we verify the following diagram commutes:
		\begin{center}
			\begin{tikzcd}
				{(E_1\otimes_{\text{inc}}B, \tilde{\phi}_1)} &&& {(E_1, \phi_1)} \\
				\\
				\\
				{(E_2\otimes_{\text{inc}}B, \phi_2)} &&& {(E_2, \phi_2)}
				\arrow["{(U_1, 1_A)}"{description}, from=1-1, to=1-4]
				\arrow["{(\hat{\eta}, \alpha)}"{description}, from=1-1, to=4-1]
				\arrow["{(\eta, \alpha)}"{description}, from=1-4, to=4-4]
				\arrow["{(U_2, 1_A)}"{description}, from=4-1, to=4-4]
			\end{tikzcd}
		\end{center}
		From how composition is defined, we just need to show $\eta\circ U_1=U_2\circ\hat{\eta}$. Using linearity and continuity of the adjointable maps as well as density of $E_1\otimes_BB$ in $E_1\otimes_{\text{inc}}B$, it suffices to check equality on elements of the form $x\dot{\otimes} b\in E_1\otimes_{\text{inc}}B$. Given such an element we have
		\begin{align*}
			(\eta\circ U_1)(x\dot{\otimes} b)&=\eta(xb)=\eta(x)b=U_2(\eta(x)\dot{\otimes} b)=(U_2\circ \hat{\eta})(x\dot{\otimes} b)
		\end{align*}
		Thus the diagram commutes. Since $(U_i, 1_A)$ is an isomorphism, then we conclude the two functors are naturally isomorphic.
		
	\end{proof}
	
	We remark the adjoint of the unitary $U:E\otimes_\text{inc} B\to E$ can be realized as the strong limit of a net of $\mathbb{C}$-linear maps. Given an approximate unit $(b_\lambda)_{\lambda\in\Lambda}$ for $B$, we can define, for each $\lambda\in\Lambda$, the $B$-linear map
	$$V_\lambda:E\to E\otimes_{\text{inc}}B\quad\quad V_\lambda(x)=x\dot{\otimes} b_\lambda$$
	A quick computation shows $V_\lambda$ is a contraction. Using that $(b_\lambda)_{\lambda\in\Lambda}$ is an approximate unit, we know $(V_\lambda x)_{\lambda\in\Lambda}$ is a Cauchy net in $E\otimes_{\text{inc}} B$ and therefore converges to a point $V(x)$. Let $V:E\to E\otimes_{\text{inc}}B$ be the induced map. It is clear $V$ is B-linear contraction. Furthermore, 
	\begin{align*}
		\langle V(x), y\dot{\otimes} b\rangle&=\lim\limits_{\lambda\to \infty} \langle x\dot{\otimes} b_\lambda, y\dot{\otimes} b\rangle=\lim\limits_{\lambda\to \infty}b_\lambda\langle x, y\rangle b=\langle x, yb\rangle=\langle x, U(y\dot{\otimes} b)\rangle
	\end{align*}
	showing $U^*=V$. 
	
	\begin{proposition}\label{Prop: Composition and tensor}\

		\noindent Let $A,B, C$, and $D$ be $C^*$-algebras. Let $\rho_1:B\to M(C)$ and $\rho_2:C\to M(D)$ be non-degenerate $*$-algebra homomorphisms. Let $\tilde{\rho_2}$ be the unique extension of $\rho_2$ to $M(C)$. Then the functor $((-)\otimes_{\rho_1}C)\otimes_{\rho_2}D$ is naturally isomorphic to the functor $(-)\otimes_{\tilde{\rho}_2\circ \rho_1} D$.
		
	\end{proposition}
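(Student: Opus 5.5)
The plan mirrors the proof of Proposition \ref{Prop: inclusion tensor functor}: for each object $(E,\phi)\in\text{PosCor}(A,B)$ I will build a unitary
$$U_E:(E\otimes_{\rho_1}C)\otimes_{\rho_2}D\to E\otimes_{\tilde\rho_2\circ\rho_1}D$$
that intertwines the induced completely positive maps with respect to $1_A$. I will then check naturality. On the algebraic level, define $U_E$ on elements $(x\dot\otimes c)\dot\otimes d$ by
$$U_E\big((x\dot\otimes c)\dot\otimes d\big)=x\dot\otimes \rho_2(c)d .$$
Here $\rho_2(c)$ is regarded as an element of $M(D)=\mathcal{L}(D)$.

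First I need $U_E$ to be well defined and isometric. I will start from the trilinear map $(x,c,d)\mapsto x\dot\otimes\rho_2(c)d$ on $E\times C\times D$. It is balanced in the $C$-slot: $(xc')\dot\otimes\dots$ gives the same value as $x\dot\otimes\rho_2(c')\dots$. It also respects the relation $xb\otimes c=x\otimes\rho_1(b)c$, because $\tilde\rho_2(\rho_1(b)c)=\tilde\rho_2(\rho_1(b))\rho_2(c)$ by multiplicativity of the extension. This gives a map on $(E\otimes_B C)\otimes_{\text{alg}} D$.

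For the isometry, I will compute inner products of simple tensors:
$$\langle (x_1\dot\otimes c_1)\dot\otimes d_1,(x_2\dot\otimes c_2)\dot\otimes d_2\rangle=d_1^*\rho_2\big(c_1^*\rho_1(\langle x_1,x_2\rangle)c_2\big)d_2 ,$$
$$\langle x_1\dot\otimes\rho_2(c_1)d_1,\;x_2\dot\otimes\rho_2(c_2)d_2\rangle=d_1^*\rho_2(c_1)^*\,\tilde\rho_2(\rho_1(\langle x_1,x_2\rangle))\,\rho_2(c_2)d_2 .$$
These agree because $\tilde\rho_2$ extends $\rho_2$ multiplicatively on $M(C)$. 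Hence $U_E$ preserves inner products on finite sums. It is therefore well defined on the quotient, since the null vectors are killed, and it extends to an isometry on the completion. One subtlety is that $E\otimes_B C$ is only dense in $E\otimes_{\rho_1}C$. I handle this by defining $U_E$ on the dense span of triple simple tensors and using the isometry to extend.

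The step I expect to be the main obstacle is surjectivity, which needs non-degeneracy. The range of $U_E$ contains $x\dot\otimes\rho_2(c)d$ for all $x,c,d$. Since $\rho_2$ is non-degenerate, $\rho_2(C)D$ is dense in $D$. Since $\|x\dot\otimes f\|\le\|x\|\|f\|$, the elements $x\dot\otimes d$ then lie in the closure of the range, and these span a dense subspace of $E\otimes_{\tilde\rho_2\circ\rho_1}D$. A surjective isometric $D$-linear map is unitary, as in Proposition \ref{Prop: inclusion tensor functor}. One could also write $U_E^*$ explicitly using an approximate unit $(c_\lambda)$ of $C$, namely $x\dot\otimes d\mapsto\lim_\lambda (x\dot\otimes c_\lambda)\dot\otimes d$, as in the remark after Proposition \ref{Prop: inclusion tensor functor}.

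Finally, I check the intertwining and naturality relations on simple tensors and then extend by density and continuity. For intertwining, $U_E\big(((\phi(a)\otimes I)\otimes I)((x\dot\otimes c)\dot\otimes d)\big)=\phi(a)x\dot\otimes\rho_2(c)d=(\phi(a)\otimes I)U_E(\cdots)$, so $(U_E,1_A)$ is an isomorphism. For naturality, given $(\eta,\alpha)$, the lift under the composite functor sends $(x\dot\otimes c)\dot\otimes d\mapsto(\eta x\dot\otimes c)\dot\otimes d$ by Proposition \ref{Prop: Lift 1} applied twice. Therefore both $U_{E_2}\circ\hat{\hat\eta}$ and $\hat\eta\circ U_{E_1}$ send this element to $\eta(x)\dot\otimes\rho_2(c)d$. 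The automorphism components agree trivially, since both composites have $\alpha\circ 1_A=1_A\circ\alpha$.
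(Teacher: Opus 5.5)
Your proposal is correct and follows essentially the same route as the paper. You use the same map $(x\dot{\otimes}c)\dot{\otimes}d\mapsto x\dot{\otimes}\rho_2(c)d$, get the isometry from the inner-product computation using multiplicativity of $\tilde{\rho}_2$, get surjectivity from non-degeneracy of $\rho_2$, and check intertwining and naturality on simple tensors. The only differences are small: you phrase surjectivity as density of $\rho_2(C)D$ where the paper runs an approximate-unit Cauchy-net argument, and you justify well-definedness explicitly via preservation of inner products, which the paper simply asserts.
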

	
	\begin{proof}\
		
		\noindent We first construct the isomorphism between the objects from each functor. Fix an object $(E, \phi)\in \text{PosCor}(A, B)$. Define
		$$U:(E\otimes_{B} C)\otimes_{C} D\to E\otimes_{\tilde{\rho}_2\circ\rho_1} D\quad\quad U((x\dot{\otimes} c)\dot{\otimes} d)=x\dot{\otimes} \rho_2(c)d$$
		which is a well-defined $D$-linear map.  Since
		\begin{align*}
			\left\|U\left(\sum\limits_{k=1}^n\left(\sum\limits_{\ell=1}^{m_k} x_{k, \ell}\dot{\otimes} c_{k, \ell}\right)\dot{\otimes} d_{k}\right)\right\|^2&=\left\|\sum\limits_{k=1}^n\sum\limits_{\ell=1}^{m_k} x_{k, \ell}\dot{\otimes} \rho_2(c_{k, \ell})d_k\right\|^2\\
			&=\left\|\sum\limits_{i, j=1}^n\sum\limits_{k=1}^{m_i}\sum\limits_{ \ell=1}^{m_j} d_i^*\rho_2(c_{i, k})^*(\tilde{\rho}_2\circ\rho_1)(\langle x_{i, k}, x_{j, \ell}\rangle)\rho_{2}(c_{j, \ell})d_{j}\right\|_D\\
			&=\left\|\sum\limits_{i, j=1}^n\sum\limits_{k=1}^{m_i}\sum\limits_{ \ell=1}^{m_j} d_i^*\rho_2\left(c_{i, k}^*\rho_1(\langle x_{i, k}, x_{j, \ell}\rangle)c_{j \ell}\right)\rho_{2}(c_{j, \ell})d_{j}\right\|_D\\
			&=\left\|\sum\limits_{k=1}^n\left(\sum\limits_{\ell=1}^{m_k} x_{k, \ell}\dot{\otimes} c_{k, \ell}\right)\dot{\otimes} d_{k}\right\|^2
		\end{align*}
		then $U$ is an isometry. As the closure of $(E\otimes_B C)\otimes_CD$ is $(E\otimes_{\rho_1} C)\otimes_{\rho_2} D$, then we can extend $U$ to a $D$-linear isometry, denote by $\tilde{U}$, on $(E\otimes_{\rho_1} C)\otimes_{\rho_2} D$. 
		
		We claim $\tilde{U}$ is surjective. To prove this, we show $E\otimes_BD$ is contained in the image of $\tilde{U}$. Fix $x\dot{\otimes}d\in E\otimes_{\tilde{\rho}_2\circ\rho_1}D$. Let $(c_\lambda)_{\lambda\in\Lambda}$ be an approximate unit in $C$. As $\rho_2$ is non-degenerate, then $(\rho_2(c_\lambda))_{\lambda\in\Lambda}$ converges in the strict topology to $1_D$. Therefore
		$$||x\dot{\otimes}\rho_2(c_\lambda)-x\dot{\otimes}d||^2=\left\|x\dot{\otimes} (\rho_2(c_\lambda)d-d)\right\|^2\leq ||\sqrt{(\tilde{\rho}_2\circ\rho_1)(\langle x, x\rangle)}||_D^2*||\rho_2(c_\lambda)d-d||_{D}^2$$
		implies $(x\dot{\otimes}\rho_2(c_\lambda)d)_{\lambda\in\Lambda}$ converges in $E\otimes_{\tilde{\rho}_2\circ \rho_1}D$ to $x\dot{\otimes} d$. As $U((x\dot{\otimes}c_\lambda)\dot{\otimes}d)=x\dot{\otimes}\rho_2(c_\lambda)d$ for each $\lambda\in\Lambda$ and $U$ is an isometry, then $((x\dot{\otimes}c_\lambda)\dot{\otimes}d)_{\lambda\in\Lambda}$ is a Cauchy net in $(E\otimes_{\rho_1}C)\otimes_{\rho_2}D$. Therefore the net converges to a point $z\in (E\otimes_{\rho_1}C)\otimes_{\rho_2}D$ for which $\tilde{U}(z)=x\dot{\otimes}d$. Using linearity of $\tilde{U}$, the image of $\tilde{U}$ contains $E\otimes_{B}D$.
		
		As $\tilde{U}$ is an isometry, then the image of $\tilde{U}$ is closed. Since the image of $\tilde{U}$ contains $E\otimes_B D$ which is dense in $E\otimes_{\tilde{\rho}_2\circ \rho_1}D$, then the image of $\tilde{U}$ is all of $E\otimes_{\tilde{\rho}_2\circ \rho_1}D$. Hence $\tilde{U}$ is surjective. As $\tilde{U}$ is a surjective $D$-linear isometry, then $\tilde{U}$ is a unitary map.
		
		We write $\tilde{\phi}=(\phi(-)\otimes_{\rho_1} 1_C)\otimes_{\rho_2}1_D$ and $\hat{\phi}=\phi(-)\otimes_{\tilde{\rho}_2\circ \rho_1} 1_D$. We claim $(\tilde{U}, 1_A)$ defines a morphism from $((E\otimes_{\rho_1} C)\otimes_{\rho_2}D, \tilde{\phi})$ to $(E\otimes_{\tilde{\rho}_2\circ\rho_1} \hat{\phi})$. As $\tilde{U}$ is an adjointable operator, then we only need to verify $\tilde{U}$ intertwines $\tilde{\phi}$ and $\hat{\phi}$ with respect to $1_A$. Fix $a\in A$. Using density and continuity, it suffices to check $\tilde{U}\circ \tilde{\phi}(a)=\hat{\phi}(a)\circ \tilde{U}$ on $(E\otimes_B C)\otimes_CD$. Using linearity, it suffices to check equality on elements of the form $(x\dot{\otimes} c)\dot{\otimes} d\in (E\otimes_B C)\otimes_CD$. Given such an element, we have
		\begin{align*}
			(\tilde{U}\circ\tilde{\phi}(a))((x\dot{\otimes} c)\dot{\otimes} d)&=\tilde{U}((\phi(a)x\dot{\otimes} c)\dot{\otimes} d)=\phi(a)x\dot{\otimes} \rho_2(c)d\\
			&=\hat{\phi}(a)(x\dot{\otimes} \rho_2(c)d)=(\hat{\phi}(a)\circ \tilde{U})((x\dot{\otimes} c)\dot{\otimes} d)
		\end{align*}
		Therefore $\tilde{U}\circ\tilde{\phi}(a)=\hat{\phi}(a)\circ\tilde{U}$ for all $a\in A$ proving the claim. Observe, as $\tilde{U}$ is a unitary, then $(\tilde{U}, 1_A)$ is an isomorphism.
		
		Now we verify the isomorphisms satisfy the naturality condition. Fix objects $(E_1, \phi_1)$ and $(E_2, \phi_2)$ in $\text{PosCor}(A, B)$, and let $(\eta, \alpha):(E_1, \phi_1)\to (E_2, \phi_2)$ be a morphism. For $i=1, 2$, let 
		$$(U_i, 1_A): ((E_i\otimes_{\rho_1}C)\otimes_{\rho_2}D, \tilde{\phi}_i)\to (E_i\otimes_{\tilde{\rho}_2\circ\rho_1}D, \hat{\phi}_i)$$
		be the isomorphisms from above. To prove naturality, we verify the  following diagram commutes in $\text{PosCor}(A, D)$:
		\begin{center}
			\begin{tikzcd}
				{((E_1\otimes_{\rho_1}C)\otimes_{\rho_2}D, \tilde{\phi}_1)} &&&& {((E_1\otimes_{\tilde{\rho}_2\circ\rho_1}D, \hat{\phi}_1)} \\
				\\
				\\
				{((E_2\otimes_{\rho_1}C)\otimes_{\rho_2}D, \tilde{\phi}_2)} &&&& {((E_2\otimes_{\tilde{\rho}_2\circ\rho_1}D, \hat{\phi}_2)}
				\arrow["{(U_1, 1_A)}"{description}, from=1-1, to=1-5]
				\arrow["{(\hat{\hat{\eta}}, \alpha)}"{description}, from=1-1, to=4-1]
				\arrow["{(\hat{\eta}, \alpha)}"{description}, from=1-5, to=4-5]
				\arrow["{(U_2, 1_A)}"{description}, from=4-1, to=4-5]
			\end{tikzcd}
		\end{center}
		From how composition is defined, we only need to check $U_2\circ\hat{\hat{\eta}}=\hat{\eta}\circ U_1$. Using continuity and linearity of $U_2\circ\hat{\hat{\eta}}$ and $\hat{\eta}\circ U_1$ as well as density of $(E_1\otimes_BC)\otimes_CD$ in $(E_1\otimes_{\rho_1} C)\otimes_{\rho_2}D$, it suffices to check equality on elements of the form $(x\dot{\otimes} c)\dot{\otimes} d\in (E\otimes_B C)\otimes_CD$. Given such an element we have
		\begin{align*}
			(U_2\circ\hat{\hat{\eta}})((x\dot{\otimes} c)\dot{\otimes} d)&=U_2((\eta(x)\dot{\otimes} c)\dot{\otimes} d)=\eta(x)\dot{\otimes} \rho_2(c)d\\
			&=\hat{\eta}(x\dot{\otimes} \rho_2(c)d)=(\hat{\eta}\circ U_1)((x\dot{\otimes} c)\dot{\otimes} d)
		\end{align*}
		Hence the diagram does indeed commute. As $(U_1, 1_A)$ and $(U_2, 1_A)$ are isomorphisms, then we conclude the two functors are naturally isomorphic.
		
	\end{proof}

	\subsection{The categories $\text{PosCor}(A)$ and $\text{PosCor}_{*\text{-alg}}(A)$}

	Let us return to our morphisms at the start of Section 3.2. Given objects $(E_B, \phi)\in \text{PosCor}(A, B)$ and $(E_C, \psi)\in \text{PosCor}(A, C)$, we determined a candidate for a morphism from $(E_B, \phi)$ to $(E_C, \psi)$ was given by the data $((F, \pi), (\eta, \alpha))$ where $(F, \pi)\in \text{Cor}(B, C)$ and $(\eta, \alpha)$ is a morphism from $(E_B\otimes_\pi F, \tilde{\phi})$ to $(E_C, \psi)$ in $\text{PosCor}(A, C)$. While the data works for describing a morphism from $(E_B, \phi)$ to $(E_C, \psi)$, we restrict our attention from all $C^*$-correspondences $(F, \pi)\in \text{Cor}(B, C)$ to those of the form $(C, \pi)$. This restriction is made as we want the choice of $C^*$-correspondence to represent a change of the background algebra which $C^*$-correspondences of the form $(C, \pi)$ capture. We note the following category and the results in Section 4.3 likely generalize if one relaxes this assumption.
	
	Given our morphisms, we now need a composition rule for composing them. To see where our composition rule comes from, suppose we have objects $(E_{B_i}, \phi_i)$ for $i=1, 2, 3$, and suppose we have the data $((B_{i+1}, \rho_i), (\eta_i, \alpha_i))$ for $i=1, 2$, where $(B_{i+1}, \rho_i)\in \text{Cor}(B_{i}, B_{i+1})$ and $(\eta_i, \alpha_i)$ is a morphism in $\text{PosCor}(A, B_{i+1})$ from $(E_{B_i}\otimes_{\rho_i}B_{i+1}, \tilde{\phi}_i)$ to $(E_{B_{i+1}}, \phi_{i+1})$. Applying the functor $(-)\otimes_{\rho_2}B_3$ to $(\eta_1, \alpha_1)$, we obtain the morphism
	$$(\hat{\eta}_1, \alpha_1):((E_{B_1}\otimes_{\rho_1} B_2)\otimes_{\rho_2}B_3, \tilde{\tilde{\phi}}_1)\to (E_B\otimes_{\rho_2}B_3, \tilde{\phi}_2)$$
	As $(\eta_2, \alpha_2):(E_{B_2}\otimes_{\rho_2}B_3, \tilde{\phi}_2)\to(E_{B_3}, \phi_3)$, then we compose $(\eta_2, \alpha_2)$ and $(\hat{\eta}_1, \alpha_1)$ in $\text{PosCor}(A, B_3)$ to obtain the morphism
	$$(\eta_2\circ\hat{\eta}_1, \alpha_2\circ\alpha_1):((E_{B_1}\otimes_{\rho_1} B_2)\otimes_{\rho_2}B_3, \tilde{\tilde{\phi}}_1)\to (E_{B_3}, \phi_3)$$
	To compose the $C^*$-correspondences $(B_2, \rho_1)$ and $(B_3, \rho_2)$, we compose the maps $\rho_1$ and $\rho_2$ to obtain the $C^*$-correspondence $(B_3, \tilde{\rho}_2\circ\rho_1)$ where $\tilde{\rho}_2$ is the unique extension of $\rho_2$ to $M(B_2)$.
	
	From the computation, we are led to define the composition of $((B_2, \rho_1), (\eta_1, \alpha_1))$ and $((B_3, \rho_2), (\eta_2, \alpha_2))$ as
	$$((B_3, \rho_2), (\eta_2, \alpha_2))\circ ((B_2, \rho_1), (\eta_1, \alpha_1))=((B_3, \tilde{\rho}_2\circ\rho_1), (\eta_2\circ\hat{\eta}_1, \alpha_2\circ\alpha_1))$$
	However, we have a domain issue: $\eta_2\circ\hat{\eta}_1$ is defined on the module $(E_{B_1}\otimes_{\rho_1} B_1)\otimes_{\rho_2}B_3$ and not $E_{B_1}\otimes_{\tilde{\rho}_2\circ\rho_1}B_3$. Using Proposition \ref{Prop: Composition and tensor}, we obtain a unitary
	$$U:(E_{B_1}\otimes_{\rho_1}B_2)\otimes_{\rho_2}B_3\to E_{B_1}\otimes_{\tilde{\rho}_2\circ\rho_1}B_3\quad\quad U((x\dot{\otimes} b_2)\dot{\otimes}b_3)=x\dot{\otimes}\rho_2(b_2)b_3$$
	which enables us to correct the naive composition rule to
	$$((\rho_2, B_3), (\eta_2, \alpha_2))\circ ((\rho_1, B_2), (\eta_1, \alpha_1))=((\tilde{\rho}_2\circ\rho_1, B_3), (\eta_2\circ\hat{\eta}_1\circ U^{-1}, \alpha_2\circ\alpha_1))$$
	
	\begin{definition}\

		\noindent Let $A$ be a $C^*$-algebra. 
		\begin{itemize}
			\item Define the category $\text{PosCor}(A)$, where
			\begin{itemize}
				\item objects are pairs $(E_B, \phi)$ where $E_B$ is a Hilbert $B$-module for a $C^*$-algebra $B$ and $\phi:A\to \mathcal{L}(E_B)$ is a strict completely positive map.
				\item  a morphism from $(E_B, \phi)$ to $(E_C, \psi)$ is a pair $(\rho, (\eta, \alpha))$ where $\rho:B\to M(C)$ is a non-degenerate $*$-algebra homomorphism and $(\eta, \alpha):(E_B\otimes_\rho C, \tilde{\phi})\to (E_C, \psi)$ is a morphism in $\text{PosCor}(A, C)$.
				\item Given morphisms $(\rho_i, (\eta_i, \alpha_i)):(E_{B_i}, \phi_i)\to (E_{B_{i+1}},\phi_{i+1})$ for $i=1, 2$, define
				$$(\rho_2, (\eta_2, \alpha_2))\circ (\rho_1, (\eta_1, \alpha_1))=(\tilde{\rho}_2\circ \rho_1, (\eta_2\circ \hat{\eta}_1\circ U^{-1}, \alpha_2\circ\alpha_1))$$
				where $\tilde{\rho}_2$ is the unique extension of $\rho_2$ to $M(B_2)$, and where $$U:(E_{B_1}\otimes_{\rho_1}B_2)\otimes_{\rho_2}B_3\to E_{B_1}\otimes_{\tilde{\rho_2}\circ\rho_1}B_3\quad\quad U((x\dot{\otimes}b_2)\dot{\otimes}b_3)=x\dot{\otimes}\rho_2(b_2)b_3$$
				is the unitary from Proposition \ref{Prop: Composition and tensor}.
				
			\end{itemize}
			\item Define the category $\text{PosCor}_{*\text{-alg}}(A)$ as the subcategory of $\text{PosCor}(A)$ where a morphism $(\rho, (\eta, \alpha)):(E_B, \phi)\to (E_C, \psi)$ satisfies the condition that $\rho(B)\subset C$.
		\end{itemize}
		
	\end{definition}
	
	The next two propositions verify $\text{PosCor}(A)$ is indeed a category. From this, it immediately follows that $\text{PosCor}_{*\text{-alg}}(A)$ is a well-defined subcategory.
	
	\begin{proposition}\

		\noindent Let $A$ be a $C^*$-algebra. Given $(E_B, \phi)\in \text{PosCor}(A)$, the identity morphism is given by the pair $(\text{inc}, (\iota, 1_A))$ where 
		$$\iota:E_B\otimes_{\text{inc}}B\to E_B\quad\quad \iota(x\dot{\otimes} b)=xb$$
		is the unitary from Proposition \ref{Prop: inclusion tensor functor}.
		
	\end{proposition}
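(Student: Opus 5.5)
We must check three things: that $(\text{inc}, (\iota, 1_A))$ is a morphism, that it is a left identity, and that it is a right identity.

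\textbf{It is a morphism.} The map $\text{inc}:B\to M(B)$ is a non-degenerate $*$-algebra homomorphism, since an approximate unit of $B$ converges strictly to $1_{M(B)}$. Proposition \ref{Prop: inclusion tensor functor} shows that $(\iota, 1_A)$ is a (unitary) morphism $(E_B\otimes_{\text{inc}}B, \tilde{\phi})\to(E_B,\phi)$ in $\text{PosCor}(A,B)$. So the pair is a morphism $(E_B,\phi)\to(E_B,\phi)$ in $\text{PosCor}(A)$.

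\textbf{Left identity.} Take $(\rho,(\eta,\alpha)):(E_B,\phi)\to(E_C,\psi)$. Composing with the identity of $(E_C,\psi)$ gives
$$(\widetilde{\text{inc}}\circ\rho,\ (\iota_C\circ\hat\eta\circ U^{-1},\ \alpha)).$$
Here $\widetilde{\text{inc}}$ is the unique extension of $\text{inc}$ to $M(C)$, which is the identity on $M(C)$, so the first component is $\rho$.

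For the second component, it suffices to show $\iota_C\circ\hat\eta = \eta\circ U$. As elsewhere in the paper, we check this on elementary tensors and extend by density and continuity. For $(x\dot\otimes c)\dot\otimes c'$ we get
$$\iota_C\hat\eta((x\dot\otimes c)\dot\otimes c') = \eta(x\dot\otimes c)c' = \eta(x\dot\otimes cc'),$$
using $C$-linearity of $\eta$. On the other side,
$$\eta U((x\dot\otimes c)\dot\otimes c') = \eta(x\dot\otimes cc').$$
These agree. The only fact needed is that $\text{inc}(c)c'=cc'$.

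\textbf{Right identity.} Composing $(\rho,(\eta,\alpha))$ after the identity of $(E_B,\phi)$ gives
$$(\tilde\rho\circ\text{inc},\ (\eta\circ\hat\iota_B\circ U^{-1},\ \alpha)).$$
The first component is $\rho$, because $\tilde\rho$ extends $\rho$. For the second, it suffices to show $\eta\circ\hat\iota_B=\eta\circ U$, and in fact $\hat\iota_B=U$ as maps $(E_B\otimes_{\text{inc}}B)\otimes_\rho C\to E_B\otimes_\rho C$. On elementary tensors,
$$\hat\iota_B((x\dot\otimes b)\dot\otimes c)=xb\dot\otimes c, \qquad U((x\dot\otimes b)\dot\otimes c)=x\dot\otimes\rho(b)c.$$
These are equal by the balancing relation defining $E_B\otimes_B C$. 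Density and continuity, via the explicit formula for $U$ in Proposition \ref{Prop: Composition and tensor} and Proposition \ref{Prop: Lift 1}, finish the argument.

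\textbf{Main obstacle.} The only delicate point is bookkeeping: we must identify the extensions $\widetilde{\text{inc}}$ and $\tilde\rho$ correctly and make sure that each $U$ is the unitary for the right pair of homomorphisms. Once that is fixed, everything reduces to the elementary-tensor checks above.
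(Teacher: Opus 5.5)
Your proposal is correct and follows essentially the same route as the paper. Both reduce the two identity laws to $\iota_C\circ\hat\eta=\eta\circ U$ and $\eta\circ\hat\iota_B=\eta\circ U$, and verify them on elementary tensors using $\widetilde{\text{inc}}=1_{M(C)}$, $\tilde\rho\circ\text{inc}=\rho$ and the balancing relation $xb\dot\otimes c=x\dot\otimes\rho(b)c$. Your direct observation that $\hat\iota_B=U$ and your explicit check that $(\text{inc},(\iota,1_A))$ is itself a morphism are only minor additions to what the paper leaves implicit.
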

	
	\begin{proof}\
		
		\noindent We simply verify $(\text{inc}, (\iota, 1_A))$ satisfies the definition to be the identity morphism for $(E_B, \phi)$. Let $(E_C, \phi)\in \text{PosCor}(A)$. Suppose we have a morphism $(\rho, (\eta, \alpha)):(E_C, \psi)\to (E_B, \phi)$. Note $\widetilde{\text{inc}}\circ \rho=\rho$ as $\widetilde{\text{inc}}=1_{M(B)}$ which implies $E_{C}\otimes_{\tilde{\text{inc}}\circ\rho}B=E_C\otimes_{\rho}B$. We claim the following diagram commutes:
		\begin{center}
			\begin{tikzcd}
				{E_{C}\otimes_{\tilde{\text{inc}}\circ\rho}B} &&&& {E_{C}\otimes_{\rho}B} \\
				\\
				{(E_C\otimes_\rho B)\otimes_{\text{inc}}B} && {E_B\otimes_{\text{inc}}B} && {E_B}
				\arrow["id"{description}, from=1-1, to=1-5]
				\arrow["{U^{-1}}"{description}, from=1-1, to=3-1]
				\arrow["\eta"{description}, from=1-5, to=3-5]
				\arrow["{\hat{\eta}}"{description}, from=3-1, to=3-3]
				\arrow["\iota"{description}, from=3-3, to=3-5]
			\end{tikzcd}
		\end{center}
		where $U$ is the unitary described in Proposition \ref{Prop: Composition and tensor}. As $U$ is unitary, it suffices to show $\eta\circ U=\iota\circ\hat{\eta}$. As both  $\eta\circ U$ and $\iota\circ \hat{\eta}$ are continuous  as well as $(E_C\otimes_CB)\otimes_BB$ is dense in $(E_C\otimes_\rho B)\otimes_{\text{inc}}B$, then it suffices to show equality on the dense subspace. Using linearity, it suffices to show equality on elements of the form $(x\dot{\otimes} b_1)\dot{\otimes} b_2\in (E_C\otimes_\rho B)\otimes_{\text{inc}}B$. Given such an element, we have
		\begin{align*}
			(\eta\circ U)((x\dot{\otimes} b_1)\dot{\otimes} b_2)&=\eta(x\dot{\otimes} b_1b_2)=\eta(x\dot{\otimes} b_1)b_2=\iota(\eta(x\dot{\otimes} b_1)\dot{\otimes} b_2)=(\iota\circ \hat{\eta})((x\dot{\otimes} b_1)\dot{\otimes} b_2)
		\end{align*}
		Therefore we conclude $\eta\circ U=\iota\circ \hat{\eta}$. Thus $\eta=\iota\circ\hat{\eta}\circ U^{-1}$ showing
		\begin{align*}
			(\text{inc}, (\iota, 1_A))\circ (\rho, (\eta, \alpha))&=(\tilde{\text{inc}}\circ \rho, (\iota\circ\hat{\eta}\circ U^{-1}, 1_A\circ \alpha)=(\rho, (\eta, \alpha))
		\end{align*}
		
		Now suppose we have a morphism $(\rho, (\eta, \alpha)):(E_B, \phi)\to (E_C, \psi)$. As $\tilde{\rho}$ restricts to $\rho$ on $B$ which is the image of $\text{inc}$, then $\tilde{\rho}\circ \text{inc}=\rho$. Therefore $E_B\otimes_{\tilde{\rho}\circ\text{inc}}C=E_B\otimes_{\rho}C$. We claim the following diagram commutes:
		\begin{center}
			\begin{tikzcd}
				{E_{B}\otimes_{\tilde{\rho}\circ\text{inc}}C} &&&& {E_{B}\otimes_{\rho}C} \\
				\\
				{(E_B\otimes_\text{inc} B)\otimes_{\rho}C} && {E_B\otimes_{\rho}C} && {E_C}
				\arrow["id"{description}, from=1-1, to=1-5]
				\arrow["{U^{-1}}"{description}, from=1-1, to=3-1]
				\arrow["\eta"{description}, from=1-5, to=3-5]
				\arrow["{\hat{\iota}}"{description}, from=3-1, to=3-3]
				\arrow["\eta"{description}, from=3-3, to=3-5]
			\end{tikzcd}
		\end{center}
		where $U$ is the unitary described in Proposition \ref{Prop: Composition and tensor}. As before, it suffices to show $\eta\circ U=\eta\circ\hat{\iota}$ on elements of the form $(x\dot{\otimes} b)\dot{\otimes} c\in (E_B\otimes_{\text{inc}}B)\otimes_\rho C$. Given such an element, we have
		\begin{align*}
			(\eta\circ U)((x\dot{\otimes} b)\dot{\otimes} c)&=\eta(x\dot{\otimes} \rho(b)c)=\eta(xb\dot{\otimes} c)=(\eta\circ\hat{\iota})((x\dot{\otimes} b)\dot{\otimes} c)
		\end{align*}
		Therefore $\eta\circ U=\eta\circ\hat{\iota}$ which implies $\eta=\eta\circ \hat{\iota}\circ U^{-1}$. Thus
		\begin{align*}
			(\rho, (\eta, \alpha))\circ (\text{inc}, (\iota, 1_A))&=(\tilde{\rho}\circ \text{inc}, (\eta\circ\hat{\iota}\circ U^{-1}, 1_A\circ \alpha)=(\rho, (\eta, \alpha))
		\end{align*}
		Therefore we conclude $(\text{inc}, (\iota, 1_A))$ is the identity morphism for $(E_B, \phi)$.
		
	\end{proof}
	
	\begin{proposition}\

		\noindent Let $A$ be a $C^*$-algebra. Let $(E_{B_i}, \phi_i)\in \text{PosCor}(A)$ for $i=1, 2, 3, 4$. Let $(\rho_i, (\eta_i, \alpha_i))$ be a morphism from $(E_{B_i}, \phi_i)$ to $(E_{B_{i+1}}, \phi_{i+1})$ for $i=1, 2, 3$. Then
		$$(\rho_3, (\eta_3, \alpha_3))\circ ((\rho_2, (\eta_2, \alpha_2))\circ (\rho_1, (\eta_1, \alpha_1)))=((\rho_3, (\eta_3, \alpha_3))\circ (\rho_2, (\eta_2, \alpha_2)))\circ (\rho_1, (\eta_1, \alpha_1))$$
		
	\end{proposition}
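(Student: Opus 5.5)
Associativity splits into three components. The first two are immediate. On the automorphism component, both sides give $\alpha_3\circ\alpha_2\circ\alpha_1$, by associativity of composition in $\text{Aut}_{*\text{-alg}}(A)$. On the homomorphism component, the left side gives $\tilde{\rho}_3\circ(\tilde{\rho}_2\circ\rho_1)$ and the right side gives $\widetilde{(\tilde{\rho}_3\circ\rho_2)}\circ\rho_1$. These agree once we know $\widetilde{\tilde{\rho}_3\circ\rho_2}=\tilde{\rho}_3\circ\tilde{\rho}_2$ on $M(B_2)$. Both maps are unital $*$-homomorphisms that are strictly continuous on the unit ball and agree with $\tilde{\rho}_3\circ\rho_2$ on $B_2$. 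By uniqueness of extensions of non-degenerate homomorphisms they coincide, which also shows the composite $\rho$'s are non-degenerate. So the whole content of the proposition lies in the Hilbert module component.

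For that component, set $\rho_{12}=\tilde{\rho}_2\circ\rho_1$, $\rho_{23}=\tilde{\rho}_3\circ\rho_2$ and $\rho_{123}=\tilde{\rho}_3\circ\rho_{12}$. Unwinding the definition, the left side is
$$\eta_3\circ\widehat{(\eta_2\circ\hat{\eta}_1\circ U_{12}^{-1})}\circ U_{(12)3}^{-1},$$
where the hat is $(-)\otimes_{\rho_3}B_4$ and $U_{(12)3}:(E_{B_1}\otimes_{\rho_{12}}B_3)\otimes_{\rho_3}B_4\to E_{B_1}\otimes_{\rho_{123}}B_4$. The right side is
$$\eta_3\circ\hat{\eta}_2\circ U_{23}^{-1}\circ\hat{\eta}_1'\circ U_{1(23)}^{-1},$$
where $\hat{\eta}_1'=\eta_1\otimes_{\rho_{23}}I$ and $U_{1(23)}:(E_{B_1}\otimes_{\rho_1}B_2)\otimes_{\rho_{23}}B_4\to E_{B_1}\otimes_{\rho_{123}}B_4$. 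By Theorem \ref{Theorem: Tensor Functor}, the functor $(-)\otimes_{\rho_3}B_4$ preserves composition, so the hat in the left side distributes:
$$\widehat{\eta_2\circ\hat{\eta}_1\circ U_{12}^{-1}}=\hat{\eta}_2\circ\hat{\hat{\eta}}_1\circ\widehat{U_{12}^{-1}}.$$
After cancelling $\eta_3\circ\hat{\eta}_2$, it suffices to prove
$$\hat{\hat{\eta}}_1\circ\widehat{U_{12}^{-1}}\circ U_{(12)3}^{-1}=U_{23}^{-1}\circ\hat{\eta}_1'\circ U_{1(23)}^{-1}$$
as maps $E_{B_1}\otimes_{\rho_{123}}B_4\to(E_{B_2}\otimes_{\rho_2}B_3)\otimes_{\rho_3}B_4$.

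To handle this identity I would move the unitaries to the other side and compare maps out of the triple tensor product $((E_{B_1}\otimes_{\rho_1}B_2)\otimes_{\rho_2}B_3)\otimes_{\rho_3}B_4$. Concretely, I would show
$$U_{23}\circ\hat{\hat{\eta}}_1=\hat{\eta}_1'\circ U_{1(23)}^{-1}\circ U_{(12)3}\circ\widehat{U_{12}}.$$
Both sides are bounded and $B_4$-linear, so by density and linearity it suffices to check them on simple tensors $((x\dot{\otimes}b_2)\dot{\otimes}b_3)\dot{\otimes}b_4$.

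The main obstacle is that $U_{1(23)}^{-1}$ has no explicit formula on simple tensors $x\dot{\otimes}b_4$; it is only given as a limit, as in the surjectivity argument of Proposition \ref{Prop: Composition and tensor}. I would avoid inverting it by moving it across instead, proving
$$U_{1(23)}\circ(\text{stuff})=U_{(12)3}\circ\widehat{U_{12}}$$
and evaluating both sides forward on simple tensors.
\begin{itemize}
\item[] For the right-hand side of this identity, $\widehat{U_{12}}$ sends the simple tensor to $(x\dot{\otimes}\rho_2(b_2)b_3)\dot{\otimes}b_4$, and $U_{(12)3}$ then sends it to $x\dot{\otimes}\rho_3(\rho_2(b_2)b_3)b_4$.
\item[] For the other side, $U_{23}\circ\hat{\hat{\eta}}_1$ sends the simple tensor to $\eta_1(x\dot{\otimes}b_2)\dot{\otimes}\rho_3(b_3)b_4$ in $E_{B_2}\otimes_{\rho_{23}}B_4$. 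Pulling this back through $\hat{\eta}_1'$ corresponds to the element $(x\dot{\otimes}b_2)\dot{\otimes}\rho_3(b_3)b_4$, and $U_{1(23)}$ maps that to $x\dot{\otimes}\tilde{\rho}_3(\rho_2(b_2))\rho_3(b_3)b_4$.
\end{itemize}
These two results agree because $\tilde{\rho}_3$ is multiplicative on $M(B_3)$, giving $\rho_3(\rho_2(b_2)b_3)=\tilde{\rho}_3(\rho_2(b_2))\rho_3(b_3)$.

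To package this cleanly, I would introduce the unitary
$$W:((E_{B_1}\otimes_{\rho_1}B_2)\otimes_{\rho_2}B_3)\otimes_{\rho_3}B_4\to(E_{B_1}\otimes_{\rho_1}B_2)\otimes_{\rho_{23}}B_4,\qquad W\bigl(((x\dot{\otimes}b_2)\dot{\otimes}b_3)\dot{\otimes}b_4\bigr)=(x\dot{\otimes}b_2)\dot{\otimes}\rho_3(b_3)b_4,$$
which is the Proposition \ref{Prop: Composition and tensor} unitary applied to $E_{B_1}\otimes_{\rho_1}B_2$. Then verify two identities on simple tensors:
$$U_{1(23)}\circ W=U_{(12)3}\circ\widehat{U_{12}},\qquad U_{23}\circ\hat{\hat{\eta}}_1=\hat{\eta}_1'\circ W.$$
The second is the naturality square of Proposition \ref{Prop: Composition and tensor} applied to $\eta_1$. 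Combining the two identities yields the required identity, and hence associativity.
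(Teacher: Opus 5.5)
Your proposal is correct and is essentially the paper's proof. Your $U_{1(23)}$, $W$, $U_{23}$, $U_{(12)3}$, $U_{12}$ are the paper's $U_1,\dots,U_5$, your identities $U_{1(23)}\circ W=U_{(12)3}\circ\widehat{U_{12}}$ and $U_{23}\circ\hat{\hat{\eta}}_1=\hat{\eta}_1'\circ W$ are exactly its top pentagon and middle-left square, and functoriality of $(-)\otimes_{\rho_3}B_4$ supplies the remaining square. You also justify $\widetilde{\tilde{\rho}_3\circ\rho_2}=\tilde{\rho}_3\circ\tilde{\rho}_2$, which the paper only asserts; however, non-degeneracy of $\tilde{\rho}_3\circ\rho_2$ has to be checked before its extension makes sense (it follows from density of $\rho_2(B_2)B_3$ in $B_3$), not read off afterwards from the uniqueness argument.
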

	
	\begin{proof}\
		
		\noindent First, note $\widetilde{\tilde{\rho}_3\circ \rho_2}\circ \rho_1=\tilde{\rho}_3\circ\tilde{\rho}_2\circ \rho_1$ which implies $E_{B_1}\otimes_{\widetilde{\tilde{\rho}_3\circ \rho_2}\circ \rho_1}B_4=E_{B_1}\otimes_{\tilde{\rho}_3\circ \tilde{\rho}_2\circ\rho_1}B_4$. Let
		\begin{align*}
			U_1&: (E_{B_1}\otimes_{\rho_1}B_2)\otimes_{\tilde{\rho}_3\circ\rho_2}B_4\to  E_{B_1}\otimes_{\widetilde{\tilde{\rho}_3\circ\rho_2}\circ\rho_1}B_4\\ U_2&:((E_{B_1}\otimes_{\rho_1}B_2)\otimes_{\rho_2}B_3)\otimes_{\rho_3}B_4\to (E_{B_1}\otimes_{\rho_1}B_2)\otimes_{\tilde{\rho}_3\circ\rho_2}B_4\\ 
			U_3&:(E_{B_2}\otimes_{\rho_2}B_3)\otimes_{\rho_3}B_4\to E_{B_2}\otimes_{\tilde{\rho}_3\circ\rho_2}B_4 \\
			U_4&:(E_{B_1}\otimes_{\tilde{\rho}_2\circ\rho_1} B_3)\otimes_{\rho_3}B_4\to E_{B_1}\otimes_{\tilde{\rho}_3\circ\tilde{\rho}_2\circ\rho_1}B_4\\
			U_5&:(E_{B_1}\otimes_{\rho_1} B_2)\otimes_{\rho_2}B_3\to E_{B_1}\otimes_{\tilde{\rho}_2\circ\rho_1} B_3
		\end{align*}
		be the unitaries obtained from Proposition \ref{Prop: Composition and tensor}. We claim all the sub-diagrams of the following diagram commute:
		\begin{center}
			\begin{tikzcd}
				{E_{B_1}\otimes_{\widetilde{\tilde{\rho}_3\circ\rho_2}\circ\rho_1}B_4} && {E_{B_1}\otimes_{\tilde{\rho}_3\circ\tilde{\rho}_2\circ\rho_1}B_4} \\
				& {((E_{B_1}\otimes_{\rho_1} B_2)\otimes_{\rho_2}B_3)\otimes_{\rho_3}B_4} \\
				{(E_{B_1}\otimes_{\rho_1}B_2)\otimes_{\tilde{\rho}_3\circ\rho_2}B_4} && {(E_{B_1}\otimes_{\tilde{\rho}_2\circ\rho_1} B_3)\otimes_{\rho_3}B_4} \\
				& {(E_{B_2}\otimes_{\rho_2}B_3)\otimes_{\rho_3}B_4} \\
				{E_{B_2}\otimes_{\tilde{\rho}_3\circ\rho_2}B_4} && {E_{B_3}\otimes_{\rho_3}B_4} \\
				\\
				& {E_{B_4}}
				\arrow["id"{description}, from=1-1, to=1-3]
				\arrow["{U_1^{-1}}"{description}, from=1-1, to=3-1]
				\arrow["{U_4^{-1}}"{description}, from=1-3, to=3-3]
				\arrow["{\hat{\hat{\eta}}_1}"{description}, from=2-2, to=4-2]
				\arrow["{U_2^{-1}}"{description}, from=3-1, to=2-2]
				\arrow["{\hat{\eta}_1}"{description}, from=3-1, to=5-1]
				\arrow["{\widehat{U_5^{-1}}}"{description}, from=3-3, to=2-2]
				\arrow["{\widehat{\eta_2\circ\hat{\eta}_1\circ U_5^{-1}}}"{description}, from=3-3, to=5-3]
				\arrow["{\hat{\eta}_2}"{description}, from=4-2, to=5-3]
				\arrow["{\eta_3\circ\hat{\eta}_2}"{description}, from=4-2, to=7-2]
				\arrow["{U_3^{-1}}"{description}, from=5-1, to=4-2]
				\arrow["{\eta_3\circ \hat{\eta}_2\circ U_3^{-1}}"{description}, from=5-1, to=7-2]
				\arrow["{\eta_3}"{description}, from=5-3, to=7-2]
			\end{tikzcd}
		\end{center}
		Definitionally, the bottom triangles commutes. Using functoriality of the tensor functor in Theorem \ref{Theorem: Tensor Functor}
		, the middle right square commutes. Thus, we just need to verify the pentagon at the top of the diagram and the middle left square commute.
		
		Let us begin with the middle left square. As $U_2$ and $U_3$ are unitaries, it suffices to show $\hat{\eta}_1\circ U_2=U_3\circ \widehat{\hat{\eta}}_1$. As both $\hat{\eta}_1\circ U_2$ and $U_3\circ \widehat{\hat{\eta}}_1$ are continuous and $((E_{B_1}\otimes_{B_1}B_2)\otimes_{B_2}B_3)\otimes_{B_3}B_4$ is dense in $((E_{B_1}\otimes_{\rho_1}B_2)\otimes_{\rho_2}B_3)\otimes_{\rho_3}B_4$, then it suffices to show equality on the dense subspace. Using linearity, it suffices to show the two maps are the same on elements of the form $((x\dot{\otimes} b_1)\dot{\otimes} b_2)\dot{\otimes} b_3\in((E_{B_1}\otimes_{B_1}B_2)\otimes_{B_2}B_3)\otimes_{B_3}B_4$. Given such an element, we have
		\begin{align*}
			(\hat{\eta}_1\circ U_2)(((x\dot{\otimes} b_1)\dot{\otimes} b_2)\dot{\otimes} b_3)&=\hat{\eta}((x\dot{\otimes} b_1)\dot{\otimes} \rho_3(b_2)b_3)=\eta(x\dot{\otimes} b_1)\dot{\otimes} \rho_3(b_2)b_3\\
			&=U_3((\eta(x\dot{\otimes} b_1)\dot{\otimes} b_2)\dot{\otimes} b_3)=(U_3\circ\widehat{\hat{\eta}}_1)(((x\dot{\otimes} b_1)\dot{\otimes} b_2)\dot{\otimes} b_3)
		\end{align*}
		Therefore we conclude $\hat{\eta}_1\circ U_2=U_3\circ \widehat{\hat{\eta}}_1$ showing the middle left square in the diagram above commutes.
		
		Now let us verify the top pentagon commutes. As $U_1, U_2, U_4, \widehat{U_5^{-1}}$ are all unitaries, then it suffices to show $U_1\circ U_2=U_4\circ\widehat{U}_5$. As both $U_1\circ U_2$ and $U_4\circ\widehat{U}_5$ are continuous and $((E_{B_1}\otimes_{B_1}B_2)\otimes_{B_2}B_3)\otimes_{B_3}B_4$ is dense in $((E_{B_1}\otimes_{\rho_1}B_2)\otimes_{\rho_2}B_3)\otimes_{\rho_3}B_4$, then it suffices to show equality on the dense subspace.  Using linearity, it suffices to show the two maps are the same on elements of the form $((x\dot{\otimes} b_1)\dot{\otimes} b_2)\dot{\otimes} b_3\in ((E_{B_1}\otimes_{B_1}B_2)\otimes_{B_2}B_3)\otimes_{B_3}B_4$. Given such an element, we have
		\begin{align*}
			(U_1\circ U_2)(((x\dot{\otimes} b_1)\dot{\otimes} b_2)\dot{\otimes} b_3)&=U_1((x\dot{\otimes} b_1)\dot{\otimes} \rho_3(b_2)b_3)=x\dot{\otimes} (\tilde{\rho}_3\circ \rho_2)(b_1)\rho_3(b_2)b_3\\
			&=x\dot{\otimes} \tilde{\rho}_3(\rho_2(b_1)b_2)b_3=x\dot{\otimes} \rho_3(\rho_2(b_1)b_2)b_3\\
			&=U_4((x\dot{\otimes} \rho_2(b_1)b_2)\dot{\otimes} b_3)=(U_4\circ \widehat{U}_5)(((x\dot{\otimes} b_1)\dot{\otimes} b_2)\dot{\otimes} b_3)
		\end{align*}
		Therefore we conclude $U_1\circ U_2=U_4\circ\widehat{U}_5$ showing the pentagon at the top commutes.
		
		Using commutativity of these five diagrams, we claim the outside of the diagram commutes. Indeed, we have
		\begin{align*}
			\eta_3\circ \hat{\eta}_2\circ U_3^{-1}\circ \hat{\eta}_1\circ U_1^{-1}&=\eta_3\circ \hat{\eta}_2\circ U_3^{-1}\circ \hat{\eta}_1\circ U_2\circ \widehat{U_5^{-1}}\circ U_4^{-1}\\
			&=\eta_3\circ \hat{\eta}_2\circ U_3^{-1}\circ U_3\circ \hat{\hat{\eta}}_1\circ \widehat{U_5^{-1}}\circ U_4^{-1}\\
			&=\eta_3\circ \hat{\eta}_2\circ \hat{\hat{\eta}}_1\circ \widehat{U_5^{-1}}\circ U_4^{-1}\\
			&=\eta_3\circ \widehat{(\eta_2\circ\hat{\eta}_1\circ U_5^{-1})}\circ U_4^{-1}
		\end{align*}
		Therefore
		\begin{align*}
			(\rho_3, (\eta_3, \alpha_3))\circ ((\rho_2, (\eta_2, \alpha_2))\circ &(\rho_1, (\eta_1, \alpha_1)))=(\rho_3, (\eta_3, \alpha_3))\circ (\tilde{\rho}_2\circ \rho_1, (\eta_2\circ \hat{\eta}_1\circ V_2^{-1}, \alpha_2\circ\alpha_1)) \\
			&=(\tilde{\rho}_3\circ \tilde{\rho}_2\circ \rho_1, (\eta_3\circ (\widehat{\eta_2\circ \hat{\eta}_1\circ U_5^{-1}})\circ U_4^{-1}, \alpha_3\circ \alpha_2\circ\alpha_1)\\
			&=(\widetilde{\tilde{\rho}_3\circ \rho_2}\circ \rho_1, (\eta_3\circ \hat{\eta}_2\circ U_3^{-1}\circ \hat{\eta}_1\circ U_1^{-1}, \alpha_3\circ\alpha_2\circ\alpha_1))\\
			&=(\tilde{\rho}_3\circ\rho_2, (\eta_3\circ \hat{\eta}_2\circ U_3^{-1}, \alpha_3\circ\alpha_2))\circ (\rho_1, (\eta_1, \alpha_1)\\
			&=((\rho_3, (\eta_3, \alpha_3))\circ (\rho_2, (\eta_2, \alpha_2)))\circ (\rho_1, (\eta_1, \alpha_1))
		\end{align*}
		
	\end{proof}
	
	\section{Functoriality of the KSGNS Construction}
	
	Having constructed the category $\text{PosCor}(A)$, we turn our attention to showing that the KSGNS construction defines an idempotent endofunctor on $\text{PosCor}(A)$. Due to how $\text{PosCor}(A)$ is constructed, we build toward this result in two intermediate steps. First, we show the KSGNS construction defines an idempotent endofunctor on $\text{PosCor}(A, B)$. Second, we verify the KSGNS construction and functors coming from the interior tensor product in Theorem \ref{Theorem: Tensor Functor} commute. Using both of these results, we then prove the KSGNS construction defines an idempotent endofunctor on $\text{PosCor}(A)$.
	
	\subsection{KSGNS construction and $\text{PosCor}(A, B)$}
	
	We now show the KSGNS construction defines an idempotent endofunctor on $\text{PosCor}(A, B)$. We begin first showing the KSGNS construction defines an endofunctor on $\text{PosCor}(A, B)$. Given an object $(E, \phi)\in\text{PosCor}(A, B)$, the KSGNS construction yields an object $(F_\phi, \pi_\phi)\in \text{PosCor}(A, B)$. Therefore at the level of objects, the KSGNS construction provides such an assignment. 
	
	At the level of morphisms, the assignment becomes a bit technical. If we have a morphism $(\eta, \alpha):(E_1, \phi_1)\to (E_2, \phi_2)$, then we want to define the map
	$$\tilde{\eta}:\frac{A\otimes_{\text{alg}}E_1}{N_{\phi_1}}\to F_{\phi_2}\quad\quad \tilde{\eta}(a\dot{\otimes} x)=\alpha(a)\dot{\otimes} \eta(x)$$ 
	Provided $\tilde{\eta}$  is well-defined, it is $B$-linear. To extend $\tilde{\eta}$ to all of $F_{\phi_1}$ it suffices to know $\tilde{\eta}$ is bounded. Defining a similar map for $\eta^*$ will enable us to show $\tilde{\eta}$ is adjointable. Therefore the two hurdles to obtain an assignment at the level of morphisms comes down to $\tilde{\eta}$ being both well-defined and bounded. The following two lemmas will help us in establishing these two items.
	
	\begin{lemma}\label{Lemma: Properties Lemma}\

		\noindent Let $A$ and $B$ be $C^*$-algebras. For $i=1, 2$,  let $\phi_i:A\to \mathcal{L}(E_i)$  be a positive map for a Hilbert $B$-module $E_i$.  If $\eta\in \mathcal{L}(E_1, E_2)$ and $\alpha\in \text{Aut}_{*\text{-alg}}(A)$ such that for all $a\in A$, $\phi_2(\alpha(a))\circ \eta=\eta\circ\phi_1(a)$, then
		\begin{enumerate}
			\item for all $a\in A$, $\eta^*\circ \phi_2(\alpha(a))=\phi_1(a)\circ \eta^*$.
			\item for all $a\in A$, $\phi_1(a)$ commutes with $\eta^*\eta$ and $\phi_2(\alpha(a))$ commutes with $\eta\eta^*$.
			\item for all $a\in A$, 
			$$0\leq \phi_1(a^*a)\eta^*\eta\leq ||\eta||^2\phi_1(a^*a)\quad \text{ and }\quad 0\leq \phi_2(\alpha(a^*a))\eta\eta^*\leq ||\eta||^2\phi_2(\alpha(a^*a)).$$
		\end{enumerate}
		
	\end{lemma}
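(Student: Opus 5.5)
Part (1) comes from taking adjoints in the intertwining relation. Since $\phi_1,\phi_2$ are positive, they are $*$-preserving, so $\phi_i(a)^*=\phi_i(a^*)$. Apply the hypothesis to $a^*$ to get $\phi_2(\alpha(a^*))\eta=\eta\phi_1(a^*)$, and take adjoints of both sides. Because $\alpha$ is a $*$-automorphism, $\alpha(a^*)^*=\alpha(a)$. The result is $\eta^*\phi_2(\alpha(a))=\phi_1(a)\eta^*$ for every $a\in A$.

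Part (2) follows by combining the hypothesis with part (1):
$$\phi_1(a)\eta^*\eta=\eta^*\phi_2(\alpha(a))\eta=\eta^*\eta\phi_1(a),$$
$$\phi_2(\alpha(a))\eta\eta^*=\eta\phi_1(a)\eta^*=\eta\eta^*\phi_2(\alpha(a)).$$

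For part (3) I will use a standard fact in the $C^*$-algebra $\mathcal{L}(E_1)$: if $P\geq 0$, $0\leq Q\leq cI$, and $PQ=QP$, then $0\leq PQ\leq cP$. The proof goes through the functional calculus. Since $P$ and $Q$ commute, $P^{1/2}$ commutes with $Q$ and with $cI-Q$. Hence
$$PQ=P^{1/2}QP^{1/2}\geq 0,\qquad cP-PQ=P^{1/2}(cI-Q)P^{1/2}\geq 0.$$

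To apply this, take $P=\phi_1(a^*a)$, which is positive because $\phi_1$ is positive. Take $Q=\eta^*\eta$, which satisfies $0\leq\eta^*\eta\leq\|\eta\|^2 I$ in the $C^*$-algebra $\mathcal{L}(E_1)$. By part (2), applied with the element $a^*a$, these two operators commute. This gives the first chain of inequalities.

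The second chain is the same argument in $\mathcal{L}(E_2)$ with $P=\phi_2(\alpha(a^*a))$, which is positive since $\alpha(a^*a)=\alpha(a)^*\alpha(a)$. Take $Q=\eta\eta^*\leq\|\eta\|^2 I$, which commutes with $P$ by part (2). The only nontrivial point in the whole argument is the commutation needed for the square-root trick, and part (2) supplies it.
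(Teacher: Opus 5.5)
Your proposal is correct and follows the paper's proof essentially step for step: (1) takes adjoints of the intertwining relation at $a^*$, (2) combines the hypothesis with (1), and (3) writes $\phi_1(a^*a)\eta^*\eta=\sqrt{\phi_1(a^*a)}\,\eta^*\eta\,\sqrt{\phi_1(a^*a)}$, using the commutation from (2) and $\eta^*\eta\leq\|\eta\|^2 I$. The only difference is that you derive positivity of the product from the same conjugation identity, whereas the paper simply asserts that a product of commuting positive elements is positive.
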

	
	\begin{proof}\
		
		\begin{enumerate}
			\item  Fix $a\in A$. By assumption,  $\eta\circ\phi_1(a^*)=\phi_2(\alpha(a^*))\circ\eta$. Taking the adjoint of both sides yields $\phi_1(a^*)^*\circ\eta^*=\eta^*\circ\phi_2(\alpha(a^*))^*$. As $\phi_1$ and $\phi_2$ are positive maps, then the maps preserve the adjoint operation. Using this as well as that $\alpha$ is a $*$-algebra homomorphism, we have
			$$\phi_1(a)\circ\eta^*=\phi_1(a^*)^*\circ\eta^*=\eta^*\circ\phi_2(\alpha(a^*))^*=\eta^*\circ \phi_2(\alpha(a))$$
			\item Fix $a\in A$, then $\phi_1(a)\eta^*\eta=\eta^*\phi_2(\alpha(a))\eta=\eta^*\eta\phi_1(a)$ showing $\phi_1(a)$ commutes with $\eta^*\eta$. By a similar argument, we have $\phi_2(\alpha(a))$ commutes with $\eta\eta^*$.
			\item Fix $a\in A$. Since $\phi_1$ is positive, then $\phi_1(a^*a)$ is positive and commutes with $\eta^*\eta$. Hence $\phi_1(a^*a)\eta^*\eta$ is positive. Using Gelfand duality, we know $\eta^*\eta$ commutes with $\sqrt{\phi_1(a^*a)}$ as well as $\eta^*\eta\leq ||\eta^*\eta|| I=||\eta||^2I$. Hence
			$$\phi_1(a^*a)\eta^*\eta=\sqrt{\phi_1(a^*a)}\eta^*\eta \sqrt{\phi_1(a^*a)}\leq \sqrt{\phi_1(a^*a)}||\eta||^2 I\sqrt{\phi_1(a^*a)}=||\eta||^2\phi_1(a^*a).$$
			A similar argument proves the other inequality.
		\end{enumerate}
		
	\end{proof}
	\begin{lemma}\label{Lemma: Bound 2}\

		\noindent Let $A$ and $B$ be $C^*$-algebras, and let $(E_1, \phi_1), (E_2, \phi_2)\in\text{PosCor}(A, B)$. If $(\eta, \alpha)$ is a morphism from $(E_1, \phi_1)$ to $(E_2, \phi_2)$, then
		\begin{enumerate}
			\item for all $a_1, \ldots, a_n\in A$ and $x_1, \ldots, x_n\in E_1$,
			$$\left\|\sum\limits_{i, j} \langle x_i, \phi_1(a_i^*a_j)\eta^*\eta x_j\rangle_{E_1}\right\|_B\leq ||\eta||^2\left\|\sum\limits_{i, j} \langle x_i, \phi_1(a_i^*a_j) x_j\rangle_{E_1}\right\|_B$$
			\item for all $a_1, \ldots, a_n\in A$ and $y_1, \ldots, y_n\in E_2$,
			$$\left\|\sum\limits_{i, j} \langle y_i, \phi_2(\alpha(a_i)^*\alpha(a_j))\eta\eta^* y_j\rangle_{E_2}\right\|_B\leq ||\eta||^2\left\|\sum\limits_{i, j} \langle y_i, \phi_2(\alpha(a_i)^*\alpha(a_j)) y_j\rangle_{E_2}\right\|_B$$
		\end{enumerate}
		
	\end{lemma}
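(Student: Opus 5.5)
We mirror the matrix argument of Lemma \ref{Lemma: Bound 1}, with complete positivity of $\phi_1$ replacing that of $\pi$, and with Lemma \ref{Lemma: Properties Lemma} supplying the commutation needed to insert $\eta^*\eta$. Fix $n$ and work in the Hilbert $B$-module $E_1^n$. Set
$$P=[\phi_1(a_i^*a_j)]_{i,j}=(\phi_1)_n\bigl([a_i^*a_j]\bigr)\in M_n(\mathcal{L}(E_1))\cong\mathcal{L}(E_1^n).$$
The matrix $[a_i^*a_j]$ is positive in $M_n(A)$, being $R^*R$ with $R$ the matrix whose first row is $(a_1,\ldots,a_n)$ and whose other rows are zero. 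Since $\phi_1$ is completely positive, $P\geq 0$. Let $D=\operatorname{diag}(\eta^*\eta,\ldots,\eta^*\eta)\in\mathcal{L}(E_1^n)$; then $0\leq D\leq\|\eta\|^2 I$. The left-hand sum in (1) is $\langle \vec{x}, PD\vec{x}\rangle_{E_1^n}$ with $\vec{x}=(x_1,\ldots,x_n)^T$, and the right-hand sum is $\langle\vec{x},P\vec{x}\rangle$.

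The key point is that $P$ and $D$ commute. Each entry $\phi_1(a_i^*a_j)$ commutes with $\eta^*\eta$ by Lemma \ref{Lemma: Properties Lemma}(2), and $D$ is scalar-diagonal with that entry repeated. Given this, we repeat the argument of Lemma \ref{Lemma: Properties Lemma}(3) at the matrix level. Since $D$ commutes with $P$, it commutes with $\sqrt{P}$ by continuous functional calculus, so
$$PD=\sqrt{P}\,D\sqrt{P}\leq\|\eta\|^2 P,\qquad PD\geq 0 .$$
Hence
$$0\leq\langle\vec{x},PD\vec{x}\rangle\leq\|\eta\|^2\langle\vec{x},P\vec{x}\rangle$$
in $B$. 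Taking norms is monotone on positive elements of $B$, which yields (1).

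Part (2) is the same argument applied to $E_2^n$. Here we take $Q=(\phi_2)_n([\alpha(a_i)^*\alpha(a_j)])\geq 0$, where positivity follows because $\alpha$ is a $*$-homomorphism and $[\alpha(a_i)^*\alpha(a_j)]=\alpha_n([a_i^*a_j])$. We take $D'=\operatorname{diag}(\eta\eta^*,\ldots)$. The commutation $\phi_2(\alpha(a))\eta\eta^*=\eta\eta^*\phi_2(\alpha(a))$ is again Lemma \ref{Lemma: Properties Lemma}(2), applied with $a=a_i^*a_j$.

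The only delicate step is justifying that $QD'$ (respectively $PD$) is positive and dominated. This relies on the commutation together with $D\leq\|\eta\|^2 I$ in $\mathcal{L}(E_1^n)$. That bound is clear, since the norm of the diagonal operator $D$ is $\|\eta^*\eta\|$.
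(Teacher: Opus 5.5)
Your proof is correct and follows essentially the same route as the paper: pass to $E_1^n$ (resp.\ $E_2^n$), write the sums as $\langle \vec x, \phi_{1,n}(R^*R)\,\hat\eta^*\hat\eta\,\vec x\rangle$ with $R$ the first-row matrix, and run the square-root argument of Lemma \ref{Lemma: Properties Lemma}(3) at the matrix level. The only difference is cosmetic. The paper gets the needed commutation by lifting the intertwining relation to $(\hat\eta,\alpha_n)$ and then citing Lemma \ref{Lemma: Properties Lemma} for the amplified maps, whereas you check the commutation entrywise from Lemma \ref{Lemma: Properties Lemma}(2) and redo the functional-calculus step directly.
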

	
	\begin{proof}\
		
		\noindent Fix $n\in\mathbb{N}$. For the following proof, we view $E_i^n$ as a Hilbert $B$-module. As $\eta:E_1\to E_2$ is an adjointable map, then 
		$$\hat{\eta}:E_1^n\to E_2^n\quad\quad \hat{\eta}\left(\begin{bmatrix}
			x_1\\
			\vdots\\
			x_n
		\end{bmatrix}\right)=\begin{bmatrix}
			\eta(x_1)\\
			\vdots\\
			\eta(x_n)
		\end{bmatrix}$$
		is an adjointable map with  adjoint $\widehat{\eta^*}$ and $||\hat{\eta}||=||\eta||$. As $\phi_i:A\to \mathcal{L}(E_i)$ is a completely positive map, then we have a positive map
		$$\phi_{k, n}:M_n(A)\to \mathcal{L}(E_k^n)\cong M_n(\mathcal{L}(E_k))\quad\quad \phi_{k, n}([a^i_j])\begin{bmatrix}
			x_1\\
			\vdots\\
			x_n
		\end{bmatrix}=\begin{bmatrix}
			\sum\limits_{j=1}^n \phi_k(a^1_j)x_j\\
			\vdots\\
			\sum\limits_{j=1}^n \phi_k(a^n_j)x_j
		\end{bmatrix}$$
		As $\alpha:A\to A$ is a $*$-algebra automorphism, then $\alpha_n:M_n(A)\to M_n(A)$ is also a $*$-algebra automorphism. Using the maps $\hat{\eta}$, $\phi_{i, n}$, and $\alpha_n$, we make the following identifications: for all $x_1, \ldots, x_n\in E_1$ and $a_1, \ldots, a_n\in A$
		\begin{align*}
			\sum\limits_{i, j} \langle x_i, \phi_1(a_i^*a_j)&\eta^*\eta x_j\rangle_{E_1}\\
			&=\left\langle\begin{bmatrix}
				x_1\\
				\vdots\\
				x_n
			\end{bmatrix}, \phi_{1, n}\left(\begin{bmatrix}
				a_1 & \ldots & a_n\\
				0 & \ldots & 0\\
				\vdots & & \vdots\\
				0 & \ldots & 0
			\end{bmatrix}^*\begin{bmatrix}
				a_1 & \ldots & a_n\\
				0 & \ldots & 0\\
				\vdots & & \vdots\\
				0 & \ldots & 0
			\end{bmatrix}\right)\hat{\eta}^*\hat{\eta}\begin{bmatrix}
				x_1\\
				\vdots\\
				x_n
			\end{bmatrix}\right\rangle_{E_1^n}
		\end{align*}
		and for all $y_1, \ldots, y_n\in E_2$ and $a_1, \ldots, a_n\in A$
		\begin{align*}
			\sum\limits_{i, j} \langle y_i, &\phi_2(\alpha(a_i)^*\alpha(a_j))\eta\eta^* y_j\rangle_{E_2}\\
			&=\left\langle\begin{bmatrix}
				y_1\\
				\vdots\\
				y_n
			\end{bmatrix}, \phi_{2, n}\left(\alpha_n\left(\begin{bmatrix}
				a_1 & \ldots & a_n\\
				0 & \ldots & 0\\
				\vdots & & \vdots\\
				0 & \ldots & 0
			\end{bmatrix}^*\begin{bmatrix}
				a_1 & \ldots & a_n\\
				0 & \ldots & 0\\
				\vdots & & \vdots\\
				0 & \ldots & 0
			\end{bmatrix}\right)\right)\hat{\eta}\hat{\eta}^*\begin{bmatrix}
				y_1\\
				\vdots\\
				y_n
			\end{bmatrix}\right\rangle_{E_2^n}.
		\end{align*}
		
		Using $\eta\circ \phi_1(a)=\phi_2(\alpha(a))\circ \eta$ for all $a\in A$, we have $\hat{\eta}\circ \phi_{1, n}([a^i_j])=\phi_{2, n}[\alpha_n([a^i_j])]\circ\hat{\eta}$ for all $[a^i_j]\in M_n(A)$. Indeed, given $x_1, \ldots, x_n\in E_1$
		\begin{align*}
			\left(\hat{\eta}\circ \phi_{1, n}([a^i_j])\right)\begin{bmatrix}
				x_1\\
				\vdots\\
				x_n
			\end{bmatrix}&=\hat{\eta}\left(\begin{bmatrix}
				\sum\limits_{j=1}^n \phi(a^1_j)x_j\\
				\vdots\\
				\sum\limits_{j=1}^n \phi(a^n_j)x_j
			\end{bmatrix}\right)=\begin{bmatrix}
				\sum\limits_{j=1}^n \eta\phi_1(a^1_j)x_j\\
				\vdots\\
				\sum\limits_{j=1}^n \eta\phi_1(a^n_j)x_j
			\end{bmatrix}=\begin{bmatrix}
				\sum\limits_{j=1}^n \phi_2(\alpha(a^1_j))\eta x_j\\
				\vdots\\
				\sum\limits_{j=1}^n \phi_2(\alpha(a^n_j))\eta x_j
			\end{bmatrix}\\
			&=\phi_{2, n}([\alpha (a^i_j)])\begin{bmatrix}
				\eta x_1\\
				\vdots\\
				\eta x_n
			\end{bmatrix}=\phi_{2, n}(\alpha_n[ a^i_j])\begin{bmatrix}
				\eta x_1\\
				\vdots\\
				\eta x_n
			\end{bmatrix}=\phi_{2, n}(\alpha_n([a^i_j]))\hat{\eta}\begin{bmatrix}
				x_1\\
				\vdots\\
				x_n
			\end{bmatrix}
		\end{align*}
		We now apply Lemma \ref{Lemma: Properties Lemma} to know for all $[a^i_j]\in M_n(A)$
		$$0\leq \phi_{1, n}([a^i_j]^*[a^i_j])\hat{\eta}^*\hat{\eta}\leq ||\hat{\eta}||^2\phi_{1, n}([a^i_j]^*[a^i_j])=||\eta||^2\phi_{1, n}([a^i_j]^*[a^i_j])$$
		and
		$$0\leq \phi_{2, n}(\alpha_n([a^i_j]^*[a^i_j]))\hat{\eta}\hat{\eta}^*\leq ||\hat{\eta}||^2\phi_{2, n}(\alpha_n([a^i_j]^*[a^i_j]))=||\eta||^2\phi_{2, n}(\alpha_{n}([a^i_j]^*[a^i_j])).$$
		Therefore, for all $x_1, \ldots, x_n\in E_1$ and $a_1, \ldots, a_n\in A$, we have
		\begin{align*}
			0&\leq \left\langle\begin{bmatrix}
				x_1\\
				\vdots\\
				x_n
			\end{bmatrix}, \phi_{1, n}\left(\begin{bmatrix}
				a_1 & \ldots & a_n\\
				0 & \ldots & 0\\
				\vdots & & \vdots\\
				0 & \ldots & 0
			\end{bmatrix}^*\begin{bmatrix}
				a_1 & \ldots & a_n\\
				0 & \ldots & 0\\
				\vdots & & \vdots\\
				0 & \ldots & 0
			\end{bmatrix}\right)\hat{\eta}^*\hat{\eta}\begin{bmatrix}
				x_1\\
				\vdots\\
				x_n
			\end{bmatrix}\right\rangle_{E_1^n}\\
			&\leq ||\eta||^2\left\langle\begin{bmatrix}
				x_1\\
				\vdots\\
				x_n
			\end{bmatrix}, \phi_{1, n}\left(\begin{bmatrix}
				a_1 & \ldots & a_n\\
				0 & \ldots & 0\\
				\vdots & & \vdots\\
				0 & \ldots & 0
			\end{bmatrix}^*\begin{bmatrix}
				a_1 & \ldots & a_n\\
				0 & \ldots & 0\\
				\vdots & & \vdots\\
				0 & \ldots & 0
			\end{bmatrix}\right)\begin{bmatrix}
				x_1\\
				\vdots\\
				x_n
			\end{bmatrix}\right\rangle_{E_1^n}
		\end{align*}
		showing
		$$0\leq\sum\limits_{i, j} \langle x_i, \phi_1(a_i^*a_j)\eta^*\eta x_j\rangle_{E_1} \leq ||\eta||^2\sum\limits_{i, j}\langle x_i, \phi_1(a^*_ia_j)x_j\rangle_{E_1}$$
		Hence
		$$\left\|\sum\limits_{i, j} \langle x_i, \phi_1(a_i^*a_j)\eta^*\eta x_j\rangle_{E_1}\right\|_B \leq||\eta||^2 \left\|\sum\limits_{i, j}\langle x_i, \phi_1(a^*_ia_j)x_j\rangle_{E_1}\right\|_B$$
		Similarly, for all $y_1, \ldots, y_n\in E_2$ and $a_1, \ldots, a_n\in A$ 
		\begin{align*}
			0&\leq \left\langle\begin{bmatrix}
				y_1\\
				\vdots\\
				e_n
			\end{bmatrix}, \phi_{2, n}\left(\alpha_n\left(\begin{bmatrix}
				a_1 & \ldots & a_n\\
				0 & \ldots & 0\\
				\vdots & & \vdots\\
				0 & \ldots & 0
			\end{bmatrix}^*\begin{bmatrix}
				a_1 & \ldots & a_n\\
				0 & \ldots & 0\\
				\vdots & & \vdots\\
				0 & \ldots & 0
			\end{bmatrix}\right)\right)\hat{\eta}\hat{\eta}^*\begin{bmatrix}
				y_1\\
				\vdots\\
				y_n
			\end{bmatrix}\right\rangle_{E_2^n}\\
			&\leq ||\eta||^2\left\langle\begin{bmatrix}
				y_1\\
				\vdots\\
				y_n
			\end{bmatrix}, \phi_{2, n}\left(\alpha_n\left(\begin{bmatrix}
				a_1 & \ldots & a_n\\
				0 & \ldots & 0\\
				\vdots & & \vdots\\
				0 & \ldots & 0
			\end{bmatrix}^*\begin{bmatrix}
				a_1 & \ldots & a_n\\
				0 & \ldots & 0\\
				\vdots & & \vdots\\
				0 & \ldots & 0
			\end{bmatrix}\right)\right)\begin{bmatrix}
				y_1\\
				\vdots\\
				y_n
			\end{bmatrix}\right\rangle_{E_2^n}
		\end{align*}
		showing
		$$0\leq\sum\limits_{i, j} \langle y_i, \phi_1(\alpha(a_i)^*\alpha(a_j))\eta\eta^* y_j\rangle_{E_2} \leq ||\eta||^2\sum\limits_{i, j}\langle y_i, \phi_2(\alpha(a_i)^*\alpha(a_j))y_j\rangle_{E_2}$$
		Hence
		$$\left\|\sum\limits_{i, j} \langle y_i, \phi_2(\alpha(a_i)^*\alpha(a_j))\eta\eta^* y_j\rangle_{E_2}\right\|_B \leq||\eta||^2 \left\|\sum\limits_{i, j}\langle y_i, \phi_2(\alpha(a_i)^*\alpha(a_j))y_j\rangle_{E_2}\right\|_B$$
	\end{proof}

	\begin{proposition}\label{Prop: Lifting 2}\

		\noindent Let $A$ and $B$ be $C^*$-algebras. Let $R_i=(E_i, \phi_i)\in\text{PosCor}(A, B)$ for $i=1, 2$. Let $(F_{\phi_i},\pi_{\phi_i}, V_{\phi_i})$ be the triple obtained when the KSGNS construction is applied to  $\phi_i:A\to \mathcal{L}(E_i)$, and let $R_{\phi_i}=(F_{\phi_i}, \pi_{\phi_i})\in\text{PosCor}(A, B)$ for $i=1, 2$. If $(\eta, \alpha)\in \text{Hom}_{\text{PosCor}(A, B)}(R_1, R_2)$, then there exists $\tilde{\eta}\in\mathcal{L}(F_{\phi_1}, F_{\phi_2})$ such that
		\begin{enumerate}
			\item for all $a\in A$ and $x\in E_1$, $\tilde{\eta}( a\dot{\otimes} x)=\alpha(a)\dot{\otimes} \eta(x)$.
			\item for all $a\in A$ and $y\in E_2$, $\tilde{\eta}^*( a\dot{\otimes} y)=\alpha^{-1}(a)\dot{\otimes} \eta^*(y)$.
			\item $(\tilde{\eta}, \alpha)\in \text{Hom}_{\text{PosCor}(A, B)}(R_{\phi_1}, R_{\phi_2})$
			\item $\tilde{\eta}\circ V_{\phi_1}=V_{\phi_2}\circ \eta$.
		\end{enumerate}
		
	\end{proposition}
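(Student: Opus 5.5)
The plan is to define $\tilde{\eta}$ first on the dense submodule $\frac{A\otimes_{\text{alg}}E_1}{N_{\phi_1}}$, show it is bounded there, extend it by continuity, and then check $(2)$–$(4)$ on simple tensors.

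\emph{Construction and boundedness.} The map $(a,x)\mapsto \alpha(a)\dot{\otimes}\eta(x)\in F_{\phi_2}$ is bilinear, so it induces a linear map $\xi:A\otimes_{\text{alg}}E_1\to F_{\phi_2}$. It is $B$-linear because $\eta$ is. For a finite sum $z=\sum_k a_k\otimes x_k$ we have
\begin{align*}
\|\xi(z)\|^2&=\Big\|\sum_{i,j}\langle \eta x_i,\phi_2(\alpha(a_i^*a_j))\eta x_j\rangle_{E_2}\Big\|_B
=\Big\|\sum_{i,j}\langle x_i,\phi_1(a_i^*a_j)\eta^*\eta x_j\rangle_{E_1}\Big\|_B\\
&\leq \|\eta\|^2\Big\|\sum_{i,j}\langle x_i,\phi_1(a_i^*a_j)x_j\rangle_{E_1}\Big\|_B=\|\eta\|^2\langle z,z\rangle_{\phi_1}\text{-norm}.
\end{align*}
The second equality uses $\eta^*\phi_2(\alpha(\cdot))\eta=\eta^*\eta\,\phi_1(\cdot)$, which holds by the intertwining relation together with Lemma \ref{Lemma: Properties Lemma}(2). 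The inequality is Lemma \ref{Lemma: Bound 2}(1).

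This single estimate does two jobs. First, $\xi$ vanishes on $N_{\phi_1}$, so it descends to the quotient; this is the well-definedness step. Second, the descended map is bounded with norm at most $\|\eta\|$, so it extends to a bounded $B$-linear map $\tilde{\eta}:F_{\phi_1}\to F_{\phi_2}$ satisfying $(1)$.

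\emph{The candidate adjoint.} Symmetrically, define $\gamma(b\dot{\otimes}y)=\alpha^{-1}(b)\dot{\otimes}\eta^*(y)$. To bound it, I would rewrite
$$\eta\,\phi_1(\alpha^{-1}(b_i^*b_j))\,\eta^* = \phi_2(b_i^*b_j)\,\eta\eta^*$$
using Lemma \ref{Lemma: Properties Lemma}(1),(2). Then I apply Lemma \ref{Lemma: Bound 2}(2) with $a_i:=\alpha^{-1}(b_i)$. This gives a bounded map $\gamma:F_{\phi_2}\to F_{\phi_1}$.

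\emph{Adjointness.} By linearity, continuity and density, it is enough to check $\langle\tilde{\eta}(a\dot{\otimes}x),b\dot{\otimes}y\rangle=\langle a\dot{\otimes}x,\gamma(b\dot{\otimes}y)\rangle$ on simple tensors. The left side equals
$$\langle \eta x,\phi_2(\alpha(a^*\alpha^{-1}(b)))y\rangle=\langle x,\phi_1(a^*\alpha^{-1}(b))\eta^*y\rangle,$$
where the equality is Lemma \ref{Lemma: Properties Lemma}(1). This is exactly the right side. Hence $\tilde{\eta}$ is adjointable with $\tilde{\eta}^*=\gamma$, which proves $(2)$.

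\emph{Intertwining, property $(3)$.} On simple tensors,
$$\pi_{\phi_2}(\alpha(a))\tilde{\eta}(b\dot{\otimes}x)=\alpha(ab)\dot{\otimes}\eta x=\tilde{\eta}\,\pi_{\phi_1}(a)(b\dot{\otimes}x).$$
Density and continuity then give $(3)$.

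\emph{Property $(4)$.} I would avoid approximate units here and work with adjoints instead. It suffices to show $V_{\phi_1}^*\tilde{\eta}^*=\eta^*V_{\phi_2}^*$. On $b\dot{\otimes}y$, the left side is $\phi_1(\alpha^{-1}(b))\eta^*y$ and the right side is $\eta^*\phi_2(b)y$. These agree by Lemma \ref{Lemma: Properties Lemma}(1). Density and continuity finish the argument, and taking adjoints gives $(4)$.

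The main obstacle is the first step: simultaneously obtaining well-definedness modulo $N_{\phi_1}$ and boundedness. Everything there hinges on rewriting the inner product so that Lemma \ref{Lemma: Bound 2} applies; the remaining steps are routine density arguments.
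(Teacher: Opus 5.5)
Your proof is correct. For (1)--(3) it follows the paper: the same estimate via Lemma \ref{Lemma: Bound 2}(1) gives both vanishing on $N_{\phi_1}$ and $\|\tilde{\eta}\|\leq\|\eta\|$. The adjoint is the analogous map built from $\eta^*$ and $\alpha^{-1}$, and adjointness and intertwining are checked on simple tensors. On bounding the adjoint you are more explicit than the paper's ``by a similar argument'': you use $\eta\,\phi_1(\alpha^{-1}(c))\,\eta^*=\phi_2(c)\,\eta\eta^*$ together with Lemma \ref{Lemma: Bound 2}(2) applied with $a_i=\alpha^{-1}(b_i)$.

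The genuine difference is in (4). The paper argues pointwise with an approximate unit: $\tilde{\eta}V_{\phi_1}x=\lim_\lambda\tilde{\eta}(a_\lambda\dot{\otimes}x)=\lim_\lambda\alpha(a_\lambda)\dot{\otimes}\eta(x)$. It then identifies the last limit with $V_{\phi_2}\eta(x)$, because $(\alpha(a_\lambda))$ is again an approximate unit and $V_{\phi_2}$ does not depend on the approximate unit used to build it. The paper justifies that independence in the preliminaries through the strict extension of $\phi_2$ to $M(A)$.

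You instead dualize. Using the closed formula $V_\phi^*(a\dot{\otimes}y)=\phi(a)y$, property (2), and Lemma \ref{Lemma: Properties Lemma}(1), you verify $V_{\phi_1}^*\tilde{\eta}^*=\eta^*V_{\phi_2}^*$ on simple tensors and take adjoints. Your route is limit-free and avoids the approximate-unit bookkeeping entirely. The trade-off is that the paper's route needs only (1) and continuity, while yours makes (4) depend on (2) and on all four maps being adjointable, which they are.
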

	
	\begin{proof}\
		
		\noindent As the map
		$$A\times E_1\to F_{\phi_2}\quad\quad (a, x)\mapsto \alpha(a)\dot{\otimes}\eta(x)$$
		is bilinear and $B$-linear in the second slot, then we obtain a unique $B$-linear map
		$$\xi:A\otimes_{\text{alg}}E_1\to F_{\phi_2}\quad\quad \xi\left(\sum\limits_{k=1}^n a_k\otimes x_k\right)=\sum\limits_{k=1}^n \alpha(a_k)\dot{\otimes} \eta(x_k)$$
		We observe
		\begin{align*}
			\left\|\xi\left(\sum\limits_{k=1}^n a_k\otimes x_k\right)\right\|_{\phi_2}^2&=\left\|\sum\limits_{k=1}^n\alpha(a_k)\dot{\otimes} \eta(x_k)\right\|_{\phi_2}^2=\left\|\sum\limits_{i, j}\langle \eta(x_i), \phi_2(\alpha(a_i^*)\alpha(a_j))\eta(x_j)\rangle_{E_2}\right\|_B\\
			&=\left\|\sum\limits_{i, j}\langle x_i, \eta^*\phi_2(\alpha(a_i^*a_j))\eta(x_j)\rangle_{E_1}\right\|_B=\left\|\sum\limits_{i, j}\langle x_i, \phi_1(a_i^*a_j)\eta^*\eta x_k\rangle_{E_1}\right\|_B\\
			&\leq ||\eta||^2\left\|\sum\limits_{i, j}\langle x_i, \phi_1(a_i^*a_j) x_k\rangle_{E_1}\right\|_B=||\eta||^2\left\| \sum\limits_{k=1}^n a_k\otimes x_k\right\|_{\phi_1}^2
		\end{align*}
		where the inequality follows from Lemma \ref{Lemma: Bound 2}. Therefore we obtain a well-defined bounded $B$-linear map
		$$\tilde{\xi}:\frac{A\otimes_{\text{alg}}E_1}{N_{\phi_1}}\to F_{\phi_2}\quad\quad \tilde{\xi}\left(\sum\limits_{k=1}^n a_k\dot{\otimes} x_k\right)=\sum\limits_{k=1}^n \alpha(a_k)\dot{\otimes}\eta(x_k)$$
		Denote the extension of $\tilde{\xi}$ to $F_{\phi_1}$ by $\tilde{\eta}$. By a similar argument, we obtain a well-defined bounded $B$-linear map
		$$\tilde{\gamma}:F_{\phi_2}\to F_{\phi_1}\quad\quad \tilde{\gamma}\left(\sum\limits_{k=1}^n a_k\dot{\otimes} y_k\right)=\sum\limits_{k=1}^n \alpha^{-1}(a_k)\dot{\otimes}\eta^*(y_k).$$

		We claim $\tilde{\eta}=\tilde{\gamma}^*$. As $\frac{A\otimes_{\text{alg}} E_1}{N_{\phi_1}}\subset F_{\phi_1}$ and $\frac{A\otimes_{\text{alg}} E_2}{N_{\phi_2}}\subset F_{\phi_2}$ are dense and both $\tilde{\gamma}$ and $\tilde{\eta}$ are continuous, then it suffices to check $\tilde{\eta}=\tilde{\gamma}^*$ on these subspaces. Using linearity, it suffices to prove the claim on elements of the form $a_1\dot{\otimes} x\in \frac{A\otimes_{\text{alg}} E_1}{N_{\phi_1}}$ and $a_2\dot{\otimes} y\in\frac{A\otimes_{\text{alg}} E_2}{N_{\phi_2}}$. Given such elements, we have
		\begin{align*}
			\langle\tilde{\gamma}(a_2\dot{\otimes} y), a_1\dot{\otimes} x\rangle_{\phi_1}&=\langle \alpha^{-1}(a_2)\dot{\otimes} \eta^*(y), a_1\dot{\otimes} x\rangle_{\phi_1}=\langle \eta^*(y), \phi_1(\alpha^{-1}(a_2)^*a_1) x\rangle_{E_1}\\
			&=\langle y, \eta\phi_1(\alpha^{-1}(a_2)^*a_1)x\rangle_{E_2}=\langle  y, \phi_2(a_2^*\alpha(a_1))\eta(x)\rangle_{E_2}\\
			&=\langle a_2\dot{\otimes} y, \alpha(a_1)\dot{\otimes} \eta(x)\rangle_{\phi_2}=\langle a_2\dot{\otimes} y, \tilde{\eta}(a_1\dot{\otimes} x)\rangle_{\phi_2}
		\end{align*}
		Thus $\tilde{\eta}=\tilde{\gamma}^*$ showing $\tilde{\eta}\in \mathcal{L}(F_{\phi_1}, F_{\phi_2})$. From the construction of $\tilde{\eta}$ and $\tilde{\eta}^*$, we know $\tilde{\eta}$ satisfies $(1)$ and $(2)$ of the assertion.
		
		We now verify $(\tilde{\eta}, \alpha)$ intertwines $\pi_{\phi_1}$ and $\pi_{\phi_2}$. Fix $a'\in A$. As $\pi_{\phi_2}(\alpha(a'))\circ\tilde{\eta}$ and $\tilde{\eta}\circ \pi_{\phi_1}(a)$ are both continuous and $\frac{A\otimes_{\text{alg}}E_1}{N_{\phi_1}}$ is dense in $F_{\phi_1}$, then it suffices to check the two maps are the same on the dense subspace. Using linearity, it suffices to show the claim holds on elements of the form $a\dot{\otimes} x\in F_{\phi_1}$. Given such an element, we have
		\begin{align*}
			(\pi_{\phi_2}(\alpha(a'))\circ\tilde{\eta})(a\dot{\otimes} x)&=\pi_{\phi_2}(\alpha(a'))(\alpha(a)\dot{\otimes} \eta(x))=\alpha(a')\alpha(a)\dot{\otimes} \eta(x)\\
			&=\alpha(a'a)\dot{\otimes} \eta(x)=\tilde{\eta}(a'a\dot{\otimes} x)=(\tilde{\eta}\circ\pi_{\phi_1}(a'))(a\dot{\otimes} x)
		\end{align*}
		Therefore we conclude $(\tilde{\eta}, \alpha)$ intertwines $\pi_{\phi_1}$ and $\pi_{\phi_2}$. Hence $(\tilde{\eta}, \alpha)\in\text{Hom}_{\text{PosCor}(A, B)}(F_{\phi_1}, F_{\phi_2})$.
		
		Finally, we verify $(4)$. Let $(a_\lambda)_{\lambda\in \Lambda}$ be an approximate unit in $A$, then $(\alpha(a_\lambda))_{\lambda\in\Lambda}$ is also an approximate unit in $A$. Therefore, given $x\in E_1$ we have
		\begin{align*}
			\tilde{\eta}V_{\phi_1}x&=\lim\limits_{\lambda\to \infty}\tilde{\eta}\left(a_\lambda\dot{\otimes} x\right)=\lim\limits_{\lambda\to \infty}\alpha(a_\lambda)\dot{\otimes} \eta(x)=V_{\phi_2}(\eta(x))
		\end{align*}
		proving the claim.
		
	\end{proof}
	
	\begin{theorem}\label{Theorem: KSGNS Endofucntor}\
		
		\noindent Let $A$ and $B$ be $C^*$-algebras. Then the assignment
		\begin{align*}
			\text{KSGNS}:\text{PosCor}(A, B)&\to \text{PosCor}(A, B)
			\\
			(E, \phi)&\mapsto (F_\phi, \pi_\phi)\\
			(\eta, \alpha)&\mapsto (\tilde{\eta}, \alpha)
		\end{align*}
		defines a covariant endofunctor on $\text{PosCor}(A, B)$. 
		
	\end{theorem}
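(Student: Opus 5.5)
The plan is to verify the three ingredients of a functor: well-definedness on objects, well-definedness on morphisms, and preservation of identities and composition. On objects, for $(E,\phi)\in\text{PosCor}(A,B)$ the KSGNS Construction produces $(F_\phi,\pi_\phi)$ with $F_\phi$ a Hilbert $B$-module and $\pi_\phi$ a non-degenerate $*$-algebra homomorphism. A $*$-homomorphism is completely positive, and non-degeneracy gives strictness: for an approximate unit $(a_\lambda)$, the net $\pi_\phi(a_\lambda)$ converges strictly to $1$, hence is strictly Cauchy. So $(F_\phi,\pi_\phi)$ is an object of $\text{PosCor}(A,B)$, in fact of $\text{Cor}(A,B)$. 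On morphisms, Proposition \ref{Prop: Lifting 2}(3) says that $(\tilde\eta,\alpha)$ is a morphism $R_{\phi_1}\to R_{\phi_2}$.

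There is one point to address first. Proposition \ref{Prop: Lifting 2} only asserts existence of $\tilde\eta$. I will record that $\tilde\eta$ is uniquely determined by property (1), since $\tilde\eta$ is bounded and the elements $a\dot\otimes x$ span the dense subspace $\frac{A\otimes_{\text{alg}}E_1}{N_{\phi_1}}$ of $F_{\phi_1}$. This makes the assignment $(\eta,\alpha)\mapsto(\tilde\eta,\alpha)$ a genuine function, and it is the tool for the functoriality checks.

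For identities, take $(1_E,1_A)$. Its lift satisfies $\tilde{1}_E(a\dot\otimes x)=a\dot\otimes x$ on the dense subspace, so by uniqueness $\tilde 1_E=1_{F_\phi}$. The image is therefore $(1_{F_\phi},1_A)$, the identity morphism.

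For composition, take $(\eta,\alpha):R_1\to R_2$ and $(\xi,\beta):R_2\to R_3$. The composite $(\xi\eta,\beta\alpha)$ lifts to $\widetilde{\xi\eta}$, which is characterized by $a\dot\otimes x\mapsto \beta(\alpha(a))\dot\otimes \xi(\eta(x))$. On the other hand,
$$\tilde\xi\tilde\eta(a\dot\otimes x)=\tilde\xi(\alpha(a)\dot\otimes\eta(x))=\beta\alpha(a)\dot\otimes\xi\eta(x).$$
Both operators are bounded and agree on the dense span of simple tensors, so they are equal. Hence $\text{KSGNS}((\xi,\beta)\circ(\eta,\alpha))=\text{KSGNS}(\xi,\beta)\circ\text{KSGNS}(\eta,\alpha)$, and the functor is covariant.

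I expect no serious obstacle; the substantive analytic work, namely well-definedness and boundedness of $\tilde\eta$, was already done in Lemma \ref{Lemma: Bound 2} and Proposition \ref{Prop: Lifting 2}. The step needing the most care is the uniqueness and density argument, together with confirming that $\pi_\phi$ is strict so that the image really lies in $\text{PosCor}(A,B)$.
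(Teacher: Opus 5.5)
Your proposal is correct and takes the same route as the paper: objects come from the KSGNS construction, morphisms from Proposition \ref{Prop: Lifting 2}, and functoriality follows by comparing the formula $a\dot{\otimes}x\mapsto\alpha(a)\dot{\otimes}\eta(x)$ on the dense span of simple tensors. You also make explicit two points the paper's one-line proof leaves implicit. First, $\pi_\phi$ is strict, so the image lies in $\text{PosCor}(A,B)$. Second, $\tilde{\eta}$ is uniquely determined by property (1), which is what makes the assignment well defined and the identity and composition checks valid.
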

	
	\begin{proof}\
		
		\noindent The KSGNS construction shows the assignment is well-defined on objects. Proposition \ref{Prop: Lifting 2} shows the assignment on morphisms is well-defined. From the construction of $\tilde{\eta}$ in Proposition \ref{Prop: Lifting 2}, it is clear the assignment is functorial in a covariant manner.
		
	\end{proof}
	
	We now show the endofunctor $\text{KSGNS}$ is idempotent in the sense that $\text{KSGNS}^2$ is naturally isomorphic to $\text{KSGNS}$. The proof of this claim comes down to the uniqueness of the data obtained from the KSGNS construction. Suppose $\phi:A\to \mathcal{L}(E)$ is a strict completely positive map for $E$ a Hilbert $B$ module. Applying the KSGNS construction to this correspondence yields a unitarily unique triple $(F_\phi, \pi_\phi, V_\phi)$. As $(F_\phi, \pi_\phi)$ is a $C^*$-correspondence, we can apply the KSGNS construction again to obtain a unitarily unique triple $(F_{\pi_\phi}, \pi_{\pi_\phi}, V_{\pi_\phi})$. Since
	$(F_\phi, \pi_\phi, I_{F_\phi})$ satisfy the two defining conditions for the triple when applying the KSGNS construction to $(F_\phi, \pi_\phi)$, then there exists a unitary $U\in \mathcal{L}(F_\phi, F_{\pi_\phi})$ such that for all $a\in A$, $\pi_{\pi_\phi}(a)=U\pi_\phi(a)U^*$ and $V_{\pi_{\phi}}=UI_{F_\phi}=U$. Thus the pair $(V_{\pi_\phi}, 1_A)$ defines an isomorphism from $(\pi_\phi, F_\phi)$ to $(\pi_{\pi_\phi}, F_{\pi_{\pi_\phi}})$ in $\text{PosCor}(A, B)$. Using these isomorphisms and an approximate unit in $A$, we obtain a natural isomorphism from $\text{KSGNS}$ to $\text{KSGNS}^2$.
	
	\begin{proposition}\label{Prop: Idempotency of KSGSN}\

		\noindent Let $A$ and $B$ be $C^*$-algebras. As functors on $\text{PosCor}(A, B)$, $\textnormal{KSGNS}^2$ is naturally isomorphic to $\textnormal{KSGNS}$.
		
	\end{proposition}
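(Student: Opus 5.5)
We build the natural isomorphism componentwise from the maps $V_{\pi_\phi}$ described just before the statement, and then check naturality on the dense algebraic tensors. For each object $R=(E,\phi)\in\text{PosCor}(A,B)$, write $(F_\phi,\pi_\phi,V_\phi)$ for its KSGNS triple. Apply the construction again to $(F_\phi,\pi_\phi)$, which is strict since $\pi_\phi$ is non-degenerate, to get $(F_{\pi_\phi},\pi_{\pi_\phi},V_{\pi_\phi})$. By the uniqueness clause of the KSGNS construction, applied to the competing triple $(F_\phi,\pi_\phi,I_{F_\phi})$, the map $V_{\pi_\phi}$ is a unitary with $\pi_{\pi_\phi}(a)V_{\pi_\phi}=V_{\pi_\phi}\pi_\phi(a)$. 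Hence $\theta_R:=(V_{\pi_\phi},1_A)$ is an isomorphism $\text{KSGNS}(R)\to\text{KSGNS}^2(R)$, with inverse $(V_{\pi_\phi}^*,1_A)$.

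To make later computations concrete, we record the formula for $V_{\pi_\phi}$ on the dense subspace. For an approximate unit $(a_\lambda)$ of $A$ and $z\in F_\phi$,
$$V_{\pi_\phi}z=\lim_\lambda a_\lambda\dot{\otimes}z\in F_{\pi_\phi}.$$
Its adjoint is given by $V_{\pi_\phi}^*(a\dot{\otimes}z)=\pi_\phi(a)z$.

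For naturality, fix a morphism $(\eta,\alpha):R_1\to R_2$. Write $\tilde\eta\in\mathcal{L}(F_{\phi_1},F_{\phi_2})$ for its image under KSGNS, and $\tilde{\tilde\eta}\in\mathcal{L}(F_{\pi_{\phi_1}},F_{\pi_{\phi_2}})$ for the image of $(\tilde\eta,\alpha)$, both from Proposition \ref{Prop: Lifting 2}. Composition in $\text{PosCor}(A,B)$ composes the automorphism components as $1_A\circ\alpha=\alpha\circ 1_A$, so the $\text{Aut}(A)$ parts agree automatically. It therefore suffices to show
$$\tilde{\tilde\eta}\circ V_{\pi_{\phi_1}}=V_{\pi_{\phi_2}}\circ\tilde\eta.$$
This is exactly item (4) of Proposition \ref{Prop: Lifting 2}, applied to the morphism $(\tilde\eta,\alpha):(F_{\phi_1},\pi_{\phi_1})\to(F_{\phi_2},\pi_{\phi_2})$, with $V_{\pi_{\phi_i}}$ playing the role of $V_{\phi_i}$. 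So naturality follows immediately.

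As a sanity check, we can also verify this directly. On $z=a\dot{\otimes}x$ we have
$$\tilde{\tilde\eta}(a_\lambda\dot{\otimes}z)=\alpha(a_\lambda)\dot{\otimes}\tilde\eta z,$$
and $(\alpha(a_\lambda))$ is again an approximate unit, so the limit is $V_{\pi_{\phi_2}}\tilde\eta z$. Here we use that $V_{\pi_\phi}$ is independent of the choice of approximate unit, as noted in the preliminaries.

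The only real obstacle is making sure the uniqueness clause genuinely identifies $V_{\pi_\phi}$ as the unitary, and not merely some abstract $U$. That requires checking that $(F_\phi,\pi_\phi,I)$ satisfies both defining conditions for the strict map $\pi_\phi$. Condition (1) is $I\pi_\phi(a)I=\pi_\phi(a)$, which is immediate. Condition (2), density of $\pi_\phi(A)F_\phi$, is exactly non-degeneracy of $\pi_\phi$. Uniqueness then gives a unitary $U$ with $V_{\pi_\phi}=UI=U$. Everything else is a direct application of Proposition \ref{Prop: Lifting 2}.
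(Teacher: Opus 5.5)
Your proposal is correct and follows essentially the same route as the paper: the components are $(V_{\pi_\phi},1_A)$, shown to be isomorphisms by KSGNS uniqueness against the triple $(F_\phi,\pi_\phi,I_{F_\phi})$, and naturality reduces to $\tilde{\tilde\eta}\circ V_{\pi_{\phi_1}}=V_{\pi_{\phi_2}}\circ\tilde\eta$. The only difference is cosmetic: you obtain this identity by citing item (4) of Proposition \ref{Prop: Lifting 2} for the lifted morphism $(\tilde\eta,\alpha)$, while the paper re-derives it on elements $a\dot{\otimes} x$ by the same approximate-unit argument (using the approximate unit $(\alpha^{-1}(a_\lambda))$ where you use $(\alpha(a_\lambda))$).
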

	
	\begin{proof}\
		
		\noindent Fix objects $(E_1, \pi_1), (E_2, \phi_2)\in \text{PosCor}(A, B)$, and let $(\eta, \alpha)$ be a morphism from $(E_1, \phi_1)$ to $(E_2, \phi_2)$. To prove naturality between the two functors, we verify the following diagram commutes:
		\begin{center}
			\begin{tikzcd}
				{(F_{\phi_1}, \pi_{\phi_1})} &&& {(F_{\pi_{\phi_1}}, \pi_{\pi_{\phi_1}})} \\
				\\
				\\
				{(F_{\phi_2}, \pi_{\phi_2})} &&& {(F_{\pi_{\phi_2}}, \pi_{\pi_{\phi_2}})}
				\arrow["{(V_{\pi_{\phi_1}}, 1_A)}"{description}, from=1-1, to=1-4]
				\arrow["{(\tilde{\eta}, \alpha)}"{description}, from=1-1, to=4-1]
				\arrow["{(\tilde{\tilde{\eta}}, \alpha)}"{description}, from=1-4, to=4-4]
				\arrow["{(V_{\pi_{\phi_2}}, 1_A)}"{description}, from=4-1, to=4-4]
			\end{tikzcd}
		\end{center}
		From how we have defined composition, we only need check $V_{\pi_{\phi_2}}\circ\tilde{\eta}=\tilde{\tilde{\eta}}\circ V_{\pi_{\phi_1}}$. As $\frac{A\otimes_{\text{alg}} E_1}{N_{\phi_1}}$ is a dense subspace of $F_{\phi_1}$ and both the $V_{\pi_{\phi_2}}\circ\tilde{\eta}$ and $\tilde{\tilde{\eta}}\circ V_{\pi_{\phi_1}}$ are continuous, it suffices to show the equality on $\frac{A\otimes_{\text{alg}} E_1}{N_{\phi_1}}$. Using linearity, it suffices to show equality on elements of the form $a\dot{\otimes} x\in \frac{A\otimes_{\text{alg}}E_1}{N_{\phi_1}}$. Let $(a_\lambda)_{\lambda\in\Lambda}$ be an approximate unit for $A$, then $(\alpha^{-1}(a_\lambda))_{\lambda\in\Lambda}$ is an approximate unit in $A$. Therefore
		\begin{align*}
			(V_{\pi_{\phi_2}}\circ\tilde{\eta})(a\dot{\otimes} x)&=V_{\pi_{\phi_2}}(\alpha(a)\dot{\otimes} \eta(x))=\lim\limits_{\lambda\to \infty} a_{\lambda}\dot{\otimes} (\alpha(a)\dot{\otimes} \eta(x))\\
			&=\lim\limits_{\lambda\to \infty} a_\lambda\dot{\otimes} \tilde{\eta}(a\dot{\otimes} x)=\lim\limits_{\lambda\to \infty} \tilde{\tilde{\eta}}\left(\alpha^{-1}(a_\lambda)\dot{\otimes} (a\dot{\otimes} x)\right)=(\tilde{\tilde{\eta}}\circ V_{\pi_{\phi_1}})(a\dot{\otimes} x)
		\end{align*}
		allowing us to conclude the diagram does indeed commute.
		
	\end{proof}
	
	\subsection{KSGNS construction and the interior tensor product}
	
	Having shown the KSGNS construction defines an idempotent endofunctor on $\text{PosCor}(A, B)$, we would like to extend the result to $\text{PosCor}(A)$. Due to the use of the interior tensor product in the morphisms and composition rule of $\text{PosCor}(A)$, we first determine the compatibility of the KSGNS construction and the interior tensor product. As we can realize both the KSGNS construction and interior tensor product as functors on common categories, we can ask whether the two functors commute. As we prove in the following theorem, this is indeed the case.
	
	\begin{theorem}\label{Theorem: KSNGS commutes with ITP}\
		
		\noindent Let $A, B,$ and $C$ be $C^*$-algebras. If $(F, \pi)\in\text{Cor}(B, C)$, then, as functors from $\text{PosCor}(A, B)$ to $\text{PosCor}(A, C)$, $\textnormal{KSGNS}\circ (-)\otimes_{\pi}F$ is naturally isomorphic to 
		$(-)\otimes_\pi F\circ\textnormal{KSGNS}$.
		
	\end{theorem}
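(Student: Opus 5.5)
For each object $(E,\phi)\in\text{PosCor}(A,B)$ I will build a unitary between the two resulting modules and check that it intertwines with respect to $1_A$ and is natural. Write $\tilde\phi=\phi(-)\otimes_\pi I$. One side is $\textnormal{KSGNS}((E,\phi)\otimes_\pi F)=(A\otimes_{\tilde\phi}(E\otimes_\pi F),\pi_{\tilde\phi})$. The other is $(F_\phi\otimes_\pi F,\pi_\phi(-)\otimes_\pi I)$, where $F_\phi=A\otimes_\phi E$. The natural candidate is
$$W:F_\phi\otimes_\pi F\to F_{\tilde\phi},\qquad W\big((a\dot\otimes x)\dot\otimes f\big)=a\dot\otimes(x\dot\otimes f).$$

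The cleanest way to get $W$ is to avoid well-definedness computations and use the uniqueness part of the KSGNS Construction theorem. I will show that the triple $(F_\phi\otimes_\pi F,\ \pi_\phi(-)\otimes_\pi I,\ \widehat{V_\phi})$ satisfies the two defining conditions for the KSGNS dilation of $\tilde\phi$. Here $\widehat{V_\phi}\in\mathcal{L}(E\otimes_\pi F,F_\phi\otimes_\pi F)$ is the lift $x\dot\otimes f\mapsto V_\phi x\dot\otimes f$. This lift exists by the same argument as Proposition \ref{Prop: Lift 1}, which only used adjointability of $\eta$, and it has adjoint $\widehat{V_\phi^*}$.

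The first condition holds on simple tensors:
$$\widehat{V_\phi}^*(\pi_\phi(a)\otimes_\pi I)\widehat{V_\phi}(x\dot\otimes f)=V_\phi^*\pi_\phi(a)V_\phi x\dot\otimes f=\phi(a)x\dot\otimes f=\tilde\phi(a)(x\dot\otimes f),$$
and it extends by density and continuity. The second condition is density. The span of $(\pi_\phi(a)\otimes I)\widehat{V_\phi}(x\dot\otimes f)=\pi_\phi(a)V_\phi x\dot\otimes f$ is dense because $\pi_\phi(A)V_\phi E$ is dense in $F_\phi$, and simple tensors $z\dot\otimes f$ with $z\in F_\phi$ span a dense subspace. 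The estimate $\|z\dot\otimes f\|\le\|z\|\|f\|$ then passes density through the tensor product. We also need $\pi_\phi(-)\otimes_\pi I$ to be a non-degenerate $*$-homomorphism. It is a $*$-homomorphism by composition, and non-degeneracy follows from the density just shown.

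Uniqueness then gives a unitary $W:F_\phi\otimes_\pi F\to F_{\tilde\phi}$ with $\pi_{\tilde\phi}(a)W=W(\pi_\phi(a)\otimes_\pi I)$ and $W\widehat{V_\phi}=V_{\tilde\phi}$. So $(W,1_A)$ is an isomorphism in $\text{PosCor}(A,C)$. I also want the explicit formula. On $\pi_\phi(a)V_\phi x\dot\otimes f=(a\dot\otimes x)\dot\otimes f$ we have
$$W\big((a\dot\otimes x)\dot\otimes f\big)=\pi_{\tilde\phi}(a)V_{\tilde\phi}(x\dot\otimes f)=a\dot\otimes(x\dot\otimes f),$$
using $\pi_\phi(a)V_\phi x=a\dot\otimes x$, which follows from the approximate-unit description of $V_\phi$.

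For naturality, let $(\eta,\alpha):(E_1,\phi_1)\to(E_2,\phi_2)$. The first route gives $(\widetilde{\hat\eta},\alpha)$ and the second gives $(\widehat{\tilde\eta},\alpha)$. I must check $W_2\circ\widehat{\tilde\eta}=\widetilde{\hat\eta}\circ W_1$. By density, continuity and linearity it suffices to check elements $(a\dot\otimes x)\dot\otimes f$. Both sides give $\alpha(a)\dot\otimes(\eta(x)\dot\otimes f)$, by Proposition \ref{Prop: Lift 1}(1), Proposition \ref{Prop: Lifting 2}(1) and the formula for $W$.

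The main obstacle is the density step. One has to be careful that the spans of the simple tensors $(a\dot\otimes x)\dot\otimes f$ are really dense in the completed double tensor product, so that the unitary's formula and the naturality check on these elements suffice. I will handle it with the norm estimate $\|z\dot\otimes f\|\le\|z\|\,\|\pi\|\,\|f\|$ and approximation of $z$ by finite sums in $\frac{A\otimes_{\text{alg}}E}{N_\phi}$.
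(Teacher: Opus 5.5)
Your proof is correct, and it takes a genuinely different route from the paper's.

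\textbf{The paper's route.} The paper builds the unitary by hand, in the direction $F_{\tilde\phi}\to F_\phi\otimes_\pi F$. It defines $a\otimes(x\dot{\otimes}f)\mapsto(a\dot{\otimes}x)\dot{\otimes}f$ on $A\otimes_{\text{alg}}(E\otimes_B F)$ and checks by an inner-product computation that this map is isometric for the semi-norm, so it descends to the quotient by $N_{\tilde\phi}$. It then uses the estimate $\|a\dot{\otimes}(x-y)\|\le\|\tilde\phi(a^*a)\|^{1/2}\|x-y\|$ to pass from $E\otimes_B F$ to $E\otimes_\pi F$ in the second slot. Surjectivity comes from dense range, and the intertwining relation and naturality are then checked on elementary tensors.

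\textbf{Your route.} You instead recognize $(F_\phi\otimes_\pi F,\ \pi_\phi(-)\otimes_\pi I,\ \widehat{V_\phi})$ as a KSGNS dilation of $\tilde\phi$ and invoke the uniqueness clause. Well-definedness, isometry, surjectivity and the intertwining relation then come for free. Your only real inputs are:
\begin{itemize}
\item the lift $\widehat{V_\phi}$ of the non-intertwining map $V_\phi$, which Lemma \ref{Lemma: Bound 1} and Proposition \ref{Prop: Lift 1} do provide, since their proofs use only adjointability;
\item the density of the elements $(a\dot{\otimes}x)\dot{\otimes}f$, which you handle correctly via $\|z\dot{\otimes}f\|\le\|z\|\|f\|$.
\end{itemize}
The resulting unitary is the inverse of the paper's, which is immaterial for a natural isomorphism.

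\textbf{What each buys.} Your argument shows the isomorphism is canonical, since it is forced by the universal property. It also delivers the extra compatibility $W\widehat{V_\phi}=V_{\tilde\phi}$ with the dilation maps. That relation is essentially the commutative square the paper verifies separately, by an approximate-unit computation, in the proof of Theorem \ref{Theorem: Dilation Result}. The paper's argument is more elementary and self-contained, since it does not lean on the uniqueness theorem. It also produces the explicit formula directly, whereas you recover it afterwards from $\pi_\phi(a)V_\phi x=a\dot{\otimes}x$.
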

	
	\begin{proof}\
		
		\noindent We first establish an isomorphism between the resulting objects when the two functors are applied to a  fixed object $(E, \phi)$ in $\text{PosCor}(A, B)$. Let $(A\otimes_{\tilde{\phi}}(E\otimes_\pi F), \pi_{\tilde{\phi}})$ be the object obtained from applying the functor $\text{KSGNS}\circ ((-)\otimes_\pi F)$, and let $((A\otimes_{\phi} E)\otimes_\pi F), \widetilde{\pi_{\phi}})$ be the object obtained from applying the functor $((-)\otimes_\pi F)\circ\text{KSGNS}$. 
		
		Define
		$$V:A\otimes_{\text{alg}}(E\otimes_B F)\to (A\otimes_\phi E)\otimes_\pi F\quad\quad U(a\otimes (x\dot{\otimes} f))=(a\dot{\otimes} x)\dot{\otimes} f$$
		It is clear $U$ is well-defined as well as $C$-linear. Furthermore, as
		\begin{align*}
			\left\|V\left(\sum\limits_{k=1}^n a_k\otimes\left(\sum\limits_{\ell=1}^m x_{\ell, k}\dot{\otimes} f_{\ell, k}\right)\right)\right\|^2&=\left\|\sum\limits_{k=1}^n\sum\limits_{\ell=1}^m(a_k\dot{\otimes} x_{\ell, k})\dot{\otimes} f_{\ell, k}\right\|^2\\
			&=\left\|\sum\limits_{i, j=1}^n\sum\limits_{k, \ell=1}^m\langle f_{k, i}, \pi(\langle x_{k, i}, \phi(a_i^*a_j)x_{\ell, j}\rangle_E)f_{\ell, j}\rangle_F\right\|_C\\
			&=\left\|\sum\limits_{k=1}^n a_k\otimes\left(\sum\limits_{\ell=1}^m x_{\ell, k}\dot{\otimes} f_{\ell, k}\right)\right\|^2
		\end{align*}
		then $V$ is an isometry with respect to the semi-norm on $A\otimes_{\text{alg}}(E\otimes_B F)$ and norm on $(A\otimes_\phi E)\otimes_\pi F$. Therefore, we obtain a $C$-linear isometry
		$$V:\frac{A\otimes_{\text{alg}}(E\otimes_B F)}{N_{\tilde{\phi}}}\to (A\otimes_\phi E)\otimes_\pi F\quad\quad U(a\dot{\otimes} (x\dot{\otimes} f))=(a\dot{\otimes} x)\dot{\otimes} f$$
		
		We now claim that we can extend $V$ to $A\otimes_{\tilde{\phi}}(E\otimes_\pi F)$ and that the extension is unitary. For all $a\dot{\otimes} x, a\dot{\otimes} y\in \frac{A\otimes_{\text{alg}}(E\otimes_\pi F)}{N_{\tilde{\phi}}}$, we have
		\begin{align*}
			||a\dot{\otimes} (x-y)||^2&=||\langle x-y, \tilde{\phi}(a^*a)(x-y)\rangle_{E\otimes_\pi F}||_C\leq ||\sqrt{\tilde{\phi}(a^*a)}||^2*||x-y||^2_{E\otimes_\pi F}
		\end{align*}
		which implies $\frac{A\otimes_{\text{alg}}(E\otimes_B F)}{N_{\tilde{\phi}}}$ is dense in $\frac{A\otimes_{\text{alg}} (E\otimes_\pi F)}{N_{\tilde{\phi}}}$. Therefore we can extend $V$ to a $C$-linear isometry on $A\otimes_{\tilde{\phi}} (E\otimes_\pi F)$. As the map $V$ has image $(\frac{A\otimes_{\text{alg}} E}{N_\phi})\otimes_{C}F$ which is dense in $(A\otimes_{\tilde{\phi}}E)\otimes_\pi F$, then the extension of $V$ is surjective. Thus the extension defines a unitary map from $A\otimes_{\tilde{\phi}}(E\otimes_\pi F)$ to $(A\otimes_\phi E)\otimes_\pi F$. Denote the extentions of $V$ also as $V$. 
		
		Let us now verify $V$ intertwines $\pi_{\tilde{\phi}}$ and $\widetilde{\pi_\phi}$. Fix $a'\in A$. As both $V\circ \pi_{\tilde{\phi}}(a')$ and $\widetilde{\pi_\phi}(a')\circ V$ are continuous and $\frac{A\otimes_{\text{alg}}(E\otimes_B F)}{N_{\tilde{\phi}}}$ is dense in $A\otimes_{\tilde{\phi}}(E\otimes_\pi F)$, then it suffices to show the two maps agree on this dense subspace. Using linearity, it suffices to check equality on elements of the form $a\dot{\otimes}(x\dot{\otimes}f)\in \frac{A\otimes_{\text{alg}}(E\otimes_B F)}{N_{\tilde{\phi}}}$. Given such an element, we have
		\begin{align*}
			(V\circ \pi_{\tilde{\phi}}(a'))(a\dot{\otimes}(x\dot{\otimes}f))&=U(a'a\dot{\otimes}(x\dot{\otimes}f))=(a'a\dot{\otimes}x)\dot{\otimes}f\\
			&=\widetilde{\pi_\phi}(a')((a\dot{\otimes}x)\dot{\otimes}f)=(\widetilde{\pi_\phi}(a')\circ V)(a\dot{\otimes}(x\dot{\otimes}f))
		\end{align*}
		Hence $(V, 1_A)$ defines a isomorphism from $(A\otimes_{\tilde{\phi}}(E\otimes_\pi F), \pi_{\tilde{\phi}})$ to $((A\otimes_{\phi} E)\otimes_\pi F), \widetilde{\pi_{\phi}})$. 
		
		Having constructed isomorphisms for each object, we now verify naturality to establish the natural isomorphism. Suppose we have objects $(E_1, \phi_1), (E_2, \phi_2)\in \text{PosCor}(A, B)$. Let $V_1$ and $V_2$ be the corresponding maps constructed above if were where to replace $E$ with $E_1$ and $E_2$, respectively. To prove naturality, we claim the following diagram commutes:
		\begin{center}
			\begin{tikzcd}
				{(A\otimes_{\tilde{\phi}_1}(E_1\otimes_\pi F), \pi_{\tilde{\phi}_1})} &&&& {((A\otimes_{\phi_1} E)\otimes_\pi F, \tilde{\pi}_{\phi_1})} \\
				\\ 
				\\
				{(A\otimes_{\tilde{\phi}_2}(E_2\otimes_\pi F), \pi_{\tilde{\phi}_2})} &&&& {((A\otimes_{\phi_2} E)\otimes_\pi F, \tilde{\pi}_{\phi_1})}
				\arrow["{(V_1, 1_A)}", from=1-1, to=1-5]
				\arrow["{(\widetilde{\hat{\eta}}, \alpha)}"{description}, from=1-1, to=4-1]
				\arrow["{(\widehat{\tilde{\eta}}, \alpha)}"{description}, from=1-5, to=4-5]
				\arrow["{(V_2, 1_A)}", from=4-1, to=4-5]
			\end{tikzcd}
		\end{center}
		To prove commutativity, we verify $\widehat{\tilde{\eta}}\circ V_1=V_2\circ\widetilde{\hat{\eta}}$. As both $\widehat{\widetilde{\eta}}\circ V_1$ and $V_2\circ \widetilde{\widehat{\eta}}$ are continuous as well as $\frac{A\otimes_{\text{alg}}(E_1\otimes_BF)}{N_{\tilde{\phi_1}}}$ is dense in $A\otimes_{\tilde{\phi}_1}(E_1\otimes_\pi F)$, it suffices to show equality on the dense subspace. Using linearity, it suffices to show equality on elements of the form $a\dot{\otimes} (x\dot{\otimes} f)$. Given such an element, we have
		\begin{align*}
			(\widehat{\tilde{\eta}}\circ V_1)(a\dot{\otimes}(x\dot{\otimes} f))&=\widehat{\tilde{\eta}}((a\dot{\otimes} x)\dot{\otimes} f)=\tilde{\eta}(a\dot{\otimes} x)\dot{\otimes} f\\
			&=(a\dot{\otimes} \eta(x))\dot{\otimes} f=V_2(a\dot{\otimes} (\eta(x)\dot{\otimes} f))\\
			&=V_2(a\dot{\otimes} \hat{\eta}(x\dot{\otimes} f))=(V_2\circ\widetilde{\hat{\eta}})(a\dot{\otimes} (x\dot{\otimes} f))
		\end{align*}
		Thus the diagram above commutes. As $(V_1, 1_A)$ and $(V_2, 1_A)$ are isomorphisms, then we conclude the two functors in the assertion are indeed naturally isomorphic.
		
	\end{proof}
	
	\subsection{KSGNS construction and $\text{PosCor}(A)$}
	
	We now extend the idempotent KSGNS endofunctor on $\text{PosCor}(A, B)$ to $\text{PosCor}(A)$. The assignment on objects will be given by the KSGNS construction. For the assignment on morphisms, we leverage the KSGNS endofunctor on $\text{PosCor}(A,  B)$ as well as the unitary constructed in Theorem \ref{Theorem: KSNGS commutes with ITP}.
	
	\begin{theorem}\label{Theorem: KSGNS Functor 2}\

		\noindent Let $A$ be a $C^*$-algebra. Then the assignment
		\begin{align*}
			\text{KSGNS}:\text{PosCor}(A)&\to \text{PosCor}(A)\\
			(E_B, \phi)&\mapsto (F_{\phi, B}, \pi_\phi)\\
			\left((\rho, (\eta, \alpha)):(E_B, \phi)\to (E_C, \psi)\right)&\mapsto (\rho, (\tilde{\eta}\circ V^{-1}, \alpha))
		\end{align*}
		defines a covariant functor where $V$ is the unitary given in Theorem \ref{Theorem: KSNGS commutes with ITP}: $$V:A\otimes_{\phi\otimes_\rho 1_C}(E_{B}\otimes_\rho C)\to F_{\phi, B}\otimes_{\rho}C\quad\quad V(a\dot{\otimes}(x\dot{\otimes}c))=(a\dot{\otimes}x)\dot{\otimes}c$$ 
	\end{theorem}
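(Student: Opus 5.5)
We need three things: the assignment is well defined on morphisms, it preserves identities, and it preserves composition.

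\emph{Well-definedness.} Let $(\rho,(\eta,\alpha))\colon(E_B,\phi)\to(E_C,\psi)$ be a morphism, so $(\eta,\alpha)$ is a morphism in $\text{PosCor}(A,C)$ from $(E_B\otimes_\rho C,\tilde\phi)$ to $(E_C,\psi)$. Applying the endofunctor of Theorem \ref{Theorem: KSGNS Endofucntor} on $\text{PosCor}(A,C)$ gives
$$(\tilde\eta,\alpha)\colon (A\otimes_{\tilde\phi}(E_B\otimes_\rho C),\pi_{\tilde\phi})\to(F_\psi,\pi_\psi).$$
Theorem \ref{Theorem: KSNGS commutes with ITP} (with $F=C$, $\pi=\rho$) gives the isomorphism $(V,1_A)$ from $(A\otimes_{\tilde\phi}(E_B\otimes_\rho C),\pi_{\tilde\phi})$ to $(F_{\phi,B}\otimes_\rho C,\widetilde{\pi_\phi})$. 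Its inverse is $(V^{-1},1_A)$, and composing in $\text{PosCor}(A,C)$ yields $(\tilde\eta\circ V^{-1},\alpha)\colon(F_{\phi,B}\otimes_\rho C,\widetilde{\pi_\phi})\to(F_\psi,\pi_\psi)$. This is exactly the data of a morphism $(F_{\phi,B},\pi_\phi)\to(F_\psi,\pi_\psi)$ in $\text{PosCor}(A)$.

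\emph{Identities.} For the identity $(\text{inc},(\iota,1_A))$ on $(E_B,\phi)$ we must show $\tilde\iota\circ V^{-1}=\iota'$, where $\iota'\colon F_{\phi}\otimes_{\text{inc}}B\to F_\phi$ is the unitary $z\dot\otimes b\mapsto zb$. It suffices to show $\tilde\iota=\iota'\circ V$ on elements $a\dot\otimes(x\dot\otimes b)$, using density, linearity and continuity as throughout the paper. The left side gives $a\dot\otimes xb$. The right side gives $(a\dot\otimes x)b=a\dot\otimes xb$. So the two agree.

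\emph{Composition.} Take $(\rho_i,(\eta_i,\alpha_i))$ for $i=1,2$. Their composite is $(\tilde\rho_2\circ\rho_1,(\eta_2\circ\hat\eta_1\circ U^{-1},\alpha_2\alpha_1))$, so applying KSGNS gives the morphism component
$$\widetilde{\eta_2\circ\hat\eta_1\circ U^{-1}}\circ V_{13}^{-1}=\tilde\eta_2\circ\widetilde{\hat\eta_1}\circ\widetilde{U^{-1}}\circ V_{13}^{-1},$$
by functoriality of the endofunctor on $\text{PosCor}(A,B_3)$. Here $\widetilde{U^{-1}}$ is the lift with automorphism $1_A$.

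Composing the images of the two morphisms instead gives
$$\tilde\eta_2\circ V_{23}^{-1}\circ\widehat{(\tilde\eta_1\circ V_{12}^{-1})}\circ U'^{-1},$$
where $U'\colon(F_{\phi_1}\otimes_{\rho_1}B_2)\otimes_{\rho_2}B_3\to F_{\phi_1}\otimes_{\tilde\rho_2\rho_1}B_3$ is the unitary of Proposition \ref{Prop: Composition and tensor}. The naturality square in Theorem \ref{Theorem: KSNGS commutes with ITP} gives $V_{23}^{-1}\circ\widehat{\tilde\eta_1}=\widetilde{\hat\eta_1}\circ W^{-1}$, where $W$ is the $V$-unitary for $E_{B_1}\otimes_{\rho_1}B_2$ tensored with $B_3$. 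After cancelling $\tilde\eta_2\circ\widetilde{\hat\eta_1}$ from both sides, it remains to verify
$$\widetilde{U^{-1}}\circ V_{13}^{-1}=W^{-1}\circ\widehat{V_{12}^{-1}}\circ U'^{-1}.$$
Equivalently, after inverting, $V_{13}\circ\widetilde U=U'\circ\widehat{V_{12}}\circ W$. Both sides are compositions of unitaries, so we check equality on elements $a\dot\otimes((x\dot\otimes b_2)\dot\otimes b_3)$. Both sides send such an element to $(a\dot\otimes x)\dot\otimes\rho_2(b_2)b_3$.

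I expect this composition step to be the main obstacle. The difficulty is bookkeeping: keeping track of which lift ($\hat{\ }$ versus $\tilde{\ }$) is applied to which unitary, and making sure the dense subspaces used at each stage match up. Once that is organized, each check reduces to a computation on simple tensors, and the automorphisms play no role beyond $\alpha_2\circ\alpha_1$ appearing on both sides.
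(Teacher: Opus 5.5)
Your proposal is correct and follows essentially the same route as the paper. You get well-definedness from Proposition \ref{Prop: Lifting 2} and Theorem \ref{Theorem: KSNGS commutes with ITP}, check identities via $\tilde\iota=\iota'\circ V$ on simple tensors, and split composition into the same three pieces: functoriality of the lifts, the naturality square of Theorem \ref{Theorem: KSNGS commutes with ITP} (the paper's ``top square''), and the unitary identity $V_{13}\circ\widetilde U=U'\circ\widehat{V_{12}}\circ W$ checked on $a\dot\otimes((x\dot\otimes b_2)\dot\otimes b_3)$ (the paper's ``middle square''). The only wording to tighten is that you do not ``cancel'' $\tilde\eta_2\circ\widetilde{\hat\eta_1}$; you only need that the reduced identity suffices.
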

	\begin{proof}\
		
		\noindent The assignment being well-defined at the level of objects follows from the KSGNS construction. The assignment being well-defined at the level of morphisms follows from the construction of the isomorphism $(V, 1_A)$ in Theorem \ref{Theorem: KSNGS commutes with ITP} as well as the construction of  $(\tilde{\eta}, \alpha)$ in Proposition \ref{Prop: Lifting 2}. Therefore, the only thing the remains to be shown is that the assignment is functorial in a covariant manner.
		
		Let us first show the assignment preserves the identity morphism. Let $(E_B, \phi)\in \text{PosCor}(A)$, then the assignment maps the identity morphism $(\text{inc}, (\iota, 1_A))$ for $(E_B, \phi)$ to $(\text{inc}, (\tilde{\iota}\circ V^{-1}, 1_A))$. Let $(\text{inc}, (\iota', 1_A))$ be the identity morphism for $(F_{\phi, B}, \pi_\phi)$. As $V$ is a unitary, it suffices to show $\tilde{\iota}=\iota'\circ V$. By the standard reductions, it suffices to show the equality on elements of the form $a\dot{\otimes} (x\dot{\otimes} b)\in A\otimes_{\phi\otimes_{\text{inc}} 1_B}(E_B\otimes_\text{inc} B)$. Given such an element, we have
		\begin{align*}
			\tilde{\iota}(a\dot{\otimes} (x\dot{\otimes} b))&=a\dot{\otimes} \iota(x\dot{\otimes} b)=a\dot{\otimes} xb=(a\dot{\otimes} x)b\\
			&=\iota'((a\dot{\otimes} x)\dot{\otimes} b)=(\iota'\circ V)(a\dot{\otimes} (x\dot{\otimes} b))
		\end{align*}
		showing $\iota'\circ V=\tilde{\iota}$. Therefore we conclude $(\text{inc}, (\tilde{\iota}\circ V^{-1}, 1_A))=(\text{inc}, (\iota', 1_A))$ showing the assignment on morphisms preserves the identity morphisms.
		
		Now we show the assignment on morphisms preserves composition. Let $(E_{B_i}, \phi_i)$ be objects of $\text{PosCor}(A)$ for $i=1, 2, 3$, and let $(\rho_i, (\eta_i, \alpha_i)):(E_{B_i}, \phi_i)\to (E_{B_{i+1}}, \phi_{i+1})$ be morphisms for $i=1, 2$. From the assignment, we obtain a morphism $(\rho_i, (\tilde{\eta}_i\circ V_i^{-1}, \alpha_i))$ for $i=1, 2$. 
		We write the composition of $(\rho_1, (\eta_1, \alpha_1))$ and $(\rho_2, (\eta_2, \alpha_2))$ as 
		$$(\rho_2, (\eta_2, \alpha_2))\circ (\rho_1, (\eta_1, \alpha_1))=(\tilde{\rho}_2\circ\rho_1, (\eta_2\circ\hat{\eta}_1\circ U_1^{-1}, \alpha_2\circ\alpha_1))$$
		Under the assignment we obtain a morphism $(\tilde{\rho}_2\circ\rho_1, (\widetilde{(\eta_2\circ\hat{\eta}_1\circ U_1^{-1})}\circ V^{-1}_3, \alpha_2\circ\alpha_1))$.
		We write the composition of $(\rho_1, (\tilde{\eta}_1\circ V_1^{-1}, \alpha_1))$ and $(\rho_2, (\tilde{\eta}_2\circ V_2^{-1}, \alpha_2))$ as
		$$(\rho_2, (\tilde{\eta}_2\circ V_2^{-1}, \alpha_2))\circ(\rho_1, (\tilde{\eta}_1\circ V_1^{-1}, \alpha_1))=(\tilde{\rho}_2\circ \rho_1, (\tilde{\eta}_2\circ V_{2}^{-1}\circ \widehat{(\tilde{\eta}_1\circ V_1^{-1})}\circ U_2^{-1}, \alpha_2\circ\alpha_1))$$
		Thus, to show the assignment preserves composition, we just need to verify
		$$\tilde{\eta}_2\circ V_{2}^{-1}\circ \widehat{(\tilde{\eta}_1\circ V_1^{-1})}\circ U_2^{-1}=\widetilde{(\eta_2\circ\hat{\eta}_1\circ U_1^{-1})}\circ V^{-1}_3$$
		Let $V$ be the unitary
		$$V:(A\otimes_{\phi_1\otimes_{\rho_1} 1_{B_2}}(E_{B_1}\otimes_{\rho_1}B_2))\otimes_{\rho_2} B_3\to A\otimes_{\phi_1\otimes_{\rho_1} 1_{B_2}\otimes_{\rho_2}{1_{B_3}}}((E_{B_1}\otimes_{\rho_1}B_2)\otimes_{\rho_2}B_3)$$
		given by Theorem \ref{Theorem: KSNGS commutes with ITP}. Consider the following diagram:
		\begin{center}\Small{
				\begin{tikzcd}
					& {F_{\phi_2, B_2}\otimes_{\rho_2} B_3} && {A\otimes_{\phi_2\otimes_{\rho_2}1_{B_3}}(E_{B_2}\otimes_{\rho_2}B_3)} & \\
					\\
					& {(A\otimes_{\phi_1\otimes_{\rho_1}1_{B_2}}(E_{B_1}\otimes_{\rho_1}B_2))\otimes_{\rho_2} B_3} && {A\otimes_{\phi_1\otimes_{\rho_1}1_{B_2}\otimes_{\rho_2}1_{B_3}}((E_{B_1}\otimes_{\rho_1}B_2)\otimes_{\rho_2}B_3)} \\
					\\
					& {F_{\phi_1, B_1}\otimes_{\tilde{\rho}_2\circ\rho_1}B_3} && {A\otimes_{\phi_1\otimes_{\tilde{\rho_2}\circ\rho_1}1_{B_3}}(E_{B_1}\otimes_{\tilde{\rho}_2\circ\rho_1}B_3)} \\
					\\
					& {(F_{\phi_1, B_1}\otimes_{\rho_1}B_2)\otimes_{\rho_2}B_3} && {F_{\phi_3, B_3}} &
					\arrow["{V_2^{-1}}"{description}, from=1-2, to=1-4]
					\arrow["{\tilde{\eta}_2}"{description}, bend right=-90, from=1-4, to=7-4]
					\arrow["{\hat{\tilde{\eta}}_1}"{description}, from=3-2, to=1-2]
					\arrow["V"{description, pos=0.6}, from=3-2, to=3-4]
					\arrow["{\widetilde{\widehat{\eta}_1}}"{description}, from=3-4, to=1-4]
					\arrow["{\widehat{V}_1\circ U_2^{-1}}"{description}, from=5-2, to=3-2]
					\arrow["{V_3^{-1}}"{description}, from=5-2, to=5-4]
					\arrow["{U_2^{-1}}"{description}, from=5-2, to=7-2]
					\arrow["{\widetilde{U_1^{-1}}}"{description}, from=5-4, to=3-4]
					\arrow["{\widetilde{\eta_2\circ\hat{\eta}_1\circ U_1^{-1}}}"{description}, from=5-4, to=7-4]
					\arrow["{\widehat{\tilde{\eta}_1\circ V_1^{-1}}}"{description}, bend left=90, from=7-2, to=1-2]
					\arrow["{\tilde{\eta}_2\circ V_2^{-1}\circ\widehat{\tilde{\eta}_1\circ V_1^{-1}}}"{description}, from=7-2, to=7-4]
			\end{tikzcd}}
		\end{center}
		Using functorality of the tensor product functor in Theorem \ref{Theorem: Tensor Functor}, the left side and right side diagrams commute.
		
		Let us show the top square commutes. As $V_2$ is a unitary, it suffices to show $\widehat{\widetilde{\eta}_1}=V_2\circ \widetilde{\widehat{\eta}}\circ V$. As both $V_2\circ \widetilde{\widehat{\eta}}\circ V$ and $\widehat{\widetilde{\eta}_1}$ are continuous as well as 
		$$\left(\frac{A\otimes_{\text{alg}}(E_{B_1}\otimes_{B_1}B_2)}{N_{\phi_1\otimes_{\rho_1}1_{B_2}}}\right)\otimes_{B_2}B_3\subset A\otimes_{\phi_1\otimes_{\rho_1}1_{B_2}}(E_{B_1}\otimes_{\rho_1}B_2))\otimes_{\rho_2} B_3$$
		is dense, then it suffices to show equality on this dense subspace. Using linearity, it suffices to prove equality on elements of the form $(a\dot{\otimes} (x\dot{\otimes} b_1))\dot{\otimes} b_2\in\left(\frac{A\otimes_{\text{alg}}(E_{B_1}\otimes_{B_1}B_2)}{N_{\phi_1\otimes_{\rho_1}1_{B_1}}}\right)\otimes_{B_2}B_3$. Given such an element, we have
		\begin{align*}
			\widehat{\widetilde{\eta}}_1((a\dot{\otimes} (x\dot{\otimes} b_1))\dot{\otimes} b_2)&=\widetilde{\eta}_1(a\dot{\otimes}(x\dot{\otimes}b_1))\dot{\otimes}b_2=(\alpha(a)\dot{\otimes}\eta_1(x\dot{\otimes} b_1))\dot{\otimes}b_2\\
			&=V_2(\alpha(a)\dot{\otimes}(\eta_1(x\dot{\otimes}b_1)\dot{\otimes}b_2))=V_2(\alpha(a)\dot{\otimes}\hat{\eta}_1((x\dot{\otimes} b_1)\dot{\otimes}b_2))\\
			&=(V_2\circ \widetilde{\widehat{\eta}}_1)(a\dot{\otimes}((x\dot{\otimes} b_1)\dot{\otimes} b_2))=(V_2\circ \widetilde{\widehat{\eta}}_1\circ V)((a\dot{\otimes} (x\dot{\otimes} b_1))\dot{\otimes} b_2)
		\end{align*}
		Therefore we conclude  $\widehat{\widetilde{\eta}_1}=V_2\circ \widetilde{\widehat{\eta}}\circ V$.
		
		Now let us prove the middle square commutes. As all the maps are unitary, it suffices to show $U_2=V_3\circ\widetilde{U_1}\circ V\circ \widehat{V}_1$. By the standard reduction, it suffices to check equality on elements of the form $((a\dot{\otimes}x)\dot{\otimes}b_2)\dot{\otimes}b_3\in (F_{\phi_1, B_1}\otimes_{\rho_1}B_2)\otimes_{\rho_2}B_3$. Given such an element, we have
		\begin{align*}
			U_2(((a\dot{\otimes} x)\dot{\otimes}b_2)\dot{\otimes}b_3)&=(a\dot{\otimes}x)\dot{\otimes}\rho_2(b_2)b_3=V_3(a\dot{\otimes}(x\dot{\otimes}\rho_2(b_2)b_3))\\
			&=V_3(a\dot{\otimes}U_1((x\dot{\otimes}b_2)\dot{\otimes}b_3))=(V_3\circ \tilde{U}_1)(a\dot{\otimes}((x\dot{\otimes}b_2)\dot{\otimes}b_3))\\
			&=(V_3\circ \tilde{U}_1\circ V)((a\dot{\otimes}(x\dot{\otimes} b_2))\dot{\otimes}b_3)\\
			&=(V_3\circ \tilde{U}_1\circ V)(V_1((a\dot{\otimes}x)\dot{\otimes}b_2)\dot{\otimes}b_3)\\
			&=(V_3\circ \tilde{U}_1\circ V\circ \widehat{V}_1)(((a\dot{\otimes}x)\dot{\otimes}b_2)\dot{\otimes}b_3)
		\end{align*}
		Thus $U_2=V_3\circ\widetilde{U_1}\circ V\circ \widehat{V}_1$ showing $\widetilde{U_1^{-1}}\circ V_3^{-1}=V\circ \widehat{V}_1\circ U_2^{-1}$.
		
		Using commutativity of the other sub-diagrams, we obtain commutativity of the bottom square:
		\begin{align*}
			\tilde{\eta}_2\circ V_{2}^{-1}\circ \widehat{(\tilde{\eta}_1\circ V_1^{-1})}\circ U_2^{-1}&= \tilde{\eta}_2\circ V_{2}^{-1}\circ \widehat{\tilde{\eta}}_1\circ \widehat{V_1^{-1}}\circ U_2^{-1}\\
			&=\tilde{\eta}_2\circ \widetilde{\hat{\eta}}_1\circ V\circ \widehat{V_1^{-1}}\circ U_2^{-1}\\
			&=\tilde{\eta}_2\circ \widetilde{\hat{\eta}}_1\circ \widetilde{U_1^{-1}}\circ V_3^{-1}\\
			&=\widetilde{(\eta_2\circ\hat{\eta}_1\circ U_1^{-1})}\circ V^{-1}_3
		\end{align*}
		Thus we conclude the assignment on morphism preserves the composition rule. 
		
	\end{proof}
	
	\begin{theorem}\label{Theorem: Idempotency 2}\

		\noindent Let $A$ be a $C^*$-algebra. As functors on $\text{PosCor}(A)$, $\textnormal{KSGNS}^2$ is naturally isomorphic to $\textnormal{KSGNS}$.
		
	\end{theorem}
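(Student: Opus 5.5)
We will reuse the isomorphisms from Proposition \ref{Prop: Idempotency of KSGSN}. For each object $(E_B,\phi)\in\text{PosCor}(A)$, the KSGNS uniqueness gives a unitary $V_{\pi_\phi}\in\mathcal{L}(F_{\phi,B},F_{\pi_\phi,B})$, $V_{\pi_\phi}(z)=\lim_\lambda a_\lambda\dot{\otimes} z$, intertwining $\pi_\phi$ and $\pi_{\pi_\phi}$ with respect to $1_A$. Since $\text{PosCor}(A,B)$ does not literally sit inside $\text{PosCor}(A)$, we package this as the morphism
$$\Theta_{(E_B,\phi)}:=\big(\text{inc},(V_{\pi_\phi}\circ\iota,1_A)\big):(F_{\phi,B},\pi_\phi)\to(F_{\pi_\phi,B},\pi_{\pi_\phi}),$$
where $\iota:F_{\phi,B}\otimes_{\text{inc}}B\to F_{\phi,B}$ is the unitary of Proposition \ref{Prop: inclusion tensor functor}. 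Its inverse should be $(\text{inc},(V_{\pi_\phi}^*\circ\iota',1_A))$, with $\iota'$ the analogous unitary for $F_{\pi_\phi,B}$. To check this, we compose using the definition of composition in $\text{PosCor}(A)$. Since $\widetilde{\text{inc}}\circ\text{inc}=\text{inc}$, this reduces to an identity of the form $V^*_{\pi_\phi}\iota'\circ\widehat{V_{\pi_\phi}\iota}\circ U^{-1}=\iota$. We verify it on elementary tensors $(z\dot{\otimes}b_1)\dot{\otimes}b_2$ using $B$-linearity of $V_{\pi_\phi}$, in the same way as the identity-morphism proposition.

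Naturality is the main step. Take a morphism $(\rho,(\eta,\alpha)):(E_B,\phi)\to(E_C,\psi)$. By Theorem \ref{Theorem: KSGNS Functor 2}, $\text{KSGNS}$ sends it to $(\rho,(\tilde\eta\circ V^{-1},\alpha))$, and $\text{KSGNS}^2$ sends it to $(\rho,(\widetilde{\tilde\eta\circ V^{-1}}\circ W^{-1},\alpha))$. Here $V$ is the unitary of Theorem \ref{Theorem: KSNGS commutes with ITP} for $(E_B,\phi)$, and $W$ is the corresponding unitary for $(F_{\phi,B},\pi_\phi)$. Both composites in the naturality square have first component $\rho$ (because $\tilde\rho\circ\text{inc}=\rho$ and $\widetilde{\text{inc}}\circ\rho=\rho$) and second component $\alpha$. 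So it remains to compare the two module maps $F_{\phi,B}\otimes_\rho C\to F_{\pi_\psi,C}$. After unwinding the composition rule with the unitaries $U$ of Proposition \ref{Prop: Composition and tensor} and the maps $\iota$, the required identity should reduce to
$$V_{\pi_\psi}\circ\tilde\eta\circ V^{-1}\;=\;\widetilde{\tilde\eta\circ V^{-1}}\circ W^{-1}\circ\widehat{V_{\pi_\phi}}$$
as maps $F_{\phi,B}\otimes_\rho C\to F_{\pi_\psi,C}$. The $\iota$'s and $U$'s cancel just as in the identity-morphism proposition.

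We check this identity on the dense set of elements $(a\dot{\otimes} x)\dot{\otimes} c$. On the left side, $V^{-1}$ gives $a\dot{\otimes}(x\dot{\otimes}c)$, then $\tilde\eta$ gives $\alpha(a)\dot{\otimes}\eta(x\dot{\otimes}c)$, and $V_{\pi_\psi}$ gives $\lim_\lambda a_\lambda\dot{\otimes}(\alpha(a)\dot{\otimes}\eta(x\dot{\otimes}c))$. On the right side, $\widehat{V_{\pi_\phi}}$ gives $\lim_\lambda (a_\lambda\dot{\otimes}(a\dot{\otimes}x))\dot{\otimes}c$, and $W^{-1}$ gives $\lim_\lambda a_\lambda\dot{\otimes}((a\dot{\otimes}x)\dot{\otimes}c)$. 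Then $\widetilde{\tilde\eta\circ V^{-1}}$ gives $\lim_\lambda\alpha(a_\lambda)\dot{\otimes}(\alpha(a)\dot{\otimes}\eta(x\dot{\otimes}c))$.

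Two facts finish the argument. First, $(\alpha(a_\lambda))$ is again an approximate unit. Second, $V_{\pi_\psi}$ does not depend on the choice of approximate unit, as the paper showed. Hence the two limits agree. Continuity of all maps, each being bounded, justifies passing the limits through them. Since each $\Theta$ is an isomorphism, this gives the natural isomorphism $\text{KSGNS}\Rightarrow\text{KSGNS}^2$.

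The main obstacle is the bookkeeping in the reduction to the displayed identity. In that step we must track the unitaries $U$ from Proposition \ref{Prop: Composition and tensor} for the compositions $\widetilde{\text{inc}}\circ\rho$ and $\tilde\rho\circ\text{inc}$, together with $\iota,\iota'$ and $\widehat{\iota}$. We must confirm that they cancel, which we do by evaluating on elementary tensors $((a\dot{\otimes}x)\dot{\otimes}b)\dot{\otimes}c$ and using $x\dot{\otimes}\rho(b)c=xb\dot{\otimes}c$. A second point to verify is that $W\circ\widehat{V_{\pi_\phi}}$ actually lands in the correct space. For that we use that the limit defining $V_{\pi_\phi}$ commutes with $(-)\otimes_\rho C$, which holds by continuity of $T\mapsto\hat T$ and the bound $\|\hat T\|\le\|T\|$ from Proposition \ref{Prop: Lift 1}.
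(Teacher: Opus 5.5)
Your proposal is correct and is essentially the paper's proof. It uses the same components $(\text{inc},(V_{\pi_\phi}\circ\iota,1_A))$ and the same core check on elementary tensors $(a\dot{\otimes}x)\dot{\otimes}c$, relying on the facts that $(\alpha(a_\lambda))$ is again an approximate unit and that $V_{\pi_\psi}$ does not depend on the choice of approximate unit. The differences are only bookkeeping: you cancel the $\iota$'s and composition unitaries up front using the identity-morphism computations, whereas the paper carries them through its trapezoid/hexagon diagram, and you also check the inverse of each component explicitly, which the paper does not.
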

	
	\begin{proof}\
		
		\noindent We first construct an isomorphism for a fixed object $(E_B, \phi)\in \text{PosCor}(A)$. Using Proposition \ref{Prop: Idempotency of KSGSN} and the $B$-linear unitary map $U:F_{\phi, B}\otimes_{\text{inc}}B\to F_{\phi, B}$ from Proposition \ref{Prop: inclusion tensor functor}, we know $(\text{inc}, (V_{\pi_\phi}\circ U, 1_A))$ defines an isomorphism from $(F_{\phi, B}, \pi_\phi)$ to $(F_{\pi_\phi, B}, \pi_{\pi_\phi})$.
		
		Now suppose we have objects $(E_B, \phi)$ and $(E_C, \psi)$ in $\text{PosCor}(A)$. Let $(\rho, (\eta, \alpha))$ be a morphism from $(E_B, \phi)$ to $(E_C, \psi)$. We claim the following diagram commutes:
		\begin{center}
			\begin{tikzcd}
				{(F_{\phi, B}, \pi_\phi)} &&&& {(F_{\pi_\phi, B}, \pi_{\pi_\phi})} \\
				\\
				\\
				{(F_{\psi, C}, \pi_{\psi})} &&&& {(F_{\pi_\psi, C}, \pi_{\pi_\psi})}
				\arrow["{(\text{inc}, (V_{\pi_\phi}\circ U_B, 1_A))}"{description}, from=1-1, to=1-5]
				\arrow["{(\rho, (\tilde{\eta}\circ V^{-1}_1, \alpha))}"{description}, from=1-1, to=4-1]
				\arrow["{(\rho, ((\widetilde{\tilde{\eta}\circ V^{-1}_1})\circ V_2^{-1}, \alpha))}"{description}, from=1-5, to=4-5]
				\arrow["{(\text{inc}, (V_{\pi_\psi}\circ U_C, 1_A))}"{description}, from=4-1, to=4-5]
			\end{tikzcd}
		\end{center}
		To prove this, we just need to show
		$$V_{\pi_\psi}\circ U_C\circ \widehat{\tilde{\eta}\circ V_1^{-1}}\circ U_1^{-1}=(\widetilde{\tilde{\eta}_1\circ V_1^{-1}})\circ V_2^{-1}\circ \widehat{(V_{\pi_\phi}\circ U_B)}\circ U_2^{-1}$$
		where $U_1$ and $U_2$ are the unitaries
		$$U_1:(F_{\phi, B}\otimes_{\rho}C)\otimes_{\text{inc}}B\to F_{\phi, B}\otimes_{\rho}C \quad\quad U_2:(F_{\pi_\phi, B}\otimes_{\text{inc}}B)\otimes_{\rho}C\to F_{\pi_\phi, B}\otimes_{\rho}C$$
		given in Proposition \ref{Prop: Composition and tensor}. From functoriality in Theorem \ref{Theorem: KSGNS Endofucntor} and Theorem \ref{Theorem: Tensor Functor}, we have
		$$\widehat{\tilde{\eta}\circ V_1^{-1}}=\widehat{\tilde{\eta}}\circ \widehat{V_1^{-1}}\quad\quad \widetilde{\tilde{\eta}\circ V_1^{-1}}=\widetilde{\tilde{\eta}}\circ \widetilde{V_1^{-1}}\quad\quad \widehat{V_{\pi_\phi}\circ U_B^{-1}}=\widehat{V_{\pi_\phi}}\circ \widehat{U_B}$$
		Thus, it suffices to show
		$$V_{\pi_\psi}\circ U_C\circ \widehat{\tilde{\eta}}\circ \widehat{V_1^{-1}}\circ U_1^{-1}= \widetilde{\tilde{\eta}}\circ \widetilde{V_1^{-1}}\circ V_2^{-1}\circ \widehat{V_{\pi_\phi}}\circ\widehat{U_B}\circ U_2^{-1}$$
		
		Now consider the following diagram:
		\begin{center}
			\begin{tikzcd}
				{F_{\phi, B}\otimes_{\widetilde{\text{inc}}\circ\rho}C} && {F_{\phi, B}\otimes_{\tilde{\rho}\circ\text{inc}}C} \\
				\\
				{(F_{\phi, B}\otimes_{\rho}C)\otimes_{\text{inc}}C} && {(F_{\phi, B}\otimes_{\text{inc}}B)\otimes_\rho C} \\
				\\
				{(A\otimes_{\phi\otimes_\rho 1_C}(E_B\otimes_\rho C))\otimes_{\text{inc}}C} && {F_{\phi, B}\otimes_{\rho}C} \\
				& {F_{\pi_\psi, C}} \\
				{F_{\psi, C}\otimes_{\text{inc}}C} && {F_{\pi_\phi, B}\otimes_\rho C}
				\arrow["id"{description}, from=1-1, to=1-3]
				\arrow["{U^{-1}_1}"{description}, from=1-1, to=3-1]
				\arrow["{U_2^{-1}}"{description}, from=1-3, to=3-3]
				\arrow["{\widehat{V_1^{-1}}}"{description}, from=3-1, to=5-1]
				\arrow["{U_C'}"{description}, from=3-1, to=5-3]
				\arrow["{\widehat{U_B}}"{description}, from=3-3, to=5-3]
				\arrow["{\widehat{\widetilde{\eta}}}"{description}, from=5-1, to=7-1]
				\arrow["{\widehat{V_{\pi_\phi}}}"{description}, from=5-3, to=7-3]
				\arrow["{V_{\pi_\psi}\circ U_C}"{description}, from=7-1, to=6-2]
				\arrow["{\widetilde{\widetilde{\eta}}\circ \widetilde{V_1^{-1}}\circ V_2^{-1}}"{description}, from=7-3, to=6-2]
			\end{tikzcd}
		\end{center}
		Note, we obtain the identity map at the top for $\widetilde{\text{inc}}\circ\rho=\rho=\tilde{\rho}\circ\text{inc}$. We claim the sub-diagrams given by the top trapezoid and bottom hexagon commute. 
		
		Let us begin with the top trapezoid. As $U_1^{-1}$, $\widehat{U_B}$, and $U_2^{-1}$ are unitaries, then it suffices to show $U_1=U_2\circ \widehat{U_B}^{-1}\circ U_C'$. Let $(b_\lambda)_{\lambda\in\Lambda}$ be an approximate unit for $B$. By the standard reductions, it suffices to show the two maps agree on elements of the form $((a\dot{\otimes}x)\dot{\otimes}c_1)\dot{\otimes}c_2$ in $(F_{\phi, B}\otimes_{\rho}C)\otimes_{\text{inc}}C$. Given such an element and using the remark after Proposition \ref{Prop: inclusion tensor functor}, we have
		\begin{align*}
			(U_2\circ \widehat{U_B^{-1}}\circ U_C')(((a\dot{\otimes}x)\dot{\otimes}c_1)\dot{\otimes}c_2)&=(U_2\circ \widehat{U_B^{-1}})((a\dot{\otimes}x)\dot{\otimes}c_1c_2)=\lim\limits_{\lambda\to\infty}U_2(((a\dot{\otimes}x)\dot{\otimes}b_\lambda)\dot{\otimes}c_1c_2)\\
			&=\lim\limits_{\lambda\to\infty}(a\dot{\otimes}x)\dot{\otimes}\rho(b_\lambda)c_1c_2=(a\dot{\otimes}x)\dot{\otimes}c_1c_2\\
			&=U_1(((a\dot{\otimes}x)\dot{\otimes}c_1)\dot{\otimes}c_2)
		\end{align*}
		Thus we conclude the top trapezoid commutes.
		
		Now we show the bottom hexagon commutes. Let $(a_\lambda)_{\lambda\in\Lambda}$ be an approximate unit in $A$, then $(\alpha(a_\lambda))_{\lambda\in\Lambda}$ is also an approximate unit for $A$. Using the standard reduction, it suffices to show commutativity of the diagram on elements of the form $((a\dot{\otimes}x)\dot{\otimes}c_1)\dot{\otimes}c_2$ in $(F_{\phi, B}\otimes_\rho C)\otimes_{\text{inc}}C$. Given such an element we have
		\begin{align*}
			(V_{\pi_\psi}\circ U_C\circ \widehat{\tilde{\eta}}\circ\widehat{V_1^{-1}})(((a\dot{\otimes}x)\dot{\otimes}c_1)\dot{\otimes}c_2)&=(V_{\pi_\psi}\circ U_C\circ \widehat{\tilde{\eta}})((a\dot{\otimes}(x\dot{\otimes}c_1))\dot{\otimes}c_2)\\
			&=(V_{\pi_\psi}\circ U_C)((\alpha(a)\dot{\otimes}\eta(x\dot{\otimes}c_1))\dot{\otimes}c_2)\\
			&=V_{\pi_\psi}(\alpha(a)\dot{\otimes}\eta(x\dot{\otimes}c_1)c_2)=V_{\pi_\psi}(\alpha(a)\dot{\otimes}\eta(x\dot{\otimes}c_1c_2))\\
			&=\lim\limits_{\lambda\to\infty}\alpha(a_\lambda)\dot{\otimes}(\alpha(a)\dot{\otimes}\eta(x\dot{\otimes}c_1c_2))\\
			&=\lim\limits_{\lambda\to\infty}\tilde{\tilde{\eta}}(a_\lambda\dot{\otimes}(a\dot{\otimes}(x\dot{\otimes}c_1c_2)))\\
			&=\lim\limits_{\lambda\to\infty}(\tilde{\tilde{\eta}}\circ \widetilde{V_1^{-1}})(a_\lambda\dot{\otimes}((a\dot{\otimes}x)\dot{\otimes}c_1c_2))\\
			&=\lim\limits_{\lambda\to\infty}(\tilde{\tilde{\eta}}\circ \widetilde{V_1^{-1}}\circ V_2^{-1})((a_\lambda\dot{\otimes}(a\dot{\otimes}x))\dot{\otimes}c_1c_2)\\
			&=(\tilde{\tilde{\eta}}\circ \widetilde{V_1^{-1}}\circ V_2^{-1}\circ\widehat{V_{\pi_\phi}})((a\dot{\otimes}x)\dot{\otimes}c_1c_2)\\
			&=(\tilde{\tilde{\eta}}\circ \widetilde{V_1^{-1}}\circ V_2^{-1}\circ\widehat{V_{\pi_\phi}}\circ U_C')(((a\dot{\otimes}x)\dot{\otimes}c_1)\dot{\otimes}c_2)
		\end{align*}
		Thus we conclude the bottom hexagon in the diagram commutes. Observe, using commutativity of the two sub-diagrams, we have commutativity of the outer diagram:
		$$V_{\pi_\psi}\circ U_C\circ \widehat{\tilde{\eta}}\circ \widehat{V_1^{-1}}\circ U_1^{-1}= \widetilde{\tilde{\eta}}\circ \widetilde{V_1^{-1}}\circ V_2^{-1}\circ \widehat{V_{\pi_\phi}}\circ\widehat{U_B}\circ U_2^{-1}$$
		Therefore the diagram
		\begin{center}
			\begin{tikzcd}
				{(F_{\phi, B}, \pi_\phi)} &&&& {(F_{\pi_\phi, B}, \pi_{\pi_\phi})} \\
				\\
				\\
				{(F_{\psi, C}, \pi_{\psi})} &&&& {(F_{\pi_\psi, C}, \pi_{\pi_\psi})}
				\arrow["{(\text{inc}, (V_{\pi_\phi}\circ U_B, 1_A))}"{description}, from=1-1, to=1-5]
				\arrow["{(\rho, (\tilde{\eta}\circ V^{-1}_1, \alpha))}"{description}, from=1-1, to=4-1]
				\arrow["{(\rho, ((\widetilde{\tilde{\eta}\circ V^{-1}_1})\circ V_2^{-1}, \alpha))}"{description}, from=1-5, to=4-5]
				\arrow["{(\text{inc}, (V_{\pi_\psi}\circ U_C, 1_A))}"{description}, from=4-1, to=4-5]
			\end{tikzcd}
		\end{center}
		commutes as claimed. As the horizontal maps are isomorphisms, we conclude $\text{KSGNS}^2$ is naturally isomorphic to $\text{KSGNS}$.
		
	\end{proof}
	
	As the KSGNS endofunctor does not adjust the data of the non-degenerate $*$-algebra homomorphism $\rho$, then the functor restricts to a well-defined idempotent endofunctor on the subcategory $\text{PosCor}_{*\text{-alg}}(A)$.
	
	\begin{corollary}\
		
	\noindent Let $A$ be a $C^*$-algebra. Then the $\textnormal{KSGNS}$ endofunctor on $\text{PosCor}(A)$ restricts to an idempotent endofunctor on the subcategory $\text{PosCor}_{*\text{-alg}}(A)$. 
	\end{corollary}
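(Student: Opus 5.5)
The plan is to check two things: that the KSGNS endofunctor of Theorem \ref{Theorem: KSGNS Functor 2} sends morphisms of $\text{PosCor}_{*\text{-alg}}(A)$ to morphisms of $\text{PosCor}_{*\text{-alg}}(A)$, and that the natural isomorphism of Theorem \ref{Theorem: Idempotency 2} has components lying in this subcategory.

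\textbf{Restriction.} Every object of $\text{PosCor}(A)$ is an object of $\text{PosCor}_{*\text{-alg}}(A)$, since the subcategory only cuts down morphisms. So only morphisms need attention. Let $(\rho,(\eta,\alpha))$ be a morphism with $\rho(B)\subset C$. By definition, its image under KSGNS is $(\rho,(\tilde{\eta}\circ V^{-1},\alpha))$, which carries the same homomorphism $\rho$ and hence again satisfies $\rho(B)\subset C$. The subcategory is closed under composition and identities, because it is a well-defined subcategory: for identities, $\text{inc}(B)=B$. Functoriality of the restriction is therefore inherited directly from Theorem \ref{Theorem: KSGNS Functor 2}.

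\textbf{Idempotency.} The natural isomorphism $\text{KSGNS}\Rightarrow\text{KSGNS}^2$ constructed in Theorem \ref{Theorem: Idempotency 2} has components $(\text{inc},(V_{\pi_\phi}\circ U_B,1_A))$. Here $\text{inc}(B)=B\subset B$, so each component lies in $\text{PosCor}_{*\text{-alg}}(A)$. Each inverse must also lie there. The inverse should be $(\text{inc},(U_B^{-1}\circ V_{\pi_\phi}^{-1}\circ\iota',1_A))$ up to the appropriate identification, and it again has first entry $\text{inc}$. One can also argue directly: any morphism whose first entry is $\text{inc}$ is an isomorphism in the subcategory once it is one in $\text{PosCor}(A)$, because its inverse must have first entry some $\sigma$ with $\tilde{\sigma}\circ\text{inc}=\text{inc}$, forcing $\sigma=\text{inc}$.

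Naturality squares in the subcategory are the same squares as in $\text{PosCor}(A)$, so they commute by Theorem \ref{Theorem: Idempotency 2}. The only step requiring any care is confirming that the inverse components stay in the subcategory, and the observation about $\sigma=\text{inc}$ settles it.
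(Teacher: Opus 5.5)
Your proposal is correct and is essentially the paper's argument: KSGNS leaves the homomorphism $\rho$ unchanged, so it restricts to $\text{PosCor}_{*\text{-alg}}(A)$, and idempotency is inherited from Theorem~\ref{Theorem: Idempotency 2}. Your observation that the components $(\text{inc},(V_{\pi_\phi}\circ U_B,1_A))$ and, via $\tilde{\sigma}\circ\text{inc}=\sigma$, their inverses have first entry $\text{inc}$ supplies a check the paper leaves implicit. Your tentative explicit inverse has a typing slip: $U_B^{-1}$ lands in $F_{\phi,B}\otimes_{\text{inc}}B$ rather than $F_{\phi,B}$, and the correct formula is $(\text{inc},(V_{\pi_\phi}^{-1}\circ\iota',1_A))$. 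The $\sigma=\text{inc}$ argument makes that formula unnecessary anyway.
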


	\section{Enrichment of $\text{PosCor}(A, B)$ and $\text{PosCor}_{*\text{-alg}}(A)$ over Topological Spaces}
	
	As the individual components in the data for morphisms in both $\text{PosCor}(A, B)$ and $\text{PosCor}(A)$ naturally carry a topology, we look to induce a topology on the hom-sets for the categories. While this extrapolation is straightforward for the category $\text{PosCor}(A, B)$, we run into domain issues for $\text{PosCor}(A)$. As we are able to resolve these issues using $\text{PosCor}_{*\text{-alg}}(A)$, we opt to work with the subcategory rather than all of $\text{PosCor}(A)$. Once we have the topologies defined, we determine in what sense composition is continuous as well as in what sense the functors from the interior tensor product and KSGNS construction induce continuous maps on the hom-sets.

	\subsection{Enrichment of $\text{PosCor}(A, B)$ and $\text{PosCor}_{*\text{-alg}}(A)$}

	\begin{definition}\

		\noindent Let $A$ and $B$ be $C^*$-algebras. Fix objects $(E_1, \phi_1), (E_2, \phi_2)\in \text{PosCor}(A, B)$. For each $x\in E_1$ and $a\in A$, define the pseudo-metric
		\begin{align*}
			d_{x, a}:\text{Hom}((E_1, \phi_1), (E_2, \phi_2))^2&\to [0, \infty)\\
			d_{x, a}((\eta, \alpha), (\xi, \alpha'))=||\eta(x)-\xi(x)||&+||\alpha(a)-\alpha'(a)||_A
		\end{align*}
		Endow $\text{Hom}((E_1, \phi_1), (E_2, \phi_2))$ with the weakest topology for which each pseudo-metric is continuous.
		
	\end{definition}
	
	Let us make a few remarks on the topology. First, using properties of norms, it follows that each $d_{x, a}$ is a pseudo-metric. Second, as the topology on each hom-set is generated by a collection of pseudo-metrics,  the topology on each hom-set is given a by subbasis generated by sets of the form
	$$B_\epsilon((\eta, \alpha), d_{x, a})=\{(\xi, \beta)\in \text{Hom}((E_1, \phi_1), (E_2, \phi_2)): d_{x,a}((\eta, \alpha), (\xi, \beta))<\epsilon\}$$
	for all $\epsilon>0$, $x\in E_1$, and $a\in A$. Third, as the topology on each hom-set is generated by the collection of pseudo-metrics, we have a nice characterization of net convergence; in particular, a net of morphism $((\eta_\lambda, \alpha_\lambda))_{\lambda\in\Lambda}$ converges to $(\eta, \alpha)$ if and only if for each $(x, a)\in E_1\times A$ the net converges to $(\eta, \alpha)$ with respect to $d_{x, a}$. From the construction of $d_{x, a}$, we equivalently have $((\eta_\lambda, \alpha_\lambda))_{\lambda\in\Lambda}$ converges to $(\eta, \alpha)$ if and only if $\eta_\lambda\to \eta$ in the strong operator topology on $\mathcal{L}(E_1, E_2)$ and $\alpha_\lambda\to \alpha$ in the point-norm topology on $\text{Aut}_{*\text{-alg}}(A)$.

		We would like to consider a similar topology on the hom-sets of $\text{PosCor}(A)$; however, we run into an issue for discussing strong convergence for the adjointable maps. To see this issue explicitly, let us fix objects $(E_B, \phi)$ and $(E_C, \psi)$ in $\text{PosCor}(A)$. If $(\rho, (\eta, \alpha))$ and $(\chi, (\xi, \alpha'))$ are morphisms from $(E_B, \phi)$ to $(E_C, \psi)$, then $\eta$ and $\xi$ may not be comparable in the strong operator topology for the domains of the function, respectively, are $E_B\otimes_\rho C$ and $E_B\otimes_\chi C$. If $\rho$ and $\chi$ are non-degenerate $*$-algebra homomorphisms with $\rho(B)\subset C$ and $\chi(B)\subset C$, then, as the following lemma shows, we can pullback the domains for $\eta$ and $\xi$ to $E_B$ enabling us to compare the maps. As the morphisms in $\text{PosCor}_{*\text{-alg}}(A)$ have this exact property, we will define topologies on the hom-sets of this subcategory rather than all of $\text{PosCor}(A)$.

	\begin{lemma}\label{Lemma:C linear maps}\
		
		\noindent Let $A$ and $B$ be $C^*$-algebras, and let $E$ be a Hilbert $A$-module. Let $\rho:A\to M(B)$ be a non-degenerate $*$-algebra homomorphism with $\rho(A)\subset B$. Let $(b_\lambda)_{\lambda\in\Lambda}$ be an approximate unit in $B$. For each $\lambda\in\Lambda$, the function
		$$V_\lambda:E\to E\otimes_\rho B\quad\quad V_\lambda(x)=x\dot{\otimes} b_\lambda $$
		is $\mathbb{C}$-linear (weak) contraction. Furthermore, the net  $(V_\lambda)_{\lambda\in\Lambda}$ converges in the strong operator topology on $\mathcal{B}(E, E\otimes_\rho B)$ to a $\mathbb{C}$-linear contraction $V_\rho$ which is independent of $(b_\lambda)_{\lambda\in\Lambda}$.
		
	\end{lemma}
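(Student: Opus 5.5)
The plan is to follow the remark after Proposition \ref{Prop: inclusion tensor functor}, where the same net was handled in the case $\rho=\text{inc}$. Everything reduces to computing norms of simple tensors via the inner product on $E\otimes_\rho B$ and using that $\rho(A)\subset B$ is non-degenerate.

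\textbf{Linearity and contraction.} Linearity of $V_\lambda$ is immediate from bilinearity of the tensor product. For the bound,
$$\|V_\lambda x\|^2=\|\langle x\dot{\otimes}b_\lambda, x\dot{\otimes}b_\lambda\rangle\|_B=\|b_\lambda^*\rho(\langle x,x\rangle)b_\lambda\|_B\le \|\rho(\langle x,x\rangle)\|\,\|b_\lambda\|^2\le \|x\|^2,$$
using $\|b_\lambda\|\le 1$ and that $\rho$ is contractive.

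\textbf{Convergence.} I will show $(V_\lambda x)_{\lambda\in\Lambda}$ is Cauchy. Expanding,
$$\|x\dot{\otimes}(b_\lambda-b_\mu)\|^2=\|(b_\lambda-b_\mu)^*\rho(\langle x,x\rangle)(b_\lambda-b_\mu)\|_B\le \|\rho(\langle x,x\rangle)^{1/2}(b_\lambda-b_\mu)\|^2 .$$
Since $\rho(A)\subset B$, the element $c=\rho(\langle x,x\rangle)^{1/2}=\rho(\langle x,x\rangle^{1/2})$ lies in $B$. Because $(b_\lambda)$ is an approximate unit, $cb_\lambda\to c$ in norm, so $(cb_\lambda)$ is Cauchy, and hence $(V_\lambda x)$ is Cauchy. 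This is the only step needing care: it is exactly where the hypothesis $\rho(A)\subset B$ is used, since for a general $M(B)$-valued $\rho$, right multiplication by $b_\lambda$ would only converge strictly. By completeness, define $V_\rho x=\lim_\lambda V_\lambda x$. As a pointwise limit of linear contractions, $V_\rho$ is a $\mathbb{C}$-linear contraction, and $V_\lambda\to V_\rho$ strongly by construction.

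\textbf{Independence of the approximate unit.} Let $(b'_\mu)_{\mu\in M}$ be another approximate unit with limit $V'$. I will compare the two limits through inner products against simple tensors $y\dot{\otimes}b$:
$$\langle V_\rho x, y\dot{\otimes}b\rangle=\lim_\lambda b_\lambda^*\rho(\langle x,y\rangle)b=\rho(\langle x,y\rangle)b,$$
using $b_\lambda d\to d$ for $d=\rho(\langle x,y\rangle)b\in B$ together with self-adjointness of approximate units. The same computation holds for $V'$. Since these simple tensors span a dense subspace of $E\otimes_\rho B$, it follows that $V_\rho x=V'x$. Alternatively, one can interleave the two nets: $\|c(b_\lambda-b'_\mu)\|\to 0$ gives the equality of the limits directly.
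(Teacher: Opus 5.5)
Your proposal is correct and follows the paper's proof. The same computation $\|x\dot{\otimes}b\|^2=\|b^*\rho(\langle x,x\rangle)b\|$ gives the contraction bound, and the Cauchy property comes via $c=\rho(\langle x,x\rangle)^{1/2}\in B$, which is exactly where $\rho(A)\subset B$ is used. For independence of the approximate unit, the paper uses your alternative interleaving/triangle-inequality argument; your primary route, pairing against the dense span of simple tensors $y\dot{\otimes}b$, is an equally valid weak-uniqueness argument, and mirrors how the paper identifies $U^*=V$ after Proposition~\ref{Prop: inclusion tensor functor}.
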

	
	\begin{proof}\
		
		\noindent It is clear $V_\lambda$ is a $\mathbb{C}$-linear function. To see $V_\lambda$ is a contraction, let $x\in E$, then
		\begin{align*}
			||V_\lambda x||^2&=||x\dot{\otimes} b_\lambda ||^2=||b_\lambda\rho(\langle x, x\rangle)b_\lambda||\leq ||\rho||*||\langle x, x\rangle||=||\rho||*||x||^2\leq ||x||^2
		\end{align*}
		Thus $||V_\lambda||\leq 1$.
		
		We now verify $(V_\lambda)_{\lambda\in\Lambda}$ converges in the strong operator topology to a $\mathbb{C}$-linear contraction. Let $\lambda, \mu\in\Lambda$ and $x\in E$, then
		\begin{align*}
			||V_\lambda x-V_\mu x||^2&=||x\dot{\otimes}(b_\lambda-b_\mu)||^2=||(b_\lambda-b_\mu)\rho(\langle x, x\rangle)(b_\lambda-b_\mu)||\\
			&=||\sqrt{\rho(\langle x, x\rangle)}b_\lambda -\sqrt{\rho(\langle x, x\rangle}b_\mu||^2
		\end{align*}
		Since $(b_\lambda)_{\lambda\in\Lambda}$ is an approximate unit in $B$ and $\rho$ takes value in $B\subset M(B)$, then the equality above implies $(V_\lambda x)_{\lambda\in\Lambda}$ is a Cauchy net in $E\otimes_\rho C$. As $E\otimes_\rho C$ is complete, then $(V_\lambda x)_{\lambda\in\Lambda}$ converges to a point $V_\rho(x)$. Let $V_\rho:E\to E\otimes_\rho C$ be the induced function. It is clear $V_\rho$ is a $\mathbb{C}$-linear contraction. By construction $V_\rho$ is the strong limit of $(V_\lambda)_{\lambda\in\Lambda}$.
		
		Now we verify $V_\rho$ is independent of the approximate unit $(b_\lambda)_{\lambda\in\Lambda}$. To see this, suppose $(b_\mu')_{\mu\in M}$ is another approximate unit in $B$. Let $(V_\mu')_{\mu\in M}$ and $V'_\rho$ be the associated $\mathbb{C}$-linear contractions. Given $x\in E$, we have
		\begin{align*}
			||V_\rho x-V'_\rho x||&\leq ||V_\rho x-V_\lambda x||+||V_\lambda x-V'_\mu x||+||V'_\rho x-V_\mu'x||\\
			&\leq ||V_\rho x-V_\lambda x||+||V'_\rho x-V_\mu' x||+||\sqrt{\rho(\langle x, x\rangle} b_\lambda -\sqrt{\rho(\langle x, x\rangle)}||\\
			&+||\sqrt{\rho(\langle x, x\rangle)} b_\mu' -\sqrt{\rho(\langle x, x\rangle)}||
		\end{align*}
		Since each term on the second line can be made within $\frac{\epsilon}{4}$ for any $\epsilon>0$, then we conclude $||V_\rho x-V'_\rho x||=0$. Hence $V_\rho x=V'_\rho x$ showing $V_\rho=V'_\rho$. 
		
	\end{proof}
	
	We remark that in the case $\rho=\text{inc}:B\to M(B)$, then $V_{\rho}$ is the adjoint of the unitary constructed in Proposition \ref{Prop: inclusion tensor functor}.
	
	\begin{definition}\

		\noindent Let $A$ be a $C^*$-algebra. Fix objects $(E_B, \phi), (E_C, \psi)\in \text{PosCor}_{*\text{-alg}}(A)$. For each $b\in B,x\in E_B$, and $a\in A$, define the pseudo-metrics
		\begin{align*}
			d_{b,x, a}:\text{Hom}((E_B, \phi), (E_C, \psi))^2&\to [0, \infty)\\
			d_{b, x, a}((\rho, (\eta, \alpha)), (\rho', (\xi, \alpha')))=&||\rho(b)-\rho(b)'||_{C}+||(\eta\circ V_\rho)(x)-(\xi\circ V_{\rho'})(x)||\\
			&+||\alpha(a)-\alpha'(a)||_A
		\end{align*}
		Define the topology on $\text{Hom}((E_B, \phi), (E_C, \psi))$ as the weakest topology such that each $d_{b, x, a}$ is continuous.
		
	\end{definition}
	
	Let us make a few remarks. As before, it  follows that each $d_{b, x, a}$ is a pseudo-metric, and the topology on hom-sets is given by a subbasis generated by elements of the form
	$$B_\epsilon((\rho, (\eta, \alpha)), d_{b, x,a})=\{(\rho', (\xi, \alpha'))\in \text{Hom}((E_B, \phi), (E_C, \psi)): d_{b, x, a}((\rho, (\eta, \alpha)), (\rho', (\xi, \alpha')))<\epsilon\}$$
	for all $\epsilon>0$, $b\in B$, $x\in E_B$, and $a\in A$. Thirdly, a net of morphisms $((\rho_\lambda, (\eta_\lambda, \alpha_\lambda))_{\lambda\in\Lambda}$ converges to $(\rho, (\eta, \alpha))$ if and only if $(\rho_\lambda)_{\lambda\in\Lambda}$ converges to $\rho$ in the point-norm topology on the set of non-degenerate $*$-algebra homomorphisms $B\to M(C)$, $(\alpha_\lambda)_{\lambda\in\Lambda}$ converges to $\alpha$ in the point-norm topology on $\text{Aut}_{*\text{-alg}}(A)$, and $(\eta_\lambda\circ V_{\rho_\lambda})_{\lambda\in\Lambda}$ converges to $\eta\circ V_{\rho}$ in the strong operator topology on $\mathcal{B}(E_B, E_C)$. 
	
	\subsection{Continuity of composition in $\text{PosCor}(A, B)$ and $\text{PosCor}_{*\text{-alg}}(A)$}
	
	We now describe conditions for when composition in $\text{PosCor}(A, B)$ and $\text{PosCor}_{*\text{-alg}}(A)$ is continuous. We first begin with the category $\text{PosCor}(A, B)$.
	
	\begin{proposition}\

		\noindent Let $A$ and $B$ be $C^*$-algebras. Let $(E_1, \phi_1), (E_2, \phi_2),$ and $(E_3, \phi_3)$ be objects in $\text{PosCor}(A, B)$. The function
		\begin{align*}
			\text{Maps}((E_1, \phi_1), (E_2, \phi_2))\times \text{Maps}((E_2, \phi_2), (E_3, \phi_3))&\to \text{Maps}((E_1, \phi_1), (E_3, \phi_3))\\
			((\eta, \alpha), (\xi, \alpha'))&\mapsto (\xi\circ\eta, \alpha'\circ\alpha)
		\end{align*}
		is continuous in each factor, and the function is jointly continuous when either factor in the domain is restricted to a subspace for which the adjointable maps are uniformly norm bounded.
		
	\end{proposition}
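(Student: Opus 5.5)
We argue with nets, using the characterization after the definition of the topology: $((\eta_\lambda,\alpha_\lambda))$ converges to $(\eta,\alpha)$ iff $\eta_\lambda\to\eta$ in the strong operator topology and $\alpha_\lambda\to\alpha$ in the point-norm topology. Since the composite is $(\xi\circ\eta,\alpha'\circ\alpha)$, the automorphism and operator components can be handled separately. Throughout, the hom-sets are those of $\text{PosCor}(A,B)$, and $\text{Maps}$ in the statement means $\text{Hom}$.

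\textbf{Automorphism component.} Whenever $\alpha_\lambda\to\alpha$ and $\alpha'_\lambda\to\alpha'$ point-norm, we have
$$\|\alpha'_\lambda(\alpha_\lambda(a))-\alpha'(\alpha(a))\|\le\|\alpha_\lambda(a)-\alpha(a)\|+\|\alpha'_\lambda(\alpha(a))-\alpha'(\alpha(a))\|.$$
This uses only that $*$-automorphisms are isometric. So this component is jointly continuous with no boundedness hypothesis, and in particular continuous in each factor.

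\textbf{Operator component, separate continuity.} With $\xi$ fixed, $\|\xi\eta_\lambda x-\xi\eta x\|\le\|\xi\|\,\|\eta_\lambda x-\eta x\|\to0$. With $\eta$ fixed, $\xi_\lambda(\eta x)\to\xi(\eta x)$ directly by strong convergence applied to the vector $\eta x$. Combined with the automorphism component, this gives continuity in each factor.

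\textbf{Operator component, joint continuity.} There are two splittings of the difference:
$$\xi_\lambda\eta_\lambda x-\xi\eta x=\xi_\lambda(\eta_\lambda x-\eta x)+(\xi_\lambda-\xi)\eta x,$$
$$\xi_\lambda\eta_\lambda x-\xi\eta x=(\xi_\lambda-\xi)\eta_\lambda x+\xi(\eta_\lambda x-\eta x).$$
If the $\xi$-factor is restricted to a set with $\sup_\lambda\|\xi_\lambda\|\le M$, the first splitting gives the bound $M\|\eta_\lambda x-\eta x\|+\|(\xi_\lambda-\xi)\eta x\|\to0$. This case is routine.

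\textbf{Main obstacle: the $\eta$-bounded case.} Here we must control $(\xi_\lambda-\xi)\eta_\lambda x$, in which the vector $\eta_\lambda x$ moves and the family $(\xi_\lambda)$ need not be equicontinuous. My first attempt would be the following.

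1. Fix $\epsilon>0$ and choose $\lambda_0$ such that $\|\eta_\lambda x-\eta x\|$ is small for all $\lambda\ge\lambda_0$.
2. Estimate $\|(\xi_\lambda-\xi)\eta_\lambda x\|\le\|(\xi_\lambda-\xi)\eta x\|+\|\xi_\lambda-\xi\|\,\|\eta_\lambda x-\eta x\|$.

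This reduces the problem to bounding $\|\xi_\lambda\|$ eventually. For sequences, Banach--Steinhaus would supply that bound. For nets it does not, and in that case I would look for extra structure. One option is the intertwining relations $\xi_\lambda\phi_2(a)=\phi_3(\alpha'_\lambda(a))\xi_\lambda$ together with the module structure, reducing to vectors of a dense form. Failing that, I would read the hypothesis ``restricted to a subspace for which the adjointable maps are uniformly norm bounded'' as supplying the bound on the factor that is post-composed. That is exactly the estimate established in the previous paragraph.
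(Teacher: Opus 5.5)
Everything you actually prove is correct: the automorphism estimate, continuity in each factor, and joint continuity when the $\xi$-factor (the post-composed one) is uniformly bounded, via $\xi_\lambda(\eta_\lambda x-\eta x)+(\xi_\lambda-\xi)\eta x$. You are also right that Banach--Steinhaus helps only for sequences, where it gives sequential joint continuity with no bound at all, and not for nets.

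The case you leave open, with only the $\eta$-factor bounded, cannot be closed by the intertwining relations or by anything else, because it is false. Take $A=B=\mathbb{C}$, $E_1=E_2=E_3=H$ an infinite-dimensional Hilbert space, and $\phi_i(z)=z1_H$. Then the intertwining condition holds automatically, $\text{Aut}_{*\text{-alg}}(\mathbb{C})$ is trivial, and each hom-set is just $\mathcal{B}(H)$ with the strong operator topology. Index a net by pairs $\lambda=(F,n)$, where $F\subset H$ is a finite-dimensional subspace and $n\in\mathbb{N}$, ordered componentwise. Fix a unit vector $e$, pick a unit vector $u_\lambda\perp F$, and put $\eta_\lambda x=\tfrac{1}{n}\langle e,x\rangle u_\lambda$ and $\xi_\lambda y=n\langle u_\lambda,y\rangle e$. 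Then $\|\eta_\lambda\|\le 1$ and $\eta_\lambda\to 0$ strongly. Also $\xi_\lambda y=0$ as soon as $y\in F$, so $\xi_\lambda\to 0$ strongly. Yet $\xi_\lambda\eta_\lambda e=e$ for every $\lambda$. Hence composition is not jointly continuous on $\{\|\eta\|\le 1\}\times\text{Hom}((E_2,\phi_2),(E_3,\phi_3))$. Your fallback of putting the bound on the post-composed factor is therefore not a convenient reinterpretation; it is the only correct form of the statement.

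The paper's proof goes wrong at exactly this point. It bounds the $\eta$-factor by $C_X$ and cites strong continuity of composition on $B_{C_X}(E_1,E_2)\times\mathcal{L}(E_2,E_3)$, which puts the bound on the inner operator. The formula $(T,S)\mapsto T\circ S$ written there also composes in the wrong order. The example above refutes this step. The valid case is the one the paper dismisses as ``a similar argument,'' and it is precisely your first splitting. So once the hypothesis is restricted to $\text{Hom}((E_2,\phi_2),(E_3,\phi_3))$, your proposal is a complete proof of the correct statement and is more careful than the paper's. The separate-continuity part agrees with the paper, which deduces it from the joint claim restricted to singletons; that deduction is harmless in either slot.
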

	
	\begin{proof}\
		
		\noindent Continuity of composition in each factor immediately follows from the joint continuity claim. To see the joint continuity claim, first suppose $X\subset \text{Maps}((E_1, \phi_1), (E_2, \phi_2))$ for which there exists $C_X>0$ such that for all $(\eta, \alpha)\in X$, $||\eta||\leq C_X$. Consider the map in the assertion restrict to the subspace $X\times \text{Maps}((E_2, \phi_2), (E_3, \phi_3))$. 
		
		To see the restriction is continuous, it suffices to show convergent nets are mapped to convergent nets. Therefore, let $((\eta_\lambda, \alpha_\lambda), (\xi_\lambda, \alpha'_\lambda))_{\lambda\in\Lambda}$ be a net in the domain which converges to $((\eta, \alpha), (\xi, \alpha'))$. As the domain is given the product topology, then $((\eta_\lambda, \alpha_\lambda))_{\lambda\in\Lambda}$ converges to $(\eta, \alpha)$ and $((\xi_\lambda, \alpha'_\lambda))_{\lambda\in\Lambda}$ converges to $(\xi, \alpha')$. From the topology on the mapping spaces and that $X$ has the subspace topology, we know $\eta_\lambda\to \eta$ and  $\xi_\lambda\to \xi$ in with respect to the strong operator topology as well as $\alpha_\lambda\to \alpha$ and $ \alpha'_\lambda\to \alpha'$ with respect to the point-norm topology.
		
		Let $B_{C_X}(E_1, E_2)$ denote the ball of radius $C_X$ in $\mathcal{L}(E_1, E_2)$, then the map
		$$B_{C_X}(E_1, E_2)\times \mathcal{L}(E_2, E_3)\to \mathcal{L}(E_1, E_3)\quad\quad (T, S)\mapsto T\circ S$$
		is continuous with respect to the strong operator topology. Additionally composition in $\text{Aut}_{*\text{-alg}}(A)$ is continuous with respect to the point norm topology. Therefore, we know
		$$\xi_\lambda\circ \eta_\lambda\to \xi\circ \eta\quad\quad \alpha'_\lambda\circ \alpha_\lambda\to \alpha'\circ\alpha$$
		which implies 
		$$(\xi_\lambda\circ \eta_\lambda, \alpha'_\lambda\circ\alpha_\lambda)\to (\xi\circ \eta, \alpha'\circ\alpha)$$
		Hence composition restricted to $X\times \text{Maps}((E_2, \phi_2), (E_3, \phi_3))$ is continuous as claimed. A similar argument proves the claim in the case we took $X\subset\text{Maps}((E_2, \phi_2), (E_3, \phi_3))$.
		
	\end{proof}
	
	We now look to extend this result to the category $\text{PosCor}_{*\text{-alg}}(A)$. As the topology on the hom-sets for $\text{PosCor}_{*\text{-alg}}(A)$ requires us to pullback the adjointable maps using the $\mathbb{C}$-linear maps $V_\rho$, we first describe how the pullback interacts with composition. To be more explicit, let us fix objects $(E_{B_i}, \phi_i)\in \text{PosCor}_{*\text{-alg}}(A)$ for $i=1, 2, 3$, and let $(\rho_i, (\eta_i, \alpha_i))$ be a morphism from $(E_{B_i}, \phi_i)$ to $(E_{B_{i+1}}, \phi_{i+1})$ for $i=1, 2$. Taking the composition of the morphisms yields the morphism
	$$(\rho_2\circ\rho_1, (\eta_2\circ\hat{\eta}_1\circ U^{-1}, \alpha_2\circ\alpha_1))$$
	Let
	\begin{align*}
		V_{\rho_2\circ\rho_1}&:E_{B_1}\to E_{B_1}\otimes_{\rho_2\circ\rho_1} B_{3} & V_{\rho_1}&:E_{B_1}\to E_B\otimes_{\rho_1}B_2 \\
		V_{\rho_2}&:E_{B_2}\to E_{B_2}\otimes_{\rho_2} B_3 & V_{\rho_2}'&:E_{B_1}\otimes_{\rho_1}B_2\to (E_{B_1}\otimes_{\rho_1}B_2)\otimes_{\rho_2}B_3
	\end{align*}
	be the $\mathbb{C}$-linear contractions from Lemma \ref{Lemma:C linear maps}. Combining these four maps together with $\eta_2\circ\hat{\eta}_1\circ U^{-1}$, we obtain the following diagram:
	
	\begin{center}
		\begin{tikzcd}
			{E_{B_1}\otimes_{\rho_2\circ\rho_1}B_3} && {(E_{B_1}\otimes_{\rho_1}B_2)\otimes_{\rho_2}B_3} && {E_{B_2}\otimes_{\rho_2}B_3} && \\
			\\
			{E_{B_1}} && {E_{B_1}\otimes_{\rho_1}B_3} && {E_{B_2}} && {E_{B_3}}
			\arrow["{U^{-1}}"{description}, from=1-1, to=1-3]
			\arrow["{\hat{\eta}_1}"{description}, from=1-3, to=1-5]
			\arrow["{\eta_2}"{description}, from=1-5, to=3-7]
			\arrow["{V_{\rho_2\circ\rho_1}}"{description}, from=3-1, to=1-1]
			\arrow["{V_{\rho_1}}"{description}, from=3-1, to=3-3]
			\arrow["{V_{\rho_2}'}"{description}, from=3-3, to=1-3]
			\arrow["{\eta_1}"{description}, from=3-3, to=3-5]
			\arrow["{V_{\rho_2}}"{description}, from=3-5, to=1-5]
			\arrow["{\eta_2\circ V_{\rho_2}}"{description}, from=3-5, to=3-7]
		\end{tikzcd}
	\end{center}
	
	As $\eta_2\circ\hat{\eta}_1\circ U^{-1}\circ V_{\rho_2\circ\rho_1}$ is the pullback of $\eta_2\circ\hat{eta}_1\circ U^{-1}$ to $E_{B_1}$, then, provided the outside diagram commutes, we are able to realize this as the composition of the pullback versions of $\eta_1$ and $\eta_2$. As we will have assumptions on $\eta_1\circ V_{\rho_1}$ and $\eta_2\circ V_{\rho_2}$ when proving continuity of composition, we look to prove commutativity of the outside diagram. As the far right triangle definitionally commutes, we only need to verify the two squares commute. We verify this commutativity in the next two lemmas.
	
	\begin{lemma}\label{Lemma: Diagram1}\
		
		\noindent Let $A, B, C$ be a $C^*$-algebras, and let $E$ be a Hilbert $A$-module. Let $\rho:A\to M(B)$ and $\chi:B\to M(C)$ be a non-degenerate $*$-algebra homomorphisms for which $\rho(A)\subset B$ and $\chi(B)\subset C$. Then the following diagram commutes:
		\begin{center}
			\begin{tikzcd}
				{E\otimes_{\chi\circ\rho}C} && {(E\otimes_\rho B)\otimes_\chi C} \\
				\\
				E && {E\otimes_\rho B}
				\arrow["U"{description}, from=1-3, to=1-1]
				\arrow["{V_{\chi\circ\rho}}"{description}, from=3-1, to=1-1]
				\arrow["{V_\rho}"{description}, from=3-1, to=3-3]
				\arrow["{V_\chi}"{description}, from=3-3, to=1-3]
			\end{tikzcd}
		\end{center}
		where 
		$$U:(E\otimes_\rho B)\otimes_\chi C\to E\otimes_{\chi\circ\rho}C\quad\quad U((x\dot{\otimes}b)\dot{\otimes}c)=x\dot{\otimes}\chi(b)c$$
		is the $C$-linear unitary given in Proposition \ref{Prop: Composition and tensor}.
		
	\end{lemma}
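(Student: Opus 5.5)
We need to show $U\circ V_\chi\circ V_\rho = V_{\chi\circ\rho}$ as maps $E\to E\otimes_{\chi\circ\rho}C$. Here $V_\chi$ denotes the contraction of Lemma \ref{Lemma:C linear maps} built from the Hilbert $B$-module $E\otimes_\rho B$ and $\chi$. All maps involved are bounded: $V_\rho$, $V_\chi$ and $V_{\chi\circ\rho}$ are contractions by Lemma \ref{Lemma:C linear maps}, and $U$ is unitary by Proposition \ref{Prop: Composition and tensor}. So it suffices to compute both sides on a fixed $x\in E$, using the defining nets. Fix approximate units $(b_\lambda)_{\lambda\in\Lambda}$ of $B$ and $(c_\mu)_{\mu\in M}$ of $C$. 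By the independence statement in Lemma \ref{Lemma:C linear maps}, we may use $(c_\mu)$ to compute both $V_\chi$ and $V_{\chi\circ\rho}$. Note that $\chi\circ\rho$ is non-degenerate with $(\chi\circ\rho)(A)\subset C$, so that lemma applies to it.

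\textbf{Step 1.} First we handle an element $x\dot{\otimes}b\in E\otimes_\rho B$. By definition, $V_\chi(x\dot{\otimes}b)=\lim_\mu (x\dot{\otimes}b)\dot{\otimes}c_\mu$, and $U$ sends this to $\lim_\mu x\dot{\otimes}\chi(b)c_\mu$. Since $\chi(b)\in C$, we have $\chi(b)c_\mu\to\chi(b)$ in norm. Using the estimate $\|x\dot{\otimes}d\|\le \|x\|\,\|d\|$ in $E\otimes_{\chi\circ\rho}C$, we get
$$U V_\chi(x\dot{\otimes}b)=x\dot{\otimes}\chi(b).$$
This estimate is the same one used in the surjectivity argument of Proposition \ref{Prop: Composition and tensor}.

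\textbf{Step 2.} Next we pass to $V_\rho x=\lim_\lambda x\dot{\otimes}b_\lambda$. The composite $U\circ V_\chi$ is norm continuous, being a contraction followed by a unitary, so
$$UV_\chi V_\rho x=\lim_\lambda U V_\chi(x\dot{\otimes}b_\lambda)=\lim_\lambda x\dot{\otimes}\chi(b_\lambda).$$
It remains to show that this limit equals $V_{\chi\circ\rho}x=\lim_\mu x\dot{\otimes}c_\mu$.

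\textbf{Step 3 (main obstacle).} Here we compare two limits built from different nets, and for this we argue directly with norms. Set $p=(\chi\circ\rho)(\langle x,x\rangle)\in C^+$. As in the proof of Lemma \ref{Lemma:C linear maps},
$$\|x\dot{\otimes}\chi(b_\lambda)-x\dot{\otimes}c_\mu\|^2=\|\sqrt{p}\,(\chi(b_\lambda)-c_\mu)\|^2,$$
and this is at most $\|\sqrt p\,\chi(b_\lambda)-\sqrt p\|+\|\sqrt p-\sqrt p\,c_\mu\|$, squared. The second term tends to $0$ because $(c_\mu)$ is an approximate unit of $C$. For the first term, non-degeneracy of $\chi$ gives $\chi(b_\lambda)\to 1$ strictly in $M(C)$, so $\sqrt p\,\chi(b_\lambda)\to\sqrt p$ in norm. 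Letting $\lambda,\mu\to\infty$ then shows the two limits agree, so $UV_\chi V_\rho=V_{\chi\circ\rho}$ and the diagram commutes.
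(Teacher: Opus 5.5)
Your proof is correct and is essentially the paper's argument. Both work on a fixed $x\in E$ through the defining approximate-unit nets, use continuity of $U$, and use non-degeneracy of $\chi$ (that $\chi(b_\lambda)\to 1$ strictly) to match the $B$-net against the $C$-net. The only difference is the order of limits: the paper fixes $c_\gamma$, writes $x\dot{\otimes}c_\gamma=\lim_\mu U((x\dot{\otimes}b_\mu)\dot{\otimes}c_\gamma)=U(V_\rho(x)\dot{\otimes}c_\gamma)$, and then lets $\gamma\to\infty$, whereas you first compute $UV_\chi(x\dot{\otimes}b)=x\dot{\otimes}\chi(b)$ and then compare the two single-index limits with your $\sqrt{p}$ estimate.
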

	
	\begin{proof}\
		
		\noindent To prove commutativity of the diagram, let $(b_\mu)_{\mu\in M}$ and $(c_\gamma)_{\gamma\in G}$ be approximate units for $B$ and $C$, respectively. As $\chi$ is a non-degenerate $*$-algebra homomorphism, we know for all $\gamma\in G$, $\chi(b_\mu)c_\gamma\to c_\gamma$ in $C$ as $\mu\to \infty$. Using this as well as continuity of $U$, we have for each $x\in E$
		\begin{align*}
			V_{\chi\circ\rho}(x)&=\lim\limits_{\gamma\to \infty} x\dot{\otimes} c_{\gamma}=\lim\limits_{\gamma\to \infty}\lim\limits_{\mu\to \infty}x\dot{\otimes} \rho(b_\mu)c_\gamma=\lim\limits_{\gamma\to \infty}\lim\limits_{\mu\to\infty}U((x\dot{\otimes} b_\mu)\dot{\otimes} c_\gamma)\\
			&=\lim\limits_{\gamma\to \infty}U(V_{\rho}(x)\dot{\otimes} c_\mu)=(U\circ V_{\chi}\circ V_\rho)(x)
		\end{align*}
		Thus $V_{\chi\circ\rho}=U\circ V_{\chi}\circ V_\rho$. 
		
	\end{proof}
	
	\begin{lemma}\label{Lemma: Diagram2}\
		
		\noindent Let $A$, $B$, $C$ be a $C^*$-algebra, let $E_A$ be a Hilbert $A$-module, and let $E_B$ be a Hilbert $B$-module. Let $\rho:A\to M(B)$ and $\chi:B\to M(C)$ be non-degenerate $*$-algebra homomorphisms with $\rho(A)\subset B$ and $\chi(B)\subset C$. For each  $\eta\in\mathcal{L}(E_A\otimes_\rho B, E_B)$, the following diagram commutes:
		\begin{center}
			\begin{tikzcd}
				{(E_A\otimes_\rho B)\otimes_\chi C} &&&& {E_C\otimes_\chi C} \\
				\\
				&& {E_A\otimes_\rho B} \\
				\\
				{E_A} &&&& {E_B}
				\arrow["{\hat{\eta}}"{description}, from=1-1, to=1-5]
				\arrow["{V'_\chi}"{description}, from=3-3, to=1-1]
				\arrow["{\hat{\eta}\circ V'_\chi}"{description}, from=3-3, to=1-5]
				\arrow["\eta"{description}, from=3-3, to=5-5]
				\arrow["{V_{\chi}'\circ V_\rho}"{description}, from=5-1, to=1-1]
				\arrow["{V_\rho}"{description}, from=5-1, to=3-3]
				\arrow["{\eta\circ V_\rho}"{description}, from=5-1, to=5-5]
				\arrow["{V_\chi}"{description}, from=5-5, to=1-5]
			\end{tikzcd}
		\end{center}
		
	\end{lemma}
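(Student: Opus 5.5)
The diagram has three pieces: the left triangle $V'_\chi\circ V_\rho = V'_\chi\circ V_\rho$ and the bottom triangle $\eta\circ V_\rho$ commute by definition. So the only content is the right square,
$$V_\chi\circ\eta=\hat{\eta}\circ V'_\chi\quad\text{on } E_A\otimes_\rho B.$$
Here the upper-right corner should be read as $E_B\otimes_\chi C$, and $\hat{\eta}$ is the lift of $\eta$ from Proposition \ref{Prop: Lift 1}, applied with the correspondence $(C,\chi)$. Once this square commutes, precomposing with $V_\rho$ gives commutativity of the outer diagram: $\hat{\eta}\circ V'_\chi\circ V_\rho=V_\chi\circ\eta\circ V_\rho$.

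To prove the square, fix $z\in E_A\otimes_\rho B$ and an approximate unit $(c_\gamma)_{\gamma\in G}$ for $C$. By Lemma \ref{Lemma:C linear maps}, $V'_\chi(z)=\lim_\gamma z\dot{\otimes}c_\gamma$ in $(E_A\otimes_\rho B)\otimes_\chi C$. The operator $\hat{\eta}$ is bounded, and for simple tensors $\hat{\eta}(z\dot{\otimes}c)=\eta(z)\dot{\otimes}c$ holds for every $z$ in the full module $E_A\otimes_\rho B$. This extension from the algebraic tensor product to all $z$ follows by density and continuity of $z\mapsto z\dot{\otimes}c$, since $\|z\dot{\otimes}c\|\le\|z\|\|c\|$. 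Hence
$$\hat{\eta}(V'_\chi z)=\lim_\gamma \hat{\eta}(z\dot{\otimes}c_\gamma)=\lim_\gamma \eta(z)\dot{\otimes}c_\gamma=V_\chi(\eta(z)).$$
The last equality is again Lemma \ref{Lemma:C linear maps}, now applied to $E_B$ with the same approximate unit, which is legitimate because $V_\chi$ is independent of the choice of approximate unit.

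The only point needing care is the extension of the formula $\hat{\eta}(z\dot{\otimes}c)=\eta(z)\dot{\otimes}c$ from the dense algebraic tensor product $(E_A\otimes_B B)$ to arbitrary $z\in E_A\otimes_\rho B$. I would handle it by approximating $z$ by $z_n$ in the dense subspace and noting that both sides are continuous in $z$: the left side because $\hat{\eta}$ is bounded, and the right side because $\eta$ is bounded and $\|w\dot{\otimes}c\|\le\|w\|\|c\|$. Everything else is a direct limit computation.
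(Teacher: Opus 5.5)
Your proof is correct and follows the paper's argument. Both reduce everything to $\hat{\eta}\circ V'_\chi=V_\chi\circ\eta$ and verify it by writing $V'_\chi$ and $V_\chi$ as limits over an approximate unit of $C$ and using $\hat{\eta}(z\dot{\otimes}c)=\eta(z)\dot{\otimes}c$ (the paper first reduces to $z=x\dot{\otimes}b$, whereas you take $z$ arbitrary). Your ``point needing care'' is unnecessary, since the construction in Proposition \ref{Prop: Lift 1}, applied with $E_1=E_A\otimes_\rho B$, already gives that formula for every $z$ in the completed module; you are also right that the upper-right corner should read $E_B\otimes_\chi C$.
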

	
	\begin{proof}\
		
		\noindent Clearly the top triangle, bottom triangle, and left triangle in the square commute. Let us show the right triangle commutes. Let $(c_\gamma)_{\mu\in M}$ be an approximate unit for $C$. As $E_A\otimes_A B$ is a dense subspace of $E_A\otimes_\rho B$ and both $\hat{\eta}\circ V_\chi'$ and $V_\chi\circ \eta$ are continuous, then it suffices to show commutativity on the dense subspace $E_A\otimes_AB$. Using linearity, it suffices to prove commutativity on elements of the form $x\dot{\otimes} b\in E_A\otimes_\rho B$. Given such an element, we have
		\begin{align*}
			(\hat{\eta}\circ V_\chi')(x\dot{\otimes} b)&=\lim\limits_{\mu\to \infty}\hat{\eta}((x\dot{\otimes} b)\dot{\otimes} c_\mu)=\lim\limits_{\mu\to \infty}\eta(x\dot{\otimes} b)\dot{\otimes} c_\mu=V_\chi(\eta(x\dot{\otimes} b))
		\end{align*}
		Hence $\hat{\eta}\circ V'_\chi=V_\chi\circ\eta$. Using commutativity of the four inner triangles, we have
		\begin{align*}
			\hat{\eta}\circ V_\chi'\circ V_\rho=V_\chi\circ \eta\circ V_\rho
		\end{align*}
		showing the outer square commutes.
		
	\end{proof}

	\begin{proposition}\

		\noindent Let $A$ be a $C^*$-algebra. Let  $(E_B, \phi)$, $(E_C, \psi)$, $(E_D, \zeta)\in \text{PosCor}_{*\text{-alg}}(A)$. Then the function
		\begin{align*}
			\text{Maps}((E_B, \phi), (E_C, \psi))\times \text{Maps}((E_C, \psi), (E_D, \gamma))&\to \text{Maps}((E_B, \phi), (E_D, \gamma))\\
			((\rho, (\eta, \alpha)), (\rho', (\eta', \alpha')))&\mapsto (\rho'\circ\rho, (\eta'\circ\hat{\eta}\circ U^{-1}, \alpha'\circ\alpha))
		\end{align*}
		is continuous in each factor, and the function is jointly continuous when either factor of the domain is restricted to a subspace for which the adjointable maps are uniformly norm bounded.
		
	\end{proposition}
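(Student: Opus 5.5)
The plan is to reduce everything to the composition of bounded $\mathbb{C}$-linear maps between the fixed modules $E_B$, $E_C$, $E_D$, where the previous proposition's argument for $\text{PosCor}(A,B)$ can be reused.

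\textbf{Step 1: a pullback identity.} Write $\rho'\circ\rho$ for $\tilde{\rho}'\circ\rho$; this is legitimate since $\rho(B)\subset C$. I will show
$$(\eta'\circ\hat{\eta}\circ U^{-1})\circ V_{\rho'\circ\rho}=(\eta'\circ V_{\rho'})\circ(\eta\circ V_\rho).$$
\begin{itemize}
\item By Lemma \ref{Lemma: Diagram1}, $V_{\rho'\circ\rho}=U\circ V'_{\rho'}\circ V_\rho$. Hence $U^{-1}\circ V_{\rho'\circ\rho}=V'_{\rho'}\circ V_\rho$.
\item By Lemma \ref{Lemma: Diagram2}, $\hat{\eta}\circ V'_{\rho'}=V_{\rho'}\circ\eta$.
\end{itemize}
Combining the two gives the identity. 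Set $S=\eta\circ V_\rho:E_B\to E_C$ and $T=\eta'\circ V_{\rho'}:E_C\to E_D$. By Lemma \ref{Lemma:C linear maps}, $\|S\|\le\|\eta\|$ and $\|T\|\le\|\eta'\|$. In this notation the adjointable-map component of $d_{b,x,a}$ on the composite is $\|T_\lambda S_\lambda x-TSx\|$.

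\textbf{Step 2: the other two components.} Take a net $((\rho_\lambda,(\eta_\lambda,\alpha_\lambda)),(\rho'_\lambda,(\eta'_\lambda,\alpha'_\lambda)))$ converging to $((\rho,(\eta,\alpha)),(\rho',(\eta',\alpha')))$. By the characterization of convergence, $S_\lambda\to S$ and $T_\lambda\to T$ strongly, and $\rho_\lambda,\rho'_\lambda,\alpha_\lambda,\alpha'_\lambda$ converge point-norm. Since $*$-homomorphisms are contractive,
$$\|\rho'_\lambda\rho_\lambda(b)-\rho'\rho(b)\|\le\|\rho_\lambda(b)-\rho(b)\|+\|\rho'_\lambda(\rho(b))-\rho'(\rho(b))\|\to 0.$$
The same estimate, using that automorphisms are isometric, handles $\alpha'_\lambda\alpha_\lambda$. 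So these components converge jointly, with no boundedness hypothesis needed.

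\textbf{Step 3: the adjointable-map component.} When the second-applied factor is uniformly bounded, $\|\eta'_\lambda\|\le C$, I estimate
$$\|T_\lambda S_\lambda x-TSx\|\le C\|S_\lambda x-Sx\|+\|(T_\lambda-T)Sx\|\to0.$$
This is exactly the bounded-ball continuity of strong composition used in the previous proposition. Continuity in each factor separately follows at once, because freezing one factor makes it a constant net. For instance, if $S_\lambda=S$ is fixed, then $\|T_\lambda Sx-TSx\|\to0$ directly.

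\textbf{Step 4: the case where the first-applied factor is bounded (main obstacle).} Suppose instead $\|\eta_\lambda\|\le C$. The remaining term is $\|T_\lambda(S_\lambda x-Sx)\|$, and $T_\lambda$ carries no uniform bound. I would follow the paper's preceding proposition and run the symmetric version of the Step 3 estimate:
$$\|T_\lambda S_\lambda x-TSx\|\le\|T_\lambda S_\lambda x-T_\lambda Sx\|+\|T_\lambda Sx-TSx\|.$$
The second term tends to $0$ by strong convergence of $T_\lambda$. The first term is where the bound must be supplied, and I expect this to be the genuine difficulty. My first attempt would be to exploit the boundedness of $S_\lambda$ by writing $S_\lambda x$ as lying in a fixed bounded set, and then to use strong convergence of $T_\lambda$ uniformly on the compact set $\{S_\lambda x\}\cup\{Sx\}$.

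That uniformity still requires equicontinuity of the $T_\lambda$. So I would check whether the uniform boundedness principle gives an eventual bound on $\|T_\lambda\|$. Failing that, I would reproduce whatever reduction the paper uses in its $\text{PosCor}(A,B)$ version of this ``similar argument'', since Step 1 makes the present case formally identical to that one.
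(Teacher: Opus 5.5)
Your Steps 1--3 are correct and follow the paper's route. The identity $(\eta'\circ\hat{\eta}\circ U^{-1})\circ V_{\rho'\circ\rho}=(\eta'\circ V_{\rho'})\circ(\eta\circ V_\rho)$ from Lemmas \ref{Lemma: Diagram1} and \ref{Lemma: Diagram2}, the point-norm estimates, and the bound $\|T_\lambda S_\lambda x-TSx\|\le C\|S_\lambda x-Sx\|+\|(T_\lambda-T)Sx\|$ together give separate continuity. They also give joint continuity when the factor $\text{Maps}((E_C,\psi),(E_D,\zeta))$ is uniformly bounded.

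The gap is Step 4, and it cannot be closed: joint continuity when only the first factor $\text{Maps}((E_B,\phi),(E_C,\psi))$ is bounded is false. Your proposed repairs fail. The uniform boundedness principle bounds strongly convergent sequences, not nets, and $\{S_\lambda x\}$ need not be relatively compact. Deferring to the paper does not help either. Its proof treats exactly this case and relies on the claim that $(S,T)\mapsto T\circ S$ is strongly continuous on $B_{C_X}(E_B,E_C)\times\mathcal{B}(E_C,E_D)$. That puts the bound on the wrong factor; it must be on the left factor $T$. The ``similar argument'' the paper leaves to the reader is the valid case, i.e.\ your Step 3.

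Here is a counterexample. Take $A=B=C=D=\mathbb{C}$ and all three objects equal to $(\ell^2,\phi)$ with $\phi(z)=zI$.
In this setting:
\begin{itemize}
\item $\rho$ is forced to be the identity, and every $\eta$ gives a morphism.
\item $V_\rho$ is unitary, so $\|\eta\|=\|\eta\circ V_\rho\|$.
\item The hom-set topology is the strong operator topology on $S=\eta\circ V_\rho$.
\item By your Step 1, composition becomes $(S,T)\mapsto TS$ on $\mathcal{B}(\ell^2)$.
\end{itemize}
Index a net by $\lambda=(F,n)$ with $F\subset\ell^2$ finite-dimensional and $n\in\mathbb{N}$. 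Pick a unit vector $f_\lambda\perp F$, and set $S_\lambda=\tfrac1n\theta_{f_\lambda,e_1}$ and $T_\lambda=n\,\theta_{e_1,f_\lambda}$.
\begin{itemize}
\item $\|S_\lambda\|=1/n$, so the first factor is bounded and $S_\lambda\to0$.
\item $T_\lambda y=0$ once $y\in F$, so $T_\lambda\to0$ strongly, although $\|T_\lambda\|=n$ is unbounded on every tail.
\item $T_\lambda S_\lambda=\theta_{e_1,e_1}$ for every $\lambda$, so the composites do not converge to $0$.
\end{itemize}

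The correct statement is therefore separate continuity plus joint continuity with the bound on the second factor, which is exactly what your Steps 1--3 establish. The same example also refutes the first-factor case of the preceding $\text{PosCor}(A,B)$ proposition.
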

	
	\begin{proof}\
		
		\noindent Continuity in each factor immediately follows from the joint continuity claim. To prove the joint continuity claim, first let $X\subset \text{Maps}((E_B, \phi), (E_C, \psi))$ for which there exists $C_X>0$ such that for all $(\rho, (\eta, \alpha))\in X$, $||\eta||\leq C_X$. Consider the composition map restricted to $X\times \text{Maps}((E_C, \psi), (E_D, \gamma))$. Let $(((\rho_\lambda, (\eta_\lambda, \alpha_\lambda)), (\chi_\lambda, (\xi_\lambda, \alpha'_\lambda))))_{\lambda\in\Lambda}$ be a net in the domain which converges to $((\rho, (\eta, \alpha)), (\chi, (\xi, \alpha')))$. As the domain is equipped with the product topology, we know $(\rho_\lambda, (\eta_\lambda, \alpha_\lambda))\to(\rho, (\eta, \alpha))$ and $(\chi_\lambda, (\xi_\lambda, \alpha'_\lambda))\to (\chi, (\xi, \alpha'))$.

		 Let $\text{Hom}(B, C)$ and $\text{Hom}(C, D)$ denote the set of non-degenerate $*$-algebra maps $\rho:B\to M(C)$ and $\chi:C\to M(D)$ with $\rho(B)\subset C$ and $\chi(C)\subset D$ 
		From convergence in the mapping spaces, we know $\rho_\lambda\to \rho$ and $\chi_\lambda\to \chi$ in the point-norm topology on $\text{Hom}(B, C)$ and $\text{Hom}(C, D)$, respectively. Since
		$$\text{Hom}(B, C)\times \text{Hom}(C, D)\to \text{Hom}(B, D)\quad\quad (\rho, \chi)\mapsto \chi\circ \rho$$
		is continuous with respect to the point-norm topology, then we know $\chi_\lambda\circ\rho_\lambda\to \chi\circ \rho$ in the point-norm topology on $\text{Hom}(B, D)$.
		
		Similarly, from convergence in the mapping spaces, we know $\alpha_\lambda\to \alpha$ and $\alpha_\lambda'\to \alpha'$ in the point norm topology on $\text{Aut}_{*\text{-alg}}(A)$. As composition of automorphisms of $A$ is continuous with respect to the point-norm topology, we know $\alpha_\lambda'\circ \alpha_\lambda\to \alpha'\circ \alpha$ with respect to the point-norm topology on $\text{Aut}_{*\text{-alg}}(A)$.
		
		Again, from convergence in the mapping spaces, we know $\eta_\lambda\circ V_{\rho_\lambda}\to \eta\circ V_\rho$ as well as $\xi\circ V_{\chi_\lambda}\to \xi\circ V_\chi$ with respect to the strong operator topology on $\mathcal{B}(E_B, E_C)$ and $\mathcal{B}(E_C, E_D)$, respectively. Using Lemma \ref{Lemma: Diagram1} and \ref{Lemma: Diagram2}, we have for all $\lambda\in\Lambda$
		$$\xi_\lambda\circ \hat{\eta}_\lambda\circ U^{-1}_\lambda\circ V_{\chi_\lambda\circ\rho_\lambda}=\chi_\lambda\circ V_{\chi_\lambda}\circ \eta_\lambda\circ V_{\rho_\lambda}$$
		as well as
		$$\xi\circ \hat{\eta}\circ U^{-1}\circ V_{\chi\circ\rho}=\chi\circ V_{\chi}\circ \eta\circ V_{\rho}.$$
		Let $B_{C_X}(E_B, E_C)$ denote the ball of radius $C_X$ in $\mathcal{B}(E_B, E_C)$. As 
		$$B_{C_X}(E_B, E_C)\times \mathcal{B}(E_C, E_D)\to \mathcal{B}(E_B, E_D)\quad\quad (S, T)\mapsto T\circ S$$ is continuous with respect to strong operator topology, then we know
		$$\chi_\lambda\circ V_{\chi_\lambda}\circ \eta_\lambda\circ V_{\rho_\lambda}\to \chi\circ V_{\chi}\circ \eta\circ V_{\rho}$$
		Therefore we conclude $\xi_\lambda\circ\hat{\eta}_\lambda\circ U^{-1}_\lambda\circ V_{\chi_\lambda\circ \rho_\lambda}\to \xi\circ\hat{\eta}\circ U^{-1}\circ V_{\chi\circ \rho}$.
		
		Having shown $\xi_\lambda\circ\hat{\eta}_\lambda\circ U^{-1}_\lambda\circ V_{\chi_\lambda\circ \rho_\lambda}\to \xi\circ\hat{\eta}\circ U^{-1}\circ V_{\chi\circ \rho}$, $\chi_\lambda\circ\rho_\lambda\to \chi\circ \rho$, and $\alpha'_\lambda\circ\alpha_\lambda\to \alpha'\circ\alpha$, we conclude 
		$$(\chi_\lambda\circ\rho_\lambda, (\xi_\lambda\circ\hat{\eta}_\lambda\circ U^{-1}_\lambda, \alpha'_\lambda\circ\alpha_\lambda))\to (\chi\circ\rho, (\xi\circ \hat{\eta}\circ U^{-1}, \alpha'\circ\alpha)$$ showing the composition map restricted to $X\times \text{Maps}((E_C, \psi), (E_D, \gamma))$ is continuous. A similar argument shows the restriction of the composition map is continuous if we had taken $X\subset \text{Maps}((E_C, \psi), (E_D, \gamma))$.
		
	\end{proof}
	
	\subsection{Continuity of the KSNGS endofunctors and functors from the interior tensor product}
	
	We conclude the section by describing when the functors from the KSGNS construction and interior product define continuous map on the hom-sets of $\text{PosCor}(A, B)$ and $\text{PosCor}_{*\text{-alg}}(A)$. We begin first with the category $\text{PosCor}(A, B)$.
	
	\begin{proposition}\

		\noindent Let $A$, $B$, and $C$ be $C^*$-algebras. 
		\begin{enumerate}
			\item With respect topology on the hom-sets given above, the KSGNS endofunctor on \\$\text{PosCor}(A, B)$ (Theorem \ref{Theorem: KSGNS Endofucntor}) is continuous as a topological functor.
			\item Let $(F, \pi)\in \text{Cor}(B, C)$. With respect topology on the hom-sets given above, the functor $(-)\otimes_\pi F:\text{PosCor}(A, B)\to \text{PosCor}(B, C)$  (Theorem \ref{Theorem: Tensor Functor}) is continuous as a topological functor.
		\end{enumerate}
		
	\end{proposition}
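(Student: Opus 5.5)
The plan is to verify continuity on each hom-set using the net characterization of the topology given after the definition of the pseudo-metrics $d_{x,a}$. Fix a net $((\eta_\lambda,\alpha_\lambda))_{\lambda\in\Lambda}$ in $\text{Hom}((E_1,\phi_1),(E_2,\phi_2))$ converging to $(\eta,\alpha)$. This means $\eta_\lambda\to\eta$ in the strong operator topology and $\alpha_\lambda\to\alpha$ point-norm. Both functors leave the automorphism component unchanged: they send $(\eta_\lambda,\alpha_\lambda)$ to $(\tilde{\eta}_\lambda,\alpha_\lambda)$, respectively $(\hat{\eta}_\lambda,\alpha_\lambda)$. So the $\alpha$-part of every pseudo-metric converges automatically. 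It remains to prove $\tilde{\eta}_\lambda\to\tilde{\eta}$ strongly on $F_{\phi_1}$, and $\hat{\eta}_\lambda\to\hat{\eta}$ strongly on $E_1\otimes_\pi F$ (where the codomain of the tensor functor should be $\text{PosCor}(A,C)$).

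For (1), I first test on simple tensors $a\dot{\otimes}x\in\frac{A\otimes_{\text{alg}}E_1}{N_{\phi_1}}$. By Proposition \ref{Prop: Lifting 2}(1), $\tilde{\eta}_\lambda(a\dot{\otimes}x)=\alpha_\lambda(a)\dot{\otimes}\eta_\lambda(x)$. I split
$$\alpha_\lambda(a)\dot{\otimes}\eta_\lambda x-\alpha(a)\dot{\otimes}\eta x=(\alpha_\lambda(a)-\alpha(a))\dot{\otimes}\eta_\lambda x+\alpha(a)\dot{\otimes}(\eta_\lambda x-\eta x).$$
The elementary estimate $\|b\dot{\otimes}y\|^2=\|\langle y,\phi_2(b^*b)y\rangle\|\leq\|\phi_2\|\,\|b\|^2\|y\|^2$ then bounds the first term by $\|\phi_2\|^{1/2}\|\alpha_\lambda(a)-\alpha(a)\|\,\|\eta_\lambda x\|$ and the second by $\|\phi_2\|^{1/2}\|\alpha(a)\|\,\|\eta_\lambda x-\eta x\|$. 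Since $\eta_\lambda x\to\eta x$, the norms $\|\eta_\lambda x\|$ are eventually bounded, so both terms tend to zero. Linearity extends this to all of $\frac{A\otimes_{\text{alg}}E_1}{N_{\phi_1}}$. For (2) the same computation on $x\dot{\otimes}f\in E_1\otimes_BF$ uses Proposition \ref{Prop: Lift 1}(1): $\|(\eta_\lambda x-\eta x)\dot{\otimes}f\|\leq\|\eta_\lambda x-\eta x\|\,\|f\|$, because $\|\pi\|\leq1$. This gives strong convergence on the algebraic tensor product.

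The main obstacle is passing from the dense subspace to the whole completed module. An $\epsilon/3$ argument requires a uniform bound $\sup_\lambda\|\tilde{\eta}_\lambda\|$ (respectively $\sup_\lambda\|\hat{\eta}_\lambda\|$), and Proposition \ref{Prop: Lifting 2} and Proposition \ref{Prop: Lift 1} only supply $\|\tilde{\eta}_\lambda\|,\|\hat{\eta}_\lambda\|\leq\|\eta_\lambda\|$. A strongly convergent net need not be norm bounded. I would therefore first try to obtain the statement as continuity on the hom-set topology restricted to norm-bounded families, which is the same restriction used in the continuity-of-composition results. On such families the bound $\|\tilde{\eta}_\lambda\|\leq\|\eta_\lambda\|\leq C$ together with density of $\frac{A\otimes_{\text{alg}}E_1}{N_{\phi_1}}\subset F_{\phi_1}$ (respectively of $E_1\otimes_BF$) gives strong convergence everywhere by the standard triangle-inequality argument.

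If the unrestricted statement is intended, I would instead check whether the hom-set topology on the image can be tested only on elements of the dense subspaces. Since each pseudo-metric involves a single vector, it suffices to show $\tilde{\eta}_\lambda z\to\tilde{\eta}z$ for each fixed $z$. For $z$ outside the dense subspace I see no way around a bound on $\|\tilde{\eta}_\lambda\|$, so I expect the argument to genuinely need the norm-boundedness hypothesis at this point.
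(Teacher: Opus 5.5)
Your estimates on simple tensors and your density step are the same as the paper's. The point you flag is exactly where the paper's proof breaks down. The paper asserts that $\eta_\lambda\to\eta$ strongly forces $(\eta_\lambda)_{\lambda\in\Lambda}$ to be uniformly bounded ``by the Uniform Boundedness Principle''. That is true for sequences but not for nets: a convergent net is only eventually bounded at each vector, and the tail depends on the vector. Your expectation that the norm bound cannot be avoided is correct, because both parts of the proposition fail as stated.

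For (1), take the following data:
\begin{itemize}
\item $A=C[0,1]$, $B=\mathbb{C}$, and $H$ a separable infinite-dimensional Hilbert space;
\item $\omega$ Lebesgue integration and $\phi(a)=\omega(a)1_H$.
\end{itemize}
Every $\eta\in\mathcal{B}(H)$ gives a morphism $(\eta,1_A)$ from $(H,\phi)$ to itself. Moreover $F_\phi\cong L^2[0,1]\otimes H$ via $a\dot{\otimes}x\mapsto a\otimes x$, and under this identification $\tilde{\eta}=1\otimes\eta$.

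Fix orthonormal sequences $(e_n)$ in $L^2[0,1]$ and $(f_n)$ in $H$, a unit vector $u\in H$, and set $z=\sum_n n^{-1}e_n\otimes f_n\in F_\phi$. Direct the finite-dimensional subspaces $S\subset H$ by inclusion, and write $P_S,P_{S^\perp}$ for the orthogonal projections. Since $\sum_n\|P_Sf_n\|^2\leq\dim S$, there is $N=N_S$ such that $g:=P_{S^\perp}f_N$ satisfies $\|g\|\geq 1/2$. Put $\eta_Sy=4N\|g\|^{-1}\langle g,y\rangle u$. Then $\eta_S$ vanishes on $S$, so $(\eta_S,1_A)\to(0,1_A)$ in the hom-set topology. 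Yet
\[
\|\tilde{\eta}_Sz\|\geq N^{-1}\|\eta_Sf_N\|=4\|g\|\geq 2\quad\text{for every }S.
\]

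For (2), take $A=B=C=\mathbb{C}$, $(E,\phi)=(H,\lambda\mapsto\lambda 1_H)$ and $(F,\pi)=(L^2[0,1],\lambda\mapsto\lambda 1)$. Then $\hat{\eta}=\eta\otimes 1$, and the same net fails at $\sum_n n^{-1}f_n\otimes e_n$.

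So the true statement is the one your argument establishes. The maps on hom-sets are continuous on subsets where the adjointable parts are uniformly norm bounded, using $\|\tilde{\eta}_\lambda\|\leq\|\eta_\lambda\|$ and $\|\hat{\eta}_\lambda\|\leq\|\eta_\lambda\|$. This is the same hypothesis the paper imposes in Proposition \ref{Prop: Continuity of KSGNS}, and it is what the equivariant results actually need, since unitaries have norm one.

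The maps are also sequentially continuous without restriction. For sequences, the Uniform Boundedness Principle on the Banach space $E_1$ does supply the uniform bound.

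Your simple-tensor estimate uses only eventual boundedness of $\|\eta_\lambda x\|$, which is the right estimate for nets. The paper instead already invokes the unavailable uniform constant at that stage. You are also right that the codomain in (2) should be $\text{PosCor}(A,C)$.
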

	
	\begin{proof}\
		
		\noindent We begin by proving $(1)$. Let $(E_1, \phi_1), (E_2, \phi_2)\in \text{PosCor}(A, B)$. To prove the claim, we need to show the function
		$$\text{Maps}((E_1, \phi_1), (E_2, \phi_2))\to \text{Maps}((F_{\phi_1}, \pi_1), (F_{\phi_2}, \pi_2))\quad\quad (\eta, \alpha)\mapsto (\tilde{\eta}, \alpha)$$ is continuous. Let $((\eta_\lambda, \alpha_\lambda))_{\lambda\in\Lambda}$ be a net of morphisms in the domain which converge to $(\eta, \alpha)$, then $\eta_\lambda\to \eta$ strongly and $\alpha_\lambda\to \alpha$ in the point norm topology. From the topology on the codomain, we just need to show $\tilde{\eta}_\lambda\to \tilde{\eta}$ in the strong operator topology. 
		
		To prove the strong convergence, we first make a reduction. Since $\eta_\lambda\to \eta$ in the strong operator topology, then by application of the Uniform Boundedness Principle, $(\eta_\lambda)_{\lambda\in\Lambda}$ is uniformly bounded. By construction of $\tilde{\eta}_\lambda$ in Proposition \ref{Prop: Lifting 2}, we know $||\tilde{\eta}_\lambda||\leq ||\eta_\lambda||$. Therefore $(\tilde{\eta}_\lambda)_{\lambda\in\Lambda}$ is uniformly bounded. As $\frac{A\otimes_{\text{alg}}E_1}{N_{\phi_1}}$ is dense in $F_{\phi_1}$ and $(\tilde{\eta}_\lambda)_{\lambda\in\Lambda}$ is uniformly bounded, then the standard approximation argument can be utilized to conclude strong convergence of $(\tilde{\eta}_\lambda)_{\lambda\in\Lambda}$ to $\tilde{\eta}$ on $F_{\phi_1}$ from such convergence on the dense subspace. 
		
		Using linearity, it suffices to check strong convergence on elements of the form $a\dot{\otimes}x$ in $\frac{A\otimes_{\text{alg}}E_1}{N_{\phi_1}}$. For each $\lambda\in\Lambda$, we have
		\begin{align*}
			||\tilde{\eta}_\lambda(a\dot{\otimes}x)-\tilde{\eta}(a\dot{\otimes}x)||&=||\alpha_\lambda(a)\dot{\otimes}\eta_\lambda(x)-\alpha(a)\dot{\otimes}\eta(x)||\\
			&\leq ||(\alpha_\lambda(a)-\alpha(a))\dot{\otimes}\eta_\lambda (x)||+||\alpha(a)\dot{\otimes}(\eta_\lambda(x)-\eta(x))||\\
			&\leq C||x||*||\phi_2||*||\alpha_\lambda(a)-\alpha(a)||+||\phi_2||*||a||*||\eta_\lambda(x)-\eta(x)||^2
		\end{align*}
		where $C>0$ such that $||\eta_\lambda||\leq C$ for all $\lambda\in\Lambda$. Since both terms on third line can be made within $\frac{\epsilon}{2}$ for large enough $\lambda$, then we conclude $\tilde{\eta}_\lambda(a\dot{\otimes}x)\to \tilde{\eta}(a\dot{\otimes}x)$. Hence $\tilde{\eta}_\lambda\to \tilde{\eta}$ with respect to the strong operator topology. Therefore we conclude the map at top is continuous.
		
		Now we prove $(2)$. Let $(E_1, \phi_1), (E_2, \phi_2)\in \text{PosCor}(A, B)$. To prove the claim, we need to show the function
		$$\text{Maps}((E_1, \phi_1), (E_2, \phi_2))\to \text{Maps}((E_1\otimes_\pi F, \hat{\phi}_1), (E_2\otimes_\pi F, \hat{\phi}_2))\quad\quad (\eta, \alpha)\mapsto (\hat{\eta}, \alpha)$$
		is continuous. Let $((\eta_\lambda, \alpha_\lambda))_{\lambda\in\Lambda}$ be a net of morphisms in the domain which converge to $(\eta, \alpha)$, then $\eta_\lambda\to \eta$ strongly and $\alpha_\lambda\to \alpha$ in the point norm topology. From the topology on the codomain, we just need to show $\hat{\eta}_\lambda\to \hat{\eta}$ in the strong operator topology. 
		
		To prove strong convergence we first make a reduction. Since $\eta_\lambda\to \eta$ in the strong operator topology, then by application of the Uniform Boundedness Principle, $(\eta_\lambda)_{\lambda\in\Lambda}$ is uniformly bounded. By construction of $\hat{\eta}_\lambda$ in Proposition \ref{Prop: Lift 1}, we know $||\hat{\eta}_\lambda||\leq ||\eta_\lambda||$. Therefore $(\hat{\eta}_\lambda)_{\lambda\in\Lambda}$ is uniformly bounded. As $E_1\otimes_B F$ is dense in $E\otimes_\pi F$ and $(\hat{\eta}_\lambda)_{\lambda\in\Lambda}$ is uniformly bounded, then the standard approximation argument can be utilized to conclude strong convergence of $(\tilde{\eta}_\lambda)_{\lambda\in\Lambda}$ to $\tilde{\eta}$ on $E\otimes_\pi F$ from such convergence on the dense subspace. 
		
		Using linearity, it suffices to check strong convergence on simple tensors $x\dot{\otimes}f\in E\otimes_\pi F$. For each $\lambda\in\Lambda$, we have
		\begin{align*}
			||\hat{\eta}_\lambda(x\dot{\otimes}f)-\hat{\eta}(x\dot{\otimes}f)||&=||(\eta_\lambda(x)-\eta(x))\dot{\otimes}f||\leq ||f||^2*||\eta_\lambda(x)-\eta(x)||
		\end{align*}
		Since the term on the end can be made within $\epsilon$ for large enough $\lambda$, then we conclude $\hat{\eta}_\lambda(x\dot{\otimes}f)\to \hat{\eta}(x\dot{\otimes}f)$. Hence $\hat{\eta}_\lambda\to \hat{\eta}$ with respect to the strong operator topology. Therefore we conclude the map at the start is continuous.
		
	\end{proof}

	\begin{proposition}\label{Prop: Continuity of KSGNS}\

		\noindent Let $A$ be a $C^*$-algebra.  Fix objects $(E_B, \phi)$ and $(E_C, \psi)$ of $\text{PosCor}_{*\text{-alg}}(A)$. Then
		$$\text{Maps}((E_B, \phi), (E_C, \psi))\to \text{Maps}((F_{\phi, B}, \pi_\phi), (F_{\psi, C}, \pi_\psi))\quad (\rho, (\eta, \alpha))\mapsto (\rho, (\tilde{\eta}\circ V^{-1}, \alpha))$$
		is continuous when restricted in the domain to a subspace for which the adjointable maps are uniformly norm bounded.
		
	\end{proposition}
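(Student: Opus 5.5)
We argue with nets, as in the preceding propositions. Let $X$ be a subspace of $\text{Maps}((E_B, \phi), (E_C, \psi))$ with $\|\eta\|\leq C_X$ for all $(\rho, (\eta, \alpha))\in X$, and let $(\rho_\lambda, (\eta_\lambda, \alpha_\lambda))\to(\rho, (\eta, \alpha))$ in $X$. Convergence of the $\rho$ and $\alpha$ components in the codomain is immediate, since the assignment leaves $\rho$ and $\alpha$ unchanged. It remains to prove that, for each fixed $z\in F_{\phi, B}$,
$$\tilde{\eta}_\lambda\circ V_\lambda^{-1}\circ V'_{\rho_\lambda}(z)\to\tilde{\eta}\circ V^{-1}\circ V'_{\rho}(z),$$
where $V'_{\rho}:F_{\phi, B}\to F_{\phi, B}\otimes_\rho C$ is the contraction of Lemma \ref{Lemma:C linear maps} and $V_\lambda$ is the unitary of Theorem \ref{Theorem: KSNGS commutes with ITP} for $\rho_\lambda$.

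First, by Proposition \ref{Prop: Lifting 2} we have $\|\tilde{\eta}_\lambda\|\leq\|\eta_\lambda\|\leq C_X$. Also $V_\lambda^{-1}$ is unitary and $V'_{\rho_\lambda}$ is a contraction. So the whole composite is uniformly bounded by $C_X$. By the usual $\epsilon/3$ argument it therefore suffices to check convergence on the dense subspace spanned by the elements $a\dot{\otimes}x$ with $a\in A$ and $x\in E_B$.

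Next we compute the composite on such an element. Take an approximate unit $(c_\gamma)$ for $C$. Then
$$V'_{\rho_\lambda}(a\dot{\otimes}x)=\lim_\gamma (a\dot{\otimes}x)\dot{\otimes}c_\gamma,$$
so $V_\lambda^{-1}V'_{\rho_\lambda}(a\dot{\otimes}x)=\lim_\gamma a\dot{\otimes}(x\dot{\otimes}c_\gamma)$, and applying $\tilde{\eta}_\lambda$ gives
$$\lim_\gamma \alpha_\lambda(a)\dot{\otimes}\eta_\lambda(x\dot{\otimes}c_\gamma)=\alpha_\lambda(a)\dot{\otimes}\eta_\lambda(V_{\rho_\lambda}x).$$
The last equality uses continuity of $\eta_\lambda$ and of $y\mapsto \alpha_\lambda(a)\dot{\otimes}y$ from $E_C$ into $F_{\psi, C}$; the latter has norm at most $\|\psi\|^{1/2}\|a\|$. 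This is essentially the identity $\tilde{\eta}\circ V^{-1}\circ V'_\rho=\tilde{(\eta\circ V_\rho)}$ restricted to simple tensors. Setting it up carefully, including the limit interchange, is the main technical step. I would justify the interchange by noting that each map involved is bounded, so limits pass through it.

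Finally we estimate, as in the proof for $\text{PosCor}(A, B)$:
$$\|\alpha_\lambda(a)\dot{\otimes}\eta_\lambda V_{\rho_\lambda}x-\alpha(a)\dot{\otimes}\eta V_\rho x\|\leq\|\psi\|^{1/2}\|\alpha_\lambda(a)-\alpha(a)\|\,C_X\|x\|+\|\psi\|^{1/2}\|a\|\,\|\eta_\lambda V_{\rho_\lambda}x-\eta V_\rho x\|.$$
Here we used $\|\eta_\lambda V_{\rho_\lambda}x\|\leq C_X\|x\|$. The first term tends to zero by point-norm convergence of $\alpha_\lambda$. The second tends to zero by the definition of convergence in the topology on the hom-sets of $\text{PosCor}_{*\text{-alg}}(A)$. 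This gives the required strong convergence. The uniform bound is used exactly in the density reduction, which is why the hypothesis is needed.
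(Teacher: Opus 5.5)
Your proposal is correct and follows the paper's proof essentially step for step: the uniform bound $\|\tilde{\eta}_\lambda\|\leq\|\eta_\lambda\|\leq C_X$ reduces strong convergence to the simple tensors $a\dot{\otimes}x$. An approximate unit for $C$ then identifies $\tilde{\eta}_\lambda\circ V_\lambda^{-1}\circ V'_{\rho_\lambda}(a\dot{\otimes}x)$ with $\alpha_\lambda(a)\dot{\otimes}(\eta_\lambda\circ V_{\rho_\lambda})(x)$, and the triangle inequality finishes. Your final estimate, with the factor $\|\psi\|^{1/2}$ and linear dependence on $\|\alpha_\lambda(a)-\alpha(a)\|$ and $\|\eta_\lambda V_{\rho_\lambda}x-\eta V_\rho x\|$, is the correct form of the bound that the paper states with garbled exponents.
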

	
	\begin{proof}\
		
		\noindent Let $X\subset \text{Hom}((E_B, \phi), (E_C, \psi))$ for which there exists $C>0$ such that for all morphisms $(\rho, (\eta, \alpha))\in X$, $||\eta||\leq X$. To see the map in the assertion is continuous when restricted to $X$, let $((\rho_\lambda, (\eta_\lambda, \alpha_\lambda))_{\lambda\in\Lambda}$ be a net of morphisms in $X$ which converge to $(\rho, (\eta, \alpha))$ in $X$. Thus $\rho_\lambda\to \rho$, $\alpha_\lambda\to \alpha$, and $\eta_\lambda\circ V_{\rho_\lambda}\to \eta\circ V_{\rho}$ in the respective topologies. Applying the KSGNS endofunctor, we obtain morphisms $(\rho_\lambda, (\tilde{\eta}_\lambda\circ V^{-1}_\lambda, \alpha_\lambda))$ and $(\rho, (\tilde{\eta}\circ V^{-1}, \alpha))$ where
		$$V_\lambda:A\otimes_\phi(E_B\otimes_{\rho_\lambda} C)\to F_{\phi, B}\otimes_{\rho_{\lambda}} C\quad \quad V:A\otimes_\phi(E_B\otimes_{\rho} C)\to F_{\phi, B}\otimes_{\rho} C$$
		are the unitaries from coming from the natural isomorphism in Theorem \ref{Theorem: KSNGS commutes with ITP}. Thus, to prove continuity, we just need to show $\tilde{\eta}_\lambda\circ V_\lambda^{-1}\circ V_{\rho_\lambda}'\to \tilde{\eta}\circ V^{-1}\circ V_\rho'$ where 
		$$V_{\rho_\lambda}':F_{\phi, B}\to F_{\phi, B}\otimes_{\rho_\lambda}C\quad\quad V_\rho':F_{\phi, B}\to F_{\phi, B}\otimes_\rho C$$
		are the $\mathbb{C}$-linear maps defined in Lemma \ref{Lemma:C linear maps}. 
		
		Since $\frac{A\otimes E_B}{N_\phi}$ is dense in $F_{\phi, B}$ and $(\tilde{\eta}_\lambda\circ V_\lambda^{-1}\circ V_{\rho_\lambda}')_{\lambda\in\Lambda}$ is uniformly bounded by $C$, then, utilizing the standard approximation argument, it suffices to show  $\tilde{\eta}_\lambda\circ V_\lambda^{-1}\circ V_{\rho_\lambda}'\to \tilde{\eta}\circ V^{-1}\circ V_\rho'$ on $\frac{A\otimes_{\text{alg}} E_B}{N_\phi}$. Using linearity, it suffices to show strong convergence on elements of the form $a\dot{\otimes} x\in \frac{A\otimes_{\text{alg}} E_B}{N_\phi}$. Fix an approximate unit $(c_\mu)_{\mu\in M}$ for $C$. Using continuity of $\widetilde{\eta}_\lambda$ and $V_\lambda^{-1}$, we have
		\begin{align*}
			(\tilde{\eta}_\lambda\circ V_\lambda^{-1}\circ V_{\rho_\lambda}')(a\dot{\otimes} x)&=\lim\limits_{\mu\to \infty}(\tilde{\eta}_\lambda\circ V_{\lambda}^{-1})((a\dot{\otimes} x)\dot{\otimes} c_\mu ) =\lim\limits_{\mu\to \infty}\tilde{\eta}_\lambda(a\dot{\otimes}(x\dot{\otimes} c_\mu))\\
			&=\tilde{\eta}_\lambda(a\dot{\otimes} V_{\rho_\lambda}(x))=\alpha_\lambda(a)\dot{\otimes} (\eta_\lambda\circ V_{\rho_\lambda})(x)
		\end{align*}
		Similarly, $(\tilde{\eta}\circ V^{-1}\circ V_{\rho}')(a\dot{\otimes} x)=\alpha(a)\dot{\otimes} (\eta\circ V_{\rho})(x)$. Using the Cauchy-Schwarz Inequality, $V_{\rho_\lambda}$ is a contraction, and $\eta_\lambda$ are uniformly bounded by $C$, we have
		\begin{align*}
			||\alpha_\lambda(a)\dot{\otimes} &(\eta_\lambda\circ V_{\rho_\lambda})(x)-\alpha(a)\dot{\otimes} (\eta\circ V_{\rho})(x)||\\
			&\leq ||(\alpha_\lambda(a)-\alpha(a))\dot{\otimes}(\eta_\lambda\circ V_{\rho_\lambda})(x)||+||\alpha(a)\dot{\otimes}((\eta_\lambda\circ V_{\rho_\lambda})(x)-(\eta\circ V_{\rho})(x))||\\
			&\leq C^{1.5}||\psi||*||x||^{1.5}*||\alpha_\lambda(a)-\alpha(a)||^2+||\psi||*||a||^2*||(\eta_\lambda\circ V_{\rho_\lambda})(x)-(\eta\circ V_{\rho})(x)||^{1.5} 
		\end{align*}
		Thus $\eta_\lambda\circ V_{\rho_\lambda}\to \eta\circ V_\rho$ and $\alpha_\lambda\to \alpha$ implies $\tilde{\eta}_\lambda\circ V_\lambda^{-1}\circ V_{\rho_\lambda}'\to \tilde{\eta}\circ V^{-1}\circ V_\rho'$ on elements of the form $a\dot{\otimes} x\in \frac{A\otimes_{\text{alg}}E_B}{N_\phi}$. Hence we conclude $\tilde{\eta}_\lambda\circ V_\lambda^{-1}\circ V_{\rho_\lambda}'\to \tilde{\eta}\circ V^{-1}\circ V_\rho'$ on all of $F_{\phi, B}$. 
		
	\end{proof}
	
	\section{KSGNS Construction and Equivariant Correspondences}
	
	We conclude the paper with an application to equivariant correspondences. Just as $C^*$-correspondences are viewed as generalized $*$-algebra homomorphisms, equivariant $C^*$-correspondences can be viewed as generalized equivariant $*$-algebra homomorphisms between $C^*$-dynamical systems. When the group for the $C^*$-dynamical systems is trivial, the data of a strict positive equivariant $C^*$-correspondence is equivalent to the data of a strict positive $C^*$-correspondence. As the KSGNS construction transforms strict positive $C^*$-correspondences, one can ask if the KSGNS construction extends to strict positive equivariant $C^*$-correspondences. Due to the data involved in a strict positive equivariant $C^*$-correspondence, it is reasonable to expect such an extension exists; however, trying to construct an extension directly becomes unwieldy due to the amount of bookkeeping needed. 
	
	Using the category $\text{PosCor}_{*\text{-alg}}(A)$ with its enrichment as well as the endofunctor perspective of the KSGNS construction, we can overcome this complexity in two steps. The first step is to realize a strict positive equivariant $C^*$-correspondence as unitary valued topological functor $\mathcal{F}:BG\to \text{PosCor}_{*\text{-alg}}(A)$ where $G$ is the group acting on the $C^*$-algebras. The second step is to compose $\mathcal{F}$ with the topological endofunctor $\text{KSGNS}$ on $\text{PosCor}_{*\text{-alg}}(A)$ to obtain a  a unitary valued topological functor $\text{KSGNS}\circ\mathcal{F}:BG\to \text{PosCor}_{*\text{-alg}}(A)$. By unpacking the data encoded by $\text{KSGNS}\circ\mathcal{F}$, we obtain an equivariant $C^*$-correspondence which is the dilated version of the  correspondence encoded by $\mathcal{F}$. Using the uniqueness of the KSGNS construction, the dilated equivariant $C^*$-correspondence is unitarily unique. We note, if one drops the topological requirement for the functors, we still obtain a unitarily uniquely dilated equivariant $C^*$-correspondence. The goal of this section is to rigorously implement these steps to obtain an equivariant KSGNS construction.
	
	\subsection{Review of equivariant correspondences}

	Before proving our assertions, we begin with a brief review of $C^*$-dynamical systems and equivariant correspondences. We include this review for completeness as well as to establish both notation and terminology. Additional information about $C^*$-dynamical systems can be found in \cite{williams2007crossed} while additional information about equivariant correspondences can found in \cite{echterhoff2000naturalityinducedrepresentations} or \cite{echterhoff2005categoricalapproachimprimitivitytheorems}.
	
	\subsubsection{$C^*$-dynamical systems}

Let $A$ be a $C^*$-algebra, and let $G$ be a group. A group action of $G$ on $A$ is the data of a group homomorphism $\alpha:G\to \text{Aut}_{*\text{-alg}}(A)$. If $G$ is a topological group, then the action is continuous if $\alpha$ is continuous with respect to the point-norm topology on $\text{Aut}_{*\text{-alg}}(A)$. We refer to the triple $(A, G, \alpha)$ as a \textit{$C^*$-dynamical system}. In the case $G$ is a topological group and $\alpha$ is continuous, we refer to the triple as a \textit{continuous $C^*$-dynamical system}.

We note that there is typically no distinction made between $C^*$-dynamical systems and continuous $C^*$-dynamical systems. In this case, we make the distinction as our main result of this section can be stated  for actions on $C^*$-algebras which are not necessarily continuous. Additionally, we note that  topological groups for continuous $C^*$-dynamical systems are typically assumed to be locally compact and Hausdorff. We do not make such an assumption here as we only need continuity of the group operation and inversion map on $G$ to obtain our main result.

	\subsubsection{Twisting Hilbert modules}
	
	Let $E$ be a Hilbert $B$-module, and let $\beta$ be a $*$-algebra automorphism of $B$. If $\langle\cdot, \cdot\rangle$ is the $B$-valued pairing on $E$ and $r$ is the right action of $B$ on $E$, then we can twist the pairing and action by $\beta$ as follows:
	$$(\cdot, \cdot):E\times E\to B\quad\quad (x, y)=\beta(\langle x, y\rangle)$$
	 and
$$r':E\times B\to E\quad\quad r'(x, b)=r(x, \beta^{-1}(b)).$$
	It is clear that $r'$ makes $E$ into a right $B$-module as well as $(\cdot, \cdot)$ makes $E$ into an inner product $B$-module with respect to the action $r'$. Since the induced norm from $(\cdot, \cdot)$ is the same as $\langle\cdot, \cdot\rangle$, then $E$ is a Hilbert $B$-module with respect to $(\cdot, \cdot)$ and $r'$. Let us denote $E$ with this $\beta$-twisted structure as $E^\beta$.
	
	\begin{lemma}\label{Lemma: beta^{-1} unitary}\
		
		\noindent Let $E$ be a Hilbert $B$-module, and let $\beta$ be a $*$-automorphism of $B$. Then the map
		$$U_\beta:E\otimes_{\text{inc}\circ\beta} B\to E^\beta\quad\quad U_\beta(x\dot{\otimes}b)=x\beta^{-1}(b)$$
		defines a $B$-linear unitary.
		
	\end{lemma}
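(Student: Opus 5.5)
The plan follows the template of Proposition \ref{Prop: inclusion tensor functor}: define $U_\beta$ on the algebraic balanced tensor product, show it is a $B$-linear isometry for the twisted structure on $E^\beta$, extend by continuity, and prove surjectivity. Here the left action of $B$ on the second factor $B$ is $\text{inc}\circ\beta$, i.e.\ $b'\cdot b=\beta(b')b$. The assignment $(x,b)\mapsto x\beta^{-1}(b)$ is bilinear, so it induces a linear map on $E\otimes_{\text{alg}}B$. It vanishes on the balancing relations, since
$$xb'\otimes b-x\otimes\beta(b')b\mapsto xb'\beta^{-1}(b)-x\beta^{-1}(\beta(b')b)=0.$$
It is also $B$-linear for the twisted action $r'$: indeed $U_\beta(x\dot{\otimes}bc)=x\beta^{-1}(b)\beta^{-1}(c)=r'(U_\beta(x\dot{\otimes}b),c)$.

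The next step is the isometry computation, which is the main point to get right. For a finite sum $z=\sum_k x_k\dot{\otimes}b_k$ the interior tensor product norm gives
$$\|z\|^2=\Big\|\sum_{i,j}b_i^*\beta(\langle x_i,x_j\rangle)b_j\Big\|_B.$$
On the other side,
$$\|U_\beta z\|^2=\Big\|\beta\Big(\Big\langle\sum_i x_i\beta^{-1}(b_i),\sum_j x_j\beta^{-1}(b_j)\Big\rangle\Big)\Big\|=\Big\|\sum_{i,j}\beta\big(\beta^{-1}(b_i)^*\langle x_i,x_j\rangle\beta^{-1}(b_j)\big)\Big\|.$$
Since $\beta$ is a $*$-homomorphism, this expression equals $\sum_{i,j}b_i^*\beta(\langle x_i,x_j\rangle)b_j$, so the two norms agree. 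In fact the inner products themselves agree, since $(U_\beta z,U_\beta w)=\langle z,w\rangle$ by the same computation. Hence $U_\beta$ is a well-defined isometry and extends to an isometry on $E\otimes_{\text{inc}\circ\beta}B$.

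For surjectivity, note that the image contains $\text{Span}\{xb\}$ as $b$ ranges over $\beta^{-1}(B)=B$. By Cohen factorization, or simply by density of $EB$ in $E$, this span is dense in $E=E^\beta$ as a set. The image of an isometry is closed, so the extension is surjective. A surjective $B$-linear isometry between Hilbert $B$-modules that preserves inner products is a unitary; preservation of inner products follows by polarization, or directly from the computation above. This yields the adjoint $U_\beta^*=U_\beta^{-1}$, and the main obstacle reduces to the careful bookkeeping of which $B$-action, twisted or untwisted, is in play in each identity.
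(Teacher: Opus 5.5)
Your proof is correct and follows essentially the same route as the paper: you define $U_\beta$ on $E\otimes_B B$, verify the isometry identity via $\sum_{i,j} b_i^*\beta(\langle x_i,x_j\rangle)b_j$ for the twisted inner product, extend by continuity, and prove surjectivity. The differences are minor. You get surjectivity from closedness of the range together with density of $EB$, where the paper uses an approximate-unit Cauchy-net argument. You also conclude unitarity directly from preservation of the $B$-valued inner products, which gives $U_\beta^*=U_\beta^{-1}$, whereas the paper appeals to the fact that a surjective $B$-linear isometry is unitary.
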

	
	\begin{proof}\
		
	\noindent It is clear the map
	$$U_\beta:E\otimes_B B\to E^\beta\quad\quad U_\beta(x\dot{\otimes}b)=x\beta^{-1}(b)$$
	is a well-defined, $B$-linear map. Since
	\begin{align*}
		\left\|U_\beta\left(\sum\limits_{k=1}^n x_k\dot{\otimes} b_k\right)\right\|^2&=\left\|\sum\limits_{k=1}^n x_k\beta^{-1}(b_k)\right\|=\left\|\sum\limits_{i, j}b_i^*\beta(\langle x_i, x_j\rangle)b_j\right\|=\left\|\sum\limits_{k=1}^n x_k\dot{\otimes} b_k\right\|^2
	\end{align*}
	then $U_\beta$ defines a $B$-linear isometry. Therefore, we can extend $U_\beta$ to a $B$-linear isometry on $E\otimes_\beta B$ which we also denote by $U_\beta$. 
	
	We claim $U_\beta$ is a surjective map on $E\otimes_{\text{inc}\circ\beta} B$. Let $x\in E$ and $(b_\lambda)_{\lambda\in\Lambda}$ be any approximate unit for $B$. Since $(xb_\lambda)_{\lambda\in\Lambda}$ converges to $x$ in $E^\beta$, $U_\beta(x\dot{\otimes}\beta(b_\lambda))=xb_\lambda$, and $U_\beta$ is an isometry, then $(x\dot{\otimes}b_\lambda)_{\lambda\in\Lambda}$ is a Cauchy net in $E\otimes_\beta B$. As $E\otimes_{\text{inc}\circ\beta} B$ is complete, then the net converges to a point $z\in E\otimes_{\text{inc}\circ\beta} B$. Using continuity of $U_\beta$, we have $U_\beta(z)=x$. Hence $U_\beta$ is surjective. As $U_\beta$ is $B$-linear bijective isometry, then $U_\beta$ is a unitary map.

	\end{proof}
	
	\begin{remark}\
		
	\noindent We observe the inverse (and thus adjoint) of $U_\beta$ is given by the map $V_{\text{inc}\circ\beta}$ from Lemma \ref{Lemma:C linear maps}. 
		
	\end{remark}
	
	As the underlying Banach spaces of $E$ and $E^\beta$ are the same, then the map $U_\beta$ defines a well-defined, bijective $\mathbb{C}$-linear isometry from $E\otimes_{\text{inc}\circ\beta} B$ to $E$ satisfying the following two properties:
	\begin{enumerate}
		\item for all $x\in E\otimes_\beta B$ and $y\in E$, $\langle U_\beta(x), y\rangle_{E}=\beta^{-1}(\langle x, U^{-1}_\beta(y)\rangle_{E\otimes_{\text{inc}\circ\beta} B})$.
		\item for all $x\in E\otimes_\beta B$ and $b\in B$, $U_\beta(xb)=U_\beta(x)\beta^{-1}(b)$.
		
	\end{enumerate}
	Generalizing these properties for general $\mathbb{C}$-linear bounded maps between Hilbert modules yields the following definitions.

	\begin{definition}\

		\noindent Let $E_1$ and $E_2$ be Hilbert $B$-modules, and let $\beta\in\text{Aut}_{*\text{-alg}}(B)$. 
		\begin{itemize}
			\item A $\beta$-linear map from $E_1$ to $E_2$ is a bounded $\mathbb{C}$-linear map $f:E_1\to E_2$ such that for all $x\in E_1$ and $b\in B$, $f(xb)=f(x)\beta(b)$.
			\item A function $f:E_1\to E_2$  is $\beta$-adjointable if $f$ is $\beta$-linear and if there exists an $\beta^{-1}$-linear map $f^*:E_2\to E_1$ such that for all $x\in E_1$ and $y\in E_2$, 
			$$\langle f(x), y\rangle_{E_2}=\beta(\langle x, f^*(y)\rangle_{E_2}).$$
			Denote the set of $\beta$-adjointable maps from $E_1$ to $E_2$ as $\mathcal{L}^\beta(E_1, E_2)$.
			\item An $\beta$-adjointable map $f:E_1\to E_2$ is unitary if $ff^*=1_{E_2}$ and $f^*f=1_{E_1}$. Denote the set of $\beta$-adjointable unitaries as $\mathcal{U}^\beta(E_1, E_2)$.
			
		\end{itemize}
		
	\end{definition}

	Using the $\beta^{-1}$-adjointable unitary $U_\beta$, we obtain the following identification of $\beta$-linear maps on $E$ and adjointable $B$-linear map on $E\otimes_\beta B$.

	\begin{proposition}\label{Prop: rel between B-linear and beta-linear}\

		\noindent Let $E_1$ and $E_2$ be Hilbert $B$-modules, and let $\beta\in\text{Aut}_{*\text{-alg}}(B)$. Let $U_\beta$ be the $\beta^{-1}$-adjointable unitary given by
		$$U_\beta:E_1\otimes_{\text{inc}\circ\beta} B\to E_1\quad\quad U_\beta(x\dot{\otimes} b)=x\beta^{-1}(b)$$
		Then the map
		$$\mathcal{L}^\beta(E_1, E_2)\to \mathcal{L}(E_1\otimes_{\text{inc}\circ\beta} B, E_2)\quad\quad T\mapsto T\circ U_\beta$$
		defines a linear bijection. Furthermore, the map restricts to a bijection $$\mathcal{U}^\beta(E_1, E_2)\to \mathcal{U}(E_1\otimes_{\text{inc}\circ\beta} B, E_2).$$
		
	\end{proposition}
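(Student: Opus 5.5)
The plan is to check the three ingredients in turn: $T\circ U_\beta$ is a genuine adjointable $B$-linear map, the assignment is linear and injective, and it is surjective via the inverse assignment $S\mapsto S\circ U_\beta^{-1}$. The restriction to unitaries then follows by applying the same computations to $T^*$.

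First I record the twisting behaviour of $U_\beta$. From the properties listed before the Definition, $U_\beta(zb)=U_\beta(z)\beta^{-1}(b)$ and $\langle U_\beta z,y\rangle_{E_1}=\beta^{-1}(\langle z,U_\beta^{-1}y\rangle)$. Thus $U_\beta$ is $\beta^{-1}$-adjointable with $U_\beta^*=U_\beta^{-1}$, and $U_\beta^{-1}$ is $\beta$-linear with $\langle U_\beta^{-1}x,z\rangle=\beta(\langle x,U_\beta z\rangle)$.

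Now take $T\in\mathcal{L}^\beta(E_1,E_2)$. For $B$-linearity, $(T\circ U_\beta)(zb)=T(U_\beta(z)\beta^{-1}(b))=TU_\beta(z)\,b$. For the adjoint, I compute
\[
\langle TU_\beta z,y\rangle=\beta(\langle U_\beta z,T^*y\rangle)=\beta\beta^{-1}(\langle z,U_\beta^{-1}T^*y\rangle)=\langle z,U_\beta^{-1}T^*y\rangle .
\]
So $T\circ U_\beta$ is adjointable with adjoint $U_\beta^{-1}\circ T^*$. Linearity of the assignment is clear, and injectivity holds because $U_\beta$ is bijective.

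For surjectivity, take $S\in\mathcal{L}(E_1\otimes_{\text{inc}\circ\beta}B,E_2)$ and set $T=S\circ U_\beta^{-1}$. It is bounded, and $\beta$-linear because $U_\beta^{-1}$ is $\beta$-linear and $S$ is $B$-linear. I define $T^*:=U_\beta\circ S^*$, which is $\beta^{-1}$-linear. Then
\[
\beta(\langle x,U_\beta S^*y\rangle)=\langle U_\beta^{-1}x,S^*y\rangle=\langle SU_\beta^{-1}x,y\rangle .
\]
This uses the identity for $U_\beta^{-1}$ obtained from property (1) by substituting $z=U_\beta^{-1}x$ and taking adjoints. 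Hence $T\in\mathcal{L}^\beta$ and $T\circ U_\beta=S$.

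For unitaries, if $T$ is a $\beta$-unitary then $(TU_\beta)(TU_\beta)^*=TU_\beta U_\beta^{-1}T^*=1$ and $(TU_\beta)^*(TU_\beta)=U_\beta^{-1}T^*TU_\beta=1$. The converse runs symmetrically with $T^*=U_\beta S^*$.

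The main point requiring care is bookkeeping: keeping straight whether each map is $\beta$- or $\beta^{-1}$-linear, and checking the adjoint identity for $U_\beta^{-1}$ in the correct direction. The Definition writes the inner product on the right as $\langle x,f^*(y)\rangle_{E_2}$; I read this as a typo for $E_1$. Everything else is formal.
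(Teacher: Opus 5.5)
Your proposal is correct and follows the paper's proof step for step. Both show that $T\circ U_\beta$ is $B$-linear with adjoint $U_\beta^{-1}\circ T^*$, both invert the assignment via $S\mapsto S\circ U_\beta^{-1}$ with $\beta$-adjoint $U_\beta\circ S^*$, and both handle the unitaries with the same two-line computations. The only difference is cosmetic: you derive the linearity and adjoint identities directly from the twisting properties $U_\beta(zb)=U_\beta(z)\beta^{-1}(b)$ and $\langle U_\beta z,y\rangle=\beta^{-1}(\langle z,U_\beta^{-1}y\rangle)$, whereas the paper checks several of them on elementary tensors using approximate units and the identification $U_\beta^{-1}=V_{\text{inc}\circ\beta}$. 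You are also right that the subscript $E_2$ in the Definition of $\beta$-adjointable should read $E_1$.
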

	
	\begin{proof}\
		
		\noindent Provided the map is well-defined, it is clear the map is linear. Therefore, let us check the map is well-defined; that is, $T\circ U_\beta$ is $B$-linear and adjointable. Let us start by verifying $T\circ U_\beta$ is $B$-linear. As the map is $\mathbb{C}$-linear, then we just need to verify $T\circ U_\beta$ is a map of the $B$-modules. As $T\circ U_\beta$ is continuous and the action of $B$ on $E_1\otimes_{\text{inc}\circ\beta} B$ and $E_2$ is continuous, then it suffices to show $B$-linearity on $E_1\otimes_BB$. Using linearity, it suffices to prove $B$-linearity on elements of the form $x\dot{\otimes}b\in E_1\otimes_BB$. Given such an element and $b'\in B$ we have
		\begin{align*}
			(T\circ U_\beta)((x\dot{\otimes} b)b')&=T(x\beta^{-1}(bb'))=T(x\beta^{-1}(b))\beta(\beta^{-1}(b'))\\
			&=T(x\beta^{-1}(b))b'=(T\circ U_\beta)(x\dot{\otimes} b)b'
		\end{align*}
		Hence $T\circ U_\beta$ is $B$-linear. 
		
		Now let us verify $T\circ U_\beta$ is adjointable. Let $T^*$ be the $\beta$-adjoint of $T$. We claim $U^{-1}_\beta\circ T^*$ is the adjoint of $T\circ U_\beta$. Since both $T\circ U_\beta$ and $U^{-1}_\beta\circ T^*$ are continuous, then it suffices to check $U^{-1}_\beta\circ T^*$ is the adjoint on the dense subspace $E_1\otimes_BB$ and $E_2$. Using linearity, it suffices to check the claim on an element $x\dot{\otimes} b\in E_1\otimes_B B$ and $y\in E_2$. Given such elements and an approximate unit $(b_\lambda)_{\lambda\in\Lambda}$ for $B$,  we have
		\begin{align*}
			\langle (T\circ U_\beta)(x\dot{\otimes} b), y\rangle_{E_2}&=\langle T(x)b, y\rangle_{E_2}=b^*\langle T(x), y\rangle_{E_2}\\
			&=b^*\beta(\langle x, T^*(y)\rangle_{E_1})=\lim\limits_{\lambda\to \infty}b^*\beta(\langle x, T^*(y)\rangle_{E_1})b_\lambda\\
			&=\lim\limits_{\lambda\to \infty}\langle x\dot{\otimes} b, T^*(y)\dot{\otimes} b_\lambda\rangle_{E_1\otimes_{\text{inc}\circ\beta} B}=\lim\limits_{\lambda\to \infty}\langle x\dot{\otimes} b, V_\lambda(T^*(y))\rangle_{E_1\otimes_{\text{inc}\circ\beta} B}\\
			&=\langle x\dot{\otimes} b, (U^{-1}_\beta\circ T^*)(y)\rangle_{E_1\otimes_{\text{inc}\circ\beta} B}
		\end{align*}
		Therefore we conclude $T\circ U_\beta$ is adjointable with adjoint $U^{-1}_\beta\circ T^*$.
		
		Now we verify the map is a bijection. To see this, define
		$$\mathcal{L}(E_1\otimes_{\text{inc}\circ\beta} B, E_2)\to \mathcal{L}^\beta(E_1, E_2)\quad\quad T\mapsto T\circ U^{-1}_\beta.$$
		Provided the map is well-defined, it is clear that it is linear as well as the inverse of the map in the claim of the proposition. To show the map above is well defined, let us begin by showing $T\circ U^{-1}_\beta$ is $\beta$-linear. Since $T\circ U^{-1}_\beta$ is $\mathbb{C}$-linear, we just need to check the $\beta$-linearity with respect to the $B$-module structure. Let $x\in E_1$ and $b\in B$. Let $(b_\lambda)_{\lambda\in\Lambda}$ be an approximate unit in $B$, then
		\begin{align*}
			(T\circ U^{-1}_\beta)(xb)&=\lim\limits_{\lambda\to\infty}T(xb\dot{\otimes} b_\lambda)=\lim\limits_{\lambda\to\infty}T(x\dot{\otimes} \beta(b)b_\lambda)\\
			&=T(x\dot{\otimes} \beta(b))=\lim\limits_{\lambda\to\infty}T(x\dot{\otimes} b_\lambda\beta(b))\\
			&=T(x\dot{\otimes} b_\lambda)\beta(b)=(T\circ U^{-1}_\beta)(x)\beta(b)
		\end{align*}
		Hence $T\circ U^{-1}_\beta$ is $\beta$-linear. 
		
		Now we claim $T\circ U^{-1}_\beta$ is a $\beta$-adjointable map with adjoint $U_\beta\circ T^*$. Given $x\in E_2$ and $b\in B$,
		\begin{align*}
			(U_\beta\circ T^*)(xb)&=U_\beta(T^*(xb))=U_\beta(T^*(x)b)=U_\beta(T^*(x))\beta^{-1}(b)=(U_\beta\circ T^*)(x)\beta^{-1}(b)
		\end{align*}
		which shows $U_\beta\circ T^*$ is $\beta^{-1}$-linear. Given $x\in E_1$ and $y\in E_2$, we have
		\begin{align*}
			\langle (T\circ U_\beta^{-1})(x), y\rangle_{E_1}&=\langle U^{-1}_\beta(x), T^*(y)\rangle_{E_1\otimes_{\text{inc}\circ\beta} B}\\
			&=\langle U^{-1}_\beta(x),U^{-1}_\beta(U_\beta( T^*(y))\rangle_{E_1\otimes_{\text{inc}\circ\beta} B}=\beta(\langle x, U_\beta(T^*(y))\rangle_{E_2})
		\end{align*}
		Therefore we conclude $T\circ U^{-1}_\beta$ is an adjointable $\beta$-linear map with adjoint $U_\beta\circ T^*$ proving
		$$\mathcal{L}(E_1\otimes_{\text{inc}\circ\beta} B, E_2)\to \mathcal{L}^\beta(E_1, E_2)\quad\quad T\mapsto T\circ U^{-1}_\beta$$
		is well-defined.
		
		Finally, we verify the map in the proposition preserves unitaries. Suppose $T\in\mathcal{L}^\beta(E_1, E_2)$ is an adjointable $\beta$-linear unitary, then $T^*T=1_{E_1}$ and $TT^*=1_{E_2}$. Since $(T\circ U_\beta)^*=U^{-1}_\beta\circ T^*$, then
		$$(T\circ U_\beta)^*(T\circ U_\beta)=U^{-1}_\beta T^*TU_\beta=1_{E_{1}\otimes_{\text{inc}\circ\beta} B}\quad\quad (T\circ U_\beta)(T\circ U_\beta)^*=TU_\beta U^{-1}_\beta T^*=1_{E_2}$$
		Hence $T\circ U_\beta$ is a $B$-linear unitary map. Now suppose $T\in\mathcal{L}(E_1\otimes_{\text{inc}\circ\beta} B, E_2)$ is unitary, then $T^*T=1_{E_{1}\otimes_{\text{inc}\circ\beta} B}$ and $TT^*=1_{E_2}$. Since $(T\circ U^{-1}_\beta)^*=U_\beta\circ T^*$, then
		$$(T\circ U^{-1}_\beta)^*(T\circ U^{-1}_\beta)=U_\beta T^*TU^{-1}_\beta=1_{E_1}\quad\quad (T\circ U^{-1}_\beta)(T\circ U^{-1}_\beta)^*=TU^{-1}_\beta U_\beta T^*=1_{E_2}$$
		Hence $T\circ U^{-1}_\beta$ is an adjointable $\beta$-linear unitary.
		
	\end{proof}
	
	\subsubsection{Equivariant correspondences}

	The notion of a $C^*$-correspondence is a generalization of a non-degenerate $*$-algebra homomorphism between the algebras. In a similar manner, an equivariant $C^*$-correspondence between $C^*$-dynamical systems is viewed as a generalization of equivariant $*$-algebra maps between $C^*$-dynamical systems.  To understand the data we take in the definition of such a correspondence, let us look at an equivariant $*$-algebra homomorphism between two $C^*$-dynamical systems.
	
	Let $(A, G, \alpha)$ and $(B, G, \beta)$ be $C^*$-dynamical systems, and let $\pi:A\to B$ be an equivariant $*$-algebra homomorphism; that is, for all $g\in G$ and $a\in A$, $\pi(\alpha_g(a))=\beta_g(\pi(a))$. Let $E=B$ viewed as a Hilbert $B$-module. For each $g\in G$, $U_g=\beta_g$ defines an adjoinable $\beta_g$-linear unitary with adjoint given by $U_{g^{-1}}$.Observe, for all $g\in G$, $x\in E$, $a\in A$,
	$$U_g(\pi(a)x)=U_g(\pi(a))U_g(x)=\pi(\alpha_g(a))U_g(x)$$
	Generalizing the maps $U_g$ and generalizing $E$ to a general Hilbert $B$-module yields the appropriate notion of an equivariant $C^*$-correspondence.
	
	\begin{definition}\

		\noindent Let $(A, G, \alpha)$ and $(B, G, \beta)$ be $C^*$-dynamical systems. An equivariant $C^*$-correspondence from $(A, G, \alpha)$ to $(B, G, \beta)$ is the data $((E, \pi), U)$ where 
		\begin{enumerate}
			\item $(E, \pi)$ is a $C^*$-correspondence from $A$ to $B$.
			\item $U:G\to \mathcal{B}(E)$ is a group homomorphism 
		\end{enumerate}
		and the data satisfies the following conditions:
		\begin{enumerate}
			\item For all $g\in G$, $U_g$ is a $\beta_g$-adjointable unitary on $E$.
			\item For all $g\in G$ and $a\in A$, $U_g\circ \pi(a)=\pi(\alpha_g(a))\circ U_g$.
		\end{enumerate}
		If $(E, \pi)$ is a strict positive $C^*$-correspondence, then $((E, \pi), U)$ is called a strict positive equivariant $C^*$-correspondence. If $(E, \pi)$ is a non-degenerate positive $C^*$-correspondence, then $((E, \pi), U)$ is called a non-degenerate positive equivariant $C^*$-correspondence. If $(A, G, \alpha)$ and $(B, G, \beta)$ are continuous $C^*$-dynamical systems, then the (strict/non-degenerate positive) equivariant $C^*$-correspondence $((E, \pi), U)$ is continuous if $U$ is strongly continuous.  
		
	\end{definition}
	
	\subsection{Functorial representation of strict positive equivariant $C^*$-correspondences}
	
		Let $G$ be a group. Let $BG$ denote the category given by a single object (denote by $\text{pt}$), morphisms given by $G$, and composition given by the group law of $G$. If $G$ is a topological group, we enrich $BG$ over topological spaces so that the topology on the hom-set $\text{hom}(\text{pt}, \text{pt})=G$ agrees with the topology on $G$.
		
		\begin{definition}\
			
		\noindent Let $A$ be a $C^*$-algebra, and let $G$ be a group. A functor $\mathcal{F}:BG\to \text{PosCor}_{*\text{-alg}}(A)$ is unitary valued if for all $g\in G$, $\mathcal{F}(g)=(\rho_g, (\eta_g, \alpha_g))$ has the property that $\eta_g$ is a unitary.
			
		\end{definition}

		\begin{theorem}\label{Theorem: ESPC as Functors}\
		
		\begin{enumerate}
			\item Let $(A, G, \alpha)$ and $(B, G, \beta)$ be continuous $C^*$-dynamical systems. Every continuous strict positive equivariant $C^*$-correspondence from $(A, G, \alpha)$ to $(B, G, \beta)$ determines and is determined by a unitary valued topological functor $\mathcal{F}:BG\to \text{PosCor}_{*\text{-alg}}(A)$ such that
			\begin{enumerate}
				\item $\mathcal{F}(\text{pt})=(E, \phi)$ is a strict positive $C^*$-correspondence from $A$ to $B$.
				\item For all $g\in G$, $\mathcal{F}(g)=(\text{inc}\circ\beta_g, (\eta_g, \alpha_g))$.
			\end{enumerate}
			\item Let $(A, G, \alpha)$ and $(B, G, \beta)$ be $C^*$-dynamical systems. Every strict positive equivariant $C^*$-correspondence from $(A, G, \alpha)$ to $(B, G, \beta)$ determines and is determined by a unitary valued functor $\mathcal{F}:BG\to \text{PosCor}_{*\text{-alg}}(A)$ such that
			\begin{enumerate}
				\item $\mathcal{F}(\text{pt})=(E, \phi)$ is a strict positive $C^*$-correspondence from $A$ to $B$.
				\item For all $g\in G$, $\mathcal{F}(g)=(\text{inc}\circ\beta_g, (\eta_g, \alpha_g))$.
			\end{enumerate}
		\end{enumerate}
		
	\end{theorem}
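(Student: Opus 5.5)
The plan is to build an explicit dictionary between the two sides using Proposition \ref{Prop: rel between B-linear and beta-linear}. Here $U_{\beta_g}\colon E\otimes_{\text{inc}\circ\beta_g}B\to E$ is the $\beta_g^{-1}$-adjointable unitary of Lemma \ref{Lemma: beta^{-1} unitary}.

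\emph{From correspondence to functor.} Start from a strict positive equivariant correspondence $((E,\phi),U)$. Set $\mathcal{F}(\text{pt})=(E,\phi)$ and
$$\mathcal{F}(g)=(\text{inc}\circ\beta_g,(\eta_g,\alpha_g)),\qquad \eta_g:=U_g\circ U_{\beta_g}\in\mathcal{U}(E\otimes_{\text{inc}\circ\beta_g}B,E).$$
Proposition \ref{Prop: rel between B-linear and beta-linear} shows that $\eta_g$ is a $B$-linear unitary.

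Next we check that $(\eta_g,\alpha_g)$ is a morphism in $\text{PosCor}(A,B)$. This reduces to elementary tensors $x\dot\otimes b$, where the computation is
$$\eta_g(\phi(a)x\dot\otimes b)=U_g(\phi(a)x)\beta_g^{-1}(b)\cdot(\text{twist})=\phi(\alpha_g(a))\eta_g(x\dot\otimes b),$$
using the covariance $U_g\phi(a)=\phi(\alpha_g(a))U_g$. We also note that $\text{inc}\circ\beta_g$ has image in $B$, so the morphism lies in $\text{PosCor}_{*\text{-alg}}(A)$.

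\emph{Functoriality.} Identities are immediate: $\eta_e=U_e\circ U_{\text{id}}=\iota$ and $\alpha_e=1_A$. For composition, we must show
$$\eta_{gh}=\eta_g\circ\hat\eta_h\circ U^{-1},$$
where $U\colon (E\otimes_{\beta_h}B)\otimes_{\beta_g}B\to E\otimes_{\beta_g\beta_h}B$ is the unitary of Proposition \ref{Prop: Composition and tensor}. We test this on $(x\dot\otimes b_1)\dot\otimes b_2$. The left side, after applying $U$, gives $U_{gh}\bigl(x(\beta_g\beta_h)^{-1}(\beta_g(b_1)b_2)\bigr)$. The right side gives $U_g\bigl(U_h(x\beta_h^{-1}(b_1))\beta_g^{-1}(b_2)\bigr)$. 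These agree by the homomorphism property $U_{gh}=U_gU_h$ together with $\beta_g$-linearity. The $\alpha$ and $\rho$ components compose correctly because $\alpha$ and $\beta$ are group homomorphisms.

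\emph{From functor to correspondence.} Conversely, take a unitary valued functor with the stated form. Define $U_g:=\eta_g\circ U_{\beta_g}^{-1}$. By Proposition \ref{Prop: rel between B-linear and beta-linear} this is a $\beta_g$-adjointable unitary. The same computations run backwards give covariance and the homomorphism property $U_{gh}=U_gU_h$, the latter from the composition law of $\mathcal{F}$. The two constructions are mutually inverse because $T\mapsto T\circ U_\beta$ is a bijection. This settles part (2).

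\emph{Continuity, part (1).} The obstacle is matching topologies: $\mathcal{F}$ is continuous if and only if $g\mapsto U_g$ is strongly continuous. In the hom-set topology of $\text{PosCor}_{*\text{-alg}}(A)$, the point-norm continuity of $g\mapsto\beta_g(b)$ and $g\mapsto\alpha_g(a)$ is given. The remaining component is $\eta_g\circ V_{\text{inc}\circ\beta_g}$. By the remark after Lemma \ref{Lemma: beta^{-1} unitary}, $V_{\text{inc}\circ\beta_g}=U_{\beta_g}^{-1}$, so
$$\eta_g\circ V_{\text{inc}\circ\beta_g}=U_g.$$
Strong convergence of this pulled-back map is therefore exactly strong continuity of $U$. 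Confirming that $V_{\text{inc}\circ\beta_g}$ coincides with $U_{\beta_g}^{-1}$ as $\mathbb{C}$-linear maps, using an approximate unit, is the main technical point to verify carefully; once it holds, both directions of (1) follow.
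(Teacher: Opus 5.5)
Your proposal is correct and follows the paper's proof. It uses the same dictionary, $\eta_g = U_g\circ U_{\beta_g}$ in one direction and $U_g = \eta_g\circ V_{\text{inc}\circ\beta_g} = \eta_g\circ U_{\beta_g}^{-1}$ in the other, via Proposition \ref{Prop: rel between B-linear and beta-linear}. It reads off continuity from the pulled-back map $\eta_g\circ V_{\text{inc}\circ\beta_g} = U_g$, as the paper does.

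The only difference is that you check the composition law directly on elementary tensors $(x\dot{\otimes}b_1)\dot{\otimes}b_2$, where the paper goes through Lemmas \ref{Lemma: Diagram1} and \ref{Lemma: Diagram2}; your route works equally well. In your morphism check, the middle term should read $U_g(\phi(a)x\,\beta_g^{-1}(b)) = U_g(\phi(a)x)\,b$.
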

	
		\begin{proof}\
		
		\noindent As the proof of $(1)$ and $(2)$ are extremely similar, we only prove $(1)$; however, point out where the lack of continuity in $(2)$ shows up. First suppose we have a unitary valued topological functor $\mathcal{F}:BG\to \text{PosCor}_{*\text{-alg}}(A)$ satisfying the two conditions. Therefore, we obtain a continuous group homomorphism
		$$G\to \text{Aut}(E, \phi)\quad\quad g\mapsto (\text{inc}\circ\beta_g, (\eta_g, \alpha_g))$$
		Define
		$$W:G\to \mathcal{B}(E)\quad\quad W(g)=\eta_g\circ V_{\text{inc}\circ\beta_g}$$
		where $V_{\text{inc}\circ\beta_g}$ is the $\mathbb{C}$-linear contraction from Lemma \ref{Lemma:C linear maps}. As $\beta_g$ is a $*$-automorphism of $B$, then $V_{\text{inc}\circ\beta_g}$ is the adjoint of the $\beta^{-1}$-adjointable unitary $U_{\beta_g}$ from Lemma \ref{Lemma: beta^{-1} unitary}. Using Proposition \ref{Prop: rel between B-linear and beta-linear}, the map $W(g)$ is a $\beta_g$-adjointable unitary on $E$. 
		
		We claim $W$ is a group homomorphism. Let $g, h\in G$, then
		$$(\text{inc}\circ\beta_g, (\eta_g, \alpha_g))\circ (\beta_h, (\eta_h, \alpha_h))=(\text{inc}\circ\beta_g\circ\beta_h, (\eta_g\circ\hat{\eta}_h\circ U^{-1}, \alpha_g\circ\alpha_h))=(\text{inc}\circ\beta_{gh},(\eta_g\circ\hat{\eta}_h\circ U^{-1}, \alpha_{g}))$$ where
		$$U:(E\otimes_{\text{inc}\circ\beta_g}B)\otimes_{\text{inc}\circ\beta_h} B\to E_{\beta_{g}\circ\beta_h}B\quad\quad U((x\dot{\otimes}b_1)\dot{\otimes}b_2)=x\dot{\otimes}\beta_h(b_1)b_2$$ 
		is the unitary from Proposition \ref{Prop: Composition and tensor}. From functoriality of $\mathcal{F}$, we know
		$$\eta_{gh}=\eta_g\circ\hat{\eta}_h\circ U^{-1}$$
		Thus
		$$W(gh)=\eta_{gh}\circ V_{\beta_{gh}}=\eta_g\circ\hat{\eta}_h\circ U^{-1}\circ V_{\text{inc}\circ\beta_{gh}}.$$
		Using Lemma \ref{Lemma: Diagram1} and \ref{Lemma: Diagram2}, we can write
		$$\eta_g\circ\hat{\eta}_h\circ U^{-1}\circ V_{\text{inc}\circ\beta_{gh}}=(\eta_g\circ V_{\text{inc}\circ\beta_g})\circ (\eta_h\circ V_{\text{inc}\circ\beta_h})=W(g)\circ W(h)$$
		Hence $W(gh)=W(g)W(h)$ showing $W$ is a group homomorphism.
		
		Now we claim $((E, \phi), W)$ defines a strict positive equivariant $C^*$-correspondence from $(A, G, \alpha)$ to $(B, G, \beta)$. The only condition on the pair $((E, \phi), U)$ that we need to check at this point is that for all $g\in G$ and $a\in A$, $W_g\circ \phi(a)=\pi(\alpha_g(a))\circ W_g$.	Fix $a\in A$, and consider the following diagram:
		\begin{center}
			\begin{tikzcd}
				E &&& {E\otimes_{\text{inc}\circ\beta_g}B} &&& E \\
				\\
				E &&& {E\otimes_{\text{inc}\circ\beta_g}B} &&& E
				\arrow["{V_{\text{inc}\circ\beta_g}}"{description}, from=1-1, to=1-4]
				\arrow["{W_g}"{description}, bend left=20, from=1-1, to=1-7]
				\arrow["{\phi(a)}"{description}, from=1-1, to=3-1]
				\arrow["{\eta_g}"{description}, from=1-4, to=1-7]
				\arrow["{\phi(a)\otimes_{\text{inc}\circ\beta_g}1_B}"{description}, from=1-4, to=3-4]
				\arrow["{\phi(\alpha_g(a))}"{description}, from=1-7, to=3-7]
				\arrow["{V_{\text{inc}\circ\beta_g}}", from=3-1, to=3-4]
				\arrow["{W_g}"{description}, bend right=20, from=3-1, to=3-7]
				\arrow["{\eta_g}"{description}, from=3-4, to=3-7]
			\end{tikzcd}
		\end{center}
		As $(\text{inc}\circ\beta_g, (\eta_g, \alpha_g))$ is a morphism from $(E, \phi)$ to $(E, \phi)$, then the middle right square commutes. Using an approximate unit for $B$, the middle left square commutes. Using commutativity of these two diagrams, it follows the outside commutes; that is,  $W_g\circ \phi(a)=\phi(\alpha_g(a))\circ W_g$. Hence the data $((E, \phi), W)$ determines a strict positive equivariant $C^*$-correspondence from $(A, G, \alpha)$ to $(B, G, \beta)$.

		Notice, we have not yet used continuity of the group homomorphism nor that the $C^*$-dynamical systems are continuous. We now use continuity of the group homomorphism at the start to show the strict positive equivariant $C^*$-correspondence is continuous; that is, $U$ is a strongly continuous. To prove continuity, it suffices to show $W$ maps convergent nets to convergent nets. Let $(g_\lambda)_{\lambda\in \Lambda}$ be a net in $G$ which converges to $g\in G$. As
		$$G\to \text{Aut}(E, \phi)\quad\quad g\mapsto (\text{inc}\circ\beta_g, (\eta_g, \alpha_g))$$
		is continuous, then the net $((\text{inc}\circ \beta_{g_\lambda}, (\eta_{g_\lambda}, \alpha_{g_\lambda})))_{\lambda\in \Lambda}$ converges to $(\text{inc}\circ\beta_{g}, (\eta_{g}, \alpha_{g}))$. From the topology on the hom-sets of $\text{PosCor}_{*\text{-alg}}(A)$, this implies $(W_{g_\lambda})_{\lambda\in\Lambda}=(\eta_{g_\lambda}\circ V_{\text{inc}\circ\beta_{g_\lambda}})_{\lambda\in\Lambda}$ converges to $\eta_g\circ V_{\text{inc}\circ\beta_g}=W_g$ in the strong operator topology. Hence $W$ is strongly continuous.
		
		We now prove the reverse direction. Let $((E, \phi), W)$ be a continuous strict positive equivariant $C^*$-correspondence from $(A, G, \alpha)$ to $(B, G, \beta)$. For each $g\in G$, let $\eta_g=W_g\circ U_{\beta_g}$ which, from Proposition \ref{Prop: rel between B-linear and beta-linear}, is a $B$-linear unitary from $E\otimes_{\text{inc}\circ\beta_g}B$ to $E$. Define the assignment
		\begin{align*}
			\mathcal{F}:BG&\to \text{PosCor}_{*\text{-alg}}(A)\\
			\{\text{pt}\}&\mapsto (E, \phi)\\
			g & \mapsto (\text{inc}\circ\beta_g, (\eta_g, \alpha_g))
		\end{align*}
		We claim the assignment defines a unitary valued topological functor. 
		
		We begin by checking the assignment is well-defined for each $g\in G$; that is, we verify for all $g\in G$, $(\text{inc}\circ\beta_g, (\eta_g, \alpha_g))$ is a morphism from $(E, \phi)$ to $(E, \phi)$. To conclude the pair is a morphism, we just need to show for all $a\in A$, $ \eta_g\circ (\phi(a)\otimes_{\text{inc}\circ\beta_g}1_B)=\phi(\alpha_g(a))\circ \eta_g$. Fix $a\in A$, and consider the diagram
		\begin{center}
			\begin{tikzcd}
				E &&& {E\otimes_{\text{inc}\circ\beta_g}B} &&& E \\
				\\
				E &&& {E\otimes_{\text{inc}\circ\beta_g}B} &&& E
				\arrow["{V_{\text{inc}\circ\beta_g}}"{description}, from=1-1, to=1-4]
				\arrow["{W_g}"{description}, bend left=20, from=1-1, to=1-7]
				\arrow["{\phi(a)}"{description}, from=1-1, to=3-1]
				\arrow["{\eta_g}"{description}, from=1-4, to=1-7]
				\arrow["{\phi(a)\otimes_{\text{inc}\circ\beta_g}1_B}"{description}, from=1-4, to=3-4]
				\arrow["{\phi(\alpha_g(a))}"{description}, from=1-7, to=3-7]
				\arrow["{V_{\text{inc}\circ\beta_g}}", from=3-1, to=3-4]
				\arrow["{W_g}"{description}, bend right=20, from=3-1, to=3-7]
				\arrow["{\eta_g}"{description}, from=3-4, to=3-7]
			\end{tikzcd}
		\end{center}
		By assumption, the outer diagram commutes. As before, the small square on the left commutes. Using commutativity of these two diagrams as well as that $V_{\text{inc}\circ\beta_g}$ is invertible, it follows the small right square commutes. Hence $ \eta_g\circ (\phi(a)\otimes_{\text{inc}\circ\beta_g}1_B)=\phi(\alpha_g(a))\circ \eta_g$ so that the pair $(\text{inc}\circ\beta_g, (\eta_g, \alpha_g))$ is a morphism.
		
		Now we check the assignment is functorial. Since $\mathcal{F}(e)=(\text{inc}, (\iota, 1_A))$ which is the identity morphism for $(E, \phi)$, then $\mathcal{F}$ preserves the identity morphism. Now we check the assignment preserves composition. Let $g, h\in G$, then
		\begin{align*}
			\mathcal{F}(g)\circ\mathcal{F}(h)&=(\text{inc}\circ\beta_g, (\eta_g, \alpha_g))\circ (\text{inc}\circ\beta_{h}, (\eta_h, \alpha_h))\\
			&=(\text{inc}\circ\beta_{g}\circ \beta_{h}, (\eta_g\circ \hat{\eta}_h\circ U^{-1}, \alpha_{g}\circ\alpha_h))\\
			&=(\text{inc}\circ\beta_{gh}, (\eta_g\circ \hat{\eta}_h\circ U^{-1}, \alpha_{gh}))
		\end{align*}
		Using Lemma \ref{Lemma: Diagram1} and Lemma \ref{Lemma: Diagram2}, we know
		\begin{align*}
			\eta_g\circ \hat{\eta}_h\circ U^{-1}\circ V_{\text{inc}\circ\beta_{gh}}&=\eta_g\circ \hat{\eta}_h\circ U^{-1}\circ V_{\text{inc}\circ\beta_{g}\circ\text{inc}\circ\beta_h}=(\eta_g\circ V_{\text{inc}\circ\beta_g})\circ (\eta_{h}\circ V_{\text{inc}\circ\beta_h})\\
			&=W_g\circ W_h=W_{gh}=\eta_{gh}\circ V_{\text{inc}\circ\beta_{gh}}
		\end{align*}
		Thus, as $V_{\text{inc}\circ\beta_{gh}}$ is invertible, $\eta_{gh}=\eta_g\circ \hat{\eta}_h\circ U^{-1}$. Hence
		$$\mathcal{F}(g)\circ \mathcal{F}(h)=(\text{inc}\circ\beta_{gh}, (\eta_g\circ \hat{\eta}_h\circ U^{-1}, \alpha_{gh}))=(\text{inc}\circ\beta_{gh}, (\eta_{gh}, \alpha_{gh}))=\mathcal{F}(gh)$$
		showing $\mathcal{F}$ preserves composition. Having checked the condition for functoriality, we conclude $\mathcal{F}$ is a covariant functor. As $\eta_g$ is a unitary for each $g\in G$, then $\mathcal{F}$ defines a unitary valued functor satisfying the two conditions.
		
		Notice, we have yet to use the fact that the correspondence is continuous and the $C^*$-dynamical systems are continuous. We use these facts now to show $\mathcal{F}$ is topological; that is, the group homomorphism
		$$G\to \text{Aut}(E, \phi)\quad\quad g\mapsto (\text{inc}\circ\beta_g, (\eta_g, \alpha_g))$$ is continuous. To prove the map is continuous, it suffices to show convergent nets are mapped to convergent nets. Therefore, let $(g_\lambda)_{\lambda\in\Lambda}$ be a net in $G$ which converges to $g$. We need to show $((\text{inc}\circ\beta_{g_\lambda}, (\eta_{g_\lambda}, \alpha_{g_\lambda})))_{\lambda\in\Lambda}$ converges in the topology on $\text{Aut}(E, \phi)$ to $(\text{inc}\circ\beta_g, (\eta_g, \alpha_g))$. From the topology on $\text{Aut}(E, \phi)$, this is equivalent to both $\alpha_{g_\lambda}\to \alpha_g$ and $\text{inc}\circ\beta_{g_\lambda}\to \beta$ in point norm topology on $\text{Aut}_{*\text{-alg}}(A)$ and $\text{Hom}_{*\text{-alg}}(B, M(B))$, respectively, as well as $$W_{g_\lambda}=\eta_{g_\lambda}\circ V_{\text{inc}\circ\beta_{g_\lambda}}\to \eta_g\circ V_{\text{inc}\circ\beta_g}=W_g$$
		 in the strong operator topology on $\mathcal{B}(E)$. As $\alpha$ and $\beta$ are continuous with respect to the point-norm topology on $\text{Aut}_{*\text{-alg}}(A)$ and $\text{Aut}_{*\text{-alg}}(B)$, then $\alpha_{g_\lambda}\to \alpha_g$ and $\text{inc}\circ\beta_{g_\lambda}\to \text{inc}\circ \beta_g$. Similarly, as $W$ is continuous with respect to the strong operator topology on $\mathcal{B}(E)$, then $\eta_{g_\lambda}\circ V_{\text{inc}\circ\beta_{g_\lambda}}\to \eta_g\circ V_{\text{inc}\circ\beta_g}$. Thus $((\text{inc}\circ\beta_{g_\lambda}, (\eta_{g_\lambda}, \alpha_{g_\lambda})))_{\lambda\in\Lambda}$ converges to $(\text{inc}\circ\beta_g, (\eta_g, \alpha_g))$ showing $\mathcal{F}$ is topological.
		 
		 From how we constructed the correspondence from the functor data as well as how we constructed the functor from the correspondence data, it is clear these constructions are inverses of each other. 
		
	\end{proof}
	
	Generalizing the argument above for when the (continuous) strict positive equivariant $C^*$-correspondences are (continuous) equivariant $C^*$-correspondences yields the following corollary characterizing equivariant $C^*$-correspondence as functors.
	
	\begin{corollary}\

		\begin{enumerate}
			\item Let $(A, G, \alpha)$ and $(B, G, \beta)$ be continuous $C^*$-dynamical systems. Every continuous non-degenerate positive equivariant $C^*$-correspondence from $(A, G, \alpha)$ to $(B, G, \beta)$ determines and is determined by a unitary valued topological functor $\mathcal{F}:BG\to \text{PosCor}_{*\text{-alg}}(A)$ such that
			\begin{enumerate}
				\item $\mathcal{F}(\text{pt})=(E, \phi)$ is a non-degenerate positive $C^*$-correspondence from $A$ to $B$.
				\item For all $g\in G$, $\mathcal{F}(g)=(\text{inc}\circ \beta_g, (\eta_g, \alpha_g))$.
			\end{enumerate}
			\item Let $(A, G, \alpha)$ and $(B, G, \beta)$ be $C^*$-dynamical systems. Every non-degenerate positive equivariant $C^*$-correspondence from $(A, G, \alpha)$ to $(B, G, \beta)$ determines and is determined by a unitary valued functor $\mathcal{F}:BG\to \text{PosCor}_{*\text{-alg}}(A)$ such that
			\begin{enumerate}
				\item $\mathcal{F}(\text{pt})=(E, \phi)$ is a non-degenerate positive $C^*$-correspondence from $A$ to $B$.
				\item For all $g\in G$, $\mathcal{F}(g)=(\text{inc}\circ\beta_g, (\eta_g, \alpha_g))$.
			\end{enumerate}
		\end{enumerate}
		
	\end{corollary}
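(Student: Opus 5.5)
The plan is to rerun the proof of Theorem \ref{Theorem: ESPC as Functors} essentially word for word, with ``strict completely positive'' replaced by ``non-degenerate (positive) completely positive''. Nothing in that proof uses strictness of $\phi$ beyond membership of $(E,\phi)$ in $\text{PosCor}_{*\text{-alg}}(A)$. So the first step is to record that a non-degenerate completely positive map $\phi:A\to\mathcal{L}(E)$ is strict. Given an approximate unit $(a_\lambda)$, non-degeneracy means $\overline{\phi(A)E}=E$. Positivity and the Kadison--Schwarz inequality then give that $\phi(a_\lambda)$ converges strictly to $1_E$, hence the net is strictly Cauchy. Thus $(E,\phi)$ is an object of $\text{PosCor}_{*\text{-alg}}(A)$. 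Conversely, any object arising as $\mathcal{F}(\text{pt})$ with the stated non-degeneracy hypothesis is such a correspondence.

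Next, for the direction from correspondence to functor, I set $\eta_g=W_g\circ U_{\beta_g}$ as before. Proposition \ref{Prop: rel between B-linear and beta-linear} gives that $\eta_g$ is a $B$-linear unitary $E\otimes_{\text{inc}\circ\beta_g}B\to E$. The intertwining relation $\eta_g\circ(\phi(a)\otimes 1_B)=\phi(\alpha_g(a))\circ\eta_g$ follows from the same two-square diagram; the left square commutes by the approximate unit computation, which is independent of the type of $\phi$. Functoriality, namely $\eta_{gh}=\eta_g\circ\hat\eta_h\circ U^{-1}$, again follows from Lemma \ref{Lemma: Diagram1}, Lemma \ref{Lemma: Diagram2}, the homomorphism property of $W$, and invertibility of $V_{\text{inc}\circ\beta_{gh}}$. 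The identity is sent to $(\text{inc},(\iota,1_A))$. In part (1), continuity of $g\mapsto\mathcal{F}(g)$ reduces, via the definition of the hom-topology, to point-norm continuity of $\alpha,\beta$ and strong continuity of $W$.

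Then, for the converse direction, given $\mathcal{F}$, I set $W_g=\eta_g\circ V_{\text{inc}\circ\beta_g}$. Using the Remark after Lemma \ref{Lemma: beta^{-1} unitary} and Proposition \ref{Prop: rel between B-linear and beta-linear}, each $W_g$ is a $\beta_g$-adjointable unitary. Lemmas \ref{Lemma: Diagram1} and \ref{Lemma: Diagram2} give the homomorphism property, the diagram gives covariance, and topological functoriality gives strong continuity. The two constructions are mutually inverse exactly as before.

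The only genuinely new point, and the main obstacle, is the first step: checking that non-degeneracy of a completely positive map forces strictness. This is needed so that the objects land in $\text{PosCor}_{*\text{-alg}}(A)$ and the earlier machinery applies. I would try this by first showing strong convergence $\phi(a_\lambda)\phi(a)x\to\phi(a)x$. This comes from the estimate $\|\phi(a_\lambda a)-\phi(a)\|\to0$ combined with a Schwarz-type bound for $\|(\phi(a_\lambda)\phi(a)-\phi(a_\lambda a))x\|$, obtained via the Stinespring/KSGNS dilation for the unitization. I would then extend by density and uniform boundedness, using self-adjointness of $\phi(a_\lambda)$ to obtain the adjoint half of strict convergence. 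If this route stalls, I would fall back on the characterization of non-degenerate completely positive maps as those with $V_\phi$ an isometry onto the cyclic part, from which strictness can be read off.
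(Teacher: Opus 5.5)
Apart from your first step, your proposal is the paper's argument. The paper proves this corollary only by remarking that the proof of Theorem \ref{Theorem: ESPC as Functors} generalizes, and your two directions reproduce that proof.

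The genuine gap is the step you single out as new. With your definition of non-degeneracy, $\overline{\phi(A)E}=E$, it is false that $\phi(a_\lambda)\to 1_E$ strictly, and it is even false that $\phi$ is strict.

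For the first claim, take $A=\mathbb{C}$ and $\phi(z)=\tfrac12 z\,1_E$. The range is all of $E$, but $\phi(a_\lambda)=\tfrac12 1_E$. The same example refutes your intermediate claim $\phi(a_\lambda)\phi(a)x\to\phi(a)x$. That claim is an asymptotic multiplicativity $\phi(a_\lambda)\phi(a)\approx\phi(a_\lambda a)$ which completely positive maps lack: in a dilation $\phi=V^*\pi(\cdot)V$ the defect is $V^*\pi(a_\lambda)(1-VV^*)\pi(a)V$. No Schwarz-type estimate will produce it.

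For a failure of strictness itself, take $A=c_0(\mathbb{N})$ and $E=B=C_0(\mathbb{R})$, so that $\mathcal{L}(E)=C_b(\mathbb{R})$. Let $(\psi_n)_{n\ge 2}$ be a continuous partition of unity of $(0,\infty)$ subordinate to the locally finite cover $(\tfrac13,\infty)$, $(\tfrac1{n+1},\tfrac1{n-1})$ for $n\ge3$. Put $\phi(f)=\tfrac12 f(1)+\tfrac12\sum_{n\ge2}f(n)\psi_n$.
\begin{itemize}
\item $\phi(f)\in C_b(\mathbb{R})$ because $f(n)\to0$.
\item $\phi$ is positive into a commutative algebra, hence completely positive.
\item $\phi(\delta_1)=\tfrac12 1_E$ gives $\phi(A)E=E$.
\item For every approximate unit $(u_\lambda)$, the net $\phi(u_\lambda)$ converges pointwise to $\tfrac12+\tfrac12\chi_{(0,\infty)}$, which is discontinuous at $0$. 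A strictly Cauchy net in $M(C_0(\mathbb{R}))$ converges uniformly on compact sets, so the net is not strictly Cauchy.
\end{itemize}
So this $(E,\phi)$ is not even an object of $\text{PosCor}_{*\text{-alg}}(A)$.

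Your fallback is circular, since $V_\phi$ is only constructed once $\phi$ is known to be strict. The equivalence between dense range and $\phi(a_\lambda)\to 1_E$ strictly holds for $*$-homomorphisms, not for completely positive maps.

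The missing idea is that strictness cannot be derived here; it has to be part of what ``non-degenerate positive'' means. The paper never defines the term, but the statement only makes sense if $\mathcal{F}(\text{pt})=(E,\phi)$ lies in $\text{PosCor}_{*\text{-alg}}(A)$. So read it as: $\phi$ is strict and $\phi(a_\lambda)\to 1_E$ strictly, i.e.\ $\tilde\phi(1)=1_E$. Alternatively, add strictness explicitly to your dense-range condition.

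With either reading, no new lemma is needed. The dictionary of Theorem \ref{Theorem: ESPC as Functors} leaves $(E,\phi)$ untouched in both directions. It therefore restricts to a bijection between the (continuous) non-degenerate positive equivariant correspondences and the unitary valued (topological) functors whose value at $\text{pt}$ is non-degenerate. The rest of your rerun goes through unchanged.
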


		\begin{corollary}\label{Cor: Important Cor}\

	\begin{enumerate}
		\item Let $(A, G, \alpha)$ and $(B, G, \beta)$ be continuous $C^*$-dynamical systems. Every continuous equivariant $C^*$-correspondence from $(A, G, \alpha)$ to $(B, G, \beta)$ determines and is determined by a unitary valued topological functor $\mathcal{F}:BG\to \text{PosCor}_{*\text{-alg}}(A)$ such that
		\begin{enumerate}
			\item $\mathcal{F}(\text{pt})=(E, \phi)$ is a $C^*$-correspondence from $A$ to $B$
			\item For all $g\in G$, $\mathcal{F}(g)=(\text{inc}\circ\beta_g, (\eta_g, \alpha_g))$.
		\end{enumerate}
		\item Let $(A, G, \alpha)$ and $(B, G, \beta)$ be $C^*$-dynamical systems. Every equivariant $C^*$-correspondence from $(A, G, \alpha)$ to $(B, G, \beta)$ determines and is determined by a unitary valued functor $\mathcal{F}:BG\to \text{PosCor}_{*\text{-alg}}(A)$ such that
		\begin{enumerate}
			\item $\mathcal{F}(\text{pt})=(E, \phi)$ is a $C^*$-correspondence from $A$ to $B$
			\item For all $g\in G$, $\mathcal{F}(g)=(\text{inc}\circ\beta_g, (\eta_g, \alpha_g))$.
		\end{enumerate}
	\end{enumerate}
	
	\end{corollary}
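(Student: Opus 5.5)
The plan is to specialize the proof of Theorem \ref{Theorem: ESPC as Functors} to the subclass of equivariant $C^*$-correspondences, viewed as a full subcollection of the strict positive ones. Given an equivariant $C^*$-correspondence $((E,\pi),W)$, the map $\pi:A\to\mathcal{L}(E)$ is a non-degenerate $*$-algebra homomorphism. Such a map is completely positive, and it is strict: non-degeneracy makes $(\pi(a_\lambda))_{\lambda\in\Lambda}$ converge strictly to $1_E$ for any approximate unit, so the net is strictly Cauchy. Hence $((E,\pi),W)$ is in particular a strict positive equivariant $C^*$-correspondence, and continuity of $W$ carries over unchanged.

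Applying Theorem \ref{Theorem: ESPC as Functors} in case (1), respectively (2), then produces a unitary valued (topological) functor $\mathcal{F}:BG\to\text{PosCor}_{*\text{-alg}}(A)$. It satisfies $\mathcal{F}(\text{pt})=(E,\pi)$ and $\mathcal{F}(g)=(\text{inc}\circ\beta_g,(W_g\circ U_{\beta_g},\alpha_g))$, so condition (a) holds because $(E,\pi)$ is a $C^*$-correspondence, and condition (b) holds by construction.

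For the converse, start with a unitary valued (topological) functor $\mathcal{F}$ satisfying (a) and (b). Since $\mathcal{F}(\text{pt})=(E,\phi)$ is a $C^*$-correspondence, it is a strict positive $C^*$-correspondence, so the reverse direction of Theorem \ref{Theorem: ESPC as Functors} applies. It yields $W_g=\eta_g\circ V_{\text{inc}\circ\beta_g}$, and $((E,\phi),W)$ is a (continuous) strict positive equivariant correspondence. Its underlying map $\phi$ is still the given non-degenerate $*$-homomorphism, so the result is an equivariant $C^*$-correspondence in the sense of the definition.

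That the two assignments are mutually inverse was already shown in Theorem \ref{Theorem: ESPC as Functors}. Restricting mutually inverse bijections to corresponding subsets, namely those whose $\mathcal{F}(\text{pt})$ is a $C^*$-correspondence, keeps them mutually inverse.

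The only step requiring care is the verification that a non-degenerate $*$-homomorphism is strict, so that it is a genuine object of $\text{PosCor}(A,B)$. I would cite the standard fact (Lance) that non-degeneracy gives a strictly continuous extension $\tilde\pi:M(A)\to\mathcal{L}(E)$. Concretely, for $x=\pi(a)y$ one has $\pi(a_\lambda)x=\pi(a_\lambda a)y\to x$, and the same holds for adjoints since $\pi(a_\lambda)$ is self-adjoint. Uniform boundedness $\|\pi(a_\lambda)\|\le1$ then extends the convergence to all $x$ by density of $\pi(A)E$. Everything else is a direct inheritance from the strict positive case.
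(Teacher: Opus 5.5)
Your proposal is correct and follows essentially the paper's route: the paper gets this corollary by noting that the argument of Theorem \ref{Theorem: ESPC as Functors} goes through unchanged when the underlying correspondence is a $C^*$-correspondence. That is exactly your restriction of the two mutually inverse constructions to the subclass where $\mathcal{F}(\text{pt})$ is a $C^*$-correspondence, and your explicit check that a non-degenerate $*$-homomorphism is a strict completely positive map fills in a point the paper leaves implicit when it defines $\text{Cor}(A,B)$ as a full subcategory of $\text{PosCor}(A,B)$.
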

	
	\subsection{Equivariant KSGNS construction}
	
	Using the functorial representation of a equivariant correspondence along with the KSGNS endofucntor on $\text{PosCor}_{*\text{-alg}}(A)$, we obtain the following equivariant version of the KSGNS construction.
	
	\begin{theorem}\label{Theorem: Dilation Result}\

		\noindent Let $(A, G, \alpha)$ and $(B, G, \beta)$ be continuous $C^*$-dynamical systems. If $((E, \phi), U)$ is a continuous strict positive equivariant $C^*$-correspondence from $(A, G, \alpha)$ to $(B, G, \beta)$, then there exists a quadruple $(F_\phi, \pi_\phi, V_\phi, \widetilde{U})$ where
		\begin{enumerate}
			\item $F_\phi$ is a Hilbert $B$-module.
			\item $\pi_\phi:A\to \mathcal{L}(F_\phi)$ is a non-degenerate $*$-algebra homomorphism.
			\item $V_\phi:E\to F_\phi$ is an adjointable $B$-linear map.
			\item $\widetilde{U}:G\to \mathcal{B}(F_\phi)$ is a strongly continuous representation for which $\widetilde{U}_g\in \mathcal{U}^{\beta_g}(F_\phi)$ for all $g\in G$.
		\end{enumerate}
		and the quadruple satisfies the following conditions:
		\begin{enumerate}
			\item $((F_\phi, \pi_\phi), \widetilde{U})$ is an equivariant $C^*$-correspondence from $(A, G, \alpha)$ to $(B, G, \beta)$.
			\item $\pi_\phi(A)V_\phi E$ is dense in $F_\phi$.
			\item For all $a\in A$, $\phi(a)=V_\phi^*\pi_\phi(a)V_\phi$.
			\item For all $g\in G$, $V_\phi\circ U_g=\tilde{U}_g\circ V_\phi$ 
		\end{enumerate}
		If $(F', \pi', V', U')$ is another quadruple satisfying conditions $(1)-(4)$, then there exists a $B$-linear unitary $W:F'\to F_\phi$ such that
		\begin{enumerate}
			\item $\pi_\phi(a)=W\pi'(a)W^*$ for all $a\in A$.
			\item $V_\phi=WV'$
			\item For all $g\in G$, $\widetilde{U}_g=WU'_gW^*$
		\end{enumerate}
		
	\end{theorem}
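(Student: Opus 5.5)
Following the outline at the start of Section 6, I will pass through the functorial picture. By Theorem \ref{Theorem: ESPC as Functors}(1), the data $((E,\phi),U)$ determines a unitary valued topological functor $\mathcal{F}:BG\to \text{PosCor}_{*\text{-alg}}(A)$. It has $\mathcal{F}(\text{pt})=(E,\phi)$ and $\mathcal{F}(g)=(\text{inc}\circ\beta_g,(\eta_g,\alpha_g))$, where $\eta_g=U_g\circ U_{\beta_g}$. I then compose with the KSGNS endofunctor of Theorem \ref{Theorem: KSGNS Functor 2}, restricted to $\text{PosCor}_{*\text{-alg}}(A)$. This gives a functor $\text{KSGNS}\circ\mathcal{F}$ with object $(F_\phi,\pi_\phi)$ and morphisms $(\text{inc}\circ\beta_g,(\tilde{\eta}_g\circ V_g^{-1},\alpha_g))$.

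Three properties of this functor need to be checked.
\begin{enumerate}
\item It is unitary valued. Each $\eta_g$ is unitary, and the construction in Proposition \ref{Prop: Lifting 2} shows $\tilde{\eta}^*=\widetilde{\eta^*}$ and $\widetilde{\eta\xi}=\tilde\eta\tilde\xi$, so $\tilde{\eta}_g$ is unitary. The map $V_g$ is unitary by Theorem \ref{Theorem: KSNGS commutes with ITP}.
\item It is topological. The map $G\to\text{Aut}(E,\phi)$ is continuous, and the adjointable parts are unitaries, hence uniformly bounded. Proposition \ref{Prop: Continuity of KSGNS} then gives continuity of $g\mapsto \text{KSGNS}(\mathcal{F}(g))$.
\item Its object is an honest $C^*$-correspondence, since $\pi_\phi$ is a non-degenerate $*$-homomorphism.
\end{enumerate}
Corollary \ref{Cor: Important Cor}(1) then produces a strongly continuous $\widetilde U:G\to\mathcal{B}(F_\phi)$, given by $\widetilde U_g=\tilde\eta_g\circ V_g^{-1}\circ V'_{\text{inc}\circ\beta_g}$, with each $\widetilde U_g\in\mathcal{U}^{\beta_g}(F_\phi)$ and $((F_\phi,\pi_\phi),\widetilde U)$ equivariant. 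Conditions (2) and (3) of the statement come from the KSGNS construction theorem.

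For condition (4), I compute on $x\in E$ with approximate units $(b_\mu)$ of $B$ and $(a_\lambda)$ of $A$. The computation in the proof of Proposition \ref{Prop: Continuity of KSGNS} gives $\widetilde U_g(a\dot\otimes x)=\alpha_g(a)\dot\otimes \eta_g(V_{\text{inc}\circ\beta_g}x)=\alpha_g(a)\dot\otimes U_g x$. Taking $a=a_\lambda$, using that $(\alpha_g(a_\lambda))$ is again an approximate unit, and using continuity of $\widetilde U_g$ yields $\widetilde U_gV_\phi x=V_\phi U_g x$. This is the same argument as Proposition \ref{Prop: Lifting 2}(4).

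For uniqueness, let $(F',\pi',V',U')$ be another quadruple. The KSGNS uniqueness statement gives a $B$-linear unitary $W:F'\to F_\phi$ with $\pi_\phi=W\pi'(\cdot)W^*$ and $V_\phi=WV'$; only conclusion (3) remains. It suffices to check $\widetilde U_gW=WU'_g$ on the dense set $\pi'(A)V'E$. Using equivariance of both correspondences and condition (4) for both quadruples,
\[
WU'_g\pi'(a)V'x=W\pi'(\alpha_g(a))U'_gV'x=\pi_\phi(\alpha_g(a))WV'U_gx=\pi_\phi(\alpha_g(a))V_\phi U_gx ,
\]
and
\[
\widetilde U_gW\pi'(a)V'x=\widetilde U_g\pi_\phi(a)V_\phi x=\pi_\phi(\alpha_g(a))\widetilde U_gV_\phi x=\pi_\phi(\alpha_g(a))V_\phi U_gx .
\]
Both maps are bounded $\mathbb{C}$-linear, so equality on the dense set extends to all of $F'$. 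Note that $W$ is $B$-linear while $U'_g$ and $\widetilde U_g$ are only $\beta_g$-linear, but both sides of the identity are $\beta_g$-linear, so this causes no conflict.

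The step I expect to need the most care is the bookkeeping that identifies $\widetilde U_g$ explicitly as the map $a\dot\otimes x\mapsto \alpha_g(a)\dot\otimes U_gx$. This requires tracking the pullback $V'_{\text{inc}\circ\beta_g}$ through $V_g^{-1}$, and the resulting formula is what condition (4) and the uniqueness argument depend on. I would carry it out by the approximate-unit limit already used in Proposition \ref{Prop: Continuity of KSGNS}. I would also check directly, via Lemmas \ref{Lemma: Diagram1} and \ref{Lemma: Diagram2}, that $\widetilde U_{gh}=\widetilde U_g\widetilde U_h$, in case Corollary \ref{Cor: Important Cor} is applied too loosely.
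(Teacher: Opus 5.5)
Your proposal is correct and follows the paper's proof essentially step for step. It passes through Theorem \ref{Theorem: ESPC as Functors}, composes with the KSGNS endofunctor, gets unitarity and continuity from Propositions \ref{Prop: Lifting 2} and \ref{Prop: Continuity of KSGNS}, applies Corollary \ref{Cor: Important Cor}, and uses the same density argument for uniqueness. The only cosmetic difference is in condition (4): you first compute $\widetilde{U}_g(a\dot{\otimes}x)=\alpha_g(a)\dot{\otimes}U_gx$ and then insert an approximate unit, whereas the paper packages that same approximate-unit computation into a commuting diagram built from $U_{\beta_g}$, $V_g$ and Proposition \ref{Prop: Lifting 2}(4).
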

	
	\begin{proof}\
		
		\noindent Let us first prove uniqueness. Let $(F_\phi, \pi_\phi, V_\phi, \tilde{U})$ and $(F', \pi', V', U')$ be two quadruples satisfying conditions $(1)-(4)$. As $(F_\phi, \pi_\phi, V_\phi)$ and $(F', \pi', V')$ are two KSGNS dilations of $(E, \phi)$, then there exists a unitary $W:F'\to F_\phi$ satisfying $(1)$ and $(2)$. Note, $W$ is defined on the dense submodule $\pi'(A)V'E$ as
		$$W(\sum\limits_{k=1}^n \pi'(a_k)V'x_k)=\sum\limits_{k=1}^n \pi_\phi(a_k)V_\phi x_k$$
		To prove $(3)$, we can use continuity of $U_g, U_g',$ and $W$ to prove equality of $\tilde{U}_g=WU'_gW^*$ on the dense subspace $\pi_\phi(A)V_\phi E$. Using linearity, it suffices to prove equality on elements of the form $\pi_\phi(a)V_\phi x$ for $a\in A$ and $x\in E$. Given such elements, we use the definition of $W$ on the dense subspace as well as condition $(1)$ and $(4)$ of the quadruples to obtain
		\begin{align*}
			WU'_gW^*(\pi_\phi(a)V_\phi x)&=WU'_g(\pi'(a)V' x)=W(\pi'(\alpha_g(a))U'_gV'x)\\
			&=W(\pi'(\alpha_g(a))V'U_gx)=\pi_\phi(\alpha_g(a))V_\phi U_gx\\
			&=\pi_\phi(\alpha_g(a))\tilde{U}_gV_\phi x=\tilde{U}_g(\pi_\phi(a)V_\phi x)
		\end{align*}
		Hence $\tilde{U}_g=WU'_gW^*$.
		
		Let us now prove the existence of a quadruple. Let $\mathcal{F}:BG\to \text{PosCor}_{*\text{-alg}}(A)$ be the corresponding unitary valued topological functor from the continuous strict positive equivariant $C^*$-corresopndence $((E, \phi), U)$. For each $g\in G$, we have $\mathcal{F}(g)=(\text{inc}\circ\beta_g, (\eta_g, \alpha_g))$ where $\eta_g=U_g\circ U_{\beta_g}$ for $U_{\beta_g}$ the $\beta^{-1}$-adjointable unitary from Lemma \ref{Lemma: beta^{-1} unitary}. Consider the functor $\text{KSGNS}\circ\mathcal{F}$.
		
		We claim $\text{KSGNS}\circ\mathcal{F}$ is unitary valued. For each $g\in G$, we obtain an automorphism $(\text{inc}\circ\beta_g, (\tilde{\eta}_g\circ V_g^{-1}, \alpha_g))$ of $(F_\phi, \pi_\phi)$  where 
		$$V_g:A\otimes_{\phi\otimes 1_B}(E\otimes_{\text{inc}\circ\beta_g} B)\to F_{\phi}\otimes_{\text{inc}\circ\beta_g}B$$ is the unitary given in Theorem \ref{Theorem: KSNGS commutes with ITP}. As $\mathcal{F}$ is unitary valued, then $\eta_g$ is a unitary for all $g\in G$. From the construction of $\tilde{\eta}_g$ in Proposition \ref{Prop: Lifting 2}, we know $\tilde{\eta}_g$ is also a unitary. Thus $\tilde{\eta}_g\circ V_{g}^{-1}$ is unitary showing $\text{KSGNS}\circ \mathcal{F}$ is unitary valued.
		
		We claim $\text{KSGNS}\circ \mathcal{F}$ is a topological functor. As $\mathcal{F}$ is unitary valued, $\mathcal{F}$ maps morphisms $\text{pt}\to \text{pt}$ to a set of morphism from $(E, \phi)$ to $(E, \phi)$ for which the adjointable maps are uniformly norm bounded (in fact, they are bound by one). Thus, Proposition \ref{Prop: Continuity of KSGNS} implies the assignment on morphism given by $\text{KSGNS}\circ\mathcal{F}$ is continuous; that is, $\text{KSGNS}\circ \mathcal{F}$ is a topological functor.
		
		As $\text{KSGNS}\circ \mathcal{F}$ is a unitary valued topological functor with
		$$(\text{KSGNS}\circ\mathcal{F})(\text{pt})=(F_\phi, \pi_\phi)$$
		and for all $g\in G$,
		$$(\text{KSGNS}\circ\mathcal{F})(g)=(\text{inc}\circ\beta_g, (\tilde{\eta}_g\circ V_g^{-1}, \alpha_g))$$
		then Corollary \ref{Cor: Important Cor} implies $\text{KSGNS}\circ\mathcal{F}$ induces a continuous equivariant $C^*$-correspondence $((F_\phi, \pi_\phi), \tilde{U})$ from $(A, G, \alpha)$ to $(B, G, \beta)$.
		
		We claim the quadruple $(F_\phi, \pi_\phi, V_\phi, \tilde{U})$ satisfies conditions $(1)-(4)$. From the end of the prior paragraph, we know condition $(1)$ is satisfied. As $(F_\phi, \pi_\phi, V_\phi)$ is the KSGNS dilation of $(E, \phi)$, then we know the quadruple satisfies conditions $(2)$ and $(3)$.  Therefore, all that remains to be seen is the quadruple satisfies condition $(4)$. Fix $g\in G$ and consider the following diagram:
		\begin{center}
			\begin{tikzcd}
				{E\otimes_{\text{inc}\circ\beta_g}B} &&& {A\otimes_{\phi\otimes_{\text{inc}\circ\beta_{g}} 1_B}(E\otimes_{\text{inc}\circ\beta_g}B)} && {F_\phi\otimes_{\text{inc}\circ\beta_g}B} \\
				\\
				E &&&&& {F_\phi}
				\arrow["{V_{\phi\otimes_{\text{inc}\circ\beta_g}1_B}}"{description}, from=1-1, to=1-4]
				\arrow["{U_{\beta_g}}"{description}, from=1-1, to=3-1]
				\arrow["{V_g}"{description}, from=1-4, to=1-6]
				\arrow["{U_{\beta_g}'}"{description}, from=1-6, to=3-6]
				\arrow["{V_\phi}"{description}, from=3-1, to=3-6]
			\end{tikzcd} 
		\end{center}
		We claim the diagram commutes. Using continuity and density, it suffices to show commutativity on the dense subspace $E\otimes_BB$.  Using linearity, it suffice to show commutativity on elements of the form $x\dot{\otimes}b\in E\otimes_BB$. Given such an element and an approximate unit $(a_\lambda)_{\lambda\in\Lambda}$ for $A$, we have
		\begin{align*}
			(V_\phi\circ U_{\beta_g})(x\dot{\otimes}b)&=V_\phi(x\beta_g^{-1}(b))=\lim\limits_{\lambda\to\infty}a_\lambda\dot{\otimes} x\beta_g^{-1}(b)\\
			&=\lim\limits_{\lambda\to\infty}U_{\beta_g}'((a_\lambda\dot{\otimes}x)\dot{\otimes}b)=\lim\limits_{\lambda\to\infty}(U_{\beta_g}'\circ V_g)(a_\lambda\dot{\otimes}(x\dot{\otimes}b))\\
			&=(U_{\beta_g}'\circ V_g\circ V_{\phi\otimes_{\text{inc}\circ\beta_g} 1_B})(x\dot{\otimes}b)
		\end{align*}
		Hence the diagram commutes.
		
		Now consider the following diagram:
		\begin{center}
			\begin{tikzcd}
				& {E\otimes_{\text{inc}\circ\beta_g}B} &&& {A\otimes_{\phi\otimes_{\text{inc}\circ\beta_g} 1_B}(E\otimes_{\text{inc}\circ\beta_g}B)} && {F_\phi\otimes_{\text{inc}\circ\beta_g}B} \\
				\\
				E & E &&& {F_\phi} && {F_\phi}
				\arrow["{V_{\phi\otimes_{\text{inc}\circ\beta_g} 1_B}}"{description}, from=1-2, to=1-5]
				\arrow["\eta_g"{description}, from=1-2, to=3-2]
				\arrow["{\tilde{\eta}_g}"{description}, from=1-5, to=3-5]
				\arrow["{V_g^{-1}}"{description}, from=1-7, to=1-5]
				\arrow["{V_{\text{inc}\circ\beta_g}}"{description}, from=3-1, to=1-2]
				\arrow["{U_g}"{description}, from=3-1, to=3-2]
				\arrow["{V_\phi}"{description}, bend right=20, from=3-1, to=3-7]
				\arrow["{V_{\phi}}"{description}, from=3-2, to=3-5]
				\arrow["{V_{\text{inc}\circ\beta_g}'}"{description}, from=3-7, to=1-7]
				\arrow["{\widetilde{U}_g}"{description}, from=3-7, to=3-5]
			\end{tikzcd}
		\end{center}
		By definition, the triangle on the left and the square on the far right commute. Using property $(4)$ in Proposition \ref{Prop: Lifting 2}, the middle square commutes. Using commutativity of these three diagrams as well as commutativity of the diagram above, we conclude the bottom loop commute:
		\begin{align*}
			V_\phi\circ U_g&=V_\phi\circ \eta\circ V_{\text{inc}\circ\beta_g}=\tilde{\eta}\circ V_{\phi\otimes_{\text{inc}\circ\beta_g} 1_B}\circ V_{\text{inc}\circ\beta_g}\\
			&=\tilde{\eta}\circ V_g^{-1}\circ V_{\text{inc}\circ\beta_g}'\circ V_\phi=\tilde{U}_g\circ V_\phi
		\end{align*}
		Therefore $V_\phi\circ U_g=\tilde{U}_g\circ V_\phi$ proving the claim.
		
	\end{proof}
	
	We observe continuity of the strict positive equivariant correspondence and continuity of the $C^*$-dynamical systems was used to know the resulting functor $\text{KSGNS}\circ\mathcal{F}$ is a topological functor. Thus, the result and proof of the theorem holds when we have $C^*$-dynamical systems and strict positive equivariant correspondences. When we are in this relaxed setting, we obtain from Corollary \ref{Cor: Important Cor} that $\text{KSGNS}\circ \mathcal{F}$ encodes a equivariant $C^*$-correspondence between the $C^*$-dynamical systems. This leads to the following version of Theorem \ref{Theorem: Dilation Result}.
	
	\begin{theorem}\

		\noindent Let $(A, G, \alpha)$ and $(B, G, \beta)$ be $C^*$-dynamical systems. If $((E, \phi), U)$ is a strict positive equivariant $C^*$-correspondence from $(A, G, \alpha)$ to $(B, G, \beta)$, then there exists a quadruple $(F_\phi, \pi_\phi, V_\phi, \widetilde{U})$ where
		\begin{enumerate}
			\item $F_\phi$ is a Hilbert $B$-module.
			\item $\pi_\phi:A\to \mathcal{L}(F_\phi)$ is a non-degenerate $*$-algebra homomorphism.
			\item $V_\phi:E\to F_\phi$ is an adjointable $B$-linear map.
			\item $\widetilde{U}:G\to \mathcal{B}(F_\phi)$ is a  representation for which $\widetilde{U}_g\in \mathcal{U}^{\beta_g}(F_\phi)$ for all $g\in G$.
		\end{enumerate}
		and the quadruple satisfies the following conditions:
		\begin{enumerate}
			\item $((F_\phi, \pi_\phi), \widetilde{U})$ is an equivariant $C^*$-correspondence from $(A, G, \alpha)$ to $(B, G, \beta)$.
			\item $\pi_\phi(A)V_\phi E$ is dense in $F_\phi$.
			\item For all $a\in A$, $\rho(a)=V_\phi^*\pi_\phi(a)V_\phi$.
			\item For all $g\in G$, $V_\phi\circ U_g=\tilde{U}_g\circ V_\phi$ 
		\end{enumerate}
		If $(F', \pi', V', U')$ is another quadruple satisfying conditions $(1)-(4)$, then there exists a $B$-linear unitary $W:F'\to F_\phi$ such that
		\begin{enumerate}
			\item $\pi_\phi(a)=W\pi'(a)W^*$ for all $a\in A$.
			\item $V_\phi=WV'$
			\item For all $g\in G$, $\widetilde{U}_g=WU'_gW^*$
		\end{enumerate}
		
	\end{theorem}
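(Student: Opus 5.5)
The plan is to run the proof of Theorem \ref{Theorem: Dilation Result} verbatim, dropping every appeal to continuity. Uniqueness is purely algebraic and goes first. Given two quadruples, the uniqueness clause of the KSGNS construction produces a $B$-linear unitary $W:F'\to F_\phi$ with $\pi_\phi(a)=W\pi'(a)W^*$ and $V_\phi=WV'$. Concretely, $W$ sends $\sum_k \pi'(a_k)V'x_k$ to $\sum_k \pi_\phi(a_k)V_\phi x_k$. To get $\widetilde{U}_g=WU'_gW^*$, I check it on the dense submodule $\pi_\phi(A)V_\phi E$. Both sides are bounded, since $U'_g$ and $\widetilde{U}_g$ are $\beta_g$-adjointable unitaries and hence isometries, so this suffices. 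On such elements the same chain of equalities as before applies, using the intertwining relation of condition (1) and the relation $V\circ U_g=U_g\circ V$ of condition (4) for both quadruples. No topology is involved here.

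For existence, I first apply part (2) of Theorem \ref{Theorem: ESPC as Functors}, the non-topological version, to the strict positive equivariant correspondence $((E,\phi),U)$. This gives a unitary valued functor $\mathcal{F}:BG\to \text{PosCor}_{*\text{-alg}}(A)$ with $\mathcal{F}(\text{pt})=(E,\phi)$ and $\mathcal{F}(g)=(\text{inc}\circ\beta_g,(\eta_g,\alpha_g))$, where $\eta_g=U_g\circ U_{\beta_g}$. Next I compose with the KSGNS endofunctor, which restricts to $\text{PosCor}_{*\text{-alg}}(A)$ by the corollary following Theorem \ref{Theorem: Idempotency 2}. The composite satisfies $(\text{KSGNS}\circ\mathcal{F})(g)=(\text{inc}\circ\beta_g,(\tilde\eta_g\circ V_g^{-1},\alpha_g))$.

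This composite is unitary valued. The map $\tilde\eta_g$ is unitary because $\widetilde{\eta_g^*}$ inverts it: by Theorem \ref{Theorem: KSGNS Endofucntor} the KSGNS functor preserves composition and identities, and $(\eta_g,\alpha_g)$ has the inverse $(\eta_g^*,\alpha_g^{-1})$ in $\text{PosCor}(A,B)$. Also $V_g$ is unitary by Theorem \ref{Theorem: KSNGS commutes with ITP}. Since $\pi_\phi$ is a non-degenerate $*$-homomorphism, part (2) of Corollary \ref{Cor: Important Cor} then yields an equivariant $C^*$-correspondence $((F_\phi,\pi_\phi),\widetilde U)$ with $\widetilde U_g=\tilde\eta_g\circ V_g^{-1}\circ V'_{\text{inc}\circ\beta_g}$. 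This gives condition (1). Conditions (2) and (3) come directly from the KSGNS construction; note that the ``$\rho(a)$'' in (3) should read $\phi(a)$.

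Condition (4) is the only step needing real computation, and I will reuse the two commutative diagrams from the proof of Theorem \ref{Theorem: Dilation Result}. The first is $V_\phi\circ U_{\beta_g}=U'_{\beta_g}\circ V_g\circ V_{\phi\otimes 1_B}$, checked on simple tensors $x\dot\otimes b$ with an approximate unit of $A$. The second is property (4) of Proposition \ref{Prop: Lifting 2}, namely $\tilde\eta_g\circ V_{\phi\otimes1_B}=V_\phi\circ\eta_g$. Combining them with $U_g=\eta_g\circ V_{\text{inc}\circ\beta_g}$ and the fact that $V'_{\text{inc}\circ\beta_g}=(U'_{\beta_g})^{-1}$ (the remark after Lemma \ref{Lemma: beta^{-1} unitary}) gives $V_\phi\circ U_g=\widetilde U_g\circ V_\phi$. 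I expect the main obstacle to be bookkeeping: confirming that none of these identities uses strong continuity. As far as I can see, each is an algebraic identity checked on dense subspaces, with norm limits taken only over approximate units, so it carries over unchanged to the non-continuous setting.
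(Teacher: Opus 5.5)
Your proposal is correct and follows the paper's route: the paper proves this theorem by noting that continuity in the proof of Theorem~\ref{Theorem: Dilation Result} is used only to make $\text{KSGNS}\circ\mathcal{F}$ topological, so the same uniqueness argument and the same existence argument (via part (2) of Theorem~\ref{Theorem: ESPC as Functors} and part (2) of Corollary~\ref{Cor: Important Cor}) go through unchanged. Your functoriality argument that $\tilde\eta_g$ is unitary, with inverse $\widetilde{\eta_g^*}=\tilde\eta_g^*$, is a slightly sharper justification than the paper's one-line appeal to the construction, and you are right that $\rho(a)$ in condition (3) should read $\phi(a)$.
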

	
\bibliographystyle{amsalpha} 
\bibliography{bibliography}

@book{lance1995hilbert,
  title={Hilbert C*-Modules: A Toolkit for Operator Algebraists},
  author={Lance, E.C.},
  isbn={9780521479103},
  lccn={95154731},
  series={Lecture note series / London mathematical society},
  year={1995},
  publisher={Cambridge University Press}
}

@book{paulsen2002completely,
  title={Completely Bounded Maps and Operator Algebras},
  author={Paulsen, V.},
  isbn={9780521816694},
  lccn={02024624},
  series={Cambridge Studies in Advanced Mathematics},
  year={2002},
  publisher={Cambridge University Press}
}

@book{williams2007crossed,
  title={Crossed Products of $C^*$-Algebras},
  author={Williams, D.P.},
  isbn={9780821842423},
  lccn={2006047931},
  series={Mathematical surveys and monographs},
  year={2007},
  publisher={American Mathematical Society}
}

@book{blackadar2006operator,
  title={Operator Algebras: Theory of C*-Algebras and Von Neumann Algebras},
  author={Blackadar, B.},
  number={v. 13},
  isbn={9783540284864},
  lccn={2005934456},
  series={Encyclopaedia of Mathematical Sciences},
  year={2006},
  publisher={Springer}
}

@book{raeburn1998morita,
  title={Morita Equivalence and Continuous-Trace $C^*$-Algebras},
  author={Raeburn, I. and Williams, D.P.},
  isbn={9780821808603},
  lccn={98025838},
  series={Mathematical surveys and monographs},
  year={1998},
  publisher={American Mathematical Society}
}

@misc{eryuzlu2023exactsequencesenchiladacategory,
      title={Exact sequences in the Enchilada category}, 
      author={Menevse Eryuzlu and Steven Kaliszewski and John Quigg},
      year={2023},
      eprint={1909.00506},
      archivePrefix={arXiv},
      primaryClass={math.OA},
      url={https://arxiv.org/abs/1909.00506}, 
}

@misc{brix2024morphismscuntzpimsneralgebrascompletely,
      title={Morphisms of Cuntz-Pimsner algebras from completely positive maps}, 
      author={Kevin Aguyar Brix and Alexander Mundey and Adam Rennie},
      year={2024},
      eprint={2311.16600},
      archivePrefix={arXiv},
      primaryClass={math.OA},
      url={https://arxiv.org/abs/2311.16600}, 
}

@misc{echterhoff2005categoricalapproachimprimitivitytheorems,
      title={A Categorical Approach to Imprimitivity Theorems for C*-Dynamical Systems}, 
      author={Siegfried Echterhoff and S. Kaliszewski and John Quigg and Iain Raeburn},
      year={2005},
      eprint={math/0205322},
      archivePrefix={arXiv},
      primaryClass={math.OA},
      url={https://arxiv.org/abs/math/0205322}, 
}

@misc{echterhoff2000naturalityinducedrepresentations,
      title={Naturality and Induced Representations}, 
      author={Siegfried Echterhoff and S. Kaliszewski and John Quigg and Iain Raeburn},
      year={2000},
      eprint={math/0002039},
      archivePrefix={arXiv},
      primaryClass={math.OA},
      url={https://arxiv.org/abs/math/0002039}, 
}

@article{Buss_2012,
   title={A higher category approach to twisted actions on c* -algebras},
   volume={56},
   ISSN={1464-3839},
   url={http://dx.doi.org/10.1017/S0013091512000259},
   DOI={10.1017/s0013091512000259},
   number={2},
   journal={Proceedings of the Edinburgh Mathematical Society},
   publisher={Cambridge University Press (CUP)},
   author={Buss, Alcides and Meyer, Ralf and Zhu, Chenchang},
   year={2012},
   month=aug, pages={387–426} }

\end{document}